\documentclass[reqno,12pt]{amsart}
%\end{align*}

\usepackage[all]{xypic}
\usepackage{enumerate}
\usepackage{hyperref}
\usepackage{a4}
\usepackage{amssymb}

\DeclareMathOperator*{\LIM}{LIM}
\DeclareMathOperator{\sdet}{sdet}
\DeclareMathOperator{\img}{img}

\DeclareMathOperator{\rk}{rk}
\DeclareMathOperator{\gr}{gr}
\DeclareMathOperator{\tr}{tr}
\DeclareMathOperator{\str}{str}
\DeclareMathOperator{\Ind}{Ind}

\newcommand\RS{{\rm{RS}}}
\newcommand\goe{\mathfrak g}

\newcommand\llangle{\langle\!\langle}
\newcommand\rrangle{\rangle\!\rangle}
\DeclareMathOperator{\Tor}{Tor}

\DeclareMathOperator{\id}{id}
\DeclareMathOperator{\GL}{GL}
\DeclareMathOperator{\Aut}{Aut}
\DeclareMathOperator{\eend}{end}
\DeclareMathOperator{\der}{der}
\newcommand\Z{\mathbb Z}
\newcommand\N{\mathbb N}

\newcommand\C{\mathbb C}

\theoremstyle{plain}
  \newtheorem{theorem}{Theorem}[section]
  \newtheorem{corollary}[theorem]{Corollary}
  \newtheorem{lemma}[theorem]{Lemma}
  \newtheorem{proposition}[theorem]{Proposition}
  
\theoremstyle{definition}
  \newtheorem{example}[theorem]{Example}
\theoremstyle{remark}
  \newtheorem{remark}[theorem]{Remark}

\begin{document}

\title[Analytic torsion of generic rank two distributions]{Analytic torsion of generic rank two distributions in dimension five}

\author{Stefan Haller}

\address{Stefan Haller,
         Department of Mathematics,
         University of Vienna,
         Oskar-Morgenstern-Platz 1,
         1090 Vienna,
	Austria.}

\email{stefan.haller@univie.ac.at}

\begin{abstract}
We propose an analytic torsion for the Rumin complex associated with generic rank two distributions on closed 5-manifolds.
This torsion behaves as expected with respect to Poincar\'e duality and finite coverings.
We establish anomaly formulas, expressing the dependence on the sub-Riemannian metric and the 2-plane bundle in terms of integrals over local quantities.
For certain nilmanifolds we are able to show that this torsion coincides with the Ray--Singer analytic torsion, up to a constant.
\end{abstract}

\keywords{Analytic torsion; Rumin complex; Rockland complex; Generic rank two distribution; (2,3,5) distribution; sub-Riemannian geometry}

\subjclass[2010]{58J52 (primary) and 53C17, 58A30, 58J10, 58J42}

\maketitle

%\tableofcontents

\section{Introduction}\label{S:intro}

The Ray--Singer analytic torsion \cite{RS71} of a closed smooth manifold is a zeta regularized graded determinant of its de~Rham complex and computes the Reidemeister torsion \cite{Ch77,Ch79,M78,BZ92}.
Rumin and Seshadri \cite{RS12} have recently introduced an analytic torsion for the Rumin complex of contact manifolds \cite{R90,R94,R00} and showed that it coincides with the Ray--Singer torsion for 3-dimensional CR Seifert manifolds equipped with a unitary representation \cite[Theorem~4.2]{RS12}.
In this paper we propose an analytic torsion of the Rumin complex associated with another filtered geometry \cite{R99,R01,R05} known as generic rank two distributions in dimension five \cite{C10}.

A generic rank two distribution on a 5-manifold $M$ is a smooth rank two subbundle $\mathcal D$ in the tangent bundle $TM$ such that Lie brackets of sections of $\mathcal D$ span a rank three subbundle $[\mathcal D,\mathcal D]$ in $TM$ and triple brackets span all of the tangent bundle.
In other words, $\mathcal D$ is a bracket generating distribution with growth vector $(2,3,5)$.
Clearly, this is a $C^2$-open condition on the 2-plane bundle $\mathcal D$, whence the name generic.

If $\mathcal D$ is a generic rank two distribution, then the Lie bracket of vector fields induces an algebraic (Levi) bracket on the associated graded bundle 
\[
	\mathfrak tM=\mathfrak t^{-3}M\oplus\mathfrak t^{-2}M\oplus\mathfrak t^{-1}M,
\]
where $\mathfrak t^{-1}M=\mathcal D$, $\mathfrak t^{-2}M=[\mathcal D,\mathcal D]/\mathcal D$ and $\mathfrak t^{-3}M=TM/[\mathcal D,\mathcal D]$.
This turns $\mathfrak tM$ into a bundle of graded nilpotent Lie algebras called the bundle of osculating algebras.
Its fibers are all isomorphic to the graded nilpotent Lie algebra $\goe=\goe_{-3}\oplus\goe_{-2}\oplus\goe_{-1}$ with graded basis $X_1,X_2\in\goe_{-1}$, $X_3\in\goe_{-2}$, $X_4,X_5\in\goe_{-3}$ and brackets
\[
	[X_1,X_2]=X_3,\qquad[X_1,X_3]=X_4,\qquad[X_2,X_3]=X_5.
\]

A basic example of a generic rank two distribution is obtained by equipping the simply connected Lie group $G$ corresponding to the Lie algebra $\goe$ with the $G$-invariant 2-plane bundle obtained by translating $\goe_{-1}$.

The study of generic rank two distributions reaches back to Cartan's ``five variables paper'' \cite{C10} from 1910 where Cartan constructs a canonical connection and derives a non-trivial local invariant, the harmonic curvature tensor which is a section of $S^4\mathcal D^*$.
A generic rank two distribution is locally diffeomorphic to the invariant rank two distribution on the nilpotent Lie group $G$ if and only if Cartan's curvature tensor vanishes.

In modern terminology, generic rank two distributions can equivalently be described as regular normal parabolic geometries \cite{CS09} of type $(G_2,P)$, where $P$ is a particular parabolic subgroup in the split real form of the exceptional Lie group $G_2$, see also \cite{S08}. 
Their geometry has many intriguing features.
The most symmetric example, the flat model, is the homogeneous space $G_2/P$ which is locally diffeomorphic to the nilpotent Lie group $G$ and has an underlying manifold diffeomorphic to $S^2\times S^3$, see \cite{S06}.
In general, the symmetry group of a generic rank two distribution is a Lie group of dimension at most 14 and is subject to further restrictions \cite[Theorem~7]{CN09}.
Nurowski \cite{N05} constructed a conformal metric of signature $(2,3)$ with conformal holonomy $G_2$ which is naturally associated with a generic rank two distribution, see also \cite{S08,CSa09,HS09,SW17,SW17b}.
Bryant and Hsu have shown that generic rank two distributions admit many rigid curves \cite{BH93}, i.e., curves which are isolated in the path space of curves tangent to $\mathcal D$ with fixed endpoints.
In the same paper Bryant and Hsu also discuss the mechanical system of a surface rolling without slipping and twisting on another surface.
This gives rise to a 5-dimensional configuration space equipped with a rank two distribution encoding the no slipping and twisting condition, which is generic iff the Gaussian curvatures are disjoint \cite[Section 4.4]{BH93}.
If both surfaces are round spheres and the ratio of their radii is 3:1 then the universal covering of the configuration space is diffeomorphic to $G_2/P$, see \cite{C10,BM09}.
Let us also mention the following recent studies of generic rank two distributions \cite{AN14,BHN18,GPW17,HS11,LNS17}.

By virtue of Gromov's h-principle for open manifolds \cite[Section~2.2.2]{G86}, it is well understood \cite[Theorem~2]{DH19} which open 5-manifolds admit globally defined generic rank two distributions.
For closed manifolds the situation appears to be more subtle but the underlying topological problem has been settle, see \cite[Theorem~1(b)]{DH19}.
The question to what extent generic rank two distributions abide by an h-principle \cite{G86,E02} has served as a motivation for us to study the analytic torsion of the associated Rumin complex, cf.~\cite{E89,G91,BEM15,V09,CPdPP17,CdPP20,dPV20}.

The Rumin complex \cite{R99,R01,R05} associated with a generic rank two distribution on a 5-manifold $M$ is a natural complex of higher order differential operators,
\[
	\Gamma^\infty(E^0)\xrightarrow{D_0}
	\Gamma^\infty(E^1)\xrightarrow{D_1}
	\Gamma^\infty(E^2)\xrightarrow{D_2}
	\Gamma^\infty(E^3)\xrightarrow{D_3}
	\Gamma^\infty(E^4)\xrightarrow{D_4}
	\Gamma^\infty(E^5)
\]
where $E^q=\mathcal H^q(\mathfrak tM)$ denotes the vector bundle obtained by taking the fiberwise Lie algebra cohomology of the bundle of osculating algebras $\mathfrak tM$.
The ranks of the bundles $E^q$ are $1,2,3,3,2,1$ and the Heisenberg orders of the operators $D_q$ are $1,3,2,3,1$, see \cite[Section~5]{BEN11} or \cite[Example~4.24]{DH17}.
There exist injective differential operators $L_q\colon\Gamma^\infty(E^q)\to\Omega^q(M)$, embedding the Rumin complex as a subcomplex in the de~Rham complex, whose image can be characterized using differential operators.
The operators $L_q$ intertwine the Rumin differential with the de~Rham differential, $dL_q=L_{q+1}D_q$, and induce isomorphisms in cohomology.
In particular, the Rumin complex computes the cohomology of $M$.

Let us emphasize that the Rumin differentials $D_q$ are natural, i.e.\ independent of any further choices, and so is the map induced on cohomology by $L_q$, see Lemma~\ref{L:Sindep}.
The operators $L_q$, however, depend on the choice of a sub-Riemannian metric $\tilde g$ on $\mathcal D$ and a splitting of the filtration, $S\colon\mathfrak tM\to TM$.
By the latter we mean a filtration preserving isomorphism inducing the identity on the associated graded, that is, $S|_{\mathfrak t^{-1}M}=\id$, $S(\mathfrak t^{-2}M)\subseteq[\mathcal D,\mathcal D]$, $S|_{\mathfrak t^{-2}M}=\id$ mod $\mathcal D$, and $S|_{\mathfrak t^{-3}M}=\id$ mod $[\mathcal D,\mathcal D]$.
Using parabolic geometry, one can construct a variant of the Rumin complex $D_q$ with splitting operators $L_q$ which are entirely natural.
Indeed, the Rumin complex is isomorphic to the curved BGG sequence \cite{CSS01} associated with the trivial representation of $G_2$.
Writing down the BGG operators explicitly, however, requires the determination of the canonical regular normal Cartan connection associated with the rank two distribution \cite{C10}, a quite involved procedure, see \cite{S08}.
While this more natural perspective might prove to be insightful when studying the sub-Riemannian limit, it does not appear to be helpful for the purpose of this paper, and we will therefore not adopt it here.

Suppose $F$ is a flat complex vector bundle over $M$ and let $D^F_q$ denote the differential complex obtained by twisting the Rumin complex with $F$.
The definition of the analytic torsion requires a sub-Riemannian metric \cite{G96,MG02}, i.e., a fiberwise Euclidean inner product $\tilde g$ on $\mathcal D$, as well as a fiberwise Hermitian metric $h$ on $F$.
These choices give rise to fiberwise Hermitian metrics on the vector bundles $E^q$ and permit to define formal adjoints of the Rumin differentials, denoted by $(D^F_q)^*$.
The essential part of the analytic torsion is a zeta regularized graded determinant which may be expressed in the form
\[
	\log\sdet\sqrt{(D^F)^*D^F}=-\frac12\frac\partial{\partial s}\Big|_{s=0}\sum_q(-1)^q\tr\left(\left((D^F_q)^*D^F_q\right)^{-s}\right)
\]
where the complex powers are understood to vanish on the kernels of the operators, see Remark~\ref{R:Z2}.
For our rigorous definition, however, we will proceed as in Ray--Singer \cite{RS71} or Rumin--Seshadri \cite{RS12} and rewrite this in terms of zeta functions associated with hypoelliptic Rumin--Seshadri type operators 
\[
	\Delta^F_q=\bigl((D^F_{q-1}D^F_{q-1})^*\bigr)^{a_{q-1}}+\bigl((D_q^F)^*D_q^F\bigr)^{a_q},
\]
see Section~\ref{SS:anator}.
For appropriate choices of the numbers $a_q$, the operators $\Delta_q^F$ are Rockland of Heisenberg order $2\kappa$ and admit a parametrix in the Heisenberg calculus \cite{M82,EY19,DH17}.
The zeta function $\tr\bigl((\Delta_q^F)^{-s}\bigr)$ converges for $\Re(s)>10/2\kappa$ and extends to a meromorphic function on the entire complex plane which is holomorphic at zero, see \cite[Theorem~2]{DH20}.
Correspondingly, as $t\to0$, the heat kernel admits an asymptotic expansion \cite[Theorem~1]{DH20} of the form
\begin{equation}\label{E:heatk235}
	e^{-t\Delta^F_q}(x,x)\sim\sum_{j=0}^\infty t^{(j-10)/2\kappa}p_{q,j}^F(x)
\end{equation}
where $p^F_{q,j}\in\Gamma^\infty\bigl(\eend(\mathcal H^q(\mathfrak tM)\otimes F)\otimes|\Lambda|\bigr)$ are locally computable, and $p_{q,j}^F=0$ for all odd $j$.
Here $|\Lambda|$ denotes the line bundle of 1-densities over $M$.

Following \cite{BZ92} we incorporate the zero eigenspaces and consider the analytic torsion as a norm $\|-\|^{\sdet(H^*(M;F))}_{\mathcal D,\tilde g,h}$ on the graded determinant line \cite{KM76}
\[
	\sdet(H^*(M;F)):=\bigotimes_q\bigl(\det H^q(M;F)\bigr)^{(-1)^q},
\]
where $H^q(M;F)$ denotes the de~Rham cohomology of $M$ with coefficients in the flat bundle $F$.
This analytic torsion behaves as expected with respect to finite coverings and Poincar\'e duality, see Theorem~\ref{T:PD}.
Moreover, this definition leads to a simple and local anomaly formula, describing the metric dependence, which is analogous to the corresponding formulas for the Ray--Singer torsion \cite[Theorem~0.1]{BZ92} and the Rumin--Seshadri torsion \cite[Corollary~3.7]{RS12}.

To state this formula, note that restriction provides natural isomorphisms
\[
	\Aut(\mathfrak tM)=\Aut(\mathcal D)\qquad\text{and}\qquad\der(\mathfrak tM)=\eend(\mathcal D),
\]
where $\Aut(\mathfrak tM)$ denotes the bundle of fiberwise graded Lie algebra automorphisms and $\der(\mathfrak tM)$ denotes the bundle of fiberwise derivations of degree zero.
Hence, each $A\in\Gamma^\infty(\Aut(\mathcal D))$ induces $\mathcal H^q(A)\in\Gamma^\infty(\Aut(\mathcal H^q(\mathfrak tM)))$ and correspondingly, each $\dot A\in\Gamma^\infty(\eend(\mathcal D))$ induces $\mathcal H(\dot A)\in\Gamma^\infty(\eend(\mathcal H^q(\mathfrak tM)))$.

\begin{theorem}\label{T:var235}
Suppose $\tilde g_u$ is a family of sub-Riemannian metrics on $\mathcal D$ and $h_u$ is a family of fiberwise Hermitian metrics on $F$, both depending smoothly on a real parameter $u$.
Then
\begin{multline*}
	\tfrac\partial{\partial u}\log\|-\|^{\sdet(H^*(M;F))}_{\mathcal D,\tilde g_u,h_u}
	=\frac12\sum_q(-1)^q\int_M\tr\Bigl(\bigl(\mathcal H^q(\dot g_u)+\tfrac52\tr(\dot g_u)+\dot h_u\bigr)p_{q,u,10}^F\Bigr)
\end{multline*}
where $\dot g_u:=\tilde g^{-1}_u\frac\partial{\partial u}\tilde g_u\in\Gamma^\infty(\eend(\mathcal D))$, $\dot h_u:=h^{-1}_u\frac\partial{\partial u}h_u\in\Gamma^\infty(\eend(F))$, and $p^F_{q,u,10}\in\Gamma^\infty\bigl(\eend(\mathcal H^q(\mathfrak tM)\otimes F)\otimes|\Lambda|\bigr)$ denotes the constant term in the heat kernel expansions associated with the Rumin--Seshadri operator $\Delta^F_{u,q}$, cf.~\eqref{E:heatk235}.
\end{theorem}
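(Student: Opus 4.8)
The plan is to adapt to the present hypoelliptic framework the transgression argument of Ray--Singer \cite{RS71}, Bismut--Zhang \cite{BZ92} and Rumin--Seshadri \cite{RS12}. Recall from Section~\ref{SS:anator} that the torsion norm is assembled from the $L^2$-metric on $\sdet(H^*(M;F))$ carried by $\Delta^F_q$-harmonic representatives together with the essential factor $\sdet\sqrt{(D^F)^*D^F}$, whose logarithm is $-\tfrac12\tfrac{d}{ds}\big|_{s=0}\sum_q(-1)^q\tr\bigl(((D^F_q)^*D^F_q)^{-s}\bigr)$; this is made rigorous through the Rumin--Seshadri zeta functions, since on the orthogonal complement of its kernel $\Delta^F_q$ restricts to $\bigl(D^F_{q-1}(D^F_{q-1})^*\bigr)^{a_{q-1}}$ on $\img D^F_{q-1}$ and to $\bigl((D^F_q)^*D^F_q\bigr)^{a_q}$ on $\img(D^F_q)^*$, so that $\tr\bigl((\Delta^F_q)^{-s}\bigr)=\tr\bigl(((D^F_{q-1})^*D^F_{q-1})^{-a_{q-1}s}\bigr)+\tr\bigl(((D^F_q)^*D^F_q)^{-a_qs}\bigr)$ and a suitable alternating combination gives $\sum_q(-1)^q\lambda_q\tfrac{d}{ds}\big|_{s=0}\tr\bigl((\Delta^F_q)^{-s}\bigr)=\sum_q(-1)^q\tfrac{d}{ds}\big|_{s=0}\tr\bigl(((D^F_q)^*D^F_q)^{-s}\bigr)$. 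Since the Rumin differentials $D^F_q$ do not depend on $\tilde g$ or $h$, all the $u$-dependence sits in the $L^2$-inner products $\llangle\cdot,\cdot\rrangle_u$ on the $\Gamma^\infty(E^q\otimes F)$, and I would package it into self-adjoint bundle endomorphisms $\mathcal L_{u,q}\in\Gamma^\infty(\eend(E^q\otimes F))$ defined by $\tfrac\partial{\partial u}\llangle\sigma,\tau\rrangle_u=\llangle\mathcal L_{u,q}\sigma,\tau\rrangle_u$. Tracing through how the Hermitian metric on $E^q=\mathcal H^q(\mathfrak tM)$ and the osculating (Popp-type) density on $M$ are built from $\tilde g$ — the logarithmic $u$-derivative of that density being $\tfrac52\tr(\dot g_u)$, since the derivation induced by $\dot g_u$ has trace $5\tr(\dot g_u)$ on the osculating algebra, which has homogeneous dimension $10$ — and how $h$ enters on the $F$ factor, one identifies $\mathcal L_{u,q}=\mathcal H^q(\dot g_u)+\tfrac52\tr(\dot g_u)+\dot h_u$; pinning down this identification is bookkeeping, but it is what produces the coefficients in the statement.

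Next I would differentiate in $u$. From $\llangle D^F_q\sigma,\tau\rrangle_u=\llangle\sigma,(D^F_q)^*\tau\rrangle_u$ one obtains
\[
	\tfrac\partial{\partial u}(D^F_q)^*=(D^F_q)^*\mathcal L_{u,q+1}-\mathcal L_{u,q}(D^F_q)^*.
\]
Substituting this into $\Delta^F_q=\bigl(D^F_{q-1}(D^F_{q-1})^*\bigr)^{a_{q-1}}+\bigl((D^F_q)^*D^F_q\bigr)^{a_q}$ and using three facts — (i) $D^F_{q-1}(D^F_{q-1})^*$ and $(D^F_q)^*D^F_q$ have vanishing products (because $D^F_qD^F_{q-1}=0$), hence commute with each other and with $e^{-t\Delta^F_q}$; (ii) the intertwiners $D^F_q\Delta^F_q=\Delta^F_{q+1}D^F_q$ persist despite the powers $a_q$; (iii) cyclicity of the trace — a short computation gives $\tr\bigl(\tfrac\partial{\partial u}\Delta^F_q\cdot e^{-t\Delta^F_q}\bigr)=a_q\Psi_{q+1}(t)+a_{q-1}\Psi_q(t)-a_q\Phi_q(t)-a_{q-1}\Phi_{q-1}(t)$, where $\Phi_q(t):=\tr\bigl(\mathcal L_{u,q}((D^F_q)^*D^F_q)^{a_q}e^{-t\Delta^F_q}\bigr)$ and $\Psi_q(t):=\tr\bigl(\mathcal L_{u,q}(D^F_{q-1}(D^F_{q-1})^*)^{a_{q-1}}e^{-t\Delta^F_q}\bigr)$; in particular $\Phi_q+\Psi_q=\tr\bigl(\mathcal L_{u,q}\Delta^F_qe^{-t\Delta^F_q}\bigr)=-\tfrac\partial{\partial t}\tr\bigl(\mathcal L_{u,q}e^{-t\Delta^F_q}\bigr)$.

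The transgression is the observation that in the \emph{alternating} sum against the weights $\lambda_q$ the coefficients of $\Psi_p$ and of $\Phi_p$ both collapse to $(-1)^{p+1}$ — this is exactly what the recursion $a_q(\lambda_q-\lambda_{q+1})=1$ defining the $\lambda_q$ arranges — so that
\[
	\sum_q(-1)^q\lambda_q\tr\bigl(\tfrac\partial{\partial u}\Delta^F_q\cdot e^{-t\Delta^F_q}\bigr)=\sum_q(-1)^{q+1}\bigl(\Psi_q(t)+\Phi_q(t)\bigr)=\sum_q(-1)^q\tfrac\partial{\partial t}\tr\bigl(\mathcal L_{u,q}e^{-t\Delta^F_q}\bigr).
\]
I expect this step to be the principal obstacle: in the presence of the inhomogeneous powers $a_q$ one must track carefully which $(D^F_q)^*D^F_q$- and $D^F_{q-1}(D^F_{q-1})^*$-factors may be moved past the heat semigroups — which rests on (i)--(ii) above — while simultaneously verifying that every operator occurring is trace class with estimates uniform in $u$, so that the interchanges of $\tfrac\partial{\partial u}$, $\tfrac d{ds}$ and $\tr$, the meromorphic continuations and the integration by parts below are all legitimate. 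This analytic part relies on the Heisenberg pseudodifferential calculus and the resolvent and heat-trace results of \cite{DH20} for the Rockland operators $\Delta^F_q$.

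Finally I would feed the transgression identity into the zeta functions. Writing $\zeta_q(s)=\tfrac1{\Gamma(s)}\int_0^\infty t^{s-1}\tr\bigl(e^{-t\Delta^F_q}-P_{0,q}\bigr)\,dt$, with $P_{0,q}$ the orthogonal projection onto $\ker\Delta^F_q$ (of $u$-independent rank, a Betti number), differentiating under the integral via $\tfrac\partial{\partial u}\tr(e^{-t\Delta^F_q})=-t\,\tr\bigl(\tfrac\partial{\partial u}\Delta^F_q\cdot e^{-t\Delta^F_q}\bigr)$, applying the transgression, and integrating by parts in $t$ — the boundary terms vanish, exponentially as $t\to\infty$ after subtracting $P_{0,q}$ and by the convergence range $\Re(s)>10/2\kappa$ as $t\to0$ — one finds $\tfrac\partial{\partial u}\bigl(\tfrac12\sum_q(-1)^q\lambda_q\zeta_q(s)\bigr)=\tfrac s2\sum_q(-1)^q\eta_q(s)$, where $\eta_q(s)=\tfrac1{\Gamma(s)}\int_0^\infty t^{s-1}\tr\bigl(\mathcal L_{u,q}(e^{-t\Delta^F_q}-P_{0,q})\bigr)\,dt$. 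Taking $\tfrac d{ds}$ at $0$ and invoking the heat expansion \eqref{E:heatk235} — only the constant ($j=10$) heat coefficient and the harmonic projection survive, since $\tfrac1{\Gamma(s)}\int_0^1 t^{s-1+(j-10)/2\kappa}\,dt$ vanishes at $s=0$ unless $j=10$ — one arrives at
\[
	\tfrac\partial{\partial u}\log\sdet\sqrt{(D^F)^*D^F}=-\tfrac12\sum_q(-1)^q\Bigl(\int_M\tr\bigl(\mathcal L_{u,q}\,p^F_{q,u,10}\bigr)-\tr\bigl(\mathcal L_{u,q}P_{0,q}\bigr)\Bigr).
\]
On the other hand, the variation of the $L^2$-metric on $\sdet(H^*(M;F))$ equals $\tfrac12\sum_q(-1)^q\tr\bigl(\mathcal L_{u,q}P_{0,q}\bigr)$, because the $u$-derivative of a $\Delta^F_q$-harmonic representative of a fixed cohomology class is $D^F_{q-1}$-exact, hence $L^2$-orthogonal to the harmonic forms, so that only the variation of the inner product on harmonic forms contributes. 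Adding the two contributions with the sign dictated by Section~\ref{SS:anator}, the $\tr(\mathcal L_{u,q}P_{0,q})$-terms cancel, and substituting $\mathcal L_{u,q}=\mathcal H^q(\dot g_u)+\tfrac52\tr(\dot g_u)+\dot h_u$ yields precisely the asserted formula.
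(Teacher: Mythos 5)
Your proposal is correct and follows essentially the same route as the paper: a Duhamel/transgression argument based on $\tfrac\partial{\partial u}(D^F_q)^*=(D^F_q)^*\mathcal L_{u,q+1}-\mathcal L_{u,q}(D^F_q)^*$, the collapse of the alternating sum under the weights $\lambda_q$ (the paper's operator $N$ with $N_{q+1}-N_q=k_q$, packaged via the graded commutators $\Delta=[D,\sigma]=[s,D^*]$ in Lemma~\ref{L:var}), integration by parts in the Mellin transform, extraction of the $j=10$ heat coefficient, and cancellation of the $\tr(\mathcal L_{u,q}P_{0,q})$ terms against the variation of the $L^2$-metric on harmonics (Lemma~\ref{L:varfinnorm}). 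The only point you compress is the identification $\mathcal L_{u,q}=\mathcal H^q(\dot g_u)+\tfrac52\tr(\dot g_u)+\dot h_u$, which in the paper (Theorem~\ref{T:varg}) rests on the observation that the extensions \eqref{E:gext1}--\eqref{E:gext2} make $\tilde g_v^{-1}\tilde g_u$ a fiberwise \emph{graded Lie algebra automorphism}, so the Hodge decomposition \eqref{E:LtMdeco} is $u$-independent and the metric on $\mathcal H^q(\mathfrak tM)$ varies exactly by $\mathcal H^q(\dot g_u)$, with $\dot\mu_u=\tfrac12\tr_{\mathfrak tM}(\dot g_u)=\tfrac52\tr_{\mathcal D}(\dot g_u)$.
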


Of course, the Ray--Singer torsion of a 5-manifold is independent of the Riemannian metric and the Hermitian metric on $F$, for there are no constant terms in the small time asymptotic expansion of the heat trace of the Hodge Laplacians in odd dimensions.
The relevant analogue in the heat trace asymptotics for Rockland operators on filtered manifolds is the homogeneous dimension, which happens to be even (ten) for generic rank two distributions in dimension five.
Hence, there is no apparent analytical reason, why the corresponding analytic torsion should be independent of $\tilde g$ and $h$.
The situation is similar for the Rumin--Seshadri analytic torsion of contact manifolds which have even homogeneous dimension too.
On contact $3$-manifolds, by adding (potentially vanishing) local correction terms, Rumin and Seshadri were able to turn their torsion into a CR-invariant \cite[Corollary~3.8(3)]{RS12} which coincides with the Ray--Singer torsion according to \cite{AQ19}.
We will not compute the local quantities appearing in the anomaly formula in Theorem~\ref{T:var235} more explicitly in this paper.

In view of Gray's stability theorem \cite[Theorem~2.2.2]{G08}, a smooth deformation of a contact structure can always be absorbed by adjusting the sub-Riemannian metric.
Hence, the infinitesimal change of the Rumin--Seshadri analytic torsion under a smooth deformation of the contact structure can be expressed as an integral over a local quantity.
For generic rank two distributions the situation is fundamentally different due to the local invariant provided by Cartan's curvature tensor.
Even a small perturbation of the 2-plane bundle will in general result in a distribution which is not even locally diffeomorphic to the original one.
Nevertheless, the torsion changes by a local quantity.
More precisely, we have

\begin{theorem}\label{T:var.all235}
Suppose $\Theta_u\in\Gamma^\infty(\Aut(TM))$ depends smoothly on a real parameter $u$ such that $\Theta_0=\id_{TM}$.
Consider the 2-plane bundle $\mathcal D_u=\Theta_u(\mathcal D)$ equipped with the sub-Riemannian metric $\tilde g_u=(\Theta_u)_*\tilde g$.
Then, provided $u$ is sufficiently small, $\mathcal D_u$ is a generic rank two distribution and we have
\[
	\tfrac\partial{\partial u}\big|_{u=0}\log\|-\|^{\sdet(H^*(M;F))}_{\mathcal D_u,\tilde g_u,h}=\frac12\int_M\alpha
\]
where the density $\alpha$ is locally computable.
More precisely, $\alpha(x)$ can be computed from the germ of $(M,\mathcal D,\tilde g,F,h,\dot\Theta|_{\mathcal D})$ at $x$, where $\dot\Theta=\frac\partial{\partial u}\big|_{u=0}\Theta_u\in\Gamma^\infty(\eend(TM))$.
\end{theorem}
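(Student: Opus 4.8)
The plan is to run the variational argument behind Theorem~\ref{T:var235} for the whole family $(\mathcal D_u,\tilde g_u,h)$, the point being that every quantity produced along the way turns out to be local.

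To begin with, genericity of a rank two distribution is a $C^2$-open condition, so $\mathcal D_u=\Theta_u(\mathcal D)$ is a generic rank two distribution for all sufficiently small $u$. As a subbundle of $TM$, $\mathcal D_u$ depends only on the restriction $\Theta_u|_{\mathcal D}\colon\mathcal D\to TM$, and so does the metric $\tilde g_u=(\Theta_u)_*\tilde g$; consequently the osculating bundle $\mathfrak t_uM$, the coefficient bundles $E^q_u=\mathcal H^q(\mathfrak t_uM)$, the Rumin differentials $D^F_{u,q}$, their formal adjoints $(D^F_{u,q})^*$ (which in addition involve $h$), and the Rumin--Seshadri operators $\Delta^F_{u,q}$ all depend only on $\Theta_u|_{\mathcal D}$, $F$ and $h$. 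Differentiating at $u=0$, the operators $\tfrac\partial{\partial u}\big|_{u=0}D^F_{u,q}$ and $\tfrac\partial{\partial u}\big|_{u=0}(D^F_{u,q})^*$ are therefore differential operators whose coefficients at a point $x$ are expressions in the jets at $x$ of $\mathcal D$, $\tilde g$, $F$, $h$ and $\dot\Theta|_{\mathcal D}$. Finally, since the de~Rham cohomology $H^*(M;F)$ does not refer to $\mathcal D_u$ and the comparison isomorphism induced by the splitting operators is natural (Lemma~\ref{L:Sindep}), all the norms $\|-\|^{\sdet(H^*(M;F))}_{\mathcal D_u,\tilde g_u,h}$ live on one fixed complex line, so that the left hand side of the claimed identity is meaningful.

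On the analytic side, for appropriate fixed exponents $a_q$ the operators $\Delta^F_{u,q}$ form a smooth family of Rockland operators of Heisenberg order $2\kappa$ on the filtered manifolds $(M,\mathcal D_u)$; the Heisenberg-calculus parametrix construction of \cite{DH17} and the heat-kernel and zeta-function results of \cite{DH20} go through with smooth dependence on $u$, so that $\tr\bigl(e^{-t\Delta^F_{u,q}}\bigr)$ and the zeta function $\tr\bigl((\Delta^F_{u,q})^{-s}\bigr)$ depend smoothly on $u$, the latter being holomorphic at $s=0$, and the heat coefficients $p^F_{q,u,j}$ of \eqref{E:heatk235} depend smoothly on $u$ and vanish for odd $j$. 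Writing $\log\|-\|^{\sdet(H^*(M;F))}_{\mathcal D_u,\tilde g_u,h}$ as the sum of the zeta-regularized graded determinant $\log\sdet\sqrt{(D^F_u)^*D^F_u}$ and the logarithm of the $L^2$-norm on harmonic representatives, exactly as in \cite{RS71,BZ92,RS12}, I would then differentiate in $u$. The standard manipulation --- Duhamel's principle for $\tfrac\partial{\partial u}e^{-t\Delta^F_{u,q}}$, cyclicity of the trace, and the cancellation of the derivative of the finite-dimensional contribution against the large-time part of the heat trace --- reduces $\tfrac\partial{\partial u}\log\|-\|$ to the $t^0$-coefficient of the small-time heat expansion, that is, to $\tfrac12\int_M\alpha_u$ for a density $\alpha_u$ assembled bilinearly from the local operators $\tfrac\partial{\partial u}D^F_{u,q}$, $\tfrac\partial{\partial u}(D^F_{u,q})^*$ and the coefficients $p^F_{q,u,j}$. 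This is precisely the mechanism underlying Theorem~\ref{T:var235}; the only new feature is that here $D^F_{u,q}$ itself varies with $u$, so $\alpha_u$ acquires further terms beyond the ones exhibited there, which is why no closed formula is recorded.

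It remains to observe that $\alpha:=\alpha_0$ is local in the asserted sense. By \cite[Theorem~1]{DH20} the coefficients $p^F_{q,u,j}$ are locally computable from $\Delta^F_{u,q}$, hence from the germ at $x$ of $(\mathcal D_u,\tilde g_u,F,h)$; together with the first paragraph this exhibits $\alpha$ as a density locally computable from the germ of $(M,\mathcal D,\tilde g,F,h,\dot\Theta|_{\mathcal D})$ at $x$. The step I expect to require genuine work is not this bookkeeping but the analytic input that makes it legitimate: one must check that the hypoelliptic heat kernels $e^{-t\Delta^F_{u,q}}$, their traces and the associated zeta functions depend smoothly on the parameter $u$, with estimates uniform in $u$, so that differentiation under the zeta regularization is justified --- in other words, that the results of \cite{DH20} hold in smooth families. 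Note that, in contrast with the contact case treated by Rumin and Seshadri, one cannot avoid this by absorbing the deformation of $\mathcal D$ into a diffeomorphism: by Cartan's theorem $\mathcal D_u$ need not even be locally diffeomorphic to $\mathcal D$, so the argument has to be carried out directly for the varying Rumin complex $D^F_{u,q}$.
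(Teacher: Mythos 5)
Your overall strategy (differentiate the zeta--regularized determinant, use Duhamel and trace identities, read off the constant heat coefficient) is the right one, and you correctly flag the need for smooth dependence of the Heisenberg heat calculus on $u$. But there is a genuine gap at the step you dismiss as ``the standard manipulation'': when the distribution itself is deformed, the Rumin differential $D_u$ varies, and the reduction of $\tfrac\partial{\partial u}\zeta'_u(0)$ to a $t$--derivative of a heat trace (and hence to the $t^0$--coefficient alone) does \emph{not} follow from Duhamel and cyclicity. In the metric anomaly (Theorem~\ref{T:var}) one has $D$ fixed and $\dot D^*_u=[D^*_u,\dot G_u]$ with $\dot G_u$ a bundle endomorphism; the graded commutator identities then turn $\str(N\dot\Delta_ue^{-t\Delta_u})$ into $-\kappa t\,\tfrac\partial{\partial t}\str(\dot G_ue^{-t\Delta_u})$, and the factor $s$ this produces in $\tfrac\partial{\partial u}\zeta_u(s)$ is what localizes everything to the constant term. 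For a general variation $\dot D_u$ the computation stalls at terms like $\str(\sigma_u\dot D_ue^{-t\Delta_u})$, which are not total $t$--derivatives, and the variation of the torsion would a priori involve the whole meromorphic structure of the zeta function, i.e.\ would not be local.

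The missing ingredient is the construction of explicit intertwiners between the Rumin complexes for different $u$: the differential operators $A_{v,u}=\iota_v^{-1}\tilde\Pi_v\mathbf L_v^{-1}L_u$ built from the splitting maps $L_u$ into the de~Rham complex, which satisfy $A_{v,u}D_u=D_vA_{v,u}$ and hence give $\dot D_u=[\dot A_u,D_u]$ with $\dot A_u=-\pi\Pi\dot\psi L$ a differential operator of finite Heisenberg order (this is the content of Sections~\ref{SS:deffil}, \ref{SS:def.filt} and \eqref{E:dotAF235}). Only with this commutator form does one obtain \eqref{E:vara}, and only then does Lemma~\ref{L:asymA} (the asymptotic expansion of $\bigl(\dot A_uk_{t,u}\bigr)(x,x)$, starting at $t^{(-r_u-n)/2\kappa}$) identify the answer with the density $\str(\tilde p_{10})$ of \eqref{E:tpj}--\eqref{E:alpha}. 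The same chain maps are needed to compare the finite--dimensional harmonic contributions for different $u$ (Lemma~\ref{L:varfinnorma}); your appeal to ``cancellation of the finite--dimensional contribution against the large--time part'' presupposes them. Finally, to set up $\dot\psi$ and to see that $\alpha$ depends only on $\dot\Theta|_{\mathcal D}$ (not on all of $\dot\Theta$) one needs the canonical splittings $S_{\mathcal D,\tilde g}$ coming from the Weyl structure together with the independence statement of Remark~\ref{R:dotpsi}; this bookkeeping is absent from your argument as well.
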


A more explicit formula for the density $\alpha$ will be provided in the proof presented in Section~\ref{SS:proof}, see \eqref{E:dotAF235}, \eqref{E:tpj}, and \eqref{E:alpha}.

We will compute the torsion for generic rank two distributions on some nilmanifolds, up to a constant.
To formulate this result, let $G$ denote the simply connected Lie group with Lie algebra $\goe$ and let $\mathcal D$ denote a right invariant generic rank two distribution on $G$.
We equip $\mathcal D$ with a right invariant sub-Riemannian metric $\tilde g$.
Since the structure constants of $\goe$ are rational, the group $G$ admits lattices \cite[Theorem 2.12]{R72}.
If $\Gamma$ is a lattice in $G$, then $\mathcal D$ and $\tilde g$ descend to the nilmanifold $G/\Gamma$.
The induced structures on $G/\Gamma$ will be denoted by $\mathcal D_\Gamma$ and $\tilde g_\Gamma$, respectively.
We have the following comparison with the Ray--Singer torsion \cite{RS71,BZ92} which will be denoted by $\|-\|^{\sdet(H^*(G/\Gamma;F))}_\RS$.

\begin{theorem}\label{T:intro}
There exists a constant $c>0$ such that
\[
	\|-\|^{\sdet(H^*(G/\Gamma;F))}_{\mathcal D_\Gamma,\tilde g_\Gamma,h}=c\cdot\|-\|^{\sdet(H^*(G/\Gamma;F))}_\RS
\]
for every right invariant generic rank two distribution $\mathcal D$ on $G$, every lattice $\Gamma$ in $G$ which is generated by two elements, and every flat line bundle $F$ over $G/\Gamma$ with parallel fiberwise Hermitian metric $h$.
\end{theorem}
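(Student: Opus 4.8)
The plan follows the strategy Rumin and Seshadri used for CR Seifert $3$-manifolds \cite{RS12}: normalize all data, decompose the relevant hypoelliptic Laplacians along the unitary dual of $G$, and compare the two zeta determinants one irreducible representation at a time.

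\emph{Normalization of the data.} Since $\goe$ is the free nilpotent Lie algebra on two generators of step three, $\Aut(\goe)$ acts transitively on the two-element generating sets of $\goe$, hence on the generic $2$-planes in $\goe$; and since every linear automorphism of $\goe_{-1}$ extends over $\goe$ through the bracket relations, $\Aut(\goe)$ also acts transitively on the inner products on $\goe_{-1}$. An automorphism of $\goe$ integrates to an automorphism of $G$, which carries a right invariant pair $(\mathcal D,\tilde g)$ to a fixed standard one $(\mathcal D_0,\tilde g_0)$ and the lattice $\Gamma$ to an isomorphic, hence again two-generated, lattice, without changing either torsion norm. Thus one may assume $(\mathcal D_0,\tilde g_0)$ fixed, with only $\Gamma$, the flat line bundle $F$, and the parallel metric $h$ varying. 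Because $h$ is parallel, $F$ is unitarily flat, so $\|-\|^{\sdet(H^*(G/\Gamma;F))}_\RS$ is the Reidemeister norm and is in particular independent of $\tilde g_\Gamma$ and $h$. Because $\Gamma$ is generated by two elements, $H_1(G/\Gamma;\Z)\cong\Z^2$ is torsion free, so the unitary character of $\Gamma$ underlying $F$ factors through a lattice in $G/[G,G]\cong\R^2$ and extends to a unitary character of $G$; twisting by this extension trivializes $F$ at the cost of replacing the Rumin and de~Rham differentials by their perturbations along an invariant closed $1$-form $i\lambda$ with $\lambda\in(\goe/[\goe,\goe])^*$. By Nomizu's theorem and its analogue for the invariant Rumin complex, $H^*(G/\Gamma;F)$ is then identified with the Lie algebra cohomology $H^*(\goe;\C_\lambda)$; this vanishes whenever $\lambda\neq0$, since contraction with any $X$ with $\lambda(X)\neq0$ is a contracting homotopy, and for $\lambda=0$ the two induced metrics on $\sdet(H^*(G/\Gamma;F))$ differ only by a fixed factor, the volume contributions cancelling against the vanishing Euler characteristic.

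\emph{Decomposition along the unitary dual.} With all data $G$-invariant, the Rumin--Seshadri operators $\Delta^F_q$ and the Hodge--de~Rham Laplacians become $G$-invariant operators on the $\chi$-equivariant sections over $G$. The regular representation of $G$ on the corresponding $L^2$-space decomposes as a Hilbert sum $\bigoplus_\sigma m_\sigma\,\sigma$ of irreducible unitaries with finite multiplicities, by the structure theory of lattices in simply connected nilpotent Lie groups; equivalently one periodizes the heat kernels of the invariant operators on $G$ and expands the heat traces into sums of orbital integrals. Either way the graded zeta determinants entering the two torsions split into contributions indexed by $\sigma$, so it suffices to compare, representation by representation, the de~Rham torsion with the Rumin--Seshadri torsion. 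On the $\sigma$-summand one works with finite-length complexes of $G$-invariant operators on $\sigma^\infty\otimes E^\bullet$ and on $\sigma^\infty\otimes\Lambda^\bullet\goe^*$; in Kirillov's model $\sigma^\infty$ is a Schwartz space and these are explicit complexes of polynomial-coefficient operators.

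\emph{The per-representation comparison and the constant.} The cohomology of each $\sigma$-complex is $H^*(\goe;\sigma^\infty)$, which vanishes for every $\sigma$ with nontrivial central character; for the remaining representations one reduces to smaller quotients of $G$, and the trivial representation contributes the finite-dimensional invariant Rumin complex of $\goe$ on one side and the Chevalley--Eilenberg complex of $\goe$ on the other. For the acyclic summands I would exploit the grading automorphism $\delta_t\in\Aut(\goe)$, under which $\Delta^F_q$ and the Hodge Laplacian scale by fixed powers of $t$; the resulting functional equations force the zeta functions of the two $\sigma$-complexes to have the same singularities and to differ at $s=0$ only by a combination of local quantities that is the same for all $\sigma$, so this discrepancy is absorbed into a multiplicative constant. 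The trivial representation adds the ratio of the torsions of the two finite-dimensional complexes, again a number depending only on $\goe$ and the fixed reference structure. The essential point is that no term proportional to $\operatorname{vol}(G/\Gamma)$ survives: such a term would come from the constant heat coefficient $p^F_{q,10}$ in \eqref{E:heatk235}, i.e.\ from the identity contribution to the trace, and must cancel in the alternating sum over $q$. Here the flatness of the $(2,3,5)$ structure on $G$, which makes the Rumin--Seshadri operator explicit, together with Poincar\'e duality (Theorem~\ref{T:PD}) and the grading symmetry, is used to establish this cancellation directly. Assembling the contributions yields a constant $c>0$, depending only on $\goe$ and $(\mathcal D_0,\tilde g_0)$, with $\|-\|^{\sdet(H^*(G/\Gamma;F))}_{\mathcal D_\Gamma,\tilde g_\Gamma,h}=c\cdot\|-\|^{\sdet(H^*(G/\Gamma;F))}_\RS$.

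\emph{Main obstacle.} The crux is the explicit spectral comparison inside each Kirillov model: one must pin down $\Delta^F_q$ and the Hodge Laplacian on $\sigma^\infty\otimes E^\bullet$ and $\sigma^\infty\otimes\Lambda^\bullet\goe^*$ precisely enough to control their zeta functions at $s=0$, show that the resulting discrepancy is independent of $\sigma$, and verify the cancellation of the volume-proportional terms---the last step being a concrete vanishing statement for the relevant alternating combination of the heat coefficients $p^F_{q,10}$ on the flat model. A further analytic point is to legitimize interchanging the infinite sum over $\sigma$ with the meromorphic continuation and the derivative at $s=0$, which calls for estimates on the regularized determinants that are uniform in $\sigma$.
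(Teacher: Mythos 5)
Your strategy is genuinely different from the paper's and, as it stands, has gaps that I do not see how to close. The paper never attempts a representation-by-representation spectral comparison. Instead it first shows (using the anomaly formulas of Theorems~\ref{T:var235} and \ref{T:var.all235} together with the homogeneity of the invariant Rumin--Seshadri operator under the grading dilations, which kills all but the leading heat coefficient) that the new torsion is independent of the invariant $\mathcal D$ and $\tilde g$; it then reduces every two-generated lattice to the standard lattice $\Gamma_0$ by an automorphism of $G$, and proves constancy of the ratio $R_{\Gamma_0}(\chi)$ in the character $\chi$ by a soft argument: $\Aut(G,\Gamma_0)\cong\GL(2,\Z)$ acts on the character torus $U(1)\times U(1)$ with dense orbits, $R_{\Gamma_0}$ is invariant under this action and continuous in the flat connection (Proposition~\ref{P:cont}), hence constant. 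No Kirillov-model computation is needed.

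Two steps of your plan would fail as written. First, the scaling argument: the Rumin--Seshadri operator is homogeneous under the anisotropic dilations $\delta_t$, but the Hodge--de~Rham Laplacian of a left invariant Riemannian metric is \emph{not} --- the vector fields in $\goe_{-1},\goe_{-2},\goe_{-3}$ scale by $t,t^2,t^3$, so $\sum_iX_i^2$ is a sum of terms of different homogeneities. The claimed functional equations on the de~Rham side therefore do not exist, and with them goes your only handle on the per-representation zeta functions. Second, the assembly of the constant: if each of the infinitely many acyclic $\sigma$-blocks contributed the \emph{same} nonzero discrepancy at $s=0$, the product over $\sigma$ (with multiplicities $m_\sigma\to\infty$) would diverge; for the total ratio to be finite the per-block discrepancies must be summable, i.e.\ the generic blocks must contribute \emph{exactly} zero and only finitely many exceptional blocks may contribute to $c$. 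That vanishing is precisely the hard analytic content of a Rumin--Seshadri-type computation (their Theorem~4.2 in dimension $3$), and your sketch offers no argument for it; relatedly, the interchange of the sum over $\sigma$ with meromorphic continuation and differentiation at $s=0$, which you flag but do not address, requires estimates uniform in $\sigma$ that are nontrivial already in the contact case. Your normalization paragraph and the identification of $H^*(G/\Gamma;F)$ with $H^*(\goe;\C_\lambda)$ are fine, but note that the constant you would obtain must also be shown independent of which two-generated lattice occurs --- in the paper this follows because every such lattice is the image of $\Gamma_0$ under an automorphism of $G$ (Lemmas~\ref{L:Autgoe} and \ref{L:GG0} combined with Lemma~\ref{L:Rrho}(e)), a reduction your blockwise argument would still need.
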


Actually, we will establish a slightly stronger result in Theorem~\ref{T:main} below.

Since the nilmanifold $G/\Gamma$ admits a free circle action, its Ray--Singer torsion can be read off the corresponding Gysin sequence, see \cite[Corollary~0.9]{LST98}.

To prove Theorem~\ref{T:intro}, we will consider the lattice $\Gamma_0$ in $G$ generated by two elements $\exp(X_1)$ and $\exp(X_2)$ where $X_1,X_2$ is a basis of $\goe_{-1}$.
The group of graded automorphisms $\Aut(\goe)\cong\GL(2,\mathbb R)$ acts on $G$ by diffeomorphisms preserving $\mathcal D_0$, the right invariant rank two distribution obtained by translating $\goe_{-1}$.
The subgroup preserving the lattice, $\Aut(\goe,\log\Gamma_0)\cong\GL(2,\mathbb Z)$, acts on the nilmanifold $G/\Gamma_0$ by filtration preserving diffeomorphisms.
Moreover, the latter group has dense orbits when acting on the space of unitary characters of $\Gamma_0$, that is, the space of flat line bundles over $G/\Gamma_0$ which admit a parallel Hermitian metric.
Exploiting this fact, and using several other results established here, including Theorems~\ref{T:var235} and \ref{T:var.all235}, we are able to give a proof of Theorem~\ref{T:intro}.

We expect that the statement in Theorem~\ref{T:intro} remains true with $c=1$ for arbitrary lattices $\Gamma$ and all flat bundles $F$ with parallel Hermitian metric $h$.
This expectation is motivated by a recent result of Albin and Quan \cite[Corollary~3]{AQ19}, asserting that the quotient of the Rumin--Seshadri torsion and the Ray--Singer torsion can be expressed in terms of an integral over a local quantity.
If their analysis of the sub-Riemannian limit can be generalized to generic rank two distributions, the aforementioned strengthening of Theorem~\ref{T:intro} would follow at once.

The remaining part of this paper is organized as follows.
In Section~\ref{S:torRc} we define an analytic torsion for Rockland differential complexes over general closed filtered manifolds.
We establish a metric anomaly formula for this torsion in Theorem~\ref{T:var} and describe the variation of the torsion through a deformation of the underlying filtration in Theorem~\ref{T:var.all}.
Some basic properties including compatibility with finite coverings and duality are collected in Section~\ref{SS:elementary}.

In Section~\ref{S:torRumin} we apply this general framework to the Rumin complex associated with certain filtered manifolds.
The resulting anomaly formulas are contained in Theorem~\ref{T:varg} and Theorem~\ref{T:def.filt}, compatibility with Poincar\'e duality is formulated in Theorem~\ref{T:PD}.
In order for the Rumin complex to be Rockland, it is necessary to assume that the osculating algebras have pure cohomology.
This apparently very restrictive assumption will be discussed in Section~\ref{SS:pure}.
We only know of three types of filtered manifolds with this property: trivially filtered manifolds giving rise to the Ray--Singer torsion, contact manifolds giving rise to the Rumin--Seshadri torsion, and generic rank two distributions in dimension five giving rise to the torsion proposed in this paper.

In Section~\ref{S:five} we specialize to generic rank two distributions in dimension five and prove the results stated in the introduction.

\subsection*{Acknowledgments}
The author is indebted to Shantanu Dave for countless encouraging discussions.
He gratefully acknowledges the support of the Austrian Science Fund (FWF): P31663-N35 and Y963-N35.

\section{Analytic torsion of Rockland complexes}\label{S:torRc}

To every Rockland complex of differential operators over a closed filtered manifold one can associate an analytic torsion.
For the de~Rham complex this specializes to the classical Ray--Singer torsion \cite{RS71,BZ92}.
For the Rumin complex of a contact manifold it specializes to the Rumin--Seshadri analytic torsion \cite{RS12}.

The purpose of this section is to provide a rigorous definition of the analytic torsion of general Rockland complexes and to discuss some basic properties.
We will show that this torsion is compatible with duality and finite coverings, see Section~\ref{SS:elementary}.
Moreover, we will establish an anomaly formula which shows that the dependence on the metric is given by an integral over a local quantity, see Theorem~\ref{T:var}.
We will also study the variation of the torsion during a deformation of the underlying filtration, and show that in certain situations, this can be expressed in terms of local quantities also, cf.~Theorem~\ref{T:var.all}.

In the Sections~\ref{S:torRumin} and \ref{S:five} will apply these results to Rumin complexes associated with certain filtered manifolds.

\subsection{Differential operators on filtered manifolds}

Recall that a filtered manifold is a smooth manifold $M$ together with a filtration of the tangent bundle $TM$ by smooth subbundles,
$$
TM=T^{-r}M\supseteq\cdots\supseteq T^{-2}M\supseteq T^{-1}M\supseteq T^0M=0,
$$
which is compatible with the Lie bracket of vector fields in the following sense: If $X$ is a smooth section of $T^pM$ and $Y$ is a smooth sections of $T^qM$ then the Lie bracket $[X,Y]$ is a section of $T^{p+q}M$.
Putting $\mathfrak t^pM:=T^pM/T^{p+1}M$, the Lie bracket induces a vector bundle homomorphism $\mathfrak t^pM\otimes\mathfrak t^qM\to\mathfrak t^{p+q}M$ referred to as Levi bracket.
This turns the associated graded vector bundle $\mathfrak tM:=\bigoplus_p\mathfrak t^pM$ into a bundle of graded nilpotent Lie algebras called the bundle of osculating algebras.
The Lie algebra structure on the fiber $\mathfrak t_xM=\bigoplus_p\mathfrak t^p_xM$ depends smoothly on the base point $x\in M$, but will in general not be locally trivial.
In particular, the Lie algebras $\mathfrak t_xM$ might be non-isomorphic for different $x\in M$.
The simply connected nilpotent Lie group with Lie algebra $\mathfrak t_xM$ is called osculating group at $x$ and will be denoted by $\mathcal T_xM$.
%The osculating groups combine to form a smooth bundle of simply connected nilpotent Lie groups $\mathcal TM$ over $M$, where the smooth structure is obtain via the canonical identification with $\mathfrak tM$ provided by the fiberwise exponential map.

Using negative degrees, we are following a convention common in parabolic geometry, see \cite{CS09,M93,M02} for instance.
The other convention, where everything is concentrated in positive degrees, is the one that has been adopted in \cite{EY19,EY17}.

The filtration on $M$ gives rise to a Heisenberg filtration on differential operators which is compatible with composition and transposition.
The basic idea is to consider differentiation along a vector field tangent to $T^{-k}M$ as a an operator of Heisenberg order at most $k$.
Suppose $E$ and $F$ are two vector bundles over $M$.
A differential operator $A\colon\Gamma^\infty(E)\to\Gamma^\infty(F)$ of Heisenberg order at most $k$ has a Heisenberg principal (co)symbol, encoding the highest order derivatives in the Heisenberg sense at $x\in M$,
\[
	\sigma^k_x(A)\in\mathcal U_{-k}(\mathfrak t_xM)\otimes\hom(E_x,F_x)
\]
where $\mathcal U_{-k}(\mathfrak t_xM)$ denotes the degree $-k$ part of the universal enveloping algebra of the graded nilpotent Lie algebra $\mathfrak t_xM=\bigoplus_p\mathfrak t^p_xM$.
Equivalently, the Heisenberg principal symbol may be regarded as a left invariant differential operator
\[
	\sigma^k_x(A)\colon C^\infty(\mathcal T_xM,E_x)\to C^\infty(\mathcal T_xM,F_x)
\]
on the osculating group $\mathcal T_xM$ which is homogeneous of degree $k$ with respect to the grading automorphism.
The Heisenberg principal symbol is compatible with composition and transposition of differential operators.

A differential operator $A\colon\Gamma^\infty(E)\to\Gamma^\infty(F)$ of Heisenberg order at most $k$ is said to satisfy the Rockland condition \cite{R78} if 
\[
	\pi(\sigma^k_x(A))\colon\mathcal H_\infty\otimes E_x\to\mathcal H_\infty\otimes F_x
\]
is injective for every $x\in M$ and every non-trivial irreducible unitary Hilbert space representation $\pi$ of the osculating group $\mathcal T_xM$, where $\mathcal H_\infty$ denotes the subspace of smooth vectors, cf.~\cite{K04}.
The Rockland theorem asserts that in this situation the operator admits a left parametrix which is of order $-k$ in an appropriate Heisenberg calculus of pseudodifferential operators adapted to the filtration.
In particular, Rockland operators are hypoelliptic.

This result has a long history.
For trivially filtered manifolds, that is if $TM=T^{-1}M$, it reduces to the classical, elliptic case.
In this situation all irreducible unitary representations of the (abelian) osculating group are one dimensional, and the Rockland condition at $x\in M$ becomes the familiar condition that the principal symbol of the operator is invertible at every $0\neq\xi\in T_x^*M$, provided $\rk(E)=\rk(F)$.
Helffer--Nourrigat \cite{HN78,HN79,HN85} proved maximal hypoellipticity for left invariant scalar Rockland differential operators on graded nilpotent Lie groups, thus confirming a conjecture due to Rockland \cite{R78}.
For contact and (more generally) Heisenberg manifolds, a pseudodifferential calculus has been developed independently by Beals--Greiner \cite{BG88} and Taylor \cite{T84}, see also \cite{P08}.
Rockland theorems for Heisenberg manifolds can be found in \cite[Theorem~8.4]{BG88} or \cite[Theorem~5.4.1]{P08}.
These investigations can be traced back to the work of Kohn \cite{K65}, Boutet de Monvel \cite{B74}, and Folland--Stein \cite{FS74} on CR manifolds, cf.\ the introduction of \cite{BG88} for further historical comments.
A pseudodifferential calculus for general filtered manifolds was first described by Melin \cite{M82}.
In his unpublished manuscript Melin shows \cite[Theorem~7.2]{M82} that a scalar Rockland differential operator admits a parametrix in his calculus.
More recent constructions of pseudodifferential calculi on filtered manifolds are based on the Heisenberg tangent groupoid \cite{EY17,EY19,CP19,HH18,M21} and the idea of essential homogeneity introduced in \cite{DS14}.
In \cite[Theorem~2.5(d)]{CGGP92} one finds a scalar Rockland theorem for graded nilpotent Lie groups which suffices to study the flat models in parabolic geometry as well as topologically stable \cite{P16} structures like contact and Engel manifolds.
For general systems of (pseudo)differential operators on arbitrary filtered manifolds the Rockland theorem may be found in \cite[Theorem~A]{DH17}.

Using the parametrix of the heat operator and the Heisenberg pseudodifferential calculus, one can extend the results on the structure of complex powers \cite{MP49,S67,BGS84,P08} to (formally) selfadjoint positive Rockland differential operators over general filtered manifolds, see \cite[Corollary~2]{DH19}.
In particular, the zeta function of such an operator admits a meromorphic extension to the entire complex plane which is holomorphic at zero.
Hence, its derivative at zero may be used to define regularized determinants for these operators.
%In this generality, the meromorphic extension can be found in \cite[Corollary~2]{DH19}, see \cite{MP49,S67} for the calssical case and \cite{BGS84} as well as \cite[Chapter~5]{P08} for Heisenberg manifolds.

\subsection{Rockland complexes}\label{SS:Rockland}

Consider a finite complex of differential operators over a closed filtered manifold $M$,
\begin{equation}\label{E:ED}
	\cdots\to\Gamma^\infty(E^{q-1})\xrightarrow{D_{q-1}}
	\Gamma^\infty(E^q)\xrightarrow{D_q}
	\Gamma^\infty(E^{q+1})\to\cdots
\end{equation}
which are of Heisenberg order $k_q\geq1$, respectively.
Here $E^q$ are smooth vector bundles over $M$, only finitely many of these vector bundles are non-zero, and $D_qD_{q-1}=0$ for all $q$.
We assume the sequence is Rockland \cite[Definition~2.14]{DH17}, i.e., the Heisenberg principal symbol sequence
\[
	\cdots\to 
	C^\infty(\mathcal T_xM,E^{q-1}_x)\xrightarrow{\sigma^{k_{q-1}}_x(D_{q-1})}
	C^\infty(\mathcal T_xM,E^q_x)\xrightarrow{\sigma^{k_q}_x(D_q)}
	C^\infty(\mathcal T_xM,E^{q-1}_x)\to\cdots
\]
becomes exact in every non-trivial irreducible unitary representation of $\mathcal T_xM$.

Fix a volume density $\mu$ on $M$ and fiber wise Hermitian inner products $h_q$ on the bundles $E^q$, and consider the associated $L^2$ inner product on $\Gamma^\infty(E^q)$,
\begin{equation}\label{E:L2IP}
	\llangle\phi,\psi\rrangle_{E^q}:=\int_Mh_q(\phi,\psi)\mu,
	\qquad\phi,\psi\in\Gamma^\infty(E^q).
\end{equation}
Let $D_q^*\colon\Gamma^\infty(E^{q+1})\to\Gamma^\infty(E^q)$ denote the formal adjoint of $D_q$, that is
\begin{equation}\label{E:deltai}
	\llangle D_q^*\phi,\psi\rrangle_{E^q}
	=\llangle\phi,D_q\psi\rrangle_{E^{q+1}}
\end{equation}
for all $\phi\in\Gamma^\infty(E^{q+1})$ and $\psi\in\Gamma^\infty(E^q)$.

Fix natural numbers $a_q\in\N$ such that 
\begin{equation}\label{E:kiai}
	k_{q-1}a_{q-1}=k_qa_q
\end{equation} 
for all $q$, and put
\begin{equation}\label{E:kkiai}
	\kappa:=k_qa_q.
\end{equation}
Then the Rumin--Seshadri operators,
\begin{equation}\label{E:Deltai}
	\Delta_q\colon\Gamma^\infty(E^q)\to\Gamma^\infty(E^q),\qquad
	\Delta_q:=(D_{q-1}D_{q-1}^*)^{a_{q-1}}+(D_q^*D_q)^{a_q}.
\end{equation}
are all of Heisenberg order $2\kappa$.
These operators generalize the classical Hodge Laplacians as well as the Laplacians associated with the Rumin complex in \cite[Section 2.3]{RS12} which are of Heisenberg order two or four.

\begin{lemma}[{\cite[Lemma~2.18]{DH17}}]
The Rumin--Seshadri operator $\Delta_q$ is Rockland.
%More explicitely, if 
%$$
%\sigma^{2\kappa}_x(\Delta_q)\colon C^\infty\bigl(\mathcal T_xM,E^q_x\bigr)\to C^\infty\bigl(\mathcal T_xM,E^q_x\bigr)
%$$
%denotes the Heisenberg principal symbol of $\Delta_q$ at $x\in M$, and if $\pi\colon\mathcal T_xM\to U(\mathcal H)$ is a non-trivial irreducible unitary representation of the osculating group $\mathcal T_xM$ on a Hilbert space $\mathcal H$, then
%$$
%\pi\bigl(\sigma^{2\kappa}_x(\Delta_q)\bigr)\colon\mathcal H_\infty\otimes E^q_x\to\mathcal H_\infty\otimes E^q_x
%$$
%is injective, where $\mathcal H_\infty$ denotes the space of smooth vectors in $\mathcal H$.
\end{lemma}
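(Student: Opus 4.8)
The plan is to reduce the Rockland property of $\Delta_q$ to the Rockland property of the complex $(E^\bullet, D_\bullet)$ via the Heisenberg principal symbol. Since the Heisenberg principal symbol is multiplicative under composition and compatible with transposition, one computes
\[
	\sigma^{2\kappa}_x(\Delta_q)
	=\bigl(\sigma^{k_{q-1}}_x(D_{q-1})\sigma^{k_{q-1}}_x(D_{q-1})^*\bigr)^{a_{q-1}}
	+\bigl(\sigma^{k_q}_x(D_q)^*\sigma^{k_q}_x(D_q)\bigr)^{a_q},
\]
using \eqref{E:kiai} to see that both summands land in $\mathcal U_{-2\kappa}(\mathfrak t_xM)\otimes\eend(E^q_x)$. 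Here the adjoint is the one induced on symbols by the fiberwise inner products $h_q$ and the volume density $\mu$; on the osculating group $\mathcal T_xM$ it corresponds to the formal adjoint of the left invariant homogeneous operator $\sigma^{k_q}_x(D_q)$ with respect to the $L^2$ inner product on $C^\infty(\mathcal T_xM, E^q_x)$ given by Haar measure and $h_q$. I would carry out this symbol computation first, being slightly careful that transposition on operators corresponds to the formal adjoint on the group (this is standard in the Heisenberg calculus).

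Next I would fix a non-trivial irreducible unitary representation $\pi$ of $\mathcal T_xM$ on a Hilbert space $\mathcal H$ with smooth vectors $\mathcal H_\infty$, and apply $\pi$ to the symbol identity. Writing $d_q:=\pi(\sigma^{k_q}_x(D_q))\colon \mathcal H_\infty\otimes E^q_x\to\mathcal H_\infty\otimes E^{q+1}_x$, the Rockland hypothesis on the complex says exactly that the sequence $\cdots\to \mathcal H_\infty\otimes E^{q-1}_x\xrightarrow{d_{q-1}}\mathcal H_\infty\otimes E^q_x\xrightarrow{d_q}\cdots$ is exact, and $d_{q-1}^*$ is a formal adjoint of $d_{q-1}$ on the dense domain $\mathcal H_\infty$. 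Then
\[
	\pi(\sigma^{2\kappa}_x(\Delta_q))=(d_{q-1}d_{q-1}^*)^{a_{q-1}}+(d_q^*d_q)^{a_q}.
\]
We must show this operator is injective on $\mathcal H_\infty\otimes E^q_x$. Suppose $\phi$ lies in the kernel. Pairing against $\phi$ gives $\langle (d_{q-1}d_{q-1}^*)^{a_{q-1}}\phi,\phi\rangle + \langle (d_q^*d_q)^{a_q}\phi,\phi\rangle=0$. The operators $d_{q-1}d_{q-1}^*$ and $d_q^*d_q$ are nonnegative symmetric operators on $\mathcal H_\infty$, hence so are their $a$-th powers (on smooth vectors, where everything is defined and the relevant computations with positive powers via the spectral-type inequality $\langle T^a\phi,\phi\rangle\ge 0$ for $T\ge 0$ go through after passing to closures, or more elementarily by writing $a=2m$ or $a=2m+1$ and grouping factors). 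Positivity of both terms forces $\langle(d_{q-1}d_{q-1}^*)^{a_{q-1}}\phi,\phi\rangle=0$ and $\langle(d_q^*d_q)^{a_q}\phi,\phi\rangle=0$, and then $d_{q-1}^*\phi=0$ and $d_q\phi=0$. Thus $\phi\in\ker d_q$, while $\phi\perp\img d_{q-1}$. By the exactness of the symbol complex, $\ker d_q=\img d_{q-1}$, so $\phi$ is simultaneously in $\img d_{q-1}$ and orthogonal to it, whence $\phi=0$. This shows $\pi(\sigma^{2\kappa}_x(\Delta_q))$ is injective for all non-trivial irreducible $\pi$ and all $x$, i.e. $\Delta_q$ satisfies the Rockland condition.

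The main technical obstacle is not the algebraic heart of the argument — which is the standard "Hodge theory in a fixed representation" computation — but rather the functional-analytic bookkeeping on $\mathcal H_\infty$: verifying that $d_q^*$ as induced on symbols genuinely agrees with the Hilbert-space adjoint of $d_q$ restricted to smooth vectors (so that the adjunction $\langle d_q\phi,\psi\rangle=\langle\phi,d_q^*\psi\rangle$ holds on $\mathcal H_\infty$), and justifying the positivity $\langle T^{a}\phi,\phi\rangle\ge 0$ together with the implication $\langle T^a\phi,\phi\rangle = 0 \Rightarrow T\phi = 0$ for the nonnegative symmetric operators $T=d_{q-1}d_{q-1}^*$ and $T=d_q^*d_q$ acting on the common invariant core $\mathcal H_\infty\otimes E^q_x$. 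All of this is routine given the properties of the Heisenberg principal symbol recalled above and the fact that elements of $\mathcal T_xM$ act on $\mathcal H_\infty$ by operators preserving $\mathcal H_\infty$; since the statement is quoted from \cite[Lemma~2.18]{DH17}, I would keep this verification brief and refer to that source for the details, noting only that \eqref{E:kiai} is precisely what makes $\Delta_q$ homogeneous of a single Heisenberg order $2\kappa$.
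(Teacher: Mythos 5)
Your argument is correct and is essentially the proof given in the cited reference \cite[Lemma~2.18]{DH17} (the paper itself offers no proof beyond that citation): one passes to the Heisenberg principal symbol, applies a non-trivial irreducible unitary representation, and runs the fiberwise Hodge-theoretic argument that positivity forces $d_{q-1}^*\phi=0$ and $d_q\phi=0$, whence $\phi\in\ker d_q=\img d_{q-1}$ is orthogonal to $\img d_{q-1}$ and therefore vanishes. The functional-analytic points you flag (compatibility of transposition with the symbol, adjunction on the common invariant core $\mathcal H_\infty$, and the implication $\langle T^a\phi,\phi\rangle=0\Rightarrow T\phi=0$ by a minimal-exponent induction) are exactly the routine verifications needed, so nothing is missing.
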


It will be convenient to consider the operator $\Delta:=\bigoplus_q\Delta_q$ acting on sections of the graded vector bundle $E:=\bigoplus_qE^q$.
Note that $\Delta$ has Heisenberg order $2\kappa$.

\begin{lemma}[{\cite[Lemma~1]{DH20}}]\label{L:Desa}
The Rumin--Seshadri operator $\Delta$ is essentially self-adjoint with compact resolvent on $L^2(E)$.
Moreover, $\Delta\geq0$.
\end{lemma}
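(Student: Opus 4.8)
The plan is to leverage the Rockland property of $\Delta=\bigoplus_q\Delta_q$, established in the preceding lemma, together with the general machinery of the Heisenberg pseudodifferential calculus. First I would observe that $\Delta$ is a formally self-adjoint differential operator of Heisenberg order $2\kappa$: formal self-adjointness is immediate from the defining formula \eqref{E:Deltai}, since $\bigl((D_{q-1}D_{q-1}^*)^{a_{q-1}}\bigr)^* = (D_{q-1}D_{q-1}^*)^{a_{q-1}}$ and $\bigl((D_q^*D_q)^{a_q}\bigr)^* = (D_q^*D_q)^{a_q}$ with respect to the $L^2$ inner products \eqref{E:L2IP}. Positivity, $\Delta\ge 0$, is equally elementary: for $\phi\in\Gamma^\infty(E^q)$ one has $\llangle\Delta_q\phi,\phi\rrangle_{E^q} = \bigl\|(D_{q-1}^*)^{?}\cdots\bigr\|^2 + \cdots \ge 0$; more precisely, writing $(D_{q-1}D_{q-1}^*)^{a_{q-1}} = B_{q-1}^*B_{q-1}$ with $B_{q-1}$ an appropriate composite (an even power gives $(D_{q-1}D_{q-1}^*)^{a_{q-1}/2}$ if $a_{q-1}$ is even, and one groups a single $D_{q-1}^*$ with $(D_{q-1}D_{q-1}^*)^{(a_{q-1}-1)/2}$ if odd) and similarly $(D_q^*D_q)^{a_q} = C_q^*C_q$, we get $\llangle\Delta_q\phi,\phi\rrangle = \|B_{q-1}\phi\|^2 + \|C_q\phi\|^2\ge 0$.

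Next I would invoke the parametrix from the Rockland theorem (\cite[Theorem~A]{DH17}, cited in the excerpt): since $\Delta$ is Rockland of Heisenberg order $2\kappa$, it admits a parametrix $Q$ of Heisenberg order $-2\kappa$ in the Heisenberg calculus, so that $Q\Delta = \id - R_1$ and $\Delta Q = \id - R_2$ with $R_1, R_2$ smoothing operators. In particular $\Delta$ is hypoelliptic, and $Q$ is a compact operator on $L^2(E)$ (a Heisenberg pseudodifferential operator of negative order on a closed manifold is compact). From here essential self-adjointness follows by the standard argument: a formally self-adjoint, hypoelliptic operator with a bounded (indeed compact) pseudodifferential parametrix of negative order is essentially self-adjoint on $C^\infty$ compactly supported sections — one checks that the minimal and maximal closed extensions coincide using the parametrix to control the graph norm, exactly as in \cite[Corollary~2]{DH19} or the analogous statement for elliptic operators. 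Once $\Delta$ is known to be (essentially) self-adjoint with domain contained in the appropriate Heisenberg Sobolev space $\mathcal{H}^{2\kappa}$, the compactness of the resolvent $(\Delta+1)^{-1}$ follows because this operator differs from the compact parametrix $Q$ by a smoothing, hence compact, remainder; alternatively, the inclusion of the Heisenberg Sobolev space $\mathcal{H}^{2\kappa}(E)$ into $L^2(E)$ is compact on a closed manifold (a Rellich-type lemma in the Heisenberg calculus), and the resolvent maps $L^2$ continuously into $\mathcal{H}^{2\kappa}$.

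The main obstacle, or rather the main point requiring care, is the passage from "has a parametrix of negative Heisenberg order" to "essentially self-adjoint with compact resolvent." This is not automatic from abstract nonsense alone; it relies on the mapping properties of Heisenberg pseudodifferential operators between Heisenberg Sobolev spaces and on the compactness of the Heisenberg Sobolev embeddings, both of which are part of the calculus developed in \cite{DH17} (and \cite{M82, EY19}). Since these are cited as available, the cleanest exposition is simply to reduce to \cite[Lemma~1]{DH20} — which is precisely this statement — and note that the formal self-adjointness and positivity are visible from \eqref{E:Deltai} as above, while the operator-theoretic content is the general fact that a nonnegative formally self-adjoint Rockland operator on a closed filtered manifold is essentially self-adjoint with compact resolvent, a consequence of the Rockland theorem and the structure of the Heisenberg calculus.
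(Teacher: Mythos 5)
Your argument is correct and is essentially the proof of the cited result: the paper itself offers no proof of this lemma but simply quotes it from \cite[Lemma~1]{DH20}, where the argument runs exactly as you describe (formal self-adjointness and positivity read off from \eqref{E:Deltai} via the factorization into $B^*B$ terms, then a Rockland parametrix of Heisenberg order $-2\kappa$ to show that the maximal domain lies in the Heisenberg Sobolev space $\mathcal H^{2\kappa}$, giving essential self-adjointness, with compactness of the resolvent from the compact embedding $\mathcal H^{2\kappa}\hookrightarrow L^2$). No gaps.
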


Using the spectral theorem \cite[Section~VI§5.3]{K95} we obtain a strongly differentiable semi-group $e^{-t\Delta}$ for $t\geq0$.
Let 
\begin{equation}\label{E:n}
	n:=-\sum_pp\cdot\rk(\mathfrak t^pM)
\end{equation}
denote the homogeneous dimension of $M$.
Note that $n\geq0$ since with our convention the grading of $\mathfrak tM=\bigoplus_p\mathfrak t^pM$ is concentrated in negative degrees.
We will denotes the line bundle of $1$-densities on $M$ by $|\Lambda|$.

\begin{lemma}[{\cite[Theorem~1]{DH20}}]\label{L:asyexp}
In this situation $e^{-t\Delta}$ is a smoothing operator for each $t>0$, and the corresponding heat kernels $k_t\in\Gamma^\infty(E\boxtimes(E^*\otimes|\Lambda|))$ depend smoothly on $t>0$.
Furthermore, as $t\to0$, we have an asymptotic expansion
\begin{equation}\label{E:heatkernasymp}
k_t(x,x)\sim\sum_{j=0}^\infty t^{(j-n)/2\kappa}p_j(x)
\end{equation}
where $p_j\in\Gamma^\infty(\eend(E)\otimes|\Lambda|)$.
Moreover, $p_j(x)=0$ for all odd $j$.
\end{lemma}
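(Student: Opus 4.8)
The plan is to obtain the expansion from a parametrix construction for the heat operator in the Heisenberg pseudodifferential calculus adapted to the filtration of $M$; this is the route carried out in \cite{DH20}, and I sketch it here. First I would record that, by the preceding lemmas together with the block-diagonal structure of $\Delta=\bigoplus_q\Delta_q$, the operator $\Delta$ is a nonnegative, formally selfadjoint, Rockland differential operator of Heisenberg order $2\kappa$ over the closed filtered manifold $M$. By the Rockland theorem recalled in Section~\ref{S:torRc}, $\Delta$ then admits a parametrix of Heisenberg order $-2\kappa$ in the Heisenberg calculus. The next step is to upgrade this to a parametrix $Q(t)$ for the heat operator $\partial_t+\Delta$ in the parabolic (Volterra) version of the calculus, in which the time variable carries Heisenberg weight $2\kappa$. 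Concretely, one builds a symbol $q\sim\sum_{j\geq0}q_{-2\kappa-j}$, with $q_{-2\kappa-j}$ homogeneous of degree $-2\kappa-j$ under the joint parabolic Heisenberg dilations, by solving the triangular system of transport equations obtained from $(\partial_t+\Delta)\circ Q\equiv\id$; the leading term is built from the fiberwise resolvent of the model operator $\sigma^{2\kappa}_x(\Delta)$ on the osculating group $\mathcal T_xM$, which exists with the required decay precisely because $\Delta$ is Rockland, and a contour integral in the spectral parameter turns it into a fiberwise heat kernel.

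Once the parametrix is in place, a Duhamel/Volterra argument shows that $Q(t)$ agrees with the genuine semigroup $e^{-t\Delta}$ modulo a family of smoothing operators depending smoothly on $t\geq0$; this uses that $\Delta$ is hypoelliptic and essentially selfadjoint with compact resolvent (Lemma~\ref{L:Desa}), so that $e^{-t\Delta}$ is smoothing for $t>0$ and the error is controlled. Restricting $Q(t)$ to the diagonal and sorting by homogeneity then produces \eqref{E:heatkernasymp}: the contribution coming from $q_{-2\kappa-j}$ scales like $t^{(j-n)/2\kappa}$, the shift by the homogeneous dimension $n$ of \eqref{E:n} being the exponent occurring in the Jacobian of the Heisenberg dilations; in particular $p_0(x)$ equals $t^{-n/2\kappa}$ times the value at the group identity of the heat kernel of $\sigma^{2\kappa}_x(\Delta)$ on $\mathcal T_xM$. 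The coefficients $p_j$ are locally computable because each $q_{-2\kappa-j}$ is a universal expression in the jets of the Heisenberg symbol of $\Delta$.

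It remains to see that $p_j$ vanishes for odd $j$. The key point is that the dilation $\delta_{-1}$, which multiplies a vector of degree $p$ by $(-1)^p$, is a graded Lie algebra automorphism of every osculating algebra $\mathfrak t_xM$ and hence induces an automorphism of $\mathcal T_xM$. Conjugating a left invariant differential operator of Heisenberg degree $m$ by $\delta_{-1}$ multiplies it by $(-1)^m$, so feeding this into the transport recursion above shows that $q_{-2\kappa-j}$ is invariant under $\delta_{-1}$ for even $j$ and anti-invariant for odd $j$ (note that $2\kappa$ is even). The diagonal restriction that produces $p_j$ amounts to integrating the associated kernel against Haar measure on $\mathcal T_xM$, which is $\delta_{-1}$-invariant since $\delta_{-1}$ has determinant of modulus one on the Lie algebra; hence the anti-invariant contributions integrate to zero and $p_j=0$ for all odd $j$.

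I expect the main obstacle to be the rigorous setup and remainder analysis of the parabolic Heisenberg calculus itself --- in particular, proving that the formal parametrix genuinely captures the heat kernel with uniform control of the remainders, that $k_t$ is smooth in $t>0$, and that the expansion may be handled to arbitrary order --- rather than the homogeneity and parity bookkeeping, which becomes essentially formal once the calculus is available. This analytic core is exactly what is established in \cite{DH20}, to which we refer for the details.
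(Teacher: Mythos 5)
The paper offers no proof of this lemma; it is quoted verbatim from \cite[Theorem~1]{DH20}, and your sketch follows exactly the route of that reference (and of the paper's own later use of the heat parametrix in Lemma~\ref{L:asymA}, where $\partial_t$ is given Heisenberg order $2\kappa$ on $M\times\mathbb R$): Volterra-type parametrix built from the Rockland model operators, Duhamel comparison with the true semigroup, homogeneity counting for the exponents, and the $\delta_{-1}$-parity argument for the vanishing of the odd coefficients. Two small corrections to your bookkeeping: the diagonal restriction is evaluation of the kernel at the identity of $\mathcal T_xM$ (a fixed point of $\delta_{-1}$, at which any anti-invariant function vanishes), not integration against Haar measure, though your conclusion is unaffected; and $p_0(x)$ itself carries no factor of $t$ --- it is the value at the identity of the time-one heat kernel of $\sigma^{2\kappa}_x(\Delta)$, the $t^{-n/2\kappa}$ being supplied by the expansion.
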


%Recall that for every $A\in\Gamma^\infty(\eend(E))$ we have
%$$
%	\str\bigl(Ae^{-t\Delta}\bigr)
%	=\int_M\str\bigl(Ak_t(x,x)\bigr),
%$$
%where $\str$ denotes the graded trace.

For $A\in\Gamma^\infty(\eend(E))$ we thus obtain an asymptotic expansion as $t\to0$,
\[
	\str\bigl(Ae^{-t\Delta}\bigr)
	\sim\sum_{j=0}^\infty t^{(j-n)/2\kappa}\int_M\str\bigl(Ap_j\bigr),
\]
where $\str$ denotes the graded trace.
In particular,
\begin{equation}\label{E:strAetD}
	\LIM_{t\to0}\str\bigl(Ae^{-t\Delta}\bigr)
	=\int_M\str(Ap_n)
\end{equation}
where $\LIM$ denotes the constant term in the asymptotic expansion.

\begin{lemma}\label{L:Euler}
The graded heat trace $\str\bigl(e^{-t\Delta}\bigr)$ is constant in $t$ and
\begin{equation}\label{E:lit}
	\chi(E,D)
	=\LIM_{t\to0}\str\bigl(e^{-t\Delta}\bigr)
	=\int_M\str(p_n).
\end{equation}
Here $\chi(E,D)$ denotes the Euler characteristics of the complex in \eqref{E:ED}.
\end{lemma}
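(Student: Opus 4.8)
The proof is the classical McKean--Singer cancellation combined with Hodge theory for Rockland complexes. By Lemma~\ref{L:asyexp} the operator $e^{-t\Delta}$ is smoothing, hence of trace class, for every $t>0$, and by Lemma~\ref{L:Desa} the operator $\Delta$ has discrete non-negative spectrum with finite-dimensional eigenspaces. Writing $E_\lambda=\bigoplus_qE^q_\lambda$ for the $\lambda$-eigenspace of $\Delta$, we have $\str\bigl(e^{-t\Delta}\bigr)=\sum_{\lambda\geq0}e^{-t\lambda}\sum_q(-1)^q\dim E^q_\lambda$. The first thing to record is the algebraic identity $D_q\Delta_q=\Delta_{q+1}D_q$ (and its adjoint $D_q^*\Delta_{q+1}=\Delta_qD_q^*$), which is immediate from $D_qD_{q-1}=0$ and the shape \eqref{E:Deltai} of the Rumin--Seshadri operators, since $D_q(D_{q-1}D_{q-1}^*)^{a_{q-1}}=0$ and $D_q(D_q^*D_q)^{a_q}=(D_qD_q^*)^{a_q}D_q$. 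Consequently $D$, $D^*$ and all spectral projectors of $\Delta$ commute, so each $D_q$ restricts to a map $E^q_\lambda\to E^{q+1}_\lambda$.

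The plan is then to treat the eigenvalues $\lambda>0$ and $\lambda=0$ separately. For $\lambda>0$ the finite complex $(E_\lambda,D)$ is acyclic: if $\xi\in E^q_\lambda$ satisfies $D_q\xi=0$, then $\lambda\xi=\Delta_q\xi=(D_{q-1}D_{q-1}^*)^{a_{q-1}}\xi=D_{q-1}\eta$ with $\eta:=D_{q-1}^*(D_{q-1}D_{q-1}^*)^{a_{q-1}-1}\xi\in E^{q-1}_\lambda$, so $\ker\bigl(D_q|_{E^q_\lambda}\bigr)\subseteq\img\bigl(D_{q-1}|_{E^{q-1}_\lambda}\bigr)$; hence $\sum_q(-1)^q\dim E^q_\lambda=0$ for all $\lambda>0$. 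For $\lambda=0$ the standard Hodge decomposition for Rockland complexes --- available because $\Delta$ is hypoelliptic with compact resolvent, so that $L^2(E)$ splits orthogonally into harmonic sections and the closure of $\img\Delta$ --- gives $\ker\Delta_q=\ker D_q\cap\ker D_{q-1}^*$ on smooth sections together with a natural isomorphism $\ker\Delta_q\cong H^q(E,D)$, whence $\sum_q(-1)^q\dim E^q_0=\chi(E,D)$. Combining the two cases yields $\str\bigl(e^{-t\Delta}\bigr)=\chi(E,D)$ for every $t>0$; in particular the graded heat trace is constant in $t$ and $\LIM_{t\to0}\str\bigl(e^{-t\Delta}\bigr)=\chi(E,D)$. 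Applying \eqref{E:strAetD} with $A=\id_E$ gives $\LIM_{t\to0}\str\bigl(e^{-t\Delta}\bigr)=\int_M\str(p_n)$, which is the remaining assertion in \eqref{E:lit}.

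The only non-formal ingredient is the Hodge-theoretic identification $\ker\Delta_q\cong H^q(E,D)$, which rests on the hypoellipticity of the Rockland operator $\Delta$ and the resulting orthogonal $L^2$-decomposition; the rest is the bookkeeping of McKean--Singer. Alternatively, one can bypass the eigenspace cancellation and show directly that $\tfrac{d}{dt}\str\bigl(e^{-t\Delta}\bigr)=-\str\bigl(\Delta e^{-t\Delta}\bigr)=0$: the operators $(D_q^*D_q)^{a_q}$ and $(D_qD_q^*)^{a_q}$ have the same nonzero spectrum with multiplicities, intertwined by $D_q$, so $\tr\bigl((D_q^*D_q)^{a_q}e^{-t\Delta_q}\bigr)=\tr\bigl((D_qD_q^*)^{a_q}e^{-t\Delta_{q+1}}\bigr)$, and after the sign-shift $q\mapsto q+1$ the two families of summands in $\str\bigl(\Delta e^{-t\Delta}\bigr)$ coming from the two terms of \eqref{E:Deltai} cancel.
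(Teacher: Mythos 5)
Your proof is correct, and it reaches the conclusion by a somewhat different route than the paper. The paper first shows constancy of the graded heat trace by writing $\Delta=[D,\sigma]$ with $\sigma_q=(D_{q-1}^*D_{q-1})^{a_{q-1}-1}D_{q-1}^*$ and invoking the vanishing of the graded trace on graded commutators, and then identifies the constant with $\chi(E,D)$ by letting $t\to\infty$, using the trace-norm convergence $e^{-t\Delta}\to P$ together with the Hodge-theoretic identification of $\img(P)$ with the cohomology. You instead evaluate $\str\bigl(e^{-t\Delta}\bigr)$ exactly for each fixed $t$ by decomposing into eigenspaces, checking that $D$ intertwines the $\Delta_q$ and hence preserves each eigenspace, proving acyclicity of the finite subcomplex $(E_\lambda,D)$ for every $\lambda>0$, and using Hodge theory only for $\lambda=0$. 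Both arguments rest on the same two external inputs (the smoothing/trace-class property of $e^{-t\Delta}$ and Hodge theory for the Rockland complex), but yours dispenses with the $t\to\infty$ limit and the trace-norm convergence statement the paper cites for it, at the cost of a slightly longer spectral bookkeeping; your closing ``alternative'' via $\tr\bigl((D_q^*D_q)^{a_q}e^{-t\Delta_q}\bigr)=\tr\bigl((D_qD_q^*)^{a_q}e^{-t\Delta_{q+1}}\bigr)$ is essentially the paper's commutator computation in spectral disguise. One small point worth making explicit: applying $D_q$ and $D_{q-1}^*$ to eigenvectors requires knowing the eigenspaces consist of smooth sections, which follows from hypoellipticity of the Rockland operator $\Delta$; with that remark your argument is complete.
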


\begin{proof}
We adapt the classical argument.
Defining $\sigma_q\colon\Gamma^\infty(E^q)\to\Gamma^\infty(E^{q-1})$ by
\begin{equation}\label{E:sigma}
	\sigma_q
	:=(D_{q-1}^*D_{q-1})^{a_{q-1}-1}D_{q-1}^*
	=D_{q-1}^*(D_{q-1}D_{q-1}^*)^{a_{q-1}-1}
\end{equation}
we obtain the graded commutator relation
\begin{equation}\label{E:DDs}
	\Delta=[D,\sigma].
\end{equation}
Combining this with $[D,e^{-t\Delta}]=0$ and the fact that the graded trace vanishes on graded commutators, we see that the graded heat trace is constant in $t$: 
\begin{multline}\label{E:lit1}
	\tfrac\partial{\partial t}\str\bigl(e^{-t\Delta}\bigr)
	=-\str\bigl(\Delta e^{-t\Delta}\bigr)
	\\=-\str\bigl([D,\sigma]e^{-t\Delta}\bigr)
	=-\str\bigl([D,\sigma e^{-t\Delta}]\bigr)
	=0.
\end{multline}
Moreover, with respect to the trace norm we have \cite[Lemma~5]{DH20}
\begin{equation}\label{E:limetDP}
	\lim_{t\to\infty}e^{-t\Delta}
	=P
\end{equation}
where $P$ denotes the spectral (orthogonal) projection onto the kernel of $\Delta$.
In view of Lemma~\ref{L:Desa}, this kernel is finite dimensional.
By Hodge theory \cite[Corollary~2.20]{DH17}, the inclusion $\img(P)\subseteq\Gamma^\infty(E)$ induces an isomorphism in cohomology.
Hence, 
\begin{equation}\label{E:lit2}
	\lim_{t\to\infty}\str\bigl(e^{-t\Delta}\bigr)
	=\str(P)
	=\chi\bigl(\img(P),D\bigr)
	=\chi(E,D).
\end{equation}
Putting $A=\id$ in \eqref{E:strAetD} and combining the resulting equation with \eqref{E:lit1} and \eqref{E:lit2} we obtain \eqref{E:lit}.
\end{proof}

\subsection{Analytic torsion}\label{SS:anator}

For $\lambda\geq0$, let $P_\lambda$ denote the spectral projection corresponding to the eigenvalues at most $\lambda$, and let $Q_\lambda:=\id-P_\lambda$ denote the complementary projection.
Fix numbers $N_q$ such that 
\begin{equation}\label{E:Niik}
	N_{q+1}-N_q=k_q
\end{equation}
for all $q$.
We let $N\in\Gamma(\eend(E))$ denote the operator given by multiplication with $N_q$ on $E^q$.
For $\lambda\geq0$ and $\Re(s)>n/2\kappa$, we put
\begin{equation}\label{E:zetalambda}
	\zeta_\lambda(s)
	:=\str\bigl(NQ_\lambda\Delta^{-s}\bigr)
	=\frac1{\Gamma(s)}\int_0^\infty t^{s-1}\str\bigl(NQ_\lambda e^{-t\Delta}\bigr)dt.
\end{equation}

According to \cite[Corollary~2]{DH20} this zeta function admits a meromorphic continuation to the entire complex plane which is holomorphic at $s=0$.
Moreover,
\begin{equation}\label{E:zeta0}
	\zeta_\lambda(0)
	=\LIM_{t\to0}\str\bigl(NQ_\lambda e^{-t\Delta}\bigr)
	=\LIM_{t\to0}\str\bigl(Ne^{-t\Delta}\bigr)-\chi'_\lambda
\end{equation}
where $\LIM$ denotes the constant term in the asymptotic expansion, and
\[
	\chi'_\lambda
	:=\str(NP_\lambda)=\sum_q(-1)^qN_q\rk(P_{\lambda,q}).
\]
Putting $A=N$ in \eqref{E:strAetD} and using \eqref{E:zeta0} we obtain
\begin{equation}\label{E:zeta00}
	\zeta_\lambda(0)
	=-\chi'_\lambda+\int_M\str(Np_n).
\end{equation}

Recall that $\img(P_\lambda)$ is a finite dimensional subcomplex of $(\Gamma^\infty(E),D)$.
Moreover, the inclusion induces a canonical isomorphism in cohomology,
\[
	H^*\bigl(\img(P_\lambda),D\bigr)=H^*(E,D).
\]
The torsion of finite dimensional complexes, see \cite[Section~a]{BGS88} or \cite[Section~Ia]{BZ92}, provides a canonical isomorphism of graded determinant lines \cite{KM76}
\[
	\sdet(\img(P_\lambda))
	=\sdet\bigl(H^*\bigl(\img(P_\lambda),D\bigr)\bigr).
\]
Combining the latter two identifications, we obtain a canonical isomorphism
\begin{equation}\label{E:toriso}
	\sdet(\img(P_\lambda))
	%=\sdet\bigl(H^*\bigl(\img(P_\lambda),d\bigr)\bigr)
	=\sdet(H^*(E,D)).
\end{equation}
The $L^2$ inner product on $\Gamma^\infty(E)$ restricts to a graded inner product on $\img(P_\lambda)$ and induces an inner product on the graded determinant line $\sdet(\img(P_\lambda))$.
Via \eqref{E:toriso} this corresponds to an inner product on the line $\sdet (H^*(E,D))$. 
We will denote the corresponding norm by $\|-\|^{\sdet(H^*(E,D))}_{[0,\lambda],h,\mu}$.
Following \cite{BGS88,BZ92}, we define the \emph{analytic torsion of the Rockland complex $(E,D)$} to be the norm 
\begin{equation}\label{E:defRSmetric}
	\|-\|^{\sdet(H^*(E,D))}_{h,\mu}
	:=\exp\bigl(-\tfrac1{2\kappa}\zeta'_\lambda(0)\bigr)\cdot\|-\|^{\sdet(H^*(E,D))}_{[0,\lambda],h,\mu}
\end{equation}
on the graded determinant line 
\[
	\sdet(H^*(E,D))=\bigotimes_q\bigl(\det H^q(E,D)\bigr)^{(-1)^q}.
\]

A priori, the analytic torsion of a Rockland complex $(E,D)$ depends on the graded fiber wise Hermitian inner product $h$ on $E$, the volume density $\mu$ on $M$, the choice of numbers $a_q$ satisfying \eqref{E:kiai}, the choice of numbers $N_q$ satisfying \eqref{E:Niik}, and the choice of a spectral cutoff $\lambda\geq0$.
It is fairly easy to see that the analytic torsion is actually independent of $a_q$, $N_q$ and $\lambda$, see Lemmas~\ref{L:lambda}, \ref{L:Ni} and \ref{L:ai} below.
The dependence on $h_q$ and $\mu$ will be discussed in Section~\ref{SS:var}, see Theorem~\ref{T:var} below.

\begin{lemma}\label{L:lambda}
The analytic torsion of a Rockland complex, see \eqref{E:defRSmetric}, does not depend on the choice of the spectral cutoff $\lambda\geq0$.
\end{lemma}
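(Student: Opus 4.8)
The plan is to prove that increasing the cutoff from $\lambda$ to some $\Lambda\ge\lambda$ leaves the norm \eqref{E:defRSmetric} unchanged; since any two cutoffs are dominated by their maximum, this yields independence of $\lambda$ in general. Let $P_{(\lambda,\Lambda]}:=P_\Lambda-P_\lambda=Q_\lambda-Q_\Lambda$ be the spectral projection of $\Delta$ onto the finitely many eigenvalues in $(\lambda,\Lambda]$, and put $F:=\img\bigl(P_{(\lambda,\Lambda]}\bigr)$. A direct computation from \eqref{E:Deltai} and $DD=0$ shows $D_q\Delta_q=\Delta_{q+1}D_q$, hence also $D_q^*\Delta_{q+1}=\Delta_qD_q^*$; therefore $P_{(\lambda,\Lambda]}$ commutes with $D$ and $D^*$, so $F$ is a finite dimensional subcomplex of $(\Gamma^\infty(E),D)$ and $\img(P_\Lambda)=\img(P_\lambda)\oplus F$ is an $L^2$-orthogonal direct sum of complexes. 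Since the inclusions of $\img(P_\lambda)$ and of $\img(P_\Lambda)$ both induce the canonical isomorphism $H^*(\img(P_\lambda),D)=H^*(\img(P_\Lambda),D)=H^*(E,D)$ in cohomology, the complex $F$ is acyclic.

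First I would dispose of the analytic contribution. As $F$ is finite dimensional and $\Delta|_F$ is positive and invertible, $s\mapsto\str\bigl(N(\Delta|_F)^{-s}\bigr)$ is entire, and subtracting the representations \eqref{E:zetalambda} gives $\zeta_\lambda(s)-\zeta_\Lambda(s)=\str\bigl(N(\Delta|_F)^{-s}\bigr)$, so that
\[
	\zeta_\lambda'(0)-\zeta_\Lambda'(0)=-\str\bigl(N\log(\Delta|_F)\bigr)=-\sum_q(-1)^qN_q\log\det(\Delta_q|_F).
\]

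Next comes the finite dimensional torsion. Multiplicativity of the torsion of finite dimensional complexes over the orthogonal splitting $\img(P_\Lambda)=\img(P_\lambda)\oplus F$, combined with acyclicity of $F$ (so that its torsion isomorphism identifies $\sdet(F)$ canonically with $\C$), shows that the identification \eqref{E:toriso} for the cutoff $\Lambda$ is the composite of the one for $\lambda$ with this identification of $\sdet(F)$. Comparing the induced $L^2$ norms gives $\|-\|^{\sdet(H^*(E,D))}_{[0,\Lambda],h,\mu}=\tau(F)\cdot\|-\|^{\sdet(H^*(E,D))}_{[0,\lambda],h,\mu}$, where $\tau(F)>0$ denotes the metric torsion of the acyclic complex $(F,D)$ with the restricted $L^2$ inner products. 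By \eqref{E:defRSmetric} the lemma therefore reduces to the identity
\[
	\log\tau(F)=-\tfrac1{2\kappa}\bigl(\zeta_\lambda'(0)-\zeta_\Lambda'(0)\bigr)=\tfrac1{2\kappa}\sum_q(-1)^qN_q\log\det(\Delta_q|_F).
\]
To establish this I would use the $L^2$-orthogonal Hodge decomposition $F^q=D_{q-1}(F^{q-1})\oplus D_q^*(F^{q+1})$, on whose two summands $\Delta_q$ acts as $(\square_q|_F)^{a_{q-1}}$ and $(\square_q|_F)^{a_q}$ respectively, where $\square_q:=D_{q-1}D_{q-1}^*+D_q^*D_q$, and the fact that $D_{q-1}$ restricts to a bijection $D_{q-1}^*(F^q)\to D_{q-1}(F^{q-1})$ intertwining $\square_{q-1}$ with $\square_q$. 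Writing $d_q:=\det\bigl(\square_q|_{D_q^*(F^{q+1})}\bigr)=\det\bigl(\square_{q+1}|_{D_q(F^q)}\bigr)$ one finds $\det(\Delta_q|_F)=d_q^{a_q}d_{q-1}^{a_{q-1}}$ and $\det(\square_q|_F)=d_qd_{q-1}$; inserting these into the standard finite dimensional torsion formula $\log\tau(F)=\tfrac12\sum_q(-1)^qq\log\det(\square_q|_F)$ and into the right hand side above, and then using $k_qa_q=\kappa$ and $N_{q+1}-N_q=k_q$, both sides telescope to $-\tfrac12\sum_q(-1)^q\log d_q$.

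The one genuinely delicate point is this final step: one must fix the normalization of $\tau(F)$ consistently with \cite{BGS88,BZ92} (equivalently, the precise sign and power in $\tfrac12\sum_q(-1)^qq\log\det\square_q$) and verify that the telescoping sum it produces matches the one coming from $\exp\bigl(-\tfrac1{2\kappa}\zeta_\lambda'(0)\bigr)$ after the substitutions $k_qa_q=\kappa$, $N_{q+1}-N_q=k_q$. The remaining ingredients, namely that $F$ is an acyclic finite dimensional subcomplex and that finite dimensional torsion is multiplicative over orthogonal direct sums, are standard.
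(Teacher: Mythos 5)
Your proposal is correct and follows essentially the same route as the paper: both split off the finite dimensional acyclic subcomplex $\img(P_\Lambda)=\img(P_\lambda)\oplus F$, identify the change in $\zeta'(0)$ with $\sum_q(-1)^qN_q\log\det(\Delta_q|_F)$, and match it against the finite dimensional torsion of $F$ by a telescoping argument using $k_qa_q=\kappa$ and $N_{q+1}-N_q=k_q$. The only cosmetic difference is that you express $\tau(F)$ via $\tfrac12\sum_q(-1)^qq\log\det(\square_q|_F)$, whereas the paper uses the equivalent formula $T(C,D)=\sdet(D^*D|_L)^{-1/2}$ from Rumin--Seshadri; your sign conventions check out.
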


\begin{proof}
Suppose $0\leq\lambda_1\leq\lambda_2$ and consider the finite rank spectral projection $P_{(\lambda_1,\lambda_2]}:=P_{\lambda_2}-P_{\lambda_1}$.
Clearly, $C:=\img(P_{(\lambda_1,\lambda_2]})$ is a finite dimensional graded vector space invariant under $D$, $D^*$, and $\Delta$.
Moreover, $\ker\bigl(D|_{C^q}\bigr)=\img(D|_{C^{q-1}}\bigr)$ and $\ker\bigl(D^*|_{C^q}\bigr)=\img(D^*|_{C^{q+1}}\bigr)$.
From the decomposition of complexes 
$$
\img(P_{\lambda_2})=\img(P_{\lambda_1})\oplus C
$$ 
we obtain
\begin{equation}\label{E:lambda1}
\frac{\|-\|^{\sdet(H^*(E,D))}_{[0,\lambda_2],h,\mu}}{\|-\|^{\sdet(H^*(E,D))}_{[0,\lambda_1],h,\mu}}
=T(C,D)
\end{equation}
where the right hand side denotes the torsion of the finite dimensional acyclic complex $(C,D)$, see \cite[Proposition~1.5]{BZ92}.
%More precisely, via the canonical isomorphism of lines $\sdet(C)=\sdet(H^*(C,D))=\C$, see \eqref{E:toriso}, the norm on $\sdet(C)$ induced from the $L^2$ inner product on $C$ corresponds to $T(C,D)\cdot\|-\|_0$ where $\|-\|_0$ denotes the standard norm on $\C$.

Consider the decomposition of graded vector spaces
$$
C=K\oplus L
$$
where $K^q:=\img\bigl(P_{(\lambda_1,\lambda_2]}D_{q-1}\bigr)$ and $L^q:=\img\bigl(P_{(\lambda_1,\lambda_2]}D_q^*\bigr)$.
A well known expression for the torsion of an acyclic complex, see \cite[Proposition~2.7]{RS12}, gives
\begin{equation}\label{E:lambda2}
	T(C,D)
	=\sdet\bigl(D^*D|_L\bigr)^{-1/2}.
\end{equation}

Clearly, $\det\bigl(DD^*|_{K^q}\bigr)=\det\bigl(D^*D|_{L^{q-1}}\bigr)$ and thus
\begin{multline*}
	\det\bigl(\Delta|_{C^q}\bigr)
	=\det\bigl(DD^*|_{K^q}\bigr)^{a_{q-1}}\det\bigl(D^*D|_{L^q}\bigr)^{a_q}
	\\=\det\bigl(D^*D|_{L^{q-1}}\bigr)^{a_{q-1}}\det\bigl(D^*D|_{L^q}\bigr)^{a_q}.
\end{multline*}
Using \eqref{E:Niik} and \eqref{E:kkiai} this gives
\[
	\det\bigl(\Delta|_{C^q}\bigr)^{N_q}
	=\det\bigl(D^*D|_{L^{q-1}}\bigr)^\kappa
	\det\bigl(D^*D|_{L^{q-1}}\bigr)^{N_{q-1}a_{q-1}}
	\det\bigl(D^*D|_{L^q}\bigr)^{N_qa_q}.
\]
A telescoping argument yields
\begin{equation}\label{E:telescope}
	\prod_q\det\bigl(\Delta|_{C^q}\bigr)^{(-1)^qN_q}
	=\sdet\bigl(D^*D|_L\bigr)^{-\kappa}.
\end{equation}

Using $Q_{\lambda_2}-Q_{\lambda_1}=-P_{(\lambda_1,\lambda_2]}$ we obtain, see \eqref{E:zetalambda},
\[
	\zeta_{\lambda_2}(s)-\zeta_{\lambda_1}(s)
	=-\str\bigl(NP_{(\lambda_1,\lambda_2]}\Delta^{-s}\bigr).
\]
Hence,
\[
	\zeta'_{\lambda_2}(s)-\zeta'_{\lambda_1}(s)
	=\str\bigl(NP_{(\lambda_1,\lambda_2]}\Delta^{-s}\log\Delta\bigr)
\]
and
\[
	\zeta'_{\lambda_2}(0)-\zeta'_{\lambda_1}(0)
	=\str\bigl(NP_{(\lambda_1,\lambda_2]}\log\Delta\bigr)
	=\sum_q(-1)^qN_q\log\det\bigl(\Delta|_{C^q}\bigr).
\]
Consequently,
\begin{equation}\label{E:lambda4}
	\frac{\exp\bigl(-\tfrac1{2\kappa}\zeta'_{\lambda_2}(0)\bigr)}{\exp\bigl(-\tfrac1{2\kappa}\zeta'_{\lambda_1}(0)\bigr)}
	=\left(\prod_q\det\bigl(\Delta|_{C^q}\bigr)^{(-1)^qN_q}\right)^{-1/2\kappa}.
\end{equation}
Combining \eqref{E:lambda1}, \eqref{E:lambda2}, \eqref{E:telescope}, and \eqref{E:lambda4} we obtain 
\[
	\frac{\exp\bigl(-\tfrac1{2\kappa}\zeta'_{\lambda_2}(0)\bigr)\cdot\|-\|^{\sdet(H^*(E,D))}_{[0,\lambda_2],h,\mu}}{\exp\bigl(-\tfrac1{2\kappa}\zeta'_{\lambda_1}(0)\bigr)\cdot\|-\|^{\sdet(H^*(E,D))}_{[0,\lambda_1],h,\mu}}=1,
\]
whence the lemma.
\end{proof}

\begin{lemma}\label{L:Ni}
The analytic torsion of a Rockland complex, see \eqref{E:defRSmetric}, does not depend on the choice of numbers $N_q$, see \eqref{E:Niik}.
\end{lemma}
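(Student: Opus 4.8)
The plan is to use the fact that two weight systems $(N_q)$ and $(N'_q)$ both satisfying \eqref{E:Niik} necessarily differ by a common additive constant $c\in\R$, i.e.\ $N'_q=N_q+c$ for all $q$, so that as operators on $E=\bigoplus_qE^q$ one has $N'=N+c\cdot\id$. Since the finite-dimensional metric factor $\|-\|^{\sdet(H^*(E,D))}_{[0,\lambda],h,\mu}$ appearing in \eqref{E:defRSmetric} is built only from the $L^2$ inner product on $\img(P_\lambda)$ and does not involve the weights at all, it suffices to show that $\zeta'_\lambda(0)$ is the same for both choices. By \eqref{E:zetalambda}, replacing $N$ by $N+c\cdot\id$ changes the zeta function by $c\cdot\str\bigl(Q_\lambda\Delta^{-s}\bigr)$, so the whole lemma comes down to showing that this ``weightless'' zeta function vanishes identically.

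By the Mellin transform representation \eqref{E:zetalambda} this will follow once I know that $\str\bigl(Q_\lambda e^{-t\Delta}\bigr)=0$ for all $t>0$. To prove this I would simply repeat the argument from the proof of Lemma~\ref{L:Euler}. The operator $Q_\lambda$ is a function of $\Delta$, hence commutes with $D$ and with $e^{-t\Delta}$; combining this with $\Delta=[D,\sigma]$ from \eqref{E:DDs} gives $\Delta Q_\lambda e^{-t\Delta}=[D,\sigma Q_\lambda e^{-t\Delta}]$, a graded commutator of trace-class operators, on which the graded trace vanishes. Thus $\frac\partial{\partial t}\str\bigl(Q_\lambda e^{-t\Delta}\bigr)=-\str\bigl(\Delta Q_\lambda e^{-t\Delta}\bigr)=0$, exactly as in \eqref{E:lit1}, so $\str\bigl(Q_\lambda e^{-t\Delta}\bigr)$ is constant in $t$. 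Letting $t\to\infty$ and using \eqref{E:limetDP}, together with $Q_\lambda P=0$ (which holds because $\lambda\ge0$ forces $\img(P)\subseteq\img(P_\lambda)$), identifies this constant as $0$.

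Feeding $\str\bigl(Q_\lambda e^{-t\Delta}\bigr)\equiv0$ back into \eqref{E:zetalambda} yields $\str\bigl(Q_\lambda\Delta^{-s}\bigr)=0$ for $\Re(s)>n/2\kappa$, hence identically after meromorphic continuation, so in particular its derivative at $s=0$ vanishes. Therefore $\zeta'_\lambda(0)$ is unchanged under $(N_q)\mapsto(N'_q)$, and the two norms \eqref{E:defRSmetric} coincide. (Alternatively, one may first invoke Lemma~\ref{L:lambda} to reduce to $\lambda=0$, in which case $Q_\lambda=\id-P$ and $\str\bigl(Q_\lambda e^{-t\Delta}\bigr)=\str\bigl(e^{-t\Delta}\bigr)-\str(P)=0$ is immediate from Lemma~\ref{L:Euler} and Hodge theory.) I do not anticipate any genuine obstacle; the only point of substance is the vanishing of the weightless heat supertrace $\str\bigl(Q_\lambda e^{-t\Delta}\bigr)$, which is the same cancellation already exploited for the Euler characteristic in Lemma~\ref{L:Euler}.
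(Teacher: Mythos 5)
Your proposal is correct and follows essentially the same route as the paper: reduce to the constancy $N'_q=N_q+c$, then show the weightless supertrace $\str\bigl(Q_\lambda e^{-t\Delta}\bigr)$ vanishes by the graded-commutator argument of Lemma~\ref{L:Euler} together with the $t\to\infty$ limit \eqref{E:limetDP}, and conclude via the Mellin representation and analytic continuation. No gaps.
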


\begin{proof}
If $\tilde N_q$ is another choice such that $\tilde N_{q+1}-\tilde N_q=k_q$, then there exists a constant $c$ such that $\tilde N_q-N_q=c$ for all $q$.
Recall that the graded commutator relations $\Delta=[D,\sigma]$, see \eqref{E:DDs}, and $[D,e^{-t\Delta}]=0=[Q_\lambda,D]$ yield
\begin{multline*}
	\tfrac\partial{\partial t}\str\bigl(Q_\lambda e^{-t\Delta}\bigr)
	=-\str\bigl(Q_\lambda\Delta e^{-t\Delta}\bigr)
	\\=-\str\bigl(Q_\lambda[D,\sigma]e^{-t\Delta}\bigr)
	=-\str\bigl([D,Q_\lambda\sigma e^{-t\Delta}]\bigr)
	=0.
\end{multline*}
Since $\lim_{t\to\infty}\tr\bigl(Q_{\lambda,q}e^{-t\Delta_q}\bigr)=0$, see \eqref{E:limetDP}, we conclude $\str\bigl(Q_\lambda e^{-t\Delta}\bigr)=0$ for all $t>0$ and all $\lambda\geq0$.
Hence, for $\Re(s)>n/2\kappa$ we obtain
\[
	\str\bigl(\tilde NQ_\lambda\Delta^{-s}\bigr)
	-\str\bigl(NQ_\lambda\Delta^{-s}\bigr)
	=\frac c{\Gamma(s)}\int_0^\infty t^{s-1}\str\bigl(Q_\lambda e^{-t\Delta}\bigr)dt=0.
\]
By analytic continuation, this remains true for $s=0$, whence the lemma.
\end{proof}

\begin{lemma}\label{L:ai}
The analytic torsion of a Rockland complex, see \eqref{E:defRSmetric}, does not depend on the choice of numbers $a_q$, see \eqref{E:kiai}.
\end{lemma}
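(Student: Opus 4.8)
The plan is to reduce the general case to the comparison of two choices $a_q$ and $\tilde a_q$ satisfying \eqref{E:kiai}, and to show that the zeta-regularized determinant appearing in \eqref{E:defRSmetric} is unchanged. By Lemma~\ref{L:lambda} we may work with the spectral cutoff $\lambda=0$, so the finite-dimensional factor $\|-\|^{\sdet(H^*(E,D))}_{[0,0],h,\mu}$ is the $L^2$-norm on harmonic forms and manifestly does not involve $a_q$; hence only $\zeta'_0(0)=\frac{\partial}{\partial s}\big|_{s=0}\str\bigl(NQ_0\Delta^{-s}\bigr)$ needs to be examined. Since any two solutions of $k_qa_q=\kappa$ and $k_q\tilde a_q=\tilde\kappa$ differ by a common positive rational factor ($\tilde a_q/a_q$ is independent of $q$, equal to $\tilde\kappa/\kappa$), it suffices to treat $\tilde a_q=ma_q$ for a positive integer $m$, i.e.\ to compare $\Delta$ with $\tilde\Delta$ built from the $m$-th powers of the same building blocks; in fact the cleanest route is to observe directly that on the orthogonal complement of the kernel one has a functional relation between $\tilde\Delta$ and $\Delta$.

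First I would record the key algebraic fact: with $\sigma_q$ as in \eqref{E:sigma}, the operators $D_{q-1}D_{q-1}^*$ and $D_q^*D_q$ commute (they act on complementary pieces of the Hodge decomposition), so on $Q_0$ one may diagonalize simultaneously. On the part of $E^q$ lying in $\img(D_{q-1})$ one has $\Delta_q=(D_{q-1}D_{q-1}^*)^{a_{q-1}}$ and $\tilde\Delta_q=(D_{q-1}D_{q-1}^*)^{\tilde a_{q-1}}$, hence $\tilde\Delta_q=\Delta_q^{\,r}$ there with $r:=\tilde a_{q-1}/a_{q-1}=\tilde\kappa/\kappa$; likewise on the part in $\img(D_q^*)$ one gets $\tilde\Delta_q=\Delta_q^{\,r}$ with the same ratio $r=\tilde a_q/a_q=\tilde\kappa/\kappa$. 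Thus globally $\tilde\Delta=\Delta^{\,r}$ on $\img(Q_0)$, with one and the same exponent $r$. Consequently $\tilde\Delta^{-s}=\Delta^{-rs}$ on $\img(Q_0)$, and therefore
\begin{equation*}
	\tilde\zeta_0(s)=\str\bigl(NQ_0\tilde\Delta^{-s}\bigr)=\str\bigl(NQ_0\Delta^{-rs}\bigr)=\zeta_0(rs).
\end{equation*}
(The meromorphic continuations match since both sides agree on a half-plane and \cite[Corollary~2]{DH20} guarantees holomorphy at $0$.) Differentiating, $\tilde\zeta_0'(0)=r\,\zeta_0'(0)$, while the prefactor in \eqref{E:defRSmetric} uses $\tilde\kappa=r\kappa$, so
\begin{equation*}
	-\tfrac1{2\tilde\kappa}\tilde\zeta_0'(0)=-\tfrac1{2r\kappa}\,r\,\zeta_0'(0)=-\tfrac1{2\kappa}\zeta_0'(0),
\end{equation*}
and the two torsion norms coincide.

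The main point to get right — and the only place where anything must be checked rather than merely asserted — is the claim that the exponent $r$ relating $\tilde\Delta$ to $\Delta$ on $\img(Q_0)$ is genuinely the same constant $\tilde\kappa/\kappa$ on every graded piece and on both summands of the Hodge decomposition; this is exactly where the normalization \eqref{E:kiai}, i.e.\ $k_qa_q=\kappa$ for all $q$, is used, and it is what makes $\tilde a_q/a_q=\tilde a_{q-1}/a_{q-1}$. Everything else — that $DD^*$ and $D^*D$ commute, that $\Delta$ restricted to $\img(Q_0)$ is invertible with the spectral functional calculus behaving as expected under taking powers, and that the zeta functions continue meromorphically and are regular at $s=0$ — is either elementary or already available from Lemmas~\ref{L:Desa}–\ref{L:asyexp} and \cite{DH20}. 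One should also note in passing the trivial reduction at the start: arbitrary admissible $(a_q)$ and $(\tilde a_q)$ are both proportional, via a common rational factor, to the minimal solution $a_q=\operatorname{lcm}(k_\bullet)/k_q$, so comparing each to that minimal choice suffices, and the computation above handles precisely such a comparison with $r$ a positive rational.
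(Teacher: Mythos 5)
Your proposal is correct and follows essentially the same route as the paper: both rest on the identity $\tilde\Delta=\Delta^{r}$ with $r=\tilde\kappa/\kappa$, the resulting relation $\tilde\zeta(s)=\zeta(rs)$, and the cancellation of $r$ against the prefactor $1/2\tilde\kappa$. The only cosmetic difference is that the paper first reduces (w.l.o.g.) to the case where $r$ is a positive integer, so that $\tilde\Delta=\Delta^{r}$ follows algebraically from $D^2=0=(D^*)^2$ without appealing to the Hodge decomposition and spectral functional calculus for a rational exponent.
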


\begin{proof}
Suppose $\tilde a_q\in\N$ is another choice of positive integers satisfying \eqref{E:kiai}, that is, $k_{q-1}\tilde a_{q-1}=k_q\tilde a_q$ for all $q$.
Then there exist positive integers $r,\tilde r\in\N$ such that $ra_q=\tilde r\tilde a_q$ for all $q$. 
Indeed, we may use $r:=\tilde a_{q_0}$ and $\tilde r:=a_{q_0}$ for some fixed $q_0$.
W.l.o.g.\ we may assume $\tilde r=1$ and thus
$$
\tilde a_q=ra_q
$$
for all $q$.
Writing $\tilde\kappa:=k_q\tilde a_q$ we find, see \eqref{E:kkiai}, 
\begin{equation}\label{E:QQQ:kappa}
	\tilde\kappa=r\kappa.
\end{equation}
Denoting the corresponding Laplacian by $\tilde\Delta_q:=(D_{q-1}D^*_{q-1})^{\tilde a_{q-1}}+(D^*_qD_q)^{\tilde a_q}$, see \eqref{E:deltai}, and using $(D^*)^2=0=D^2$, we find
\begin{equation}\label{E:QQQ:Delta}
	\tilde\Delta_q=\Delta_q^r.
\end{equation}
Using the spectral cutoff
\[
	\tilde\lambda:=\lambda^r
\] 
and denoting the spectral projections of $\tilde\Delta$ by $\tilde Q$, we have
\begin{equation}\label{E:QQQ:Q}
	\tilde Q_{\tilde\lambda}=Q_\lambda.
\end{equation}
Using $(\Delta^r)^{-s}=\Delta^{-rs}$, \eqref{E:QQQ:Delta} and \eqref{E:QQQ:Q} we find, see \eqref{E:zetalambda},
\[
	\tilde\zeta_{\tilde\lambda}(s)=\zeta_\lambda(rs)
\]
for all $s$.
Differentiating and using \eqref{E:QQQ:kappa} we find
\begin{equation}\label{E:QQQ:zetap}
	\tfrac1{\tilde\kappa}\tilde\zeta'_{\tilde\lambda}(0)
	=\tfrac1\kappa\zeta'_\lambda(0).
\end{equation}
In view of \eqref{E:QQQ:Q} we have $\img(\tilde P_{\tilde\lambda})=\img(P_\lambda)$ and thus
\begin{equation}\label{E:QQQ:norm}
	\widetilde{\|-\|}^{\sdet(H^*(E,D))}_{[0,\tilde\lambda],h,\mu}
	=\|-\|^{\sdet(H^*(E,D))}_{[0,\lambda],h,\mu}.
\end{equation}
Combining \eqref{E:QQQ:zetap} with \eqref{E:QQQ:norm} and Lemma~\ref{L:lambda} we see that the analytic torsion with the choices $a_q$ and $\tilde a_q$ coincide, cf.~\eqref{E:defRSmetric}.
\end{proof}

\begin{remark}\label{R:Z2}
The analytic torsion in \eqref{E:defRSmetric} may be expressed in a way which only uses the $\mathbb Z_2$-grading of the Rockland complex and does not require the choice of numbers $a_q$ as in \eqref{E:kiai} and $N_q$ as in \eqref{E:Niik}.
More precisely, for $\lambda\geq0$ we have
\begin{equation}\label{E:zetaZ2}
	\tfrac1{2\kappa}\zeta'_\lambda(0)=-\tfrac12\tfrac\partial{\partial s}\big|_{s=0}\str_\lambda\bigl((D^*D)^{-s}\bigr)=:\log\sdet_\lambda\sqrt{D^*D}.
\end{equation}
Here $\str_\lambda\bigl((D^*D)^{-s}\bigr)$ denotes the graded trace, disregarding all eigenvalues $\leq\lambda$, in particular, disregarding the eigenvalue zero which will have infinite multiplicity in general.
To see this, note that $D^2=0$ gives, see \eqref{E:Deltai}
\[
	\tr(Q_\lambda\Delta_q^{-s})=\tr_\lambda\bigl((D^*_qD_q)^{-a_qs}\bigr)+\tr_\lambda\bigl((D_{q-1}D_{q-1}^*)^{-a_{q-1}s}\bigr),\qquad\Re(s)>n/2\kappa.
\]
Moreover, since $D_{q-1}D_{q-1}^*$ and $D^*_{q-1}D_{q-1}$ have the same spectrum 
\[
	\tr_\lambda\bigl((D_{q-1}D_{q-1}^*)^{-a_{q-1}s}\bigr)=\tr_\lambda\bigl((D^*_{q-1}D_{q-1})^{-a_{q-1}s}\bigr),\qquad\Re(s)>n/2\kappa.
\]
Using the latter two displayed equations, and proceeding by induction on $q$, one readily concludes, from the corresponding fact for $\tr(Q_\lambda\Delta_q^{-s})$, that the functions $\tr_\lambda\bigl((D^*_qD_q)^{-a_qs}\bigr)$ and $\tr_\lambda\bigl((D_{q-1}D^*_{q-1})^{-a_qs}\bigr)$ admit meromorphic extensions which are holomorphic at $s=0$.
Using \eqref{E:Niik} we obtain
\[
	-\str(NQ_\lambda\Delta^{-s})=\sum_q(-1)^q\tr_\lambda\bigl(k_q(D^*_qD_q)^{-a_qs}\bigr).
\]
Using \eqref{E:zetalambda} and \eqref{E:kkiai} this gives
\begin{equation*}\label{E:zetaZ2a}
	\tfrac1{\kappa}\zeta_\lambda'(s)=\sum_q(-1)^q\tr_\lambda\bigl((D_q^*D_q)^{-a_qs}\log D^*_qD_q\bigr).
\end{equation*}
Similarly,
\begin{equation*}\label{E:zetaZ2b}
	-\tfrac\partial{\partial s}\str_\lambda\bigl((D^*D)^{-s}\bigr)=\sum_q(-1)^q\tr_\lambda\bigl((D_q^*D_q)^{-s}\log D^*_qD_q\bigr).
\end{equation*}
Combining the latter two equations with the obvious equality
$$
	\tr_\lambda\bigl((D_q^*D_q)^{-a_qs}\log D^*_qD_q\bigr)\Big|_{s=0}=\tr_\lambda\bigl((D_q^*D_q)^{-s}\log D^*_qD_q\bigr)\Big|_{s=0},
$$
obtained via analytic continuation, we arrive at the formula in \eqref{E:zetaZ2}.
\end{remark}

\begin{remark}\label{R:signs}
The analytic torsion of a Rockland complex remains unchanged if the differentials $D_q$ are replaced with $\pm D_q$ where the signs may depend on $q$.
\end{remark}

\subsection{Dependence on the metric}\label{SS:var}

Suppose the volume density $\mu_u$ on $M$ and the graded Hermitian inner product $h_u$ on $E$ depend smoothly on a real parameter $u$.
Let $D^*_u$ and $\Delta_u$ denote the corresponding family of operators.
For real $u$ and $v$, we let $G_{v,u}\in\Gamma^\infty(\Aut(E))$,
\begin{equation}\label{E:Gdef}
	G_{v,u}:=h_v^{-1}h_u\frac{\mu_u}{\mu_v}
\end{equation}
denote the unique vector bundle automorphism such that, cf.~\eqref{E:L2IP}, 
\begin{equation}\label{E:inneru}
	\llangle\phi,\psi\rrangle_u
	=\llangle G_{v,u}\phi,\psi\rrangle_v
\end{equation}
for all $\phi,\psi\in\Gamma^\infty(E)$.
Clearly, $G_{v,u}$ preserves the decomposition $E=\bigoplus_qE^q$.
Using \eqref{E:deltai} we immediately see that $D^*_u$ varies by conjugation, i.e.,
\begin{equation}\label{E:deltau}
	D^*_u
	=G_{v,u}^{-1}D^*_vG_{v,u}.
\end{equation}

The aim of this section is to establish the following variational formula generalizing well known results for the Ray--Singer torsion \cite{RS71,BZ92} and the Rumin--Seshadri torsion, see \cite[Corollary~3.7]{RS12}.

\begin{theorem}\label{T:var}
In this situation we have
\[
	\tfrac\partial{\partial u}\log\|-\|^{\sdet(H^*(E,D))}_{h_u,\mu_u}
	=\frac12\int_M\str\bigl((\dot h_u+\dot\mu_u)p_{u,n}\bigr).
\]
where $\dot h_u:=h_u^{-1}\frac\partial{\partial u}h_u$, $\dot\mu_u:=\mu_u^{-1}\frac\partial{\partial u}\mu_u$, and $p_{u,n}\in\Gamma^\infty(\eend(E)\otimes|\Lambda|)$ denotes the constant term in the heat kernel expansion associated with the Rumin--Seshadri operator $\Delta_u$, see \eqref{E:heatkernasymp}.
\end{theorem}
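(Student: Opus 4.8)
The plan is to adapt the classical Ray--Singer/Bismut--Zhang variational argument to the present hypoelliptic setting. First I would reduce to a common reference metric: fix $v$ and write all operators at parameter $u$ in terms of the fixed $L^2$ inner product at $v$ via the automorphism $G_{v,u}$ of \eqref{E:Gdef}, so that $D$ is unchanged, $D^*_u=G_{v,u}^{-1}D^*_vG_{v,u}$ by \eqref{E:deltau}, and hence $\Delta_u=G_{v,u}^{-1}\Delta_v'G_{v,u}$ for a suitable (not self-adjoint) operator $\Delta_v'$ — more cleanly, one differentiates directly. Setting $v=u$ after differentiation and writing $g_u:=\frac\partial{\partial u}G_{v,u}|_{v=u}=\dot h_u+\dot\mu_u\in\Gamma^\infty(\eend(E))$, one computes $\frac\partial{\partial u}D^*_u=[\,\cdot\,,D^*_u]$-type expressions; concretely $\frac\partial{\partial u}D^*_u=[D^*_u,g_u]$ in the graded-commutator sense appropriate to the fact that $D^*_u$ lowers degree. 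From this and $\Delta_u=(D_{q-1}D_{q-1}^*)^{a_{q-1}}+(D_q^*D_q)^{a_q}$ one gets a formula for $\dot\Delta_u$. The key structural observation, exactly as in Lemma~\ref{L:Euler}, is that $\dot\Delta_u$ is a graded commutator up to the multiplication operator $g_u$: more precisely, using $\sigma$ from \eqref{E:sigma} and the Leibniz rule for the powers, $\frac\partial{\partial u}\Delta_u = [D,\dot\sigma_u] + (\text{terms involving }[D^*_u,g_u])$, and after manipulation $\frac\partial{\partial u}\Delta_u$ can be written as $[D,\tau_u]+[\tau_u',D^*_u]$ plus a term $\frac12(g_u\Delta_u+\Delta_ug_u)$ modulo commutators — the precise bookkeeping here is the main technical content.

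Next I would differentiate the defining formula \eqref{E:defRSmetric}. The spectral-cutoff term: choosing $\lambda$ not in the spectrum on a small $u$-interval (legitimate by Lemma~\ref{L:lambda}), the finite-dimensional piece $\img(P_{u,\lambda})$ varies smoothly, and the variation of $\log\|-\|_{[0,\lambda],h_u,\mu_u}$ is computed by the standard finite-dimensional torsion formula: it equals $\frac12\,\mathrm{str}\big(g_u P_{u,\lambda}\big)$ up to the contribution of the induced metric on cohomology, which must be tracked and will cancel against a matching term. For the zeta term, I would use the Mellin representation \eqref{E:zetalambda} and the standard identity $\frac\partial{\partial u}\zeta'_{u,\lambda}(0)$: differentiating under the integral, $\frac\partial{\partial u}\,\mathrm{str}\big(NQ_{u,\lambda}e^{-t\Delta_u}\big)=-t\,\mathrm{str}\big(N\dot\Delta_u Q_{u,\lambda}e^{-t\Delta_u}\big)$ (the $\dot Q$ terms vanishing since $Q$ commutes with $\Delta$ and $\mathrm{str}$ annihilates the relevant commutators, together with $\mathrm{str}(Q_\lambda e^{-t\Delta})=0$ as in Lemma~\ref{L:Ni}). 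Feeding in the commutator structure of $\dot\Delta_u$ and using that $\mathrm{str}$ kills graded commutators with $D$, $D^*$, one collapses everything to $\frac\partial{\partial u}\,\mathrm{str}\big(NQ_{u,\lambda}e^{-t\Delta_u}\big)= \kappa\,t\frac\partial{\partial t}\,\mathrm{str}\big(g_uQ_{u,\lambda}e^{-t\Delta_u}\big)$ or a similar total-$t$-derivative expression; here the constant $\kappa$ enters and will match the $\frac1{2\kappa}$ in \eqref{E:defRSmetric}.

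Then the Mellin transform turns the $t\frac\partial{\partial t}$ into multiplication by $-s$ inside the integral, so that $\frac\partial{\partial u}\zeta'_{u,\lambda}(0)$ is governed by the value at $s=0$ of the zeta function of $g_u$ relative to $\Delta_u$, which by the heat expansion Lemma~\ref{L:asyexp} and \eqref{E:strAetD} equals $\int_M\mathrm{str}(g_u p_{u,n})$ minus the finite-dimensional term $\mathrm{str}(g_uP_{u,\lambda})$. Combining with the variation of the $[0,\lambda]$-norm, the $P_\lambda$-contributions and the cohomology-metric contributions cancel, leaving exactly $\frac12\int_M\mathrm{str}\big((\dot h_u+\dot\mu_u)p_{u,n}\big)$. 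The main obstacle I anticipate is the algebra of Step~1: writing $\dot\Delta_u$ as graded commutators modulo the symmetrized multiplication term $\frac12(g_u\Delta_u+\Delta_ug_u)$ requires care because $\Delta_u$ is a higher power of $D^*D$ and $DD^*$, so the Leibniz expansion produces many terms, and one must check that the non-commutator remainder is precisely what yields $\mathrm{str}(g_u\cdot)$ weighted correctly by $N$ and $\kappa$ — this is where the constraints \eqref{E:kiai} and \eqref{E:Niik} and the telescoping identity (as in the proof of Lemma~\ref{L:lambda}) get used. A secondary subtlety is justifying differentiation under the trace/integral and the meromorphic continuation uniformly in $u$, which follows from \cite[Corollary~2]{DH20} together with smoothness of the heat kernel in parameters.
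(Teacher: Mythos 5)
Your proposal is correct and follows essentially the same route as the paper: Duhamel's formula plus the graded-commutator identity $\dot\Delta_u=[D,\dot\sigma_u]$ and $\dot D^*_u=[D^*_u,\dot G_u]$ collapse the variation of the heat supertrace to $\kappa t\tfrac{\partial}{\partial t}\str(\dot G_uQ_{u,\lambda}e^{-t\Delta_u})$, the Mellin transform then yields $\tfrac{\partial}{\partial u}\tfrac1\kappa\zeta'_{u,\lambda}(0)=\str(\dot G_uP_{u,\lambda})-\int_M\str(\dot G_up_{u,n})$, and the $P_\lambda$-term cancels against the variation $\tfrac12\str(\dot G_uP_{u,\lambda})$ of the finite-dimensional norm. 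The only slight imprecision is that $\dot\Delta_u$ is exactly a graded commutator (no extra $\tfrac12(g_u\Delta_u+\Delta_ug_u)$ remainder); the $\dot G_u\Delta_u$ term arises only after multiplying by $N$, using $[N,D]=Dk$, and taking the supertrace, and the finite-dimensional variation is already the full $\tfrac12\str(\dot G_uP_{u,\lambda})$ with no separate cohomology-metric contribution to track.
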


\begin{remark}
If the homogeneous dimension $n$ is odd, then Theorem~\ref{T:var} implies that the analytic torsion of a Rockland complex does not depend on the choice of $\mu$ and $h$, for in this case $p_{u,n}=0$ according to Lemma~\ref{L:asyexp}.
In the classical case this has already been observed by Ray and Singer \cite{RS71}. 
\end{remark}

Let $\dot G_u\in\Gamma^\infty(\eend(E))$ be defined by
\begin{equation}\label{E:dGdhdm}
	\dot G_u
	:=G_{v,u}^{-1}\tfrac\partial{\partial u}G_{v,u}
	=\tfrac\partial{\partial w}\big|_{w=u}G_{u,w}
	=\dot h_u+\dot\mu_u,
\end{equation}
where the middle equality is a consequence of the cocycle relation $G_{w,v}G_{v,u}=G_{w,u}$, and the last equality follows by differentiating \eqref{E:Gdef}.

The following can be traced back to the original work of Ray--Singer \cite[Theorem~2.1]{RS71}, see also \cite[Theorem~5.6]{BZ92} or \cite[Section~3.2]{RS12}.

\begin{lemma}\label{L:var}
In this situation we have
\begin{equation}\label{E:var}
	\tfrac\partial{\partial u}\str\bigl(Ne^{-t\Delta_u}\bigr)
	=\kappa t\tfrac\partial{\partial t}\str\bigl(\dot G_ue^{-t\Delta_u}\bigr).
\end{equation}
\end{lemma}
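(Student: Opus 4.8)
The plan is to differentiate the identity $\str\bigl(Ne^{-t\Delta_u}\bigr)$ in $u$ and reorganize the result into a $t$-derivative using the Hodge-theoretic algebra already available. First I would record that $\frac\partial{\partial u}\Delta_u = [\dot G_u,\,\cdot\,]$-type expressions: since only $D^*_u$ varies, and by \eqref{E:deltau} it varies by conjugation $D^*_u=G_{v,u}^{-1}D^*_vG_{v,u}$, differentiating at $u=v$ gives $\frac\partial{\partial u}D^*_u=[D^*_u,\dot G_u]$ (graded commutator with the even operator $\dot G_u$, so an ordinary commutator), while $\frac\partial{\partial u}D=0$. Plugging into $\Delta_q=(D_{q-1}D^*_{q-1})^{a_{q-1}}+(D^*_qD_q)^{a_q}$ and using Leibniz, one obtains $\dot\Delta_u$ as a sum of terms each of which is $D$ times something plus something times $D$, i.e.\ of the form $[D,\Xi_u]$ for an explicit $\Xi_u\in\Gamma^\infty(\eend(E))$ built from $\sigma$ (see \eqref{E:sigma}) and $\dot G_u$; more precisely one checks $\dot\Delta_u=[D,[\dot G_u,\sigma_u]]+(\text{terms})$, but the cleanest route is to note directly that $\frac\partial{\partial u}\bigl(D^*_uD_u\bigr)=[D^*_u,\dot G_u]D_u$ and $\frac\partial{\partial u}\bigl(D_uD^*_u\bigr)=D_u[D^*_u,\dot G_u]$, so inductively $\dot\Delta_u$ lies in the image of $\ad(D)$ modulo a correction that still commutes appropriately with $e^{-t\Delta}$.

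Next I would use Duhamel: $\frac\partial{\partial u}\str\bigl(Ne^{-t\Delta_u}\bigr)=-\int_0^t\str\bigl(Ne^{-s\Delta_u}\dot\Delta_u e^{-(t-s)\Delta_u}\bigr)\,ds=-t\,\str\bigl(N\dot\Delta_u e^{-t\Delta_u}\bigr)$, where the last equality uses $[N,\Delta_u]=0$, $[\Delta_u,e^{-t\Delta_u}]=0$, and cyclicity of the trace to collapse the $s$-integral. Now the task is purely algebraic: show $-t\,\str\bigl(N\dot\Delta_u e^{-t\Delta_u}\bigr)=\kappa t\,\frac\partial{\partial t}\str\bigl(\dot G_u e^{-t\Delta_u}\bigr)$. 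Since $\frac\partial{\partial t}\str\bigl(\dot G_u e^{-t\Delta_u}\bigr)=-\str\bigl(\dot G_u\Delta_u e^{-t\Delta_u}\bigr)$, this amounts to the operator-trace identity $\str\bigl(N\dot\Delta_u e^{-t\Delta_u}\bigr)=\kappa\,\str\bigl(\dot G_u\Delta_u e^{-t\Delta_u}\bigr)$. I would prove this by feeding the expression for $\dot\Delta_u$ into the left side, using $[D,e^{-t\Delta_u}]=0$ and the vanishing of $\str$ on graded commutators to move $D$ around, and then exploiting the defining relation $N_{q+1}-N_q=k_q$ together with $k_qa_q=\kappa$ to turn the weighting by $N$ into the uniform weight $\kappa$ — this is exactly the telescoping/bookkeeping of the kind performed in the proof of Lemma~\ref{L:lambda} (see \eqref{E:telescope}), now carried out at the level of heat traces rather than finite-dimensional determinants.

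The main obstacle I anticipate is the bookkeeping in that last identity: one must carefully track which of the two summands of each $\Delta_q$ the differentiated factor came from, pair the resulting terms across adjacent degrees $q$ and $q+1$ using that $D_qD^*_q$ and $D^*_qD_q$ are isospectral (so their heat traces agree), and confirm that the combinatorial coefficient that survives the telescoping is precisely $\kappa$ and not some degree-dependent constant. The relation \eqref{E:Niik} is tailored exactly so that this works, just as in Lemma~\ref{L:lambda}, so the obstacle is one of organization rather than of genuine difficulty. A secondary point requiring a word of care is the legitimacy of Duhamel's formula and of term-by-term trace manipulations in this hypoelliptic setting; these are justified by the smoothing property of $e^{-t\Delta_u}$ for $t>0$ and the trace-class convergence statements from Lemma~\ref{L:asyexp} and \cite[Lemma~5]{DH20}, exactly as in the classical Ray--Singer argument \cite[Theorem~2.1]{RS71}.
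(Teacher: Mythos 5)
Your proposal is correct and follows essentially the same route as the paper: Duhamel reduces the $u$-derivative to $-t\,\str(N\dot\Delta_u e^{-t\Delta_u})$, and the heart of the matter is the trace identity $\str(N\dot\Delta_u e^{-t\Delta_u})=\kappa\,\str(\dot G_u\Delta_u e^{-t\Delta_u})$, which the paper establishes exactly by the commutator bookkeeping you describe — writing $\dot\Delta_u=[D,\dot\sigma_u]$, using $[N,D]=Dk$, cyclicity under $e^{-t\Delta_u}$ to collapse the $a_q$ Leibniz terms into the factor $\kappa=k_qa_q$, and finally $\dot D^*_u=[D^*_u,\dot G_u]$ together with $[s_u,D^*_u]=\Delta_u$. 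The bookkeeping you flag as the main obstacle is carried out in the paper via the auxiliary operators $\sigma_u$ and $s_u$ from \eqref{E:sigmai}--\eqref{E:sui}, precisely as you anticipate.
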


\begin{proof}
Let us define $\sigma_{u,q}\colon\Gamma^\infty(E^q)\to\Gamma^\infty(E^{q-1})$ by
\begin{equation}\label{E:sigmai}
	\sigma_{u,q}
	:=(D^*_{u,q-1}D_{q-1})^{a_{q-1}-1}D^*_{u,q-1}
	=D^*_{u,q-1}(D_{q-1}D^*_{u,q-1})^{a_{q-1}-1}
\end{equation}
and $s_{u,q}\colon\Gamma^\infty(E^q)\to\Gamma^\infty(E^{q+1})$ by
\begin{equation}\label{E:sui}
	s_{u,q}
	:=(D_qD^*_{u,q})^{a_q-1}D_q=D_q(D^*_{u,q}D_q)^{a_q-1}.
\end{equation}
Hence, we have graded commutator relations
\begin{equation}\label{E:Dddss}
	\Delta_u=[D,\sigma_u]=[s_u,D^*_u].
\end{equation}
Using Duhamel's formula one shows 
\begin{equation}\label{E:trDui}
	\tfrac\partial{\partial u}\tr\bigl(e^{-t\Delta_{u,q}}\bigr)
	=-t\tr\bigl(\dot\Delta_{u,q}e^{-t\Delta_{u,q}}\bigr)
\end{equation}
where $\dot\Delta_u:=\frac\partial{\partial u}\Delta_u$, see \cite[Lemma~3.5]{RS12} or \cite[Corollary~2.50]{BGV92}.
Consequently,
\begin{equation}\label{E:var1}
	\tfrac\partial{\partial u}\str\bigl(Ne^{-t\Delta_u}\bigr)
	=-t\str\bigl(N\dot\Delta_ue^{-t\Delta_u}\bigr).
\end{equation}
Put $\dot\sigma_u:=\frac\partial{\partial u}\sigma$ and let $k$ denote the operator given by multiplication with $k_q$ on $\Gamma^\infty(E^q)$.
Then we have graded commutator relations $\dot\Delta_u=[D,\dot\sigma_u]$ and $[N,D]=Dk$, see \eqref{E:Dddss} and \eqref{E:Niik}, respectively.
Combining this with $[D,e^{-t\Delta_u}]=0$ and the fact that the graded trace vanishes on graded commutators we obtain
\begin{equation}\label{E:var2}
	\str\bigl(N\dot\Delta_ue^{-t\Delta_u}\bigr)
	=\str\bigl(Dk\dot\sigma_ue^{-t\Delta_u}\bigr).
\end{equation}
Put $\dot D^*_u:=\frac\partial{\partial u}D^*_u$.
From \eqref{E:sigmai} we get
\[
	\dot\sigma_{u,q}
	=\sum_{r=0}^{a_{q-1}-1}(D^*_{u,q-1}D_{q-1})^r\dot D^*_{u,q-1}(D_{q-1}D^*_{u,q-1})^{a_{q-1}-1-r}.
\]
Since $[D_{q-1}D^*_{u,q-1},e^{-t\Delta_{u,q}}]=0$ and the trace vanishes on commutators this yields 
\[
	\tr\bigl(D_{q-1}k_{q-1}\dot\sigma_{u,q}e^{-t\Delta_{u,q}}\bigr)
	=\kappa\tr\bigl(s_{u,{q-1}}\dot D^*_{u,q-1}e^{-t\Delta_{u,q}}\bigr),
\]
see also \eqref{E:kkiai} and \eqref{E:sui}.
Consequently,
\begin{equation}\label{E:var3}
	\str\bigl(Dk\dot\sigma_ue^{-t\Delta_u}\bigr)
	=\kappa\str\bigl(s_u\dot D^*_ue^{-t\Delta_u}\bigr).
\end{equation}
Differentiating \eqref{E:deltau} and using \eqref{E:dGdhdm}, we obtain $\dot D^*_u=[D^*_u,\dot G_u]$.
Combining this with $[D^*_u,e^{-t\Delta_u}]=0$ and $[s_u,D^*_u]=\Delta_u$, see \eqref{E:Dddss}, and using the fact that the graded trace vanishes on graded commutators we obtain
\begin{equation}\label{E:var4}
	\str\bigl(s_u\dot D^*_ue^{-t\Delta_u}\bigr)
	=\str\bigl(\dot G_u\Delta_ue^{-t\Delta_u}\bigr).
\end{equation}
Clearly,
\begin{equation}\label{E:var5}
	\tfrac\partial{\partial t}\str\bigl(\dot G_ue^{-t\Delta_u}\bigr)
	=-\str\bigl(\dot G_u\Delta_ue^{-t\Delta_u}\bigr).
\end{equation}
Combining equations \eqref{E:var1}, \eqref{E:var2}, \eqref{E:var3}, \eqref{E:var4} and \eqref{E:var5}, we obtain \eqref{E:var}.
\end{proof}

The following generalizes \cite[Theorem~3.4]{RS12}.

\begin{lemma}
For $\lambda\geq0$ not in the spectrum of $\Delta_u$ and $\Re(s)>n/2\kappa$ we have
\begin{equation}\label{E:varzeta}
	\tfrac\partial{\partial u}\tfrac1\kappa\zeta_{u,\lambda}(s)
	=-\frac s{\Gamma(s)}\int_0^\infty t^{s-1}\str\bigl(\dot G_uQ_{u,\lambda}e^{-t\Delta_u}\bigr)dt.
\end{equation}
Moreover, 
\begin{equation}\label{E:dduzeta}
	\tfrac\partial{\partial u}\tfrac1\kappa\zeta_{u,\lambda}(0)
	=0
\end{equation} 
and
\begin{equation}\label{E:varzetap}
	\tfrac\partial{\partial u}\tfrac1\kappa\zeta'_{u,\lambda}(0)
	=\str\bigl(\dot G_uP_{u,\lambda}\bigr)-\int_M\str\bigl(\dot G_up_{u,n}\bigr).
\end{equation}
\end{lemma}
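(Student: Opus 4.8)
The plan is to first upgrade the variational identity \eqref{E:var} of Lemma~\ref{L:var} so that it incorporates the spectral cutoff $Q_{u,\lambda}$, and then to feed the refined identity into the Mellin transform \eqref{E:zetalambda} of $\zeta_{u,\lambda}$. For the first step, I note that since $\lambda$ avoids the spectrum of $\Delta_u$, the spectral projection $P_{u,\lambda}$, and hence $Q_{u,\lambda}=\id-P_{u,\lambda}$, depends smoothly on $u$ near the parameter in question, is block diagonal with respect to $E=\bigoplus_qE^q$, commutes with $\Delta_u$, $e^{-t\Delta_u}$ and $N$, intertwines $D$ and $D^*_u$ (and therefore also $\sigma_u$, $s_u$ and the partial Laplacians $D_{q-1}D^*_{u,q-1}$, $D^*_{u,q-1}D_{q-1}$ appearing in \eqref{E:sigmai}--\eqref{E:sui}), and $P_{u,\lambda}$ has finite rank. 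I claim that
\[
  \tfrac\partial{\partial u}\str\bigl(NQ_{u,\lambda}e^{-t\Delta_u}\bigr)
  =\kappa t\,\tfrac\partial{\partial t}\str\bigl(\dot G_uQ_{u,\lambda}e^{-t\Delta_u}\bigr).
\]
This I would obtain by inserting $Q_{u,\lambda}$ into every graded trace occurring in the proof of Lemma~\ref{L:var}: the commutation and intertwining properties just recorded let each of the graded-commutator-vanishing steps \eqref{E:var1}--\eqref{E:var5} go through verbatim, with $Q_{u,\lambda}$ merely riding along. The one genuinely new point to check is that the $u$-dependence of $Q_{u,\lambda}$ contributes nothing: differentiating $P_{u,\lambda}^2=P_{u,\lambda}$ gives $\dot P_{u,\lambda}=P_{u,\lambda}\dot P_{u,\lambda}Q_{u,\lambda}+Q_{u,\lambda}\dot P_{u,\lambda}P_{u,\lambda}$, so that cyclicity of the graded trace, together with the fact that $N$ and $e^{-t\Delta_u}$ commute with $P_{u,\lambda}$ and $Q_{u,\lambda}$, forces $\str\bigl(N\dot Q_{u,\lambda}e^{-t\Delta_u}\bigr)=0$, and likewise with $N$ replaced by $\dot G_u$.

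Next, for $\Re(s)>n/2\kappa$ the integral in \eqref{E:zetalambda} and its $u$-derivative converge absolutely and uniformly on compact $u$-intervals --- using the $u$-uniform heat asymptotics of Lemma~\ref{L:asyexp} as $t\to0$ and the uniform spectral gap above $\lambda$ to control $t\to\infty$ --- so one may differentiate under the integral sign. Substituting the identity just established and integrating by parts in $t$ would then yield \eqref{E:varzeta}; the boundary terms vanish because $t^s\str\bigl(\dot G_uQ_{u,\lambda}e^{-t\Delta_u}\bigr)\to0$ both as $t\to\infty$, by exponential decay, and as $t\to0$, where the hypothesis $\Re(s)>n/2\kappa$ enters.

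Finally, to read off \eqref{E:dduzeta} and \eqref{E:varzetap} from \eqref{E:varzeta}, I would set $I_u(s):=\int_0^\infty t^{s-1}\str\bigl(\dot G_uQ_{u,\lambda}e^{-t\Delta_u}\bigr)\,dt$ and split $\str\bigl(\dot G_uQ_{u,\lambda}e^{-t\Delta_u}\bigr)=\str\bigl(\dot G_ue^{-t\Delta_u}\bigr)-\str\bigl(\dot G_uP_{u,\lambda}e^{-t\Delta_u}\bigr)$. The first summand has the asymptotic expansion of Lemma~\ref{L:asyexp}, with $t^0$-coefficient $\int_M\str(\dot G_up_{u,n})$ by \eqref{E:strAetD}, while the second is real-analytic in $t$ with value $\str(\dot G_uP_{u,\lambda})$ at $t=0$; hence $I_u$ extends meromorphically and has at $s=0$ at most a simple pole, with residue $\int_M\str(\dot G_up_{u,n})-\str(\dot G_uP_{u,\lambda})$. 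Since $s/\Gamma(s)=s^2/\Gamma(s+1)$ has a double zero at $s=0$, the function $-\tfrac s{\Gamma(s)}I_u(s)$ is holomorphic at $s=0$ and vanishes there, which is \eqref{E:dduzeta}, while its derivative at $s=0$ equals minus the residue of $I_u$, which is exactly \eqref{E:varzetap}; the interchange of $\partial/\partial u$ with $\partial/\partial s|_{s=0}$ is justified by holomorphy of $\zeta_{u,\lambda}$ in $s$ near $0$ and its smoothness in $u$. The main obstacle will be the first step: carrying the spectral cutoff undamaged through the whole chain of commutator manipulations in the proof of Lemma~\ref{L:var}, and verifying that the $u$-dependence of $Q_{u,\lambda}$ is harmless; the Mellin-transform and pole-counting arguments in the remaining steps are routine, of the kind already used in \cite{RS71,RS12}.
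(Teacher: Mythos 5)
Your proposal is correct and follows essentially the same route as the paper: first establish the cutoff version of the heat-trace variation formula by rerunning the proof of Lemma~\ref{L:var} with $Q_{u,\lambda}$ inserted (using Duhamel plus the observation that $\dot Q_{u,\lambda}=P_{u,\lambda}\dot Q_{u,\lambda}Q_{u,\lambda}+Q_{u,\lambda}\dot Q_{u,\lambda}P_{u,\lambda}$ kills the $\dot Q$-term in the trace), then pass through the Mellin transform with an integration by parts, and finally extract \eqref{E:dduzeta} and \eqref{E:varzetap} from the holomorphy of $\Gamma(s)^{-1}\int_0^\infty t^{s-1}\str(\dot G_uQ_{u,\lambda}e^{-t\Delta_u})\,dt$ at $s=0$; your residue bookkeeping reproduces the paper's $\LIM_{t\to0}$ computation exactly.
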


\begin{proof}
Since $\Delta_{u,q}$ commutes with $Q_{u,\lambda,q}$, Duhamel's formula gives
\begin{equation}\label{E:trQD}
	\tfrac\partial{\partial u}\tr\bigl(Q_{u,\lambda,q}e^{-t\Delta_{u,q}}\bigr)
	=\tr\bigl(\dot Q_{u,\lambda,q}e^{-t\Delta_{u,q}}\bigr)
	-t\tr\bigl(Q_{u,\lambda,q}\dot\Delta_{u,q}e^{-t\Delta_{u,q}}\bigr).
\end{equation}
Differentiating $Q_{u,\lambda,q}=Q_{u,\lambda,q}^2$, we get $\dot Q_{u,\lambda,q}=\dot Q_{u,\lambda,q}Q_{u,\lambda,q}+Q_{u,\lambda,q}\dot Q_{u,\lambda,q}$ and then $Q_{u,\lambda,q}\dot Q_{u,\lambda,q}Q_{u,\lambda,q}=0$.
Hence, $\dot Q_{u,\lambda,q}=Q_{u,\lambda,q}\dot Q_{u,\lambda,q}P_{u,\lambda,q}+P_{u,\lambda,q}\dot Q_{u,\lambda,q}Q_{u,\lambda,q}$ and thus
\[
	\tr\bigl(\dot Q_{u,\lambda,q}e^{-t\Delta_{u,q}}\bigr)
	=0.
\]
Combining this with \eqref{E:trQD}, we obtain the following analogue of \eqref{E:trDui}
\begin{equation}\label{E:trQD2}
	\tfrac\partial{\partial u}\tr\bigl(Q_{u,\lambda,q}e^{-t\Delta_{u,q}}\bigr)
	=-t\tr\bigl(Q_{u,\lambda,q}\dot\Delta_{u,q}e^{-t\Delta_{u,q}}\bigr).
\end{equation}

Proceeding exactly as in the proof of Lemma~\ref{L:var} and using the fact that $Q_{u,\lambda}$ commutes with $\Delta_u$, $D$ and $D^*_u$, one obtains
\begin{equation}\label{E:varlambda}
	\tfrac\partial{\partial u}\str\bigl(NQ_{u,\lambda}e^{-t\Delta_u}\bigr)
	=\kappa t\tfrac\partial{\partial t}\str\bigl(\dot G_uQ_{u,\lambda}e^{-t\Delta_u}\bigr).
\end{equation}
Using \eqref{E:zetalambda} and \eqref{E:varlambda} it is straight forward to derive \eqref{E:varzeta}.
Using the fact that 
\[
	\frac1{\Gamma(s)}\int_0^\infty t^{s-1}\str\bigl(\dot G_uQ_{u,\lambda}e^{-t\Delta_u}\bigr)dt
\]
is holomorphic at $s=0$, the equation \eqref{E:dduzeta} follows at once.
Furthermore, we get
\[
	\tfrac\partial{\partial u}\tfrac1\kappa\zeta'_{u,\lambda}(0)
	=-\LIM_{t\to0}\str\bigl(\dot G_uQ_{u,\lambda}e^{-t\Delta_u}\bigr)
	=\str\bigl(\dot G_uP_{u,\lambda}\bigr)-\LIM_{t\to0}\str\bigl(\dot G_ue^{-t\Delta_u}\bigr).
\]
Combining this with \eqref{E:strAetD} we obtain \eqref{E:varzetap}.
\end{proof}

The following can be proved as in the classical case, see for instance \cite[p~56]{BH07}. 

\begin{lemma}\label{L:varfinnorm}
If $\lambda\geq0$ is not contained in the spectrum of $\Delta_u$, then
\[
	\tfrac\partial{\partial u}\log\|-\|^{\sdet(H^*(E,D))}_{[0,\lambda],h_u,\mu_u}
	=\tfrac12\str(\dot G_uP_{u,\lambda}).
\]
\end{lemma}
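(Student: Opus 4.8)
The plan is to reduce this to the classical finite--dimensional statement: for a \emph{fixed} finite complex $C$ of vector spaces equipped with a smooth family of graded inner products $g_u$, the metric induced on $\sdet(H^*(C,D))$ via the torsion isomorphism $\sdet(C)=\sdet(H^*(C,D))$ satisfies $\tfrac\partial{\partial u}\log(\text{metric})=\tfrac12\str\bigl(g_u^{-1}\tfrac\partial{\partial u}g_u\bigr)$, which is exactly the computation recorded in \cite[p.~56]{BH07} (cf.~\cite{RS71,BZ92}). The only subtlety is that the finite--dimensional subcomplex $\img(P_{u,\lambda})\subseteq\Gamma^\infty(E)$ in \eqref{E:toriso} itself moves with $u$; since $\lambda$ is assumed not to lie in the spectrum of $\Delta_u$ for $u$ near a fixed value $u_0$, the projections $P_{u,\lambda}$ depend smoothly on $u$ with locally constant rank, and this motion can be trivialized.

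Concretely, I would fix $u_0$ and set $v:=u_0$, and consider the map $U_u:=P_{u,\lambda}|_{\img(P_{u_0,\lambda})}\colon\img(P_{u_0,\lambda})\to\img(P_{u,\lambda})$, which is an isomorphism for $u$ near $u_0$ with $U_{u_0}=\id$. Since $D$ commutes with $\Delta_u$ and hence with $P_{u,\lambda}$, $U_u$ is a chain isomorphism for the (fixed) differential $D$. To see that $U_u$ realizes the canonical identification $H^*(\img(P_{u_0,\lambda}),D)=H^*(E,D)=H^*(\img(P_{u,\lambda}),D)$ from \eqref{E:toriso}, I would use that on $\Gamma^\infty(E)$ one has $P_{u,\lambda}=\id-Q_{u,\lambda}$ with $Q_{u,\lambda}=\Delta_u\Delta_u^+=[D,\sigma_u\Delta_u^+]$, where $\Delta_u^+$ inverts $\Delta_u$ on $\img(Q_{u,\lambda})$ and vanishes on $\img(P_{u,\lambda})$ and $\sigma_u$ is as in \eqref{E:sigmai}; thus $P_{u,\lambda}$ is chain homotopic to $\id_{\Gamma^\infty(E)}$, so the inclusion $\img(P_{u,\lambda})\hookrightarrow\Gamma^\infty(E)$ precomposed with $U_u$ is chain homotopic to the inclusion $\img(P_{u_0,\lambda})\hookrightarrow\Gamma^\infty(E)$, identifying the induced cohomology maps. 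Pulling back the $L^2$ inner product $\llangle-,-\rrangle_u$ via $U_u$ yields a family of graded inner products $\tilde h_u$ on the fixed complex $C:=\img(P_{u_0,\lambda})$, and since $U_u$ is an isometry inducing the canonical identification in cohomology, the resulting determinant--line metric coincides with $\|-\|^{\sdet(H^*(E,D))}_{[0,\lambda],h_u,\mu_u}$.

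It then remains to evaluate the logarithmic derivative $\dot{\tilde h}_{u_0}:=\tilde h_{u_0}^{-1}\tfrac\partial{\partial u}\big|_{u_0}\tilde h_u\in\eend(C)$. Differentiating $\langle x,y\rangle_{\tilde h_u}=\llangle U_ux,U_uy\rrangle_u$ at $u=u_0$ gives three terms; the two involving $\dot U_{u_0}=\dot P_{u_0,\lambda}$ vanish, because differentiating $P_{u,\lambda}^2=P_{u,\lambda}$ yields $P_{u_0,\lambda}\dot P_{u_0,\lambda}P_{u_0,\lambda}=0$, which together with self--adjointness of $P_{u_0,\lambda}$ kills $\llangle\dot P_{u_0,\lambda}x,y\rrangle_{u_0}$ for $x,y\in C$ — exactly the argument already used in the proof of the preceding lemma. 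The surviving term is $\llangle\dot G_{u_0}x,y\rrangle_{u_0}$ by \eqref{E:inneru} and \eqref{E:dGdhdm}, so $\dot{\tilde h}_{u_0}$ is the compression $P_{u_0,\lambda}\dot G_{u_0}P_{u_0,\lambda}$ of $\dot G_{u_0}$ to $C$, and $\str_C(\dot{\tilde h}_{u_0})=\str(\dot G_{u_0}P_{u_0,\lambda})$ by cyclicity of the graded trace and $P_{u_0,\lambda}^2=P_{u_0,\lambda}$. Feeding this into the finite--dimensional variation formula (which supplies the factor $\tfrac12$) yields $\tfrac\partial{\partial u}\big|_{u_0}\log\|-\|^{\sdet(H^*(E,D))}_{[0,\lambda],h_u,\mu_u}=\tfrac12\str(\dot G_{u_0}P_{u_0,\lambda})$, and since $u_0$ is arbitrary the lemma follows.

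The main obstacle I anticipate is the bookkeeping in the middle step: checking carefully that $U_u$ induces precisely the canonical cohomology identification of \eqref{E:toriso} (so that no spurious finite--dimensional torsion factor is introduced) and that $u\mapsto P_{u,\lambda}$, $U_u$, $\tilde h_u$ are genuinely smooth, which is where the hypothesis that $\lambda$ avoids the spectrum of $\Delta_u$ is used. Once the passage to a fixed complex is secured, the remaining computation is the routine finite--dimensional one.
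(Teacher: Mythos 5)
Your argument is correct and is essentially the standard classical computation that the paper invokes by reference (the paper gives no proof of this lemma, citing \cite[p.~56]{BH07} instead): trivialize the moving complex $\img(P_{u,\lambda})$ by the spectral projections, check the canonical cohomology identification via the chain homotopy $Q_{u,\lambda}=[D,\sigma_u\Delta_u^+]$, kill the $\dot P_{u,\lambda}$ terms using $P\dot PP=0$, and apply the finite--dimensional anomaly formula to the compressed endomorphism $P_{u_0,\lambda}\dot G_{u_0}P_{u_0,\lambda}$. No gaps; this is exactly the intended proof.
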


Combining Lemma~\ref{L:varfinnorm} with Equations~\eqref{E:varzetap} and \eqref{E:dGdhdm}, we obtain Theorem~\ref{T:var}, see \eqref{E:defRSmetric}.

\begin{remark}\label{R:twist}
Twisting a Rockland complex as in \eqref{E:ED} with a flat vector bundle $F$ we obtain a new Rockland complex:
$$
\cdots\to\Gamma^\infty(E^{q-1}\otimes F)\xrightarrow{D_{q-1}^F}
\Gamma^\infty(E^q\otimes F)\xrightarrow{D_q^F}
\Gamma^\infty(E^{q+1}\otimes F)\to\cdots.
$$
Suppose $h^F$ is a parallel fiber wise Hermitian inner product on $F$ and equip $E^q\otimes F$ with the Hermitian metric $h_q\otimes h^F$.
Locally, the flat bundle $F$ and the parallel Hermitian metric $h^F$ can be trivialized simultaneously.
Hence, locally, we have
\[
	D^F=D\oplus\dotsb\oplus D,\quad
	(D^F)^*=D^*\oplus\dotsb\oplus D^*,\quad
	\Delta^F=\Delta\oplus\dotsm\oplus\Delta.
\]
Using locality of the heat kernel expansion, we conclude
\[
	p_j^F=p_j\otimes\id_F
\]
for all $j\in\mathbb N_0$, where $p_j^F\in\Gamma^\infty\bigl(\eend(E\otimes F)\otimes|\Lambda|\bigr)$ and $p_j\in\Gamma^\infty\bigl(\eend(E)\otimes|\Lambda|\bigr)$ denote the local quantities in the heat kernel expansions of the Rumin--Seshadri operators $\Delta^F$ and $\Delta$, respectively, see \eqref{E:heatkernasymp}.
\end{remark}

\subsection{Deformation of the filtration}\label{SS:deffil}

In this section we consider a family of closed filtered manifolds $M_u$ smoothly depending on a real parameter $u$.
More explicitly, we assume the underlying manifold $M$ remains fixed, and the filtration subbundles $T^pM_u\subseteq TM$ depend smoothly on $u$.
Suppose we have a smooth family of differential complexes $D_u$ acting between sections of vector bundles $E^q_u$ which depend smoothly on $u$,
\[
	\cdots\to\Gamma^\infty(E_u^{q-1})\xrightarrow{D_{q-1,u}}
	\Gamma^\infty(E_u^q)\xrightarrow{D_{q,u}}
	\Gamma^\infty(E_u^{q+1})\to\cdots
\]
We assume that each $D_u$ is a Rockland complex with respect to the filtration on $M_u$ and that the Heisenberg order $k_q$ of $D_{q,u}$ is independent of $u$.
We consider graded Hermitian metrics $h_u$ on $E_u=\bigoplus_qE^q_u$ and volume densities $\mu_u$ on $M$ depending smoothly on $u$, and aim at describing the dependence of the torsion $\|-\|^{\sdet(H^*(E_u,D_u))}_{h_u,\mu_u}$ on the parameter $u$, cf.~\eqref{E:defRSmetric}.

We will add the subscript $u$ to the notation used in the preceding sections to indicate the dependence on this parameter.
In particular, $\Delta_u$ denotes the Rumin--Seshadri operator associated with $E_u,D_u,h_u,\mu_u$, see \eqref{E:Deltai}, and $k_{t,u}(x,y)$ will denote the (smooth) kernel of $e^{-t\Delta_u}$.
We take the numbers $a_q$ satisfying \eqref{E:kiai} and the numbers $N_q$ as in \eqref{E:Niik} to be constant in $u$.

Suppose we have differential operators $A_{q,v,u}\colon\Gamma^\infty(E^q_u)\to\Gamma^\infty(E^q_v)$ which depend smoothly on real parameters $u$ and $v$ such that
\begin{equation}\label{E:ADDA}
	A_{v,u}D_u=D_vA_{v,u}.
\end{equation}
%where $A_{v,u}=\bigoplus_qA_{q,v,u}$.
We assume that $A_{v,u}$ induces an isomorphism in cohomology,
\begin{equation}\label{E:HAu}
	H(A_{v,u})\colon H^*(E_u,D_u)\to H^*(E_v,D_v),
\end{equation}
and that $A_{u,u}=\id$.
In this situation we have

\begin{theorem}\label{T:var.all}
With respect to the isomorphism of determinant lines
\[
	\sdet(H(A_{v,u}))\colon\sdet(H^*(E_u,D_u))\to\sdet(H^*(E_v,D_v))
\]
induced by \eqref{E:HAu}, we have
\begin{equation}\label{E:varDuall}
	\tfrac\partial{\partial v}\big|_{v=u}\log\left((\sdet(H(A_{v,u})))^*\|-\|^{\sdet(H^*(E_v,D_v))}_{h_v,\mu_v}\right)
%	\\=\frac12\LIM_{t\to0}\str\bigl((\dot A_u+\dot A^*_u+\dot h_u+\dot\mu_u)e^{-t\Delta_u}\bigr)
	=\frac12\int_M\str(\tilde p_{n,u})
\end{equation}
where $\tilde p_{n,u}\in\Gamma^\infty(\eend(E_u)\otimes|\Lambda|)$ is the constant term in the asymptotic expansion 
\begin{equation}\label{E:asym.all}
	\bigl((\dot A_u+\dot A^*_u+\dot h_u+\dot\mu_u)k_{t,u}\bigr)(x,x)\sim\sum_{j=-r_u}^\infty t^{(j-n)/2\kappa}\tilde p_{j,u}(x).
\end{equation}
Here $\dot\mu_u=\mu_u^{-1}\frac\partial{\partial u}\mu_u$, $\dot h_u=h_u^{-1}\nabla_{\frac{\partial}{\partial u}}h_u$, and $\dot A_u=\nabla_{\frac{\partial}{\partial v}}|_{v=u}A_{v,u}$ with respect to an auxiliary linear connection on the bundle $\bigsqcup_uE_u$ and $r_u$ denotes the Heisenberg order of $\dot A_u$ with respect to the filtration on $M_u$.
%Moreover, $\Delta_u$ denotes the Rumin--Seshadri operator associated with $D_u,h_u,\mu_u$, see \eqref{E:Deltai}, and $k_{t,u}(x,y)$ denotes the (smooth) kernel of $e^{-t\Delta_u}$.
In particular, $\tilde p_{n,u}$ is locally computable.
More precisely, $\tilde p_{n,u}(x)$ can be computed from $(\dot A_u,\dot h_u,\dot\mu_u)$ at $x$ and the germ of $(M_u,E_u,D_u,h_u,\mu_u)$ at $x$.
Furthermore, $\tilde p_{n,u}=0$ if the homogeneous dimension $n$ is odd.
\end{theorem}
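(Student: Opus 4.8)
The plan is to follow the proof of Theorem~\ref{T:var} closely, the two new features being the variation of the differential itself and the presence of the transport isomorphism $\sdet(H(A_{v,u}))$. As a preliminary reduction I would trivialise the family of vector bundles $\bigsqcup_uE^q_u$ by parallel transport along the auxiliary connection, so that $E^q_v=E^q$ may be taken independent of $v$, and, invoking Lemma~\ref{L:lambda}, fix once and for all a cutoff $\lambda\geq0$ which, by continuity of the spectrum, misses the spectrum of $\Delta_v$ for all $v$ near $u$. Differentiating the intertwining relation \eqref{E:ADDA} at $v=u$ gives the basic identity $\dot D_u=[\dot A_u,D_u]$; taking the $\llangle-,-\rrangle_u$-adjoint of this relation and adding the metric-variation term $[D^*_u,\dot G_u]$, exactly as in the passage from \eqref{E:deltau} to \eqref{E:dGdhdm}, yields $\dot D^*_u=[D^*_u,\dot A^*_u+\dot G_u]$, where $\dot G_u=\dot h_u+\dot\mu_u$ and $\dot A^*_u$ denotes the formal $\llangle-,-\rrangle_u$-adjoint of $\dot A_u$. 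Thus the combined infinitesimal variation of $(D_u,D^*_u,h_u,\mu_u)$ is governed by the single, generally non-zeroth order, operator $\dot A_u+\dot A^*_u+\dot G_u$, and this is precisely the operator appearing in \eqref{E:asym.all}.

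The analytic core is the behaviour of the zeta function. Proceeding as in Lemma~\ref{L:var}, one has $\tfrac\partial{\partial v}\big|_{v=u}\str(NQ_{v,\lambda}e^{-t\Delta_v})=-t\str(NQ_{u,\lambda}\dot\Delta_ue^{-t\Delta_u})$. Splitting $\dot\Delta_u$ according to whether a factor $D_u$ or a factor $D^*_u$ is differentiated, and using the two resolvent identities $\Delta_u=[D_u,\sigma_u]=[s_u,D^*_u]$ from \eqref{E:Dddss}, one writes $\dot\Delta_u=[D_u,\dot\sigma^*_u]+[\dot s^D_u,D^*_u]$, where $\dot\sigma^*_u$ collects the $\dot D^*_u$-contributions and $\dot s^D_u$ the $\dot D_u$-contributions. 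The first summand is treated verbatim as in \eqref{E:var2}--\eqref{E:var5}, since it has exactly the shape of the metric term with $\dot G_u$ replaced by $\dot A^*_u+\dot G_u$; the second is treated by the mirror-image computation, using $[N,D_u]=D_uk$, $[D^*_u,N]=kD^*_u$, and $\dot D_u=[\dot A_u,D_u]$, to produce the $\dot A_u$-contribution. (One checks along the way that the purely conjugational term $[\dot A_u,\Delta_u]$ drops out of the graded trace, since $\Delta_ue^{-t\Delta_u}$ commutes with $N$ and $Q_{u,\lambda}$ and cyclicity then forces $\str(NQ_{u,\lambda}[\dot A_u,\Delta_u]e^{-t\Delta_u})=0$.) The outcome is the analogue of \eqref{E:varlambda}, namely $\tfrac\partial{\partial v}\big|_{v=u}\str(NQ_{v,\lambda}e^{-t\Delta_v})=\kappa t\tfrac\partial{\partial t}\str\bigl((\dot A_u+\dot A^*_u+\dot G_u)Q_{u,\lambda}e^{-t\Delta_u}\bigr)$. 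Feeding this into the Mellin transform exactly as in \eqref{E:varzeta}--\eqref{E:varzetap}, and using \eqref{E:asym.all} to identify $\LIM_{t\to0}\str\bigl((\dot A_u+\dot A^*_u+\dot G_u)e^{-t\Delta_u}\bigr)=\int_M\str(\tilde p_{n,u})$ (the analogue of \eqref{E:strAetD}), one obtains $\tfrac1\kappa\tfrac\partial{\partial v}\big|_{v=u}\zeta'_{v,\lambda}(0)=\str\bigl((\dot A_u+\dot A^*_u+\dot G_u)P_{u,\lambda}\bigr)-\int_M\str(\tilde p_{n,u})$.

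It remains to handle the finite-dimensional factor together with the transport. For $v$ near $u$ the compression $B_{v,u}:=P_{v,\lambda}A_{v,u}P_{u,\lambda}\colon\img(P_{u,\lambda})\to\img(P_{v,\lambda})$ is a chain map of finite-dimensional complexes which, because the difference $A_{v,u}-B_{v,u}$ takes values in the contractible subcomplex $\img(Q_{v,\lambda})$ and the inclusions $\img(P_{\cdot,\lambda})\hookrightarrow\Gamma^\infty(E)$ are quasi-isomorphisms, induces the map $H(A_{v,u})$ in cohomology; moreover $B_{u,u}=\id$. Hence the transported norm is computed from $B_{v,u}$ and the restricted $L^2$ inner products through the torsion of finite-dimensional complexes and the identification \eqref{E:toriso}, and the finite-dimensional variational computation---a direct generalisation of Lemma~\ref{L:varfinnorm} which now also differentiates the chain map, with the $\dot P_{u,\lambda}$-contributions dropping out of the graded trace because $P_{u,\lambda}\dot P_{u,\lambda}P_{u,\lambda}=0$---gives $\tfrac\partial{\partial v}\big|_{v=u}\log\bigl((\sdet(H(A_{v,u})))^*\|-\|^{\sdet(H^*(E_v,D_v))}_{[0,\lambda],h_v,\mu_v}\bigr)=\tfrac12\str\bigl((\dot A_u+\dot A^*_u+\dot G_u)P_{u,\lambda}\bigr)$. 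Combining this with $-\tfrac1{2\kappa}\tfrac\partial{\partial v}\big|_{v=u}\zeta'_{v,\lambda}(0)$ from the previous step, the $P_{u,\lambda}$-terms cancel---consistent with the $\lambda$-independence of the torsion---and \eqref{E:varDuall} follows via \eqref{E:defRSmetric}; locality of $\tilde p_{n,u}$, and its dependence only on $(\dot A_u,\dot h_u,\dot\mu_u)$ and the germ of $(M_u,E_u,D_u,h_u,\mu_u)$ at $x$, are inherited from the locality of the heat kernel expansion \eqref{E:asym.all} and the form of $\dot A_u+\dot A^*_u+\dot G_u$.

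I expect the main obstacle to be analytic rather than formal: since $\dot A_u$ is a genuine differential operator of Heisenberg order $r_u$, the composition $(\dot A_u+\dot A^*_u+\dot G_u)e^{-t\Delta_u}$, while smoothing, has a heat trace with the more singular expansion \eqref{E:asym.all} beginning at $t^{-(n+r_u)/2\kappa}$, so one must invoke the refinement of Lemma~\ref{L:asyexp} covering $\Pi e^{-t\Delta}$ for an arbitrary differential operator $\Pi$---the existence of this expansion and the meromorphic continuation, holomorphic at $0$, of the associated zeta functions being available from \cite{DH20}---and one has to ensure that every trace occurring in the commutator manipulations is the trace of a trace-class (in fact smoothing) operator, which holds since $e^{-t\Delta_u}$ is smoothing and every other factor is a differential operator. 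Finally, the vanishing $\tilde p_{n,u}=0$ for odd $n$ is a parity statement: as in the classical case, only terms with $j$ even occur in \eqref{E:asym.all}---this parity of the heat trace, valid also when a differential operator is inserted, belongs to the framework of \cite{DH20} and extends the last clause of Lemma~\ref{L:asyexp}---whence $\tilde p_{n,u}=0$ whenever $n$ is odd.
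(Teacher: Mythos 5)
Your proposal is correct and follows essentially the same route as the paper: parallel transport to freeze the bundles, the identity $\dot D_u=[\dot A_u,D_u]$ fed into the Duhamel/graded-commutator manipulation to get $\kappa t\tfrac{\partial}{\partial t}\str\bigl((\dot A_u+\dot A_u^*+\dot G_u)Q_{\lambda,u}e^{-t\Delta_u}\bigr)$, the Mellin-transform evaluation of $\zeta'_{\lambda,u}(0)$ using the Heisenberg-calculus heat expansion for $Be^{-t\Delta_u}$ (the paper's Lemma~\ref{L:asymA}), and the finite-dimensional compression $P_{\lambda,v}A_{v,u}$ whose variation cancels the $P_{\lambda,u}$-term. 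The only (harmless) difference is that you carry the metric variation $\dot G_u$ through the whole computation, whereas the paper first splits it off by the chain rule and disposes of it with Theorem~\ref{T:var}.
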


In the remaining part of this section we will give a proof of this theorem.
We begin by addressing the existence of the asymptotic expansion in \eqref{E:asym.all}.

\begin{lemma}\label{L:asymA}
Suppose $B$ is a differential operator, acting on sections of $E_u$, which is of Heisenberg order at most $r_u$ with respect to the filtration on $M_u$, and let $(Bk_{t,u})(x,y)$ denote the (smooth) kernel of $Be^{-t\Delta_u}$.
Then we have an asymptotic expansion, as $t\to0$,
\begin{equation}\label{E:asymAu}
	(Bk_{t,u}\bigr)(x,x)\sim\sum_{j=-r_u}^\infty t^{(j-n)/2\kappa}\,\tilde p^B_{j,u}(x)
\end{equation}
where $\tilde p^B_{j,u}\in\Gamma^\infty(\eend(E_u)\otimes|\Lambda|)$ are locally computable.
More precisely, $\tilde p^B_{j,u}(x)$ can be computed from the coefficients of $B$ at $x$ and the germ of $(M_u,E_u,D_u,h_u,\mu_u)$ at $x$.
Furthermore, $\tilde p_{j,u}^B(x)=0$ for all odd $j$.
\end{lemma}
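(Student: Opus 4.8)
The plan is to reduce the statement to the heat kernel asymptotics already available for $e^{-t\Delta_u}$ (Lemma~\ref{L:asyexp} and its proof in \cite[Theorem~1]{DH20}), together with the standard fact that composing a smoothing family with a differential operator of bounded Heisenberg order only shifts the exponents in the expansion by the order of that operator. First I would recall that the Heisenberg pseudodifferential calculus adapted to the filtration on $M_u$ produces, for each $t>0$, a parametrix for $e^{-t\Delta_u}$ whose symbol admits a homogeneous expansion with the rescaling $t\mapsto t^{1/2\kappa}$ built in; this is exactly the mechanism behind \eqref{E:heatkernasymp}. Applying the differential operator $B$ of Heisenberg order $\le r_u$ on the left acts on this symbolic expansion by (at worst) raising the Heisenberg order by $r_u$, hence multiplies each term by a factor $t^{-r_u/2\kappa}$ in the worst case, which is precisely the reindexing $j\mapsto j-r_u$ appearing in \eqref{E:asymAu}. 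Locality of the coefficients $\tilde p^B_{j,u}(x)$ is inherited from the locality of the Heisenberg symbolic calculus: each term is a universal polynomial expression in the jets at $x$ of the coefficients of $\Delta_u$ (equivalently of $D_u$, $h_u$, $\mu_u$, and the filtration) and of $B$.

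The vanishing $\tilde p^B_{j,u}=0$ for odd $j$ is the one point that needs a genuine argument rather than bookkeeping. In the untwisted case (Lemma~\ref{L:asyexp}) this parity comes from a symmetry of the heat kernel: the local invariants in the expansion are built from the Heisenberg symbol of $\Delta_u$, which is homogeneous of even degree $2\kappa$, so each successive term in the parametrix construction picks up the degree in jumps of one but the rescaling variable is $t^{1/2\kappa}$ and the odd-degree contributions integrate to zero fiberwise over the osculating group — more precisely, the relevant dilation/reflection symmetry of the model operator kills the odd-order terms. I would argue that inserting $B$ does not destroy this: writing the Heisenberg order $r_u$ of $B$ and noting that the whole expansion of $(Bk_{t,u})(x,x)$ is obtained by applying $B$ termwise to the expansion of $k_{t,u}(x,x)$, the homogeneity degrees that occur are shifted by the (integer) order $r_u$ uniformly, so the parity statement for $Bk_{t,u}$ is equivalent to the parity statement for $k_{t,u}$, which is Lemma~\ref{L:asyexp}. (If one prefers, one can phrase this via the fact that the coefficients $\tilde p^B_{j,u}$ for $j$ of the wrong parity must vanish because the corresponding homogeneous symbol pieces are odd under the dilation $\delta_{-1}$ on the osculating group and therefore have vanishing ``value at the identity''.)

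With Lemma~\ref{L:asymA} in hand, the theorem follows quickly. I would apply the lemma with $B=\dot A_u+\dot A_u^*+\dot h_u+\dot\mu_u$: the terms $\dot h_u$ and $\dot\mu_u$ are of Heisenberg order zero, $\dot A_u$ has Heisenberg order $r_u$ by hypothesis, and $\dot A_u^*$ — the formal adjoint with respect to the $L^2$ structure from $h_u,\mu_u$ — has the same Heisenberg order $r_u$ since the calculus is stable under transposition. Hence $B$ has Heisenberg order $\le r_u$, giving the expansion \eqref{E:asym.all} with $\tilde p_{j,u}:=\tilde p^B_{j,u}$, and $\tilde p_{n,u}$ is the constant term (the $j=n$ coefficient), locally computable from $(\dot A_u,\dot h_u,\dot\mu_u)$ at $x$ and the germ of $(M_u,E_u,D_u,h_u,\mu_u)$ at $x$. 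For the variational identity \eqref{E:varDuall} I would imitate the proof of Theorem~\ref{T:var}: differentiating the zeta function and the finite-dimensional norm in $v$ at $v=u$, the contribution of $\dot h_u+\dot\mu_u$ is handled exactly as in Lemma~\ref{L:var} (producing $\str((\dot h_u+\dot\mu_u)k_{t,u})$ inside a $t\partial_t$), while the extra term coming from $\dot A_u$ — arising because now the differentials $D_{q,u}$ themselves move with $u$ and $A_{v,u}$ implements the chain-level comparison — contributes $\str((\dot A_u+\dot A_u^*)k_{t,u})$ via the same commutator manipulations (using $A_{v,u}D_u=D_vA_{v,u}$, the relation $\Delta_u=[D,\sigma_u]=[s_u,D_u^*]$, and that $\str$ vanishes on graded commutators), so that all four terms assemble into $\str(Bk_{t,u})$; taking $\LIM_{t\to0}$ and using \eqref{E:asym.all} yields $\tfrac12\int_M\str(\tilde p_{n,u})$, and the $P_{u,\lambda}$-contributions from the zeta derivative and from $\partial_v\log\|-\|_{[0,\lambda]}$ cancel as in the classical case. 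The main obstacle is the variational bookkeeping in this last step: one must carefully track how $H(A_{v,u})$ enters the comparison of determinant lines so that the finite-dimensional torsion term is correctly cancelled, and one must justify interchanging $\partial_v$ with the zeta-regularization and with $\LIM_{t\to0}$ — but both are routine given the uniform (in $u$) structure of the Heisenberg heat calculus already invoked above; the genuinely new analytic input is entirely contained in Lemma~\ref{L:asymA}, whose proof is the order-shift-plus-parity argument sketched in the previous paragraph.
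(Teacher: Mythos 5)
Your first paragraph follows the paper's route for the expansion itself: one takes a parametrix $Q$ for the heat operator $\Delta_u+\tfrac{\partial}{\partial t}$ in the parabolic Heisenberg calculus on $M\times\mathbb R$ (time direction of weight $2\kappa$), notes that $BQ$ is again a Heisenberg pseudodifferential operator, now of order $r_u-2\kappa$, and reads off \eqref{E:asymAu} together with the locality of the $\tilde p^B_{j,u}$ from the standard asymptotic expansion of such kernels along the diagonal, exactly as in \cite[Section~4]{DH20}. The only imprecision there is that what is needed is a parametrix for the heat \emph{operator}, uniform as $t\to0$, not a parametrix for the smoothing operator $e^{-t\Delta_u}$ at fixed $t$; this is cosmetic.

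The parity claim, however, is not established by your argument. You assert that the expansion of $(Bk_{t,u})(x,x)$ is obtained by applying $B$ termwise to the on-diagonal expansion of $k_{t,u}(x,x)$ and that the homogeneity degrees are shifted uniformly by $r_u$, so that the parity statement for $Bk_{t,u}$ is ``equivalent'' to that for $k_{t,u}$. Both steps fail. First, the on-diagonal expansion \eqref{E:heatkernasymp} does not determine the off-diagonal kernel on which $B$ acts; $B$ must be applied to the off-diagonal parametrix and only then restricted to the diagonal. Second, a differential operator of Heisenberg order at most $r_u$ has homogeneous components of every weighted degree $0,1,\dotsc,r_u$, so the shift is not uniform; worse, even for the principal part alone, a uniform shift by an odd $r_u$ would \emph{flip} the parity and yield the negation of the assertion $\tilde p^B_{j,u}=0$ for odd $j$. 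The correct mechanism is the one your closing parenthetical gestures at but does not prove: every homogeneous component of weighted degree $d$ of the full symbol (or convolution kernel) of a differential operator is odd or even under the dilation $\delta_{-1}$ according to the sign $(-1)^d$; the components of the heat parametrix $Q$ have the same property by the argument of \cite[Section~4]{DH20}; and this property ``parity $=(-1)^{\mathrm{degree}}$'' is preserved under the composition asymptotics, since the $l$-th composition term of pieces of degrees $i$ and $-2\kappa-j$ has degree $i-2\kappa-j-l$ and parity $(-1)^{i+j+l}$. Since the diagonal value of an odd homogeneous component vanishes ($\delta_{-1}$ is a measure-preserving automorphism of the osculating group), one gets $\tilde p^B_{j,u}=0$ for odd $j$. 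Without this symbol-level bookkeeping the parity statement is unsupported; and note that your last paragraph concerns Theorem~\ref{T:var.all} rather than Lemma~\ref{L:asymA}, so it is not part of the proof of the statement at hand.
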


\begin{proof}
According to \cite[Section~4]{DH20}, the heat operator $\Delta_u+\frac\partial{\partial t}$ admits a parametrix $Q$ in the Heisenberg calculus on $M\times\mathbb R$ whose kernel $k_Q$ satisfies
\[
	k_Q(x,s;y,s-t)=\begin{cases}k_{t,u}(x,y)&\text{for $t>0$, and}\\0&\text{for $t<0$.}\end{cases}
\]
Here $M\times\mathbb R$ is considered as a filtered manifold such that the vector field $\frac\partial{\partial t}$ has Heisenberg order $2\kappa$.
Hence, $BQ$ is a pseudodifferential operator of Heisenberg order $r-2\kappa$ in the Heisenberg calculus on $M\times\mathbb R$.
For its kernel $k_{BQ}$ we find
\[
	k_{BQ}(x,s;y,s-t)=\begin{cases}(Bk_{t,u})(x,y)&\text{for $t>0$, and}\\0&\text{for $t<0$.}\end{cases}
\]
Since $BQ$ is in the Heisenberg calculus, its kernel $k_{BQ}$ admits an asymptotic expansion along the diagonal.
Proceeding exactly as in \cite[Section~4]{DH20}, this leads to the asymptotic expansion in \eqref{E:asymAu}.
\end{proof}

Using parallel transport with respect to the auxiliary connection to identify the bundles $E_u$ with a single vector bundle, we may w.l.o.g.\ assume $E_u=E$ for all $u$.
Then $\dot h_u=h_u^{-1}\frac\partial{\partial u}h_u$ and $\dot A_u=\frac\partial{\partial v}|_{v=u}A_{v,u}$.
Differentiating \eqref{E:ADDA}, we obtain
\begin{equation}\label{E:dADDA}
        \dot D_u=\dot A_uD_u-D_u\dot A_u.
\end{equation}
where $\dot D_u:=\tfrac\partial{\partial u}D_u$.
By the chain rule,
\begin{multline*}
	\tfrac\partial{\partial v}\big|_{v=u}\log\left((\sdet(H(A_{v,u})))^*\|-\|^{\sdet(H^*(E,D_v))}_{h_v,\mu_v}\right)
	\\=\tfrac\partial{\partial v}\big|_{v=u}\log\left((\sdet(H(A_{v,u})))^*\|-\|^{\sdet(H^*(E,D_v))}_{h_u,\mu_u}\right)
	\\+\tfrac\partial{\partial v}\big|_{v=u}\log\|-\|^{\sdet(H^*(E,D_u))}_{h_v,\mu_v}.
\end{multline*}
In view of Theorem~\ref{T:var} we may thus assume w.l.o.g.\ that the Hermitian metrics and the volume densities are independent of $u$, that is, $h_u=h$ and $\mu_u=\mu$.

\begin{lemma}\label{L:vara}
If $\lambda\geq0$ is not in the spectrum of $\Delta_u$ then
\begin{equation}\label{E:vara}
	\tfrac\partial{\partial u}\str\bigl(NQ_{\lambda,u}e^{-t\Delta_u}\bigr)
	=\kappa t\tfrac\partial{\partial t}\str\bigl((\dot A_u+\dot A_u^*)Q_{\lambda,u}e^{-t\Delta_u}\bigr).
\end{equation}
\end{lemma}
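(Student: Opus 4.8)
The plan is to follow the template of Lemma~\ref{L:var}, but now tracking the extra contribution coming from the fact that the differential $D_u$ itself varies. Since we have reduced to the case $h_u=h$, $\mu_u=\mu$, the only $u$-dependence in $\Delta_u$ comes through $D_u$ and $D_u^*$, and the key structural input is the relation $\dot D_u=\dot A_uD_u-D_u\dot A_u$ from \eqref{E:dADDA}, together with its formal adjoint $\dot D_u^*=\dot A_u^*D_u^*-D_u^*\dot A_u^*$ (here the adjoint is taken with respect to the fixed $L^2$ inner product, which now does not depend on $u$). The idea is that the pair $(\dot A_u,\dot A_u^*)$ plays exactly the role that $\dot G_u$ played in the metric-anomaly computation: there $\dot D_u^*=[D_u^*,\dot G_u]$, whereas here $\dot D_u=[\dot A_u,D_u]$ and $\dot D_u^*=[\dot A_u^*,D_u^*]$, so both $D$ and $D^*$ acquire a commutator-type variation.

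First I would apply Duhamel's formula as in \eqref{E:trQD}--\eqref{E:trQD2}, using that $Q_{\lambda,u}$ commutes with $\Delta_u$ and that $\tr(\dot Q_{\lambda,u}e^{-t\Delta_u})=0$ (the argument from the proof of the preceding lemma, differentiating $Q_{\lambda,u}^2=Q_{\lambda,u}$, goes through verbatim), to get
\[
	\tfrac\partial{\partial u}\str\bigl(NQ_{\lambda,u}e^{-t\Delta_u}\bigr)
	=-t\,\str\bigl(NQ_{\lambda,u}\dot\Delta_ue^{-t\Delta_u}\bigr).
\]
Next I would expand $\dot\Delta_u$. Writing $\Delta_u=(D_{u}D_{u}^*)^{a}+\cdots$ schematically (with the $a_q$ exponents as in \eqref{E:Deltai}) and differentiating, each term produces a sum of monomials in which one factor of $D_u$ is replaced by $\dot D_u=[\dot A_u,D_u]$ or one factor of $D_u^*$ is replaced by $\dot D_u^*=[\dot A_u^*,D_u^*]$. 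As in the proof of Lemma~\ref{L:var}, one then uses (i) $[N,D]=Dk$ together with the graded-trace-vanishes-on-graded-commutators principle, (ii) that $D_{q-1}D_{u,q-1}^*$, $D_q^*D_q$, etc., commute with $e^{-t\Delta_u}$ and with $Q_{\lambda,u}$, to collapse the various monomials; the bookkeeping is identical to \eqref{E:var2}--\eqref{E:var4} except that in each of the two blocks of $\Delta_u$ the variation is carried by $\dot A_u$ (for the $D^*D$ block) or by $\dot A_u^*$ (for the $DD^*$ block). The cross terms reorganize so that, after moving everything through the trace, one is left with $\kappa\,\str\bigl((\dot A_u+\dot A_u^*)\Delta_uQ_{\lambda,u}e^{-t\Delta_u}\bigr)$, and then \eqref{E:var5} gives $-\kappa t\,\frac\partial{\partial t}$ of that, which is \eqref{E:vara}.

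The main obstacle I anticipate is purely algebraic: carefully verifying that the combination of commutator identities really produces $\dot A_u+\dot A_u^*$ with the right coefficient $\kappa$ and no leftover terms, since unlike in Lemma~\ref{L:var} we are now perturbing both $D$ and $D^*$ simultaneously, and the two perturbations $[\dot A_u,D_u]$ and $[\dot A_u^*,D_u^*]$ are \emph{not} adjoint-conjugate of a single endomorphism the way $\dot D_u^*=[D_u^*,\dot G_u]$ was. One must split the computation into the contribution of the $(D^*D)^{a_q}$ part and the $(DD^*)^{a_{q-1}}$ part of $\Delta_u$ separately, handle each by the telescoping/commutator argument as in \eqref{E:var2}--\eqref{E:var3}, and then check that the $\dot A_u$ terms from the first and the $\dot A_u^*$ terms from the second assemble into $\str\bigl((\dot A_u+\dot A_u^*)\Delta_uQ_{\lambda,u}e^{-t\Delta_u}\bigr)$; the key point that makes this work is that $\dot A_u$ and $\dot A_u^*$ are degree-preserving (they map $\Gamma^\infty(E^q)$ to itself), so $N$, $k$, and the grading interact with them cleanly. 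A secondary, but routine, point is justifying all the manipulations with (possibly unbounded) operators under the trace; this is handled exactly as in \cite{DH20} and the proof of Lemma~\ref{L:var}, using that $Q_{\lambda,u}e^{-t\Delta_u}$ is trace class with a smooth kernel and that $\dot A_u$, $\dot A_u^*$ are differential operators, so all the products occurring are trace class.
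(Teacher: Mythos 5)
Your overall strategy is exactly the paper's: Duhamel together with $\tr\bigl(\dot Q_{\lambda,u}e^{-t\Delta_u}\bigr)=0$ to get $\tfrac\partial{\partial u}\str\bigl(NQ_{\lambda,u}e^{-t\Delta_u}\bigr)=-t\str\bigl(NQ_{\lambda,u}\dot\Delta_ue^{-t\Delta_u}\bigr)$, then the supercommutator bookkeeping with $\sigma_u$ and $[N,D_u]=D_uk$ to extract a factor $\kappa$, and finally the intertwining relation \eqref{E:dADDA} to trade $\dot D_u$ for $\dot A_u$. However, there is one concrete error in a formula you use as ``key structural input'': the formal adjoint of $\dot D_u=\dot A_uD_u-D_u\dot A_u$ (taken with respect to the now $u$-independent $L^2$ inner product) is
\[
	\dot D_u^*=D_u^*\dot A_u^*-\dot A_u^*D_u^*=[D_u^*,\dot A_u^*],
\]
not $\dot A_u^*D_u^*-D_u^*\dot A_u^*$ as you write. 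The sign is consequential. With the correct sign, the contribution $\str\bigl(Q_{\lambda,u}\dot D_u^*\sigma_u^*e^{-t\Delta_u}\bigr)$ collapses, using $[\sigma_u^*,D_u^*]=\Delta_u$ and the vanishing of the graded trace on graded commutators, to $-\str\bigl(\dot A_u^*Q_{\lambda,u}\Delta_ue^{-t\Delta_u}\bigr)$, matching the term $-\str\bigl(\dot A_uQ_{\lambda,u}\Delta_ue^{-t\Delta_u}\bigr)$ produced by $\sigma_u\dot D_u$, so that the two assemble into $\dot A_u+\dot A_u^*$. With your sign the second contribution flips and the computation terminates in $\dot A_u-\dot A_u^*$, which cannot be right: the left-hand side of \eqref{E:vara} is real, whereas $\str\bigl((\dot A_u-\dot A_u^*)Q_{\lambda,u}e^{-t\Delta_u}\bigr)$ is purely imaginary.

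A secondary, organizational point: your description of the cancellation (``the variation is carried by $\dot A_u$ for the $D^*D$ block and by $\dot A_u^*$ for the $DD^*$ block'') does not reflect how the terms actually pair up, since differentiating either block of $\Delta_{u,q}$ inserts both $\dot D_u$ and $\dot D_u^*$ factors. The paper avoids the two-block split entirely: writing $\Delta_u=[D_u,\sigma_u]$ and using $[N,D_u]=D_uk$ and $[N,\sigma_u]=-k\sigma_u$, it first reduces $-\str\bigl(NQ_{\lambda,u}\dot\Delta_ue^{-t\Delta_u}\bigr)$ to $\str\bigl(kQ_{\lambda,u}\tfrac\partial{\partial u}(\sigma_uD_u)e^{-t\Delta_u}\bigr)$ with $\sigma_{u,q+1}D_{u,q}=(D_{u,q}^*D_{u,q})^{a_q}$, and only then differentiates; cycling the resulting sum gives $\kappa\str\bigl(Q_{\lambda,u}(\dot D_u^*\sigma_u^*+\sigma_u\dot D_u)e^{-t\Delta_u}\bigr)$, from which both $\dot A_u$ and $\dot A_u^*$ emerge. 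Once you correct the adjoint sign, this route delivers \eqref{E:vara} exactly as you outline; the analytic justifications (trace class properties, differentiation under the trace) are indeed routine as you say.
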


\begin{proof}
Defining $\sigma_{u,q}\colon\Gamma^\infty(E^q)\to\Gamma^\infty(E^{q-1})$ by
\begin{equation}\label{E:sigmaia}
        \sigma_{u,q}
        :=(D^*_{u,q-1}D_{u,q-1})^{a_{q-1}-1}D^*_{u,q-1}
        =D^*_{u,q-1}(D_{u,q-1}D^*_{u,q-1})^{a_{q-1}-1},
\end{equation}
we have the graded commutator relations
\begin{equation}\label{E:Dddssa}
        \Delta_u=[D_u,\sigma_u].
\end{equation}
Using Duhamel's formula one shows as in \eqref{E:trQD2} 
\begin{equation*}%\label{E:trDuia}
	\tfrac\partial{\partial u}\tr\bigl(Q_{\lambda,u,q}e^{-t\Delta_{u,q}}\bigr)
	=-t\tr\bigl(Q_{\lambda,u,q}\dot\Delta_{u,q}e^{-t\Delta_{u,q}}\bigr)
\end{equation*}
where $\dot\Delta_u:=\frac\partial{\partial u}\Delta_u$.
Consequently,
\begin{equation}\label{E:var1a}
	\tfrac\partial{\partial u}\str\bigl(NQ_{\lambda,u}e^{-t\Delta_u}\bigr)
	=-t\str\bigl(NQ_{\lambda,u}\dot\Delta_ue^{-t\Delta_u}\bigr).
\end{equation}

Put $\dot\sigma_u:=\frac\partial{\partial u}\sigma$ and let $k$ denote the operator given by multiplication with $k_q$ on $\Gamma^\infty(E^q)$.
Then we have graded commutator relations $\dot\Delta_u=[\dot D_u,\sigma_u]+[D_u,\dot\sigma_u]$, $[N,D_u]=D_uk$ and $[N,\sigma_u]=-k\sigma_u$, see \eqref{E:Dddssa}, \eqref{E:Niik} and \eqref{E:sigmai}, respectively.
Combining this with $[D_u,e^{-t\Delta_u}]=0=[\sigma_u,e^{-t\Delta_u}]$ and $[D_u,Q_{\lambda,u}]=0=[\sigma_u,Q_{\lambda,u}]$ and the fact that the graded trace vanishes on graded commutators, we obtain
\begin{multline}\label{E:var2a}
	-\str\bigl(NQ_{\lambda,u}\dot\Delta_ue^{-t\Delta_u}\bigr)
	=\str\bigl(kQ_{\lambda,u}(\sigma_u\dot D_u+\dot\sigma_uD_u)e^{-t\Delta_u}\bigr)
\\	=\str\bigl(kQ_{\lambda,u}\tfrac\partial{\partial u}(\sigma_uD_u)e^{-t\Delta_u}\bigr).
\end{multline}
Clearly,
\begin{multline*}
        \tfrac\partial{\partial u}(\sigma_{u,q+1}D_{u,q})
        =\tfrac\partial{\partial u}(D_{u,q}^*D_{u,q})^{a_q}
        \\=\sum_{r=0}^{a_q-1}(D_{u,q}^*D_{u,q})^r\bigl(\dot D_{u,q}^*D_{u,q}+D_{u,q}^*\dot D_{u,q}\bigr)(D_{u,q}^*D_{u,q})^{a_q-r-1}.
\end{multline*}
As $[D_{u,q}^*D_{u,q},e^{-t\Delta_{u,q}}]=0=[D_{u,q}^*D_{u,q},Q_{\lambda,u,q}]$,
%and since the trace vanishes on commutators, 
this yields
\[
	\tr\bigl(k_qQ_{\lambda,u,q}\tfrac\partial{\partial u}(\sigma_{u,q+1}D_{u,q})e^{-t\Delta_{u,q}}\bigr)
	=\kappa\tr\bigl(Q_{\lambda,u,q}(\dot D_{u,q}^*\sigma_{u,q+1}^*+\sigma_{u,q+1}\dot D_{u,q})e^{-t\Delta_{u,q}}\bigr),
\]
see also \eqref{E:kkiai}.
Consequently,
\begin{equation}\label{E:var3a}
	\str\bigl(kQ_{\lambda,u}\tfrac\partial{\partial u}(\sigma_uD_u)e^{-t\Delta_u}\bigr)
	=\kappa\str\bigl(Q_{\lambda,u}(\dot D_u^*\sigma_u^*+\sigma_u\dot D_u)e^{-t\Delta_u}\bigr).
\end{equation}
Using \eqref{E:dADDA}, $[D_u,e^{-t\Delta_u}]=0=[D_u,Q_{\lambda,u}]$, $[\sigma_u,D_u]=\Delta_u$ and the fact that the graded trace vanishes on graded commutators, we obtain $\str(Q_{\lambda,u}\sigma_u\dot D_ue^{-t\Delta_u})=-\str(\dot A_uQ_{\lambda,u}\Delta_ue^{-t\Delta_u})$.
Similarly, $\str(Q_{\lambda,u}\dot D_u^*\sigma_u^*e^{-t\Delta_u})=-\str(\dot A_u^*Q_{\lambda,u}\Delta_ue^{-t\Delta_u})$ and thus
\begin{equation}\label{E:var4a}
	\str\bigl(Q_{\lambda,u}(\dot D_u^*\sigma_u^*+\sigma_u\dot D_u)e^{-t\Delta_u}\bigr)
	=-\str\bigl((\dot A_u+\dot A_u^*)Q_{\lambda,u}\Delta_ue^{-t\Delta_u}\bigr).
\end{equation}
Clearly,
\begin{equation}\label{E:var5a}
	\tfrac\partial{\partial t}\str\bigl((\dot A_u+\dot A_u^*)Q_{\lambda,u}e^{-t\Delta_u}\bigr)
	=-\str\bigl((\dot A_u+\dot A_u^*)Q_{\lambda,u}\Delta_ue^{-t\Delta_u}\bigr).
\end{equation}
Combining equations \eqref{E:var1a}, \eqref{E:var2a}, \eqref{E:var3a}, \eqref{E:var4a} and \eqref{E:var5a}, we obtain \eqref{E:vara}.
\end{proof}

\begin{lemma}
For $\lambda\geq0$ not in the spectrum of $\Delta_u$ and $\Re(s)>(n+r_u)/2\kappa$ we have
\begin{equation}\label{E:varzetaa}
        \tfrac\partial{\partial u}\tfrac1\kappa\zeta_{\lambda,u}(s)
        =-\frac s{\Gamma(s)}\int_0^\infty t^{s-1}\str\bigl((\dot A_u+\dot A_u^*)Q_{\lambda,u}e^{-t\Delta_u}\bigr)dt.
\end{equation}
Moreover,
\begin{equation}\label{E:dduzetaa}
        \tfrac\partial{\partial u}\tfrac1\kappa\zeta_{\lambda,u}(0)
        =0
\end{equation}
and
\begin{equation}\label{E:varzetapa}
        \tfrac\partial{\partial u}\tfrac1\kappa\zeta'_{\lambda,u}(0)
        =\str\bigl((\dot A_u+\dot A_u^*)P_{u,\lambda}\bigr)-\int_M\str(\tilde p_{n,u})
\end{equation}
where $\tilde p_{n,u}$ is the constant term in the asymptotic expansion \eqref{E:asym.all}.
\end{lemma}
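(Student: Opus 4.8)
The plan is to mirror the proof of the metric-deformation statement \eqref{E:varzeta}--\eqref{E:varzetap}, with the identity \eqref{E:varlambda} replaced by Lemma~\ref{L:vara} and the heat expansion of Lemma~\ref{L:asyexp} replaced by the more general expansion of Lemma~\ref{L:asymA}. Recall we have reduced to the case $h_u=h$, $\mu_u=\mu$; accordingly $\tilde p_{n,u}$ is the constant term in the small-$t$ expansion of $\bigl((\dot A_u+\dot A_u^*)k_{t,u}\bigr)(x,x)$ provided by Lemma~\ref{L:asymA} applied with $B=\dot A_u+\dot A_u^*$, a differential operator of Heisenberg order $r_u$ on $M_u$.

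First I would start from $\tfrac1\kappa\zeta_{\lambda,u}(s)=\tfrac1{\kappa\Gamma(s)}\int_0^\infty t^{s-1}\str\bigl(NQ_{\lambda,u}e^{-t\Delta_u}\bigr)dt$ for $\Re(s)>n/2\kappa$, differentiate under the integral sign, and insert Lemma~\ref{L:vara} to obtain
\[
	\tfrac\partial{\partial u}\tfrac1\kappa\zeta_{\lambda,u}(s)
	=\frac1{\Gamma(s)}\int_0^\infty t^{s}\,\tfrac\partial{\partial t}\str\bigl((\dot A_u+\dot A_u^*)Q_{\lambda,u}e^{-t\Delta_u}\bigr)dt .
\]
Integrating by parts in $t$ then gives \eqref{E:varzetaa}. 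The boundary term at $t\to\infty$ vanishes because $\lambda$ is not in the spectrum of $\Delta_u$, so the spectrum of $\Delta_u$ on the range of $Q_{\lambda,u}$ is bounded below by some $\lambda'>\lambda$ (recall $\Delta_u$ has compact resolvent by Lemma~\ref{L:Desa}), whence $(\dot A_u+\dot A_u^*)Q_{\lambda,u}e^{-t\Delta_u}$ decays exponentially in trace norm; the boundary term at $t\to0$ vanishes precisely when $\Re(s)>(n+r_u)/2\kappa$, since by Lemma~\ref{L:asymA} the integrand there is $O\bigl(t^{\Re(s)-(n+r_u)/2\kappa}\bigr)$. These same estimates, which are locally uniform in $u$, also justify the differentiation under the integral.

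Abbreviating by $Z_u(s)$ the Mellin integral $\tfrac1{\Gamma(s)}\int_0^\infty t^{s-1}\str\bigl((\dot A_u+\dot A_u^*)Q_{\lambda,u}e^{-t\Delta_u}\bigr)dt$ appearing in \eqref{E:varzetaa}, so that $\tfrac\partial{\partial u}\tfrac1\kappa\zeta_{\lambda,u}(s)=-s\,Z_u(s)$, the standard Mellin argument using the small-$t$ expansion of Lemma~\ref{L:asymA} together with the exponential decay as $t\to\infty$ shows that $Z_u$ extends meromorphically and is holomorphic at $s=0$, with $Z_u(0)=\LIM_{t\to0}\str\bigl((\dot A_u+\dot A_u^*)Q_{\lambda,u}e^{-t\Delta_u}\bigr)$ — the simple pole of the integral at $s=0$ coming from the $t^0$-term of the expansion being cancelled by the zero of $1/\Gamma(s)$. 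Hence $\tfrac\partial{\partial u}\tfrac1\kappa\zeta_{\lambda,u}(0)=0$, which is \eqref{E:dduzetaa}, and $\tfrac\partial{\partial u}\tfrac1\kappa\zeta'_{\lambda,u}(0)=-Z_u(0)$. Writing $Q_{\lambda,u}=\id-P_{\lambda,u}$, using that $P_{\lambda,u}$ is of finite rank so that $\str\bigl((\dot A_u+\dot A_u^*)P_{\lambda,u}e^{-t\Delta_u}\bigr)\to\str\bigl((\dot A_u+\dot A_u^*)P_{\lambda,u}\bigr)$ as $t\to0$, and invoking $\LIM_{t\to0}\str\bigl((\dot A_u+\dot A_u^*)e^{-t\Delta_u}\bigr)=\int_M\str(\tilde p_{n,u})$ from the expansion \eqref{E:asym.all}, I obtain \eqref{E:varzetapa}.

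The only substantive work is technical: establishing the small- and large-$t$ trace-norm estimates, locally uniform in $u$, that legitimize the differentiation under the integral, the integration by parts, and the meromorphic continuation. With Lemmas~\ref{L:vara} and \ref{L:asymA} in hand this is a line-by-line adaptation of the proof of \eqref{E:varzeta}--\eqref{E:varzetap}, with $\dot G_u$ replaced throughout by $\dot A_u+\dot A_u^*$.
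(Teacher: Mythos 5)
Your proposal is correct and follows essentially the same route as the paper: partial integration in the Mellin representation combined with Lemma~\ref{L:vara} gives \eqref{E:varzetaa}, and the small-$t$ expansion from Lemma~\ref{L:asymA} together with $Q_{\lambda,u}=\id-P_{\lambda,u}$ yields \eqref{E:dduzetaa} and \eqref{E:varzetapa}. The extra details you supply (boundary terms, exponential decay on the range of $Q_{\lambda,u}$, the reduction to $h_u=h$, $\mu_u=\mu$) are exactly the ones the paper leaves implicit.
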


\begin{proof}
Equation~\eqref{E:varzetaa} follows from \eqref{E:zetalambda} and \eqref{E:vara} via partial integration.
From Lemma~\ref{L:asymA} we obtain the asymptotic expansion
\begin{equation}\label{E:asymAA}
	\str\bigl((\dot A_u+\dot A_u^*)e^{-t\Delta_u}\bigr)dt\sim\sum_{j=-r_u}^\infty t^{(j-n)/2\kappa}\int_M\str(\tilde p_{j,u}),
\end{equation}
as $t\to0$.
In particular,
\[
        \frac1{\Gamma(s)}\int_0^\infty t^{s-1}\str\bigl((\dot A_u+\dot A_u^*)Q_{\lambda,u}e^{-t\Delta_u}\bigr)dt
\]
is holomorphic at $s=0$.
Hence, \eqref{E:dduzetaa} follows from \eqref{E:varzetaa}.
Furthermore, 
\begin{multline*}
        \tfrac\partial{\partial u}\tfrac1\kappa\zeta'_{\lambda,u}(0)
        =-\LIM_{t\to0}\str\bigl((\dot A_u+\dot A_u^*)Q_{\lambda,u}e^{-t\Delta_u}\bigr)
        \\=\str\bigl((\dot A_u+\dot A_u^*)P_{\lambda,u}\bigr)-\LIM_{t\to0}\str\bigl((\dot A_u+\dot A_u^*)e^{-t\Delta_u}\bigr).
\end{multline*}
Combining this with \eqref{E:asymAA} we obtain \eqref{E:varzetapa}.
\end{proof}

\begin{lemma}\label{L:varfinnorma}
If $\lambda\geq0$ is not contained in the spectrum of $\Delta_u$, then
\[
	\tfrac\partial{\partial v}\big|_{v=u}\log\left((\sdet(H(A_{v,u})))^*\|-\|^{\sdet(H^*(E,D_v))}_{[0,\lambda],h,\mu}\right)
	=\tfrac12\str\bigl((\dot A_u+\dot A_u^*)P_{\lambda,u}\bigr).
\]
\end{lemma}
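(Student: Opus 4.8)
The plan is to reduce the statement to a finite-dimensional computation on the subcomplex $C_{\lambda,v}:=\img(P_{\lambda,v})$, in the spirit of the metric-variation result Lemma~\ref{L:varfinnorm}. Recall that $\|-\|^{\sdet(H^*(E,D_v))}_{[0,\lambda],h,\mu}$ is by definition the norm obtained by pushing the $L^2$ inner product on $\sdet(C_{\lambda,v})$ forward along the torsion isomorphism $\tau_v\colon\sdet(C_{\lambda,v})\xrightarrow{\ \sim\ }\sdet\bigl(H^*(C_{\lambda,v},D_v)\bigr)=\sdet(H^*(E,D_v))$, the last identification being induced by the inclusion $C_{\lambda,v}\hookrightarrow(\Gamma^\infty(E),D_v)$. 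Since $h$ and $\mu$ are held fixed here, the only $v$-dependence is through the moving finite-dimensional subspace $C_{\lambda,v}$ and the chain map $A_{v,u}$; note that $\lambda$ stays outside the spectrum of $\Delta_v$ for $v$ near $u$, so $P_{\lambda,v}$ depends smoothly on $v$ there. Since the logarithmic $v$-derivative of a family of norms on a fixed line does not depend on the chosen nonzero element, it suffices to differentiate the logarithm of this norm evaluated on one convenient element of $\sdet(H^*(E,D_u))$.

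First I would replace $A_{v,u}$ by its spectral truncation $B_{v,u}:=P_{\lambda,v}\,A_{v,u}|_{C_{\lambda,u}}\colon C_{\lambda,u}\to C_{\lambda,v}$. This is a chain map because $P_{\lambda,v}$ commutes with $D_v$, and it induces $H(A_{v,u})$ under the canonical cohomology identifications because $P_{\lambda,v}$ is a quasi-isomorphism which is a left inverse to the inclusion $C_{\lambda,v}\hookrightarrow(\Gamma^\infty(E),D_v)$. Since $B_{u,u}=\id$ and $\dim C^q_{\lambda,v}$ is locally constant, $B_{v,u}$ is an isomorphism of finite-dimensional graded complexes for $v$ near $u$, so naturality of the torsion isomorphism under chain isomorphisms (see \cite{BGS88,BZ92}) gives $\tau_v\circ\sdet(B_{v,u})=\sdet(H(A_{v,u}))\circ\tau_u$. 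Fixing an $L^2$-orthonormal graded basis $(e^q_i)$ of $C_{\lambda,u}$, letting $\theta\in\sdet(C_{\lambda,u})$ be the corresponding graded-determinant element and $\sigma:=\tau_u(\theta)$, we obtain
\[
\Bigl((\sdet(H(A_{v,u})))^*\|-\|^{\sdet(H^*(E,D_v))}_{[0,\lambda],h,\mu}\Bigr)(\sigma)
=\bigl\|\sdet(B_{v,u})\theta\bigr\|_{L^2,\sdet(C_{\lambda,v})}
=\prod_q\det\bigl(\langle B_{v,u}e^q_i,B_{v,u}e^q_j\rangle_{L^2}\bigr)^{(-1)^q/2}.
\]

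Next I differentiate the logarithm of the right-hand side at $v=u$. Since each Gram matrix reduces to the identity at $v=u$, the $v$-derivative of $\log\det$ in degree $q$ equals $2\Re\sum_i\langle\dot B e^q_i,e^q_i\rangle$, where $\dot B:=\tfrac\partial{\partial v}\big|_{v=u}B_{v,u}$. Using $A_{u,u}=\id$ one has $\dot B=\bigl(\dot P_{\lambda,u}+P_{\lambda,u}\dot A_u\bigr)\big|_{C_{\lambda,u}}$; because $P_{\lambda,u}e^q_i=e^q_i$ and $P_{\lambda,u}\dot P_{\lambda,u}P_{\lambda,u}=0$ (obtained by differentiating $P_{\lambda,u}^2=P_{\lambda,u}$ and conjugating by $P_{\lambda,u}$), the $\dot P_{\lambda,u}$-term contributes nothing and $\sum_i\langle\dot B e^q_i,e^q_i\rangle=\tr\bigl(P_{\lambda,u}\dot A_uP_{\lambda,u}|_{E^q}\bigr)$. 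Finally $2\Re\tr(P_{\lambda,u}\dot A_uP_{\lambda,u})=\tr\bigl((\dot A_u+\dot A_u^*)P_{\lambda,u}\bigr)$ since $P_{\lambda,u}$ is self-adjoint and $\overline{\tr(T)}=\tr(T^*)$ for finite-rank $T$, so the logarithmic $v$-derivative of the displayed product at $v=u$ equals $\tfrac12\sum_q(-1)^q\tr\bigl((\dot A_u+\dot A_u^*)P_{\lambda,u}|_{E^q}\bigr)=\tfrac12\str\bigl((\dot A_u+\dot A_u^*)P_{\lambda,u}\bigr)$, which is the assertion of the lemma.

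The only step beyond routine linear algebra is the reduction in the second paragraph: one has to verify that the spectrally truncated map $B_{v,u}$ genuinely represents $H(A_{v,u})$ on cohomology and that naturality of the torsion isomorphism applies — which it does precisely because, for $v$ near $u$, $B_{v,u}$ is an honest isomorphism of complexes rather than merely a quasi-isomorphism. Everything afterwards is exactly the bookkeeping already used for the metric anomaly in Lemma~\ref{L:varfinnorm}, with the motion of the subspace $C_{\lambda,v}$ washing out thanks to $P_{\lambda,u}\dot P_{\lambda,u}P_{\lambda,u}=0$.
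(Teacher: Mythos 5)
Your proposal is correct and follows essentially the same route as the paper: the truncated chain map $B_{v,u}=P_{\lambda,v}A_{v,u}|_{\img P_{\lambda,u}}$ is exactly the map $\alpha_{v,u}$ the paper introduces, the norm ratio you compute is the paper's $\sdet^{1/2}(\alpha_{v,u}^*\alpha_{v,u})$, and the differentiation using $P_{\lambda,u}\dot P_{\lambda,u}P_{\lambda,u}=0$ is the same final step. Your write-up merely spells out the Gram-determinant bookkeeping that the paper leaves implicit.
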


\begin{proof}
We consider the linear map
\[
	\alpha_{v,u}\colon\img P_{\lambda,u}\to\img P_{\lambda,v},\qquad\alpha_{v,u}:=P_{\lambda,v}A_{v,u}.
\]
This is a map of finite dimensional complexes inducing \eqref{E:HAu} on cohomology.
Since $\alpha_{u,u}=\id$, the map $\alpha_{v,u}$ is an isomorphism of complexes, for $v$ sufficiently close to $u$.
Hence, for these $v$, we have
\[
	\frac{(\sdet(H(A_{v,u})))^*\|-\|^{\sdet(H^*(E,D_v))}_{[0,\lambda],h,\mu}}{\|-\|^{\sdet(H^*(E,D_u))}_{[0,\lambda],h,\mu}}
	=\sdet^{1/2}(\alpha_{v,u}^*\alpha_{v,u}).
\]
Differentiating and using $P_{\lambda,u}\dot P_{\lambda,u}P_{\lambda,u}=0$,
%where $\dot P_{\lambda,u}:=\frac\partial{\partial u}P_{\lambda,u}$, 
we obtain the lemma.
\end{proof}

Combining Lemma~\ref{L:varfinnorma} with Equation~\eqref{E:varzetapa}, we obtain Theorem~\ref{T:var.all}, cf.~\eqref{E:defRSmetric}.

\subsection{Elementary properties of the analytic torsion}\label{SS:elementary}

In this section we collect a few properties of the analytic torsion of Rockland complexes generalizing well known facts about the Ray--Singer and the Rumin--Seshadri analytic torsion.

Throughout this section $M$ denotes a closed filtered manifold, $(E,D)$ is a Rockland complex over $M$, $\mu$ is a volume density on $M$ and $h$ is a graded fiberwise Hermitian inner product on $E$.

\begin{proposition}\label{P:shift}
Let $(\tilde E,\tilde D)$ denote the Rockland complex obtained by shifting the grading by one.
More explicitly, $\tilde E^q=E^{q+1}$, $\tilde D_q=D_{q+1}$, and $\tilde h_q=h_{q+1}$.
Then, up to the canonical identification of determinant lines
\begin{equation}\label{E:sdet.shift}
	\sdet(H^*(\tilde E,\tilde D))
	=\sdet(H^*(E,D))^*
\end{equation}
induced by the canonical isomorphisms $H^q(\tilde E,\tilde D)=H^{q+1}(E,D)$, we have
\[
	\|-\|^{\sdet(H^*(\tilde E,\tilde D))}_{\tilde h,\mu}
	=\left(\|-\|^{\sdet(H^*(E,D))}_{h,\mu}\right)^{-1},
\]
where the right hand side denotes the induced norm on the dual (inverse) line.
\end{proposition}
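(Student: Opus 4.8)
The plan is to trace how each ingredient in the definition \eqref{E:defRSmetric} transforms under the shift of grading, using that the torsion is insensitive to the auxiliary choices. First I would fix a cutoff $\lambda\geq0$ and, by Lemmas~\ref{L:ai} and \ref{L:Ni}, compute $\|-\|^{\sdet(H^*(\tilde E,\tilde D))}_{\tilde h,\mu}$ with the particular choices $\tilde a_q:=a_{q+1}$ and $\tilde N_q:=N_{q+1}$; these satisfy \eqref{E:kiai} and \eqref{E:Niik} for the shifted Heisenberg orders $\tilde k_q=k_{q+1}$, and yield $\tilde\kappa=\tilde k_q\tilde a_q=\kappa$. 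Since $\tilde D_q=D_{q+1}$ and the $L^2$ inner products on $\Gamma^\infty(\tilde E^q)$ agree entrywise with those on $\Gamma^\infty(E^{q+1})$, one gets $\tilde D_q^*=D_{q+1}^*$, hence from \eqref{E:Deltai}
\[
	\tilde\Delta_q=(\tilde D_{q-1}\tilde D_{q-1}^*)^{\tilde a_{q-1}}+(\tilde D_q^*\tilde D_q)^{\tilde a_q}=\Delta_{q+1},
\]
so in particular $\tilde Q_{\lambda,q}=Q_{\lambda,q+1}$ and $\img(\tilde P_\lambda)^q=\img(P_\lambda)^{q+1}$.

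For the analytic factor I would simply reindex the graded trace: for $\Re(s)>n/2\kappa$,
\[
	\tilde\zeta_\lambda(s)=\sum_q(-1)^q\tilde N_q\tr\bigl(\tilde Q_{\lambda,q}\tilde\Delta_q^{-s}\bigr)=\sum_q(-1)^qN_{q+1}\tr\bigl(Q_{\lambda,q+1}\Delta_{q+1}^{-s}\bigr)=-\zeta_\lambda(s),
\]
so $\tilde\zeta_\lambda'(0)=-\zeta_\lambda'(0)$ and the factor $\exp\bigl(-\tfrac1{2\tilde\kappa}\tilde\zeta_\lambda'(0)\bigr)$ is the reciprocal of $\exp\bigl(-\tfrac1{2\kappa}\zeta_\lambda'(0)\bigr)$.

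For the finite-dimensional factor I would argue as follows. The reindexing $\img(\tilde P_\lambda)^q=\img(P_\lambda)^{q+1}$ identifies $\sdet(\img(\tilde P_\lambda))$ with $\sdet(\img(P_\lambda))^*$, and since $\tilde h_q=h_{q+1}$ and $\mu$ are unchanged it carries the $L^2$-induced metric to the dual of the $L^2$-induced metric. The canonical identifications $H^q(\tilde E,\tilde D)=H^{q+1}(E,D)$ likewise identify $\sdet(H^*(\tilde E,\tilde D))$ with $\sdet(H^*(E,D))^*$; this is exactly \eqref{E:sdet.shift}. It then remains to observe that, $(\img(\tilde P_\lambda),\tilde D)$ being the degree-shifted complex of $(\img(P_\lambda),D)$, the torsion isomorphism \eqref{E:toriso} for the former is, up to a sign, the inverse-transpose of the one for the latter --- a standard compatibility of the torsion of finite-dimensional complexes with grading shifts, cf.~\cite[Section~a]{BGS88} or \cite[Section~Ia]{BZ92}. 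Since the inverse-transpose of an isometry is an isometry for the dual metrics, this gives $\|-\|^{\sdet(H^*(\tilde E,\tilde D))}_{[0,\lambda],\tilde h,\mu}=\bigl(\|-\|^{\sdet(H^*(E,D))}_{[0,\lambda],h,\mu}\bigr)^{-1}$, and multiplying by the analytic factor above yields the proposition.

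I expect the only real obstacle to be the bookkeeping around the algebraic torsion isomorphism \eqref{E:toriso}: one has to check that passing to the shifted complex dualises it compatibly with the dualisations of $\sdet(\img(P_\lambda))$ and $\sdet(H^*(E,D))$. Since we compare the induced norms rather than the isomorphisms themselves, the sign ambiguity inherent in such identifications is harmless, and no genuine analytic difficulty arises.
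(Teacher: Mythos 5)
Your proposal is correct and follows essentially the same route as the paper's proof: the key points in both are the identity $\tilde\zeta_\lambda(s)=-\zeta_\lambda(s)$ (which you obtain by the explicit reindexing with $\tilde a_q=a_{q+1}$, $\tilde N_q=N_{q+1}$) and the dualization of the finite-dimensional norm $\|-\|_{[0,\lambda]}$ under the grading shift, for which the paper simply cites \cite[Remark~1.4(4)]{T01}. Your version merely spells out the bookkeeping that the paper leaves implicit.
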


\begin{proof}
Denoting the zeta function associated with the Rockland complex $(\tilde E,\tilde D)$ by $\tilde\zeta_\lambda$, see \eqref{E:zetalambda}, we clearly have $\tilde\zeta_\lambda(s)=-\zeta_\lambda(s)$, for any $\lambda\geq0$.
Moreover, up to the canonical identification in \eqref{E:sdet.shift} we have
\[
	\|-\|^{\sdet(H^*(\tilde E,\tilde D))}_{[0,\lambda],\tilde h,\mu}
	=\left(\|-\|^{\sdet(H^*(E,D))}_{[0,\lambda],h,\mu}\right)^{-1},
\]
see \cite[Remark~1.4(4)]{T01} for the acyclic case.
Combining these facts, the proposition follows at once, cf.~\eqref{E:defRSmetric}.
\end{proof}

For the torsion of a sum of two Rockland complexes we have:

\begin{proposition}\label{P:sumE}
Suppose $(E,D)$ and $(\tilde E,\tilde D)$ are two Rockland complexes over $M$ such that the Heisenberg orders of $D_q$ and $\tilde D_q$ coincide for all $q$.
Moreover, let $h$ and $\tilde h$ be graded, fiber wise Hermitian inner products on $E$ and $\tilde E$, respectively.
Then, up to the canonical identification of determinant lines
\begin{equation}\label{E:sdetoplus}
	\sdet\bigl(H^*(E\oplus\tilde E,D\oplus\tilde D)\bigr)
	=\sdet(H^*(E,D))\otimes\sdet(H^*(\tilde E,\tilde D))
\end{equation}
induced by the canonical isomorphisms $H^q(E\oplus\tilde E,D\oplus\tilde D)=H^q(E,D)\oplus H^q(\tilde E,\tilde D)$, we have
\[
	\|-\|^{\sdet(H^*(E\oplus\tilde E,D\oplus\tilde D))}_{h\oplus\tilde h,\mu}
	=\|-\|^{\sdet(H^*(E,D))}_{h,\mu}\otimes\|-\|^{\sdet(H^*(\tilde E,\tilde D))}_{\tilde h,\mu}.
\]
\end{proposition}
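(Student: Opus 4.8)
The plan is to reduce the statement to the additivity of the zeta function and the multiplicativity of finite-dimensional torsion norms under direct sums, exactly mirroring the proof of Proposition~\ref{P:shift}. First I would observe that the Rumin--Seshadri operator for the direct sum complex decomposes as $\Delta_q^{E\oplus\tilde E}=\Delta_q^E\oplus\Delta_q^{\tilde E}$ on $E^q\oplus\tilde E^q$; this uses that the Heisenberg orders $k_q$ agree, so the same exponents $a_q$ and the same $\kappa$ may be chosen for all three complexes, and the formal adjoints are taken with respect to $h\oplus\tilde h$, which is the orthogonal direct sum of the $L^2$ inner products. Consequently, for any spectral cutoff $\lambda\geq0$, the spectral projections satisfy $Q_\lambda^{E\oplus\tilde E}=Q_\lambda^E\oplus Q_\lambda^{\tilde E}$ and $P_\lambda^{E\oplus\tilde E}=P_\lambda^E\oplus P_\lambda^{\tilde E}$, and the grading operators $N$ (built from the common numbers $N_q$ satisfying \eqref{E:Niik}) also respect the splitting.

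Next I would compute the zeta function of the direct sum. From $\str\bigl(NQ_\lambda(\Delta^{E\oplus\tilde E})^{-s}\bigr)=\str\bigl(NQ_\lambda^E(\Delta^E)^{-s}\bigr)+\str\bigl(NQ_\lambda^{\tilde E}(\Delta^{\tilde E})^{-s}\bigr)$, see \eqref{E:zetalambda}, one gets $\zeta_\lambda^{E\oplus\tilde E}(s)=\zeta_\lambda^E(s)+\zeta_\lambda^{\tilde E}(s)$, hence $\zeta_\lambda'^{E\oplus\tilde E}(0)=\zeta_\lambda'^E(0)+\zeta_\lambda'^{\tilde E}(0)$ and therefore $\exp\bigl(-\tfrac1{2\kappa}\zeta_\lambda'^{E\oplus\tilde E}(0)\bigr)=\exp\bigl(-\tfrac1{2\kappa}\zeta_\lambda'^E(0)\bigr)\cdot\exp\bigl(-\tfrac1{2\kappa}\zeta_\lambda'^{\tilde E}(0)\bigr)$. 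Then I would handle the finite-dimensional part: the complex $\img(P_\lambda^{E\oplus\tilde E})=\img(P_\lambda^E)\oplus\img(P_\lambda^{\tilde E})$ carries the orthogonal direct sum of the restricted $L^2$ inner products, and the torsion isomorphism \eqref{E:toriso} together with the multiplicativity of the Knudsen--Mumford determinant and of finite-dimensional torsion under direct sums of complexes (see \cite[Section~a]{BGS88} or \cite[Section~Ia]{BZ92}) gives, under the identification \eqref{E:sdetoplus}, the factorization $\|-\|^{\sdet(H^*(E\oplus\tilde E,D\oplus\tilde D))}_{[0,\lambda],h\oplus\tilde h,\mu}=\|-\|^{\sdet(H^*(E,D))}_{[0,\lambda],h,\mu}\otimes\|-\|^{\sdet(H^*(\tilde E,\tilde D))}_{[0,\lambda],\tilde h,\mu}$. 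Combining the two factorizations with the definition \eqref{E:defRSmetric} yields the claim.

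The only genuinely delicate point is the compatibility of the canonical isomorphism \eqref{E:sdetoplus} with the torsion isomorphism \eqref{E:toriso}, i.e.\ checking that the diagram relating $\sdet(\img P_\lambda^{E\oplus\tilde E})\cong\sdet(\img P_\lambda^E)\otimes\sdet(\img P_\lambda^{\tilde E})$ on the source side to $\sdet(H^*(E\oplus\tilde E))\cong\sdet(H^*(E))\otimes\sdet(H^*(\tilde E))$ on the target side commutes. This is the naturality of finite-dimensional torsion with respect to direct sums, and it is standard; I would simply invoke the cited references rather than reprove it. Everything else is a routine bookkeeping of zeta functions and spectral projections and presents no obstacle.
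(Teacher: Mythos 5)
Your proposal is correct and follows essentially the same route as the paper: decompose the Rumin--Seshadri operator and spectral projections as direct sums (using common $a_q$, $N_q$), deduce additivity of the zeta functions, and invoke the standard multiplicativity of the finite-dimensional torsion norm under direct sums of complexes (the paper cites \cite[Theorem~1.5]{T01} for the acyclic case where you cite \cite{BGS88,BZ92}, but this is the same standard fact). No gaps.
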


\begin{proof}
Since the Heisenberg orders of $D_q$ and $\tilde D_q$ coincide, $D\oplus\tilde D$ is a Rockland complex and we may use the same numbers $a_q$ and $N_q$ to compute the analytic torsion of all three complexes.
With theses choices, we clearly have
\[
	\Delta^{(E\oplus\tilde E,D\oplus\tilde D)}_{h\oplus\tilde h,\mu}
	=\Delta^{(E,D)}_{h,\mu}\oplus\Delta^{(\tilde E,\tilde D)}_{\tilde h,\mu}.
\]
In particular, 
\begin{equation}\label{E:Qoplus}
	Q^{(E\oplus\tilde E,D\oplus\tilde D)}_{\lambda,h\oplus\tilde h,\mu}
	=Q^{(E,D)}_{\lambda,h,\mu}\oplus Q^{(\tilde E,\tilde D)}_{\lambda,\tilde h,\mu}
\end{equation}
for each $\lambda\geq0$, and we obtain
\begin{equation}\label{E:zetaoplus}
	\zeta^{(E\oplus\tilde E,D\oplus\tilde D)}_{\lambda,h\oplus\tilde h,\mu}(s)
	=\zeta^{(E,D)}_{\lambda,h,\mu}(s)+\zeta^{(\tilde E,\tilde D)}_{\lambda,\tilde h,\mu}(s).
\end{equation}
%\begin{align}\label{E:zetaoplus}
%	\notag\zeta^{(E\oplus\tilde E,D\oplus\tilde D)}_{\lambda,h\oplus\tilde h,\mu}(s)
%	&=\str\left(NQ^{(E\oplus\tilde E,D\oplus\tilde D)}_{\lambda,h\oplus\tilde h,\mu}\left(\Delta^{(E\oplus\tilde E,D\oplus\tilde D)}_{h\oplus\tilde h,\mu}\right)^{-s}\right)
%	\\\notag&=\str\left(NQ^{(E,D)}_{\lambda,h,\mu}\left(\Delta^{(E,D)}_{h,\mu}\right)^{-s}\right)
%	+\str\left(NQ^{(\tilde E,\tilde D)}_{\lambda,\tilde h,\mu}\left(\Delta^{(\tilde E,\tilde D)}_{\tilde h,\mu}\right)^{-s}\right)
%	\\&=\zeta^{(E,D)}_{\lambda,h,\mu}(s)+\zeta^{(\tilde E,\tilde D)}_{\lambda,\tilde h,\mu}(s).
%\end{align}
From \eqref{E:Qoplus} we get a canonical isomorphism of finite dimensional complexes,
\[
	\left(\img\bigl(P^{(E\oplus\tilde E,D\oplus\tilde D)}_{\lambda,h\oplus\tilde h,\mu}\bigr),D\oplus\tilde D\right)
	=\left(\img\bigl(P^{(E,D)}_{\lambda,h,\mu}\bigr),D\right)
	\oplus\left(\img\bigl(P^{(\tilde E,\tilde D)}_{\lambda,\tilde h,\mu}\bigr),\tilde D\right).
\]
It is well known that up to the canonical identification in \eqref{E:sdetoplus} we have
\[
	\|-\|^{\sdet(H^*(E\oplus\tilde E,D\oplus\tilde D))}_{[0,\lambda],h\oplus\tilde h,\mu}
	=\|-\|^{\sdet(H^*(E,D))}_{[0,\lambda],h,\mu}\otimes\|-\|^{\sdet(H^*(\tilde E,\tilde D))}_{[0,\lambda],\tilde h,\mu},
\]
see \cite[Theorem~1.5]{T01} for the acyclic case.
Combining this with \eqref{E:zetaoplus}, the proposition follows at once, cf.~\eqref{E:defRSmetric}.
\end{proof}

Recall that the transposed of a differential operator $A\colon\Gamma^\infty(E)\to\Gamma^\infty(F)$ is the differential operator $A^t\colon\Gamma^\infty(F')\to\Gamma^\infty(E')$ characterized by
$$
(A^t\phi,\psi)_E=(\phi,A\psi)_F,\qquad\phi\in\Gamma^\infty_c(F'),\psi\in\Gamma^\infty(E),
$$
where the parentheses $(-,-)_E$ denote the canonical pairing between sections of $E$ and $E':=E^*\otimes|\Lambda|$.
The following observation will be useful in the discussion of Poincar\'e duality in Section~\ref{SS:PD} below.

\begin{proposition}\label{P:dual}
Let $(E',D^t)$ denote the transposed complex, i.e., $(E')^q=(E^{-q})^*\otimes|\Lambda|$ and $(D^t)_q=(D_{-q-1})^t$.
Equip $(E')^q$ with the induced fiberwise Hermitian inner product, $h'_q=(h_{-q})^{-1}\otimes\mu^{-2}$.
Then, up to the canonical identification 
\begin{equation}\label{E:sdett}
	\sdet(H^*(E',D^t))
	=\sdet(H^*(E,D))^*
\end{equation}
induced by the canonical isomorphism $H^q(E',D^t)=H^{-q}(E,D)^*$, we have
\[
	\|-\|^{\sdet(H^*(E',D^t))}_{h',\mu}
	=\left(\|-\|^{\sdet(H^*(E,D))}_{h,\mu}\right)^{-1},
\]
where the right hand side denotes the induced norm on the dual (inverse) line.
\end{proposition}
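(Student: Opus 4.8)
The plan is to reduce the statement for the transposed complex $(E',D^t)$ to the already-established independence properties together with an explicit identification of all the data entering the definition of the torsion. The essential point is that, up to the Hodge-theoretic identifications, forming the transposed complex does nothing more than relabel the pieces of the Rumin--Seshadri operator; so the zeta functions match and the finite-dimensional torsion norms are dual. First I would record the Hodge-theoretic dictionary. With the Hermitian metrics $h'_q=(h_{-q})^{-1}\otimes\mu^{-2}$ and the $L^2$ inner products defined via $\mu$ as in \eqref{E:L2IP}, the formal adjoint of $(D^t)_q=(D_{-q-1})^t$ is exactly (the transpose under the pairing of) $D_{-q-1}$ itself; concretely, under the fiberwise isomorphism $(E')^q=(E^{-q})^*\otimes|\Lambda|$ one has $((D^t)_q)^*=\pm(D_{-q-1})$ after transporting through the metric, the sign being irrelevant by Remark~\ref{R:signs}. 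Consequently the Rumin--Seshadri operator $\Delta'_q$ of the transposed complex, built from $(D^t_{q-1}(D^t_{q-1})^*)^{a'_{q-1}}+((D^t_q)^*D^t_q)^{a'_q}$ with the matching choice $a'_q:=a_{-q}$ and $k'_q:=k_{-q-1}$, is conjugate to $\Delta_{-q}$ via the metric identification of $(E')^q$ with $(E^{-q})^*\otimes|\Lambda|$. In particular $\Delta'$ and $\Delta$ have the same spectrum with multiplicities matched degree-by-degree under $q\mapsto -q$.

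Next I would compare the zeta functions. Choosing $N'_q$ so that $N'_{q+1}-N'_q=k'_q=k_{-q-1}$ — for instance $N'_q:=-N_{-q}$, which is compatible since $N'_{q+1}-N'_q=-N_{-q-1}+N_{-q}=k_{-q-1}$ — the graded trace picks up the Euler-characteristic sign $(-1)^q=(-1)^{-q}$ and an extra minus from $N'_q=-N_{-q}$, so that $\zeta'_\lambda(s)=-\zeta_\lambda(s)$ for every spectral cutoff $\lambda\ge0$, exactly as in the grading-shift computation in the proof of Proposition~\ref{P:shift}. Hence $\exp(-\tfrac1{2\kappa}\zeta'^{\,\prime}_\lambda(0))=\exp(-\tfrac1{2\kappa}\zeta'_\lambda(0))^{-1}$, the $\kappa$ being the same by $k'_qa'_q=k_{-q-1}a_{-q}=\kappa$. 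By Lemmas~\ref{L:lambda}, \ref{L:Ni} and \ref{L:ai} the torsion is independent of these auxiliary choices, so the particular choices above are legitimate.

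For the finite-dimensional part I would invoke the behaviour of the Milnor--Turaev torsion norm under dualizing a finite-dimensional complex: with the induced metrics, the canonical isomorphism $H^q(E',D^t)=H^{-q}(E,D)^*$ identifies $\|-\|^{\sdet(H^*(E',D^t))}_{[0,\lambda],h',\mu}$ with the inverse of $\|-\|^{\sdet(H^*(E,D))}_{[0,\lambda],h,\mu}$, using that $\img(P'_\lambda)$ is isometrically the metric dual of $\img(P_\lambda)$ (again via the fiberwise pairing built from $\mu$). This is the analogue, for the dual rather than the shift, of the identity quoted from \cite[Remark~1.4(4)]{T01} in the proof of Proposition~\ref{P:shift}; in the acyclic case it is \cite[Remark~1.4]{T01}, and the general case follows by the same determinant-line bookkeeping. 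Combining $\zeta'_\lambda=-\zeta_\lambda$ with this duality of the finite-dimensional norms and the definition \eqref{E:defRSmetric} yields $\|-\|^{\sdet(H^*(E',D^t))}_{h',\mu}=\bigl(\|-\|^{\sdet(H^*(E,D))}_{h,\mu}\bigr)^{-1}$.

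The main obstacle, such as it is, is bookkeeping rather than analysis: one must check that the density-valued metrics $h'_q=(h_{-q})^{-1}\otimes\mu^{-2}$ on $(E')^q=(E^{-q})^*\otimes|\Lambda|$ are precisely the ones for which the $L^2$ pairing coming from $\mu$ turns the transpose $(D_{-q-1})^t$ into the formal adjoint of the relevant operator, so that $\Delta'_q$ is genuinely unitarily equivalent to $\Delta_{-q}$ and not merely similar to it. Once that compatibility is pinned down, the spectral data, the zeta functions, and the finite-dimensional torsion norms all dualize simultaneously, and the statement drops out by the same argument already used for Proposition~\ref{P:shift}.
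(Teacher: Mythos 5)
Your proposal follows essentially the same route as the paper: identify $(D^t)_q^*$ with (a conjugate of) $D_{-q-1}$ via the metric isomorphism $h_{-q}\otimes\mu$, deduce that the Rumin--Seshadri operators of the transposed complex are conjugate to those of $(E,D)$ in complementary degree, take $N'_q=-N_{-q}$ to get $\zeta'_\lambda(s)=-\zeta_\lambda(s)$, and dualize the finite-dimensional torsion norm. One slip: your choice $a'_q:=a_{-q}$ is off by one --- since $(D^t)_q^*(D^t)_q$ corresponds to $D_{-q-1}D_{-q-1}^*$, the admissible choice is $a'_q=a_{-q-1}$, which gives $k'_qa'_q=k_{-q-1}a_{-q-1}=\kappa$; with $a_{-q}$ the constraint \eqref{E:kiai} generally fails. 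You also gloss over the fact that $h_{-q}\otimes\mu$ is conjugate-linear on $E^{-q}$, so the conjugation lands on the Laplacian of the conjugate complex $\bar E$; this is harmless because that operator coincides with $\Delta^{(E,D)}$ and the spectrum is real, but the paper makes this step explicit.
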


\begin{proof}
Since $(D^t)_q=(D_{-q-1})^t$ we have
\[
	(D^t)_q
	=(h_{-q-1}\otimes \mu)D_{-q-1}^*(h_{-q}\otimes \mu)^{-1}.
\]
Dualizing, we obtain
\[
	(D^t)^*_q
	=(h_{-q}\otimes \mu)D_{-q-1}(h_{-q-1}\otimes \mu)^{-1}.
\]
Note here that the vector bundle isomorphism $h_{-q}\otimes\mu\colon\bar E^{-q}\to(E^{-q})^*\otimes|\Lambda|=(E')^q$ intertwines the Hermitian metric $\bar h$ on $\bar E$ with the Hermitian metric $h'$ on $E'$, and thus, with respect to these Hermitian metrics, we have
\[
	(h_{-q}\otimes \mu)^*=(h_{-q}\otimes \mu)^{-1}.
\]
We obtain
\begin{align*}
	(D^t)^*_q(D^t)_q
	&=(h_{-q}\otimes \mu)D_{-q-1}D_{-q-1}^*(h_{-q}\otimes \mu)^{-1},
	\\
	(D^t)_{q-1}(D^t)^*_{q-1}
	&=(h_{-q}\otimes \mu)D_{-q}^*D_{-q}(h_{-q}\otimes \mu)^{-1}.
\end{align*}
Denoting the Heisenberg order of $(D^t)_q$ by $k'_q$, we have 
\begin{equation}\label{E:ktkq}
	k'_q=k_{-q-1}.
\end{equation}
Hence, we may use the numbers $a'_q=a_{-q-1}$ to compute the analytic torsion of $(E',D^t)$, cf.\ \eqref{E:kiai} and Lemma~\ref{L:ai}.
With this choice, we find
\begin{equation}\label{E:Deltat}
	\Delta_{q,h',\mu}^{(E',D^t)}
	=(h_{-q}\otimes \mu)\Delta^{(\bar E,D)}_{-q,\bar h,\mu}(h_{-q}\otimes \mu)^{-1}.
\end{equation}
In particular,
\begin{equation}\label{E:Qt}
	Q^{(E',D^t)}_{q,\lambda,h',\mu}
	=(h_{-q}\otimes \mu)Q_{-q,\lambda,\bar h,\mu}^{(\bar E,D)}(h_{-q}\otimes \mu)^{-1}.
\end{equation}
In view of \eqref{E:ktkq}, the numbers $N'_q:=-N_{-q}$ also satisfy the relation \eqref{E:Niik}, that is, $N'_{q+1}-N'_q=k'_q$.
Clearly,
\begin{equation}\label{E:Nt}
	N'_q
	=-(h_{-q}\otimes \mu)N_{-q}(h_{-q}\otimes \mu)^{-1}.
\end{equation}
Combining \eqref{E:Deltat}, \eqref{E:Qt} and \eqref{E:Nt}, we obtain
\begin{multline}\label{E:zetat}
	\zeta_{\lambda,h',\mu}^{(E',D^t)}(s)
	=\str\left(N'Q^{(E',D^t)}_{\lambda,h',\mu}\left(\Delta^{(E',D^t)}_{h',\mu}\right)^{-s}\right)
	\\=-\str\left(NQ^{(\bar E,D)}_{\lambda,\bar h,\mu}\left(\Delta^{(\bar E,D)}_{\bar h,\mu}\right)^{-s}\right)
	=-\zeta_{\lambda,\bar h,\mu}^{(\bar E,D)}(s).
\end{multline}
Recall here that according to Lemma~\ref{L:Ni}, we may use any sequence of numbers satisfying \eqref{E:Niik} to compute this zeta function.

Clearly, the adjoints of $D$ with respect to $h$ and $\bar h$ coincide.
Hence, $\Delta^{(\bar E,D)}_{\bar h,\mu}=\Delta^{(E,D)}_{h,\mu}$ and $Q^{(\bar E,D)}_{\lambda,\bar h,\mu}=Q^{(E,D)}_{\lambda,h,\mu}$.
Since the spectrum of $\Delta^{(E,D)}_{h,\mu}$ is real, we thus get
\begin{equation}\label{E:zetabar}
	\zeta_{\lambda,\bar h,\mu}^{(\bar E,D)}(s)=\zeta_{\lambda,h,\mu}^{(E,D)}(s).
\end{equation}

Note that from \eqref{E:Deltat} and \eqref{E:Qt} we also obtain 
\[
	\Delta_{q,h',\mu}^{(E',D^t)}
	=\left(\Delta^{(E,D)}_{-q,h,\mu}\right)^t
	\qquad\text{and}\qquad
	Q^{(E',D^t)}_{q,\lambda,h',\mu}
	=\left(Q_{-q,\lambda,h,\mu}^{(E,D)}\right)^t.
\]
Hence, the canonical weakly non-degenerate pairing $\Gamma^\infty((E')^q)\times\Gamma^\infty(E^{-q})\to\mathbb C$ restricts to a (weakly) non-degenerate pairing 
\[
	\img\left(P_{\lambda,q}^{(E',D^t)}\right)\otimes\img\left(P_{\lambda,-q}^{(E,D)}\right)\to\mathbb C.
\]
The latter pairing induces an isomorphism between finite dimensional complexes,
\[
	\left(\img\bigl(P_\lambda^{(E',D^t)}\bigr),D^t\right)
	=\left(\img\bigl(P_\lambda^{(E,D)}\bigr),D\right)^*,
\]
where the right hand side denotes the dual of the complex $\bigl(\img\bigl(P^{(E,D)}_\lambda\bigr),D\bigr)$ with the grading reversed.
Via this isomorphism, the inner product on the left hand side induced by the $L^2$ inner product on $\Gamma(E')$ coincides with the inner product on the right hand side induced by the $L^2$ inner product on $\Gamma(E)$.
Moreover, by the universal coefficient theorem, this identification induces isomorphisms in cohomology, $H^q(E',D^t)=H^{-q}(E,D)^*$, which in turn induce the canonical identification \eqref{E:sdett}.
Hence, via \eqref{E:sdett}, we have
\[
	\|-\|^{\sdet(H^*(E',D^t))}_{[0,\lambda],h',\mu}
	=\left(\|-\|^{\sdet(H^*(E,D))}_{[0,\lambda],h,\mu}\right)^{-1}
\]
see \cite[Theorem~1.9]{T01} for the acyclic case.
Combining this with \eqref{E:zetat} and \eqref{E:zetabar}, the proposition follows at once, see \eqref{E:defRSmetric}.
\end{proof}

With respect to finite coverings we have:

\begin{proposition}\label{P:PF}
Suppose $\pi\colon\tilde M\to M$ is a finite covering of filtered manifolds, let $(\tilde E,\tilde D)$ be a Rockland complex over $\tilde M$, and consider the Rockland complex $(\pi_*\tilde E,\pi_*\tilde D)$ over $M$, where $\pi_*\tilde E$ denotes the vector bundle over $M$ with fibers $(\pi_*\tilde E)_x=\bigoplus_{\tilde x\in\pi^{-1}(x)}\tilde E_{\tilde x}$ and $\pi_*\tilde D$ denotes the induced differential operators.
Moreover, suppose $\tilde h$ is a graded Hermitian inner product on $\tilde E$ and let $(\pi_*\tilde h)_x=\bigoplus_{\tilde x\in\pi^{-1}(x)}h_{\tilde x}$ denote the induced Hermitian inner product on $\pi_*\tilde E$.
Then, up to the canonical identification of determinant lines
\begin{equation*}\label{E:sdetPF}
	\sdet\bigl(H^*(\pi_*\tilde E,\pi_*\tilde D)\bigr)
	=\sdet(H^*(\tilde E,\tilde D))
\end{equation*}
induced by the canonical isomorphisms $H^q(\pi_*\tilde E,\pi_*\tilde D)=H^q(\tilde E,\tilde D)$, we have
\[
	\|-\|^{\sdet(H^*(\pi_*\tilde E,\pi_*\tilde D))}_{\pi_*\tilde h,\mu}
	=\|-\|^{\sdet(H^*(\tilde E,\tilde D))}_{\tilde h,\pi^*\mu}.
\]
\end{proposition}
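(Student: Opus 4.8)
The plan is to reduce everything on $M$ to the corresponding objects on $\tilde M$ via the standard identification of pushforward bundles with sections. First I would note that for a finite covering $\pi\colon\tilde M\to M$ there is a canonical isomorphism $\Gamma^\infty(\pi_*\tilde E)\cong\Gamma^\infty(\tilde E)$: a section of $\pi_*\tilde E$ over $M$ is the same data as a section of $\tilde E$ over $\tilde M$, since over each $x\in M$ the fiber $(\pi_*\tilde E)_x$ is by definition $\bigoplus_{\tilde x\in\pi^{-1}(x)}\tilde E_{\tilde x}$. Under this identification the operator $\pi_*\tilde D$ corresponds to $\tilde D$, and the Heisenberg orders $k_q$ are unchanged because the covering is a local diffeomorphism of filtered manifolds (so the osculating algebras at $x$ and at the points of $\pi^{-1}(x)$ agree). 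Moreover, the $L^2$ inner product \eqref{E:L2IP} on $\Gamma^\infty(\pi_*\tilde E)$ built from $\pi_*\tilde h$ and a volume density $\mu$ on $M$ matches \emph{exactly} the $L^2$ inner product on $\Gamma^\infty(\tilde E)$ built from $\tilde h$ and $\pi^*\mu$: integrating a function over $M$ after summing over the (finite) fiber is the same as integrating its pullback over $\tilde M$ against $\pi^*\mu$. This is the one genuinely substantive point and I expect it to be the main (if mild) obstacle — one must be careful that $\pi$ being a covering makes $\pi^*|\Lambda|_M=|\Lambda|_{\tilde M}$ and that no Jacobian factor intervenes.

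Granting these identifications, the formal adjoint $(\pi_*\tilde D)^*$ with respect to $\pi_*\tilde h,\mu$ corresponds to $\tilde D^*$ with respect to $\tilde h,\pi^*\mu$, hence so do the Rumin--Seshadri operators: $\Delta_q^{(\pi_*\tilde E,\pi_*\tilde D)}$ (computed with $\pi_*\tilde h,\mu$ and some choice of $a_q$) is carried to $\Delta_q^{(\tilde E,\tilde D)}$ (computed with $\tilde h,\pi^*\mu$ and the same $a_q$). Consequently the spectral projections satisfy $Q_\lambda^{(\pi_*\tilde E,\pi_*\tilde D)}=Q_\lambda^{(\tilde E,\tilde D)}$ and $P_\lambda^{(\pi_*\tilde E,\pi_*\tilde D)}=P_\lambda^{(\tilde E,\tilde D)}$ for every $\lambda\geq0$, and the complex powers and zeta functions \eqref{E:zetalambda} coincide:
\[
	\zeta_\lambda^{(\pi_*\tilde E,\pi_*\tilde D)}(s)=\zeta_\lambda^{(\tilde E,\tilde D)}(s)
\]
for all $s$, where on the left we use $N_q$ (valid since $k_q$ is unchanged) and on the right the same $N_q$. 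In particular $\zeta_\lambda^{(\pi_*\tilde E,\pi_*\tilde D)\prime}(0)=\zeta_\lambda^{(\tilde E,\tilde D)\prime}(0)$, so the analytic factors $\exp\bigl(-\tfrac1{2\kappa}\zeta'_\lambda(0)\bigr)$ in \eqref{E:defRSmetric} agree.

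For the finite-dimensional part, the identification $\Gamma^\infty(\pi_*\tilde E)\cong\Gamma^\infty(\tilde E)$ restricts to an isomorphism of finite-dimensional complexes $\img P_\lambda^{(\pi_*\tilde E,\pi_*\tilde D)}\cong\img P_\lambda^{(\tilde E,\tilde D)}$ which is moreover an isometry for the respective $L^2$ inner products, and which induces the canonical identifications $H^q(\pi_*\tilde E,\pi_*\tilde D)=H^q(\tilde E,\tilde D)$ in cohomology. Hence the metrics on the determinant lines induced via \eqref{E:toriso} agree, i.e.
\[
	\|-\|^{\sdet(H^*(\pi_*\tilde E,\pi_*\tilde D))}_{[0,\lambda],\pi_*\tilde h,\mu}=\|-\|^{\sdet(H^*(\tilde E,\tilde D))}_{[0,\lambda],\tilde h,\pi^*\mu}
\]
under \eqref{E:sdetPF}. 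Combining this with the equality of the $\zeta'_\lambda(0)$ terms and the definition \eqref{E:defRSmetric} of the torsion norm yields the claimed identity. Finally I would invoke Lemma~\ref{L:lambda} (independence of $\lambda$), Lemma~\ref{L:Ni}, and Lemma~\ref{L:ai} to confirm that the auxiliary choices used on both sides are immaterial, so the comparison is well posed.
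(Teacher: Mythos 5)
Your proposal is correct and follows essentially the same route as the paper: the paper's proof also rests entirely on the canonical identification $\Gamma^\infty(\pi_*\tilde E)=\Gamma^\infty(\tilde E)$ intertwining $\pi_*\tilde D$ with $\tilde D$ and the $L^2$ inner product of $(\pi_*\tilde h,\mu)$ with that of $(\tilde h,\pi^*\mu)$, and then concludes immediately. Your additional bookkeeping about the Rumin--Seshadri operators, spectral projections, zeta functions, and the finite-dimensional determinant-line metrics is exactly the "follows at once" that the paper leaves implicit.
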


\begin{proof}
Note that we have a canonical vector bundle map $\tilde E\to\pi_*\tilde E$ over $\pi$.
This vector bundle map induces a canonical isomorphism
\[
	\Gamma^\infty(\pi_*\tilde E)=\Gamma^\infty(\tilde E)
\]
which intertwines $\pi_*\tilde D$ with $\tilde D$ and induces the canonical identification in cohomology.
Moreover, this isomorphism intertwines the $L^2$ inner product induced by $\pi_*\tilde h$ and $\mu$ on $\Gamma^\infty(\pi_*\tilde E)$ with the $L^2$ inner product induced by $\tilde h$ and $\pi^*\mu$ on $\Gamma^\infty(\tilde E)$.
The proposition follows at once.
\end{proof}

\begin{proposition}\label{P:PB}
Suppose $\pi\colon\tilde M\to M$ is a finite covering of filtered manifolds and consider the pulled back Rockland complex $(\pi^*E,\pi^*D)$ over $\tilde M$.
Let $\mathcal V$ denote the flat vector bundle over $M$ with fibers 
\[
	\mathcal V_x=H_0(\pi^{-1}(x);\C)=\C[\pi^{-1}(x)],\qquad x\in M,
\]
and let $h^{\mathcal V}$ denote the canonical parallel fiber wise Hermitian inner product on $\mathcal V$.
Then, up to the canonical identification of determinant lines
\begin{equation*}\label{E:sdetPB}
	\sdet\bigl(H^*(\pi^*E,\pi^*D)\bigr)
	=\sdet\bigl(H^*(E\otimes\mathcal V,D^{\mathcal V})\bigr)
\end{equation*}
induced by the canonical isomorphisms $H^q(\pi^*E,\pi^*D)=H^q(E\otimes\mathcal V,D^{\mathcal V})$, we have
\[
	\|-\|^{\sdet(H^*(\pi^*E,\pi^*D))}_{\pi^*h,\pi^*\mu}
	=\|-\|^{\sdet(H^*(E\otimes\mathcal V,D^{\mathcal V}))}_{h\otimes h^{\mathcal V},\mu},
\]
cf.~Remark~\ref{R:twist}.
\end{proposition}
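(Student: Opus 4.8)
The plan is to deduce the proposition from Proposition~\ref{P:PF} via the canonical identification of the pushforward of the pulled back complex with the twist by $\mathcal V$. First I would record the canonical isomorphism of filtered vector bundles over $M$
\[
	\pi_*\pi^*E=E\otimes\mathcal V,
\]
which on fibres reads $(\pi_*\pi^*E)_x=\bigoplus_{\tilde x\in\pi^{-1}(x)}(\pi^*E)_{\tilde x}=\bigoplus_{\tilde x\in\pi^{-1}(x)}E_x=E_x\otimes\C[\pi^{-1}(x)]=(E\otimes\mathcal V)_x$. Over an evenly covered open set $U\subseteq M$, writing $\pi^{-1}(U)=\bigsqcup_iU_i$ with each $\pi|_{U_i}\colon U_i\to U$ a filtered diffeomorphism, this identifies $\pi_*\pi^*E|_U$ with $\bigoplus_iE|_U$ and trivialises $\mathcal V|_U$; comparing the transition functions over an overlap of two such charts shows that the flat structure on $\mathcal V$ is exactly the monodromy (permutation) representation on the fibres of $\pi$, so the displayed isomorphism is the one implicit in the statement. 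Under this identification $\pi_*\pi^*D$ corresponds to $D^{\mathcal V}$, since over $U$ both are $\bigoplus_iD|_U$; in particular $\pi_*\pi^*D$ is a Rockland complex whose Heisenberg orders agree with those of $D$ and of $D^{\mathcal V}$, cf.\ Remark~\ref{R:twist}, so the same numbers $a_q,N_q$ may be used throughout. Finally the induced Hermitian metric satisfies $\pi_*\pi^*h=h\otimes h^{\mathcal V}$, because $(\pi_*\pi^*h)_x=\bigoplus_{\tilde x\in\pi^{-1}(x)}h_x=h_x\otimes h^{\mathcal V}_x$ for the standard inner product $h^{\mathcal V}$ on $\C[\pi^{-1}(x)]$, and $h^{\mathcal V}$ is parallel since the monodromy permutes the distinguished basis.

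With these identifications I would then apply Proposition~\ref{P:PF} to the Rockland complex $(\tilde E,\tilde D):=(\pi^*E,\pi^*D)$ over $\tilde M$ with metric $\tilde h:=\pi^*h$, obtaining
\[
	\|-\|^{\sdet(H^*(\pi_*\pi^*E,\pi_*\pi^*D))}_{\pi_*\pi^*h,\mu}
	=\|-\|^{\sdet(H^*(\pi^*E,\pi^*D))}_{\pi^*h,\pi^*\mu}.
\]
Rewriting the left hand side through $(\pi_*\pi^*E,\pi_*\pi^*D,\pi_*\pi^*h)=(E\otimes\mathcal V,D^{\mathcal V},h\otimes h^{\mathcal V})$ yields precisely the asserted equality, once one checks that the determinant line identification used here is the one named in the statement. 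For that I would trace the relevant isomorphisms in cohomology: the isometric isomorphism $\Gamma^\infty(\pi_*\pi^*E)=\Gamma^\infty(\pi^*E)$ from the proof of Proposition~\ref{P:PF} intertwines $\pi_*\pi^*D$ with $\pi^*D$ and hence identifies $H^q(\pi_*\pi^*E,\pi_*\pi^*D)$ with $H^q(\pi^*E,\pi^*D)$, while the bundle isomorphism above identifies $H^q(\pi_*\pi^*E,\pi_*\pi^*D)$ with $H^q(E\otimes\mathcal V,D^{\mathcal V})$; their composition is the canonical isomorphism $H^q(\pi^*E,\pi^*D)=H^q(E\otimes\mathcal V,D^{\mathcal V})$ referenced in the proposition, and it induces the corresponding identification of graded determinant lines. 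Both the $[0,\lambda]$-part of the torsion and the zeta-regularised factor are transported correctly by this composition, since each of the two constituent isomorphisms is an isometry intertwining the Rumin--Seshadri operators.

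The only step with any content beyond formal bookkeeping is the identification of the flat structure of $\mathcal V$ with the monodromy representation, which makes $\pi_*\pi^*$ canonically $(-)\otimes\mathcal V$ rather than merely abstractly isomorphic to it; I expect this to be the main point to pin down carefully. There is no new analytic ingredient: everything reduces to Proposition~\ref{P:PF}, whose proof itself uses nothing beyond the tautological bijection $\Gamma^\infty(\pi_*\tilde E)=\Gamma^\infty(\tilde E)$ and the observation that it is an $L^2$-isometry for $(\pi_*\tilde h,\mu)$ against $(\tilde h,\pi^*\mu)$.
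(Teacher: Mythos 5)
Your proposal is correct and follows exactly the paper's own route: deduce the statement from Proposition~\ref{P:PF} applied to $(\pi^*E,\pi^*D,\pi^*h)$ together with the canonical identifications $\pi_*\pi^*E=E\otimes\mathcal V$, $\pi_*\pi^*D=D^{\mathcal V}$, and $\pi_*\pi^*h=h\otimes h^{\mathcal V}$. The paper states this in two lines; you simply spell out the local trivialisation, the monodromy description of $\mathcal V$, and the compatibility of the determinant line identifications in more detail.
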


\begin{proof}
This follows from Proposition~\ref{P:PF}. Indeed, via the canonical isomorphism of vector bundles $\pi_*\pi^*E=E\otimes\mathcal V$ we have $\pi_*\pi^*D=D^{\mathcal V}$ and $\pi_*\pi^*h=h\otimes h^{\mathcal V}$.
\end{proof}

\section{Analytic torsion of Rumin complexes}\label{S:torRumin}

The Rumin complex \cite{R99,R01,R05} associated with a filtered manifold is a complex of higher order differential operators which is conjugate to a subcomplex of the de~Rham complex and computes the cohomology of the underlying manifold.
Rumin has shown that these are Rockland complexes in a graded sense.
Assuming that the cohomology of the osculating algebras is pure, we obtain a Rockland complex in the sense of Section~\ref{S:torRc}.
The aim of this section is to establish basic properties of the analytic torsion associated with these complexes.
In particular, we will address Poincar\'e duality in Theorem~\ref{T:PD}, metric dependence in Theorem~\ref{T:varg}, and the dependence on the filtration in Theorem~\ref{T:def.filt}.

For trivially filtered manifolds the Rumin complex coincides with the de~Rham complex and gives rise to the classical Ray--Singer torsion \cite{RS71,BZ92}.
For contact manifolds the Rumin complex \cite{R90,R94,R00} has been used by Rumin and Seshadri \cite{RS12} to define an analytic torsion.
Although these two classical torsions appear as special cases of our general construction, we will say noting new about them.
Cartan's $(2,3,5)$ geometries \cite{C10} constitute another class of $5$-dimensional filtered manifolds whose osculating algebras have pure cohomology.
Their analytic torsion will be discussed in Section~\ref{S:five} below.

Our assumption that the osculating algebras have pure cohomology appears to be very restrictive.
The three cases mentioned previously are the only types of filtered manifolds we know of which have this property.
In Section~\ref{SS:pure} we present a necessary condition which restricts the dimensions of such Lie algebras tremendously.

\subsection{Rumin complexes}

We continue to consider a closed filtered manifold $M$.
Recall that $\mathfrak tM$ denotes the bundle of osculating algebras and let $\partial_q\colon\Lambda^q\mathfrak t^*M\to\Lambda^{q+1}\mathfrak t^*M$ denote the fiberwise Chevalley--Eilenberg codifferential.
We assume that the dimension of the Lie algebra cohomology $\mathcal H^q(\mathfrak t_xM)=\ker\partial_{q,x}/\img\partial_{q,x}$ is locally constant in $x\in M$, cf.~\cite[Definition~2.4]{R01}.
Hence, $\mathcal H^q(\mathfrak tM)$ is a smooth vector bundle over $M$, for each $q$.
The filtration on $TM$ induces a filtration on $\Lambda^*T^*M$.
Since the filtration on $M$ is compatible with Lie brackets, the de~Rham differential $d$ on $\Omega^*(M)$ is filtration preserving and induces $\gr(d)=\partial$ on the associated graded, $\gr(\Lambda^*T^*M)=\Lambda^*\mathfrak t^*M$.

Let $\tilde g$ be a graded fiberwise Euclidean inner product on $\mathfrak tM=\bigoplus_p\mathfrak t^pM$.
We will denote the induced fiberwise Euclidean inner product on $\Lambda^q\mathfrak t^*M$ by $\tilde g^{-1}$.
Let $\partial^*_q\colon\Lambda^{q+1}\mathfrak t^*M\to\Lambda^q\mathfrak t^*M$ denote the corresponding fiberwise adjoint of $\partial_q$.
Fiberwise finite dimensional Hodge theory provides an orthogonal decomposition of vector bundles
\begin{equation}\label{E:LtMdeco}
	\Lambda^q\mathfrak t^*M=\img\partial_{q-1}\oplus\mathcal H^q(\mathfrak tM)\oplus\img\partial^*_q,
\end{equation}
where $\mathcal H^q(\mathfrak tM)=\ker\partial_q/\img\partial_{q-1}=\ker\partial_q\cap\ker\partial^*_{q-1}=\ker\partial_{q-1}^*/\img\partial_q^*$.

We fix a splitting of the filtration $S\colon\mathfrak tM\to TM$, i.e., a filtration preserving vector bundle isomorphism inducing the identity on the associated graded.
Then $(S^t)^{-1}\colon\mathfrak t^*M\to T^*M$ is a splitting for the dual filtration which will be denoted by $S$ too.
We extend it to a splitting $S\colon\Lambda^*\mathfrak t^*M\to\Lambda^*T^*M$ characterized by
\begin{equation}\label{E:Swedge}
	S(\alpha\wedge\beta)=S\alpha\wedge S\beta
\end{equation}
for $\alpha,\beta\in\Lambda^*\mathfrak t^*_xM$.
Hence $\delta_q\colon\Lambda^qT^*M\to\Lambda^{q-1}T^*M$, 
\begin{equation}\label{E:delta}
	\delta_q:=S\circ\partial_{q-1}^*\circ S^{-1}
\end{equation}
is a filtration preserving vector bundle homomorphism inducing $\gr(\delta)=\partial^*$ on the associated graded.
Actually, $\delta$ is a Kostant type codifferential \cite[Definition~4.8 and Remark~4.14]{DH17} for the de~Rham complex $\Omega^*(M)$.

Kostant's box operator $\Box=\delta d+d\delta$ is filtration preserving and induces $\gr(\Box)=\tilde\Box:=\partial^*\partial+\partial\partial^*$ on the associated graded.
If $0\neq z\in\mathbb C$ is sufficiently close to zero, then the vector bundle map $z-\tilde\Box$ is invertible.
This implies that $z-\Box$ is invertible too and its inverse is again a filtration preserving differential operator.
Indeed, the inverse can be expressed using a finite geometric series, cf.~\cite[Lemma~1]{R99}, \cite[Lemma~2.5]{R01}, \cite{CSS01}, \cite[Theorem 5.2]{CD01}, or \cite[Lemma~4.3]{DH17}.
Hence, for sufficiently small $\varepsilon>0$, 
\begin{equation}\label{E:Pi}
	\Pi:=\frac1{2\pi\mathbf i}\oint_{|z|=\varepsilon}(z-\Box)^{-1}dz
\end{equation}
is a filtration preserving differential projector, $\Pi^2=\Pi$, inducing $\gr(\Pi)=\tilde\Pi$ on the associated graded, where $\tilde\Pi$ denotes the orthogonal projection onto the subbundle $\ker\tilde\Box=\ker\partial^*\cap\ker\partial$ of $\Lambda^*\mathfrak t^*M$.
Since $\Box$ commutes with $d$ and $\delta$, the same is true for $\Pi$, that is, $d\Pi=\Pi d$, $\delta\Pi=\Pi\delta$.
The operator $\Pi$ coincides with Rumin's projector $\Pi_E$ in \cite[Theorem~1]{R99} and \cite[Theorem~2.6]{R01}.
For more details on the construction presented here we refer to \cite[Lemma~4.4(b)]{DH17}.

The differential operator $\mathbf L\colon\Gamma^\infty(\Lambda^*\mathfrak t^*M)\to\Omega^*(M)$,
\[
	\mathbf L=\Pi S\tilde\Pi+(1-\Pi)S(1-\tilde\Pi)
\]
is filtration preserving and induces the identity on the associated graded, $\gr(\mathbf L)=\id$.
Hence, $\mathbf L$ is invertible and its inverse is a filtration preserving differential operator.
Since $\Pi\mathbf L=\mathbf L\tilde\Pi$ and $\Pi d=d\Pi$, the differential operator $\mathbf L^{-1}d\mathbf L$ commutes with $\tilde\Pi$.
Hence, $\mathbf L^{-1}d\mathbf L$ decouples into a sum of two differential operators,
\begin{equation}\label{E:dDB}
	\mathbf L^{-1}d\mathbf L=\mathbf D\oplus\mathbf B
\end{equation}
with respect to the decomposition $\Lambda^*\mathfrak t^*M=\img(\tilde\Pi)\oplus\ker(\tilde\Pi)$, where 
\begin{align*}
	\mathbf D\colon\Gamma^\infty(\img\tilde\Pi)&\to\Gamma^\infty(\img\tilde\Pi),&\mathbf D&:=\mathbf L^{-1}d\mathbf L|_{\Gamma^\infty(\img\tilde\Pi)}\\
	\mathbf B\colon\Gamma^\infty(\ker\tilde\Pi)&\to\Gamma^\infty(\ker\tilde\Pi),&\mathbf B&:=\mathbf L^{-1}d\mathbf L|_{\Gamma^\infty(\ker\tilde\Pi)}.
\end{align*}
From $d^2=0$ we obtain $\mathbf D^2=0$ and $\mathbf B^2=0$.
The complex $\mathbf D$ is Rumin's complex denoted $(E_0,d_c)$ in \cite[Theorem~1]{R99} and \cite[Theorem~2.6]{R01}.
The complex $\mathbf B$ is acyclic and the restriction of $\mathbf L$ induces an isomorphism between the cohomology of $\mathbf D$ and the de~Rham cohomology \cite{R99,R01}.
Indeed, $\partial^*\mathbf B+\mathbf B\partial^*$ is invertible for it is filtration preserving and the induced homomorphism on the associated graded, $\gr(\partial^*\mathbf B+\mathbf B\partial^*)=\partial^*\partial+\partial\partial^*$, is invertible on $\ker\tilde\Pi$.
One can show that the complex $\mathbf B$ is actually conjugate, via an invertible filtration preserving differential operator, to the acyclic tensorial complex $\partial|_{\Gamma^\infty(\ker\tilde\Pi)}$, see \cite[Proposition~4.5(c)]{DH17}.

We use the decomposition in \eqref{E:LtMdeco} to identify $\mathcal H(\mathfrak tM)=\ker\partial/\img\partial$ with $\img\tilde\Pi=\ker\partial^*\cap\ker\partial$.
More precisely, we let $\iota\colon\mathcal H(\mathfrak tM)\to\img\tilde\Pi\subseteq\ker\partial$ denote the corresponding isomorphism of vector bundles, splitting the canonical projection $\ker\partial\to\ker\partial/\img\partial=\mathcal H(\mathfrak tM)$.
Putting $D:=\iota^{-1}\mathbf D\iota$, we obtain a complex of differential operators we will refer to as Rumin complex:
\begin{equation}\label{E:Rumin}
	\cdots\to\Gamma^\infty(\mathcal H^{q-1}(\mathfrak tM))\xrightarrow{D_{q-1}}\Gamma^\infty(\mathcal H^q(\mathfrak tM))
\xrightarrow{D_q}\Gamma^\infty(\mathcal H^{q+1}(\mathfrak tM))\to\cdots
\end{equation}
Moreover, $L\colon\Gamma^\infty\bigl(\mathcal H(\mathfrak tM)\bigr)\to\Omega(M)$, $L:=\mathbf L\iota$ is a chain map, 
\begin{equation}\label{E:dLLD}
	dL=LD
\end{equation} 
inducing an isomorphism in cohomology.
The inverse is induced by the differential operator $\iota^{-1}\tilde\Pi\mathbf L^{-1}\colon\Omega(M)\to\Gamma^\infty\bigl(\mathcal H(\mathfrak tM)\bigr)$ which satisfies $(\iota^{-1}\tilde\Pi\mathbf L^{-1})L=\id$, $L(\iota^{-1}\tilde\Pi\mathbf L^{-1})=\Pi$, and
\begin{equation}\label{E:itPL}
	D(\iota^{-1}\tilde\Pi\mathbf L^{-1})=(\iota^{-1}\tilde\Pi\mathbf L^{-1})d.
\end{equation}
The latter three equations follow from $L=\mathbf L\iota$, $\mathbf L\tilde\Pi=\Pi\mathbf L$, \eqref{E:dDB}, and $D=\iota^{-1}\mathbf D\iota$.
The construction of the Rumin complex presented here is motivated by the construction of natural differential operators \cite{CSS01,CS12,CD01,DH17} in parabolic geometry.

\begin{remark}\label{R:D.pdL}
The operator $L$ can be characterized as the unique differential operator $L\colon\Gamma^\infty(\mathcal H(\mathfrak tM))\to\Omega(M)$ for which 
\begin{equation}\label{E:Ldef}
	\delta L=0,\qquad\delta dL=0\qquad\text{and}\qquad\pi L=\id,
\end{equation}
where $\pi\colon\ker\delta\to\mathcal H(\mathfrak tM)$ denotes the composition of $S^{-1}\colon\ker\delta\to\ker\partial^*$ with the projection $\ker\partial^*\to\ker\partial^*/\img\partial^*=\ker\partial^*\cap\ker\partial=\mathcal H(\mathfrak tM)$.
This follows readily from 
\[
	\Gamma^\infty(\ker\delta)=\img(\Pi)\oplus\Gamma^\infty(\img\delta)
\]
and
\[
	\img(\Pi)=\ker(\Box)=\ker(\delta)\cap\ker(\delta d)
\]
see \cite{R99,R01} or \cite[Lemma~4.7]{DH17}.
Moreover, we have $D=\pi dL$ and, cf.~\eqref{E:itPL},
\begin{equation}\label{E:LPiS}
	\iota^{-1}\tilde\Pi\mathbf L^{-1}=\pi\Pi=\iota^{-1}\tilde\Pi S^{-1}\Pi.
\end{equation}
\end{remark}

Rumin has shown that the sequence \eqref{E:Rumin} becomes exact in every non-trivial unitary representation of the osculating group, see \cite[Theorem~3]{R99} or \cite[Theorem~5.2]{R01}.
Hence, the operators in \eqref{E:Rumin} form a \emph{graded} Rockland complex which has graded Heisenberg order zero, see also \cite[Corollary~4.20(b)]{DH17}.
In the flat case, where $M$ is locally diffeomorphic to a graded nilpotent Lie group, Rumin \cite{R99,R01} proved that this complex is C-C elliptic.

To obtain a Rockland complex, we assume from now on that the osculating algebras have pure cohomology \cite{R99,R01}, that is, $\mathcal H^q(\mathfrak t_xM)=\bigoplus_p\mathcal H^q(\mathfrak t_xM)_p$ is concentrated in a single degree for each $q$.
In other words, we assume that the grading automorphism $\phi_t$ given by multiplication with $t^p$ on $\mathfrak t^pM$, $t>0$, acts as a scalar in each cohomology group.
Hence, there exist numbers $p_q$ such that  
\begin{equation}\label{E:deltaHq}
	\phi_t=t^{p_q}\qquad\text{on}\qquad\mathcal H^q(\mathfrak tM),\qquad t>0.
\end{equation}
Denoting the Heisenberg order of $D_q$ by $k_q$, we then have 
\begin{equation}\label{E:kqpq}
	k_q=p_{q+1}-p_q.
\end{equation}

\begin{lemma}\label{L:Sindep}
If the osculating algebras have pure cohomology, then the Rumin complex \eqref{E:Rumin} does not depend on the choice of a graded fiberwise Euclidean inner product $\tilde g$ on $\mathfrak tM$ or the choice of a splitting $S\colon\mathfrak tM\to TM$.
Moreover, the isomorphism in cohomology induced by $L$ is independent of these choices too.
\end{lemma}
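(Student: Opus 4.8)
The plan is to compare the data coming from two choices $(\tilde g_0,S_0)$ and $(\tilde g_1,S_1)$ head on, without passing through a one-parameter family. Decorating each of the operators constructed above with a subscript $0$ or $1$ according to the choice used, we obtain $\delta_i$, $\Box_i$, $\Pi_i$, $\mathbf L_i$, $\iota_i$, $L_i$, $D_i$, and I will write $P_i:=\iota_i^{-1}\tilde\Pi_i\mathbf L_i^{-1}$ for the cochain map in \eqref{E:LPiS}, so that $D_iP_i=P_id$, $P_iL_i=\id$, $L_iP_i=\Pi_i$, and $\gr(L_i)=\iota_i$, see \eqref{E:dLLD} and the discussion preceding it.

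The crux will be the identity
\[
	P_0L_1=\id\qquad\text{on }\Gamma^\infty(\mathcal H(\mathfrak tM)).
\]
To establish it I would argue that $P_0L_1=\iota_0^{-1}\tilde\Pi_0\mathbf L_0^{-1}L_1$ is a differential operator $\Gamma^\infty(\mathcal H^q(\mathfrak tM))\to\Gamma^\infty(\mathcal H^q(\mathfrak tM))$ which is filtration preserving of graded Heisenberg order at most zero: $\mathbf L_0^{-1}$ induces the identity on the associated graded, the fiberwise harmonic projection $\tilde\Pi_0$ and the bundle isomorphism $\iota_0^{-1}$ are weight preserving, and $L_1$ is of graded Heisenberg order at most zero. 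This is where purity enters: by \eqref{E:deltaHq} the bundle $\mathcal H^q(\mathfrak tM)$ is concentrated in the single weight $p_q$, so the grading shift between source and target is zero, whence a differential operator between them of graded Heisenberg order at most zero must actually be a vector bundle homomorphism, equal to its own principal part. That principal part is $\gr(P_0L_1)=\iota_0^{-1}\tilde\Pi_0\iota_1$; since $\iota_1\phi$ and $\iota_0\phi$ are the $\tilde g_1$- and $\tilde g_0$-harmonic representatives of one and the same class $\phi\in\mathcal H(\mathfrak tM)=\ker\partial/\img\partial$, the $\tilde g_0$-orthogonal projection $\tilde\Pi_0$ carries $\iota_1\phi$ to $\iota_0\phi$, and hence $\gr(P_0L_1)=\iota_0^{-1}\iota_0=\id$.

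Granting $P_0L_1=\id$, the independence of the Rumin differential is immediate: $D_0=D_0P_0L_1=(D_0P_0)L_1=(P_0d)L_1=P_0(dL_1)=P_0L_1D_1=D_1$, using \eqref{E:dLLD}. For the induced map on cohomology I would note that $L_0=L_0P_0L_1=\Pi_0L_1$, so $L_1-L_0=(\id-\Pi_0)L_1$, and then invoke the Green's operator $G_0$ of Kostant's box operator $\Box_0$ — a filtration preserving differential operator with $\Box_0G_0=\id-\Pi_0$ that commutes with $d$ and with $\delta_0$, obtained by the same contour integral as $\Pi_0$ in \eqref{E:Pi}. Since $\Box_0=d\delta_0+\delta_0d$ and $dL_1=L_1D_1=L_1D_0$, this gives $L_1-L_0=\Box_0G_0L_1=d(\delta_0G_0L_1)+(\delta_0G_0L_1)D_0$, so $L_1-L_0$ is chain homotopic to zero as a chain map $(\Gamma^\infty(\mathcal H(\mathfrak tM)),D_0)\to(\Omega^*(M),d)$; hence $L_0$ and $L_1$ induce the same isomorphism in cohomology. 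As at least one pair $(\tilde g,S)$ exists and the argument applies to any two of them, this proves the lemma. I expect the only delicate step to be the Heisenberg order bookkeeping of the second paragraph: the rest is formal manipulation, but it is precisely the purity of the cohomology of the osculating algebras that forces the a priori genuinely differential operator $P_0L_1$ to collapse to a vector bundle map.
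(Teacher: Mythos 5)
Your proof is correct and follows essentially the same route as the paper's: the key identity $P_0L_1=\id$ is established exactly as in the paper, namely by observing that this filtration preserving operator between bundles concentrated in a single pure weight must collapse to its associated graded $\iota_0^{-1}\tilde\Pi_0\iota_1=\id$, and $D_0=D_1$ then follows by the same conjugation argument. Your only divergence is the explicit chain homotopy $\delta_0G_0L_1$ for the cohomology statement; this is valid but more than is needed, since $P_0$ already induces the inverse of $H(L_0)$ on cohomology, so $P_0L_1=\id$ gives $H(L_1)=H(L_0)$ directly.
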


\begin{proof}
Suppose the Euclidean inner product $\tilde g_u$ and the splitting $S_u$ depend on a (discrete) parameter $u$.
Let $\tilde\Pi_u$, $\mathbf L_u$, $L_u$, $\iota_u$, and $D_u$ denote the associated operators.
Combining \eqref{E:dLLD} and \eqref{E:itPL}, we obtain
\[
	\bigl(\iota_v^{-1}\tilde\Pi_v\mathbf L_v^{-1}L_u\bigr)D_u=D_v\bigl(\iota_v^{-1}\tilde\Pi_v\mathbf L_v^{-1}L_u\bigr).
\]
It remains to show that $\iota_v^{-1}\tilde\Pi_v\mathbf L_v^{-1}L_u=\id$, for any two parameters $u$ and $v$.
Note that each factor is filtration preserving and the composition induces 
\[
	\gr\bigl(\iota_v^{-1}\tilde\Pi_v\mathbf L_v^{-1}L_u\bigr)=\iota_v^{-1}\tilde\Pi_v\iota_u=\id
\]
on the associated graded.
Indeed, $\gr(\mathbf L)=\id$, $\iota$ takes values in $\ker\partial$, and $\iota^{-1}\tilde\Pi|_{\ker\partial}$ coincides with the canonical projection $\ker\partial\to\ker\partial/\img\partial=\mathcal H(\mathfrak tM)$.
Since the filtration on $\mathcal H^q(\mathfrak tM)$ was assumed to be trivial, this implies $\iota_v^{-1}\tilde\Pi_v\mathbf L_v^{-1}L_u=\id$ on $\Gamma^\infty(\mathcal H^q(\mathfrak tM))$, for each $q$.
\end{proof}

\subsection{Analytic torsion}

We continue to consider a closed filtered manifold $M$ whose osculating algebras have pure cohomology of locally constant dimension.
Twisting the Rumin complex in \eqref{E:Rumin} with a flat vector bundle $F$, we obtain a Rockland complex
\begin{equation}\label{E:RuminF}
\cdots\to\Gamma^\infty(\mathcal H^{q-1}(\mathfrak tM)\otimes F)\xrightarrow{D_{q-1}^F}\Gamma^\infty(\mathcal H^q(\mathfrak tM)\otimes F)
\xrightarrow{D_q^F}\Gamma^\infty(\mathcal H^{q+1}(\mathfrak tM)\otimes F)\to\cdots
\end{equation}
computing the cohomology of $M$ with coefficients in the flat bundle $F$.
More explicitly, the twisted operator $L^F\colon\Gamma^\infty(\mathcal H(\mathfrak tM)\otimes F)\to\Omega(M;F)$ provides a chain map,
\begin{equation}\label{E:Lchain}
	L^F\circ D^F=d^F\circ L^F,
\end{equation}
which induces an isomorphism on cohomology,
\begin{equation}\label{E:HL}
	H^q\bigl(\mathcal H(\mathfrak tM)\otimes F,D^F\bigr)=H^q(M;F).
\end{equation}

Let $\tilde g$ be a graded fiberwise Euclidean inner product on $\mathfrak tM$ and let $\tilde g^{-1}$ denote the induced fiberwise Euclidean inner product on $\Lambda^q\mathfrak t^*M$.
Via the orthogonal decomposition in \eqref{E:LtMdeco} we obtain a fiberwise Euclidean inner product on $\mathcal H^q(\mathfrak tM)$ that will be denoted by $\tilde g^{-1}$ too.
If $h$ is a fiberwise Hermitian inner product on $F$, we obtain a fiberwise Hermitian inner product on $\mathcal H^q(\mathfrak tM)\otimes F$ we will denote by $\tilde g^{-1}\otimes h$.
Using the volume density $\mu_{\tilde g}$ on $M$ induced from the Euclidean inner product $\tilde g^{-1}$ via the canonical identification $\Lambda^m\mathfrak t^*M=\Lambda^mT^*M$ where $m=\dim M$, we obtain formal adjoints $(D^F)^*$ and Rumin--Seshadri operators, see \eqref{E:Deltai}, which will be denoted by 
\begin{equation}\label{E:RS}
	\Delta_{\tilde g,h}^F\colon\Gamma^\infty\bigl(\mathcal H(\mathfrak tM)\otimes F\bigr)\to\Gamma^\infty\bigl(\mathcal H(\mathfrak tM)\otimes F\bigr).
\end{equation}
These are Rockland operators of Heisenberg order $2\kappa$, see \eqref{E:kkiai}, which also depend on the choice of numbers $a_q$ as in \eqref{E:kiai}, but they are independent of the splitting $S$ according to Lemma~\ref{L:Sindep}.

Using the identification in \eqref{E:HL}, we obtain an analytic torsion, i.e., a norm 
\begin{equation}\label{E:deftorM}
	\|-\|^{\sdet(H^*(M;F))}_{\mathcal F,\tilde g,h}:=\|-\|^{\sdet(H^*(\mathcal H(\mathfrak tM)\otimes F,D^F))}_{\tilde g^{-1}\otimes h,\mu_{\tilde g}}
\end{equation}
on the graded determinant line,
\[
	\sdet(H^*(M;F)):=\bigotimes_q\bigl(\det H^q(M;F)\bigr)^{(-1)^q},
\]
cf.~\eqref{E:defRSmetric}.
The subscript $\mathcal F$ indicates the dependence on the filtration on $M$.
We will refer to this as the analytic torsion of the filtered manifold $M$ with coefficients in the flat bundle $F$.
It is defined for closed filtered manifolds whose osculating algebras have pure cohomology of locally constant dimension.

The analytic torsion defined above is a common generalization of the Ray--Singer torsion \cite{RS71,BZ92} and the Rumin--Seshadri \cite{RS12} analytic torsion.
In Section~\ref{S:five} we will discuss another instance of this torsion associated with a certain five dimensional geometry.
We are not aware of further filtered manifolds whose osculating algebras have pure cohomology, cf.~Section~\ref{SS:pure} below.

Let us spell out the following immediate consequence of Proposition~\ref{P:sumE}:

\begin{proposition}\label{P:sumF}
Suppose $F_1$ and $F_2$ are two flat vector bundles with Hermitian inner products $h_1$ and $h_2$, respectively.
Then, up to the canonical identification of determinant lines
\[
	\sdet\bigl(H^*(M;F_1\oplus F_2)\bigr)
        =\sdet(H^*(M;F_1))\otimes\sdet(H^*(M;F_2))
\]
induced by the canonical isomorphisms $H^q(M;F_1\oplus F_2)=H^q(M;F_1)\oplus H^q(M;F_2)$, we have
\[
	\|-\|^{\sdet(H^*(M;F_1\oplus F_2))}_{\mathcal F,\tilde g,h_1\oplus h_2}
	=\|-\|^{\sdet(H^*(M;F_1))}_{\mathcal F,\tilde g,h_1}\otimes\|-\|^{\sdet(H^*(M;F_2))}_{\mathcal F,\tilde g,h_2}.
\]
\end{proposition}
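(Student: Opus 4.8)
The plan is to deduce Proposition~\ref{P:sumF} directly from Proposition~\ref{P:sumE}, which is the statement about direct sums of Rockland complexes in general. The key observation is that twisting the Rumin complex is compatible with direct sums of flat bundles: the Rockland complex associated with $F_1\oplus F_2$ in \eqref{E:RuminF} is literally the direct sum of the Rockland complexes associated with $F_1$ and $F_2$. Concretely, $\mathcal H^q(\mathfrak tM)\otimes(F_1\oplus F_2)=\bigl(\mathcal H^q(\mathfrak tM)\otimes F_1\bigr)\oplus\bigl(\mathcal H^q(\mathfrak tM)\otimes F_2\bigr)$ as vector bundles, and under this identification the twisted Rumin differential satisfies $D_q^{F_1\oplus F_2}=D_q^{F_1}\oplus D_q^{F_2}$, since the twisted operator is built from the flat connection which splits as a direct sum.

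First I would record that the bundles $E^q:=\mathcal H^q(\mathfrak tM)\otimes F_i$ carry the Hermitian metrics $h_q=\tilde g^{-1}\otimes h_i$, so that the hypothesis of Proposition~\ref{P:sumE} is met: the two Rockland complexes $(\mathcal H(\mathfrak tM)\otimes F_1,D^{F_1})$ and $(\mathcal H(\mathfrak tM)\otimes F_2,D^{F_2})$ have differentials of the same Heisenberg order $k_q$ (indeed, the Heisenberg order $k_q$ of the untwisted differential, cf.~\eqref{E:kqpq}, is unchanged by twisting with a flat bundle, as noted in Remark~\ref{R:twist}). Moreover the Hermitian metric on $\mathcal H^q(\mathfrak tM)\otimes(F_1\oplus F_2)$ induced by $\tilde g$ and $h_1\oplus h_2$ is exactly the direct sum $(\tilde g^{-1}\otimes h_1)\oplus(\tilde g^{-1}\otimes h_2)$, and the volume density $\mu_{\tilde g}$ depends only on $\tilde g$, hence is the same in all three cases. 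So Proposition~\ref{P:sumE} applies verbatim with $\mu=\mu_{\tilde g}$, $(E,D)=(\mathcal H(\mathfrak tM)\otimes F_1,D^{F_1})$, $h=\tilde g^{-1}\otimes h_1$, and $(\tilde E,\tilde D)=(\mathcal H(\mathfrak tM)\otimes F_2,D^{F_2})$, $\tilde h=\tilde g^{-1}\otimes h_2$.

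Then I would check that the canonical identification of determinant lines in Proposition~\ref{P:sumE} matches the one asserted here. The isomorphism $H^q(\mathcal H(\mathfrak tM)\otimes(F_1\oplus F_2),D^{F_1\oplus F_2})=H^q(\mathcal H(\mathfrak tM)\otimes F_1,D^{F_1})\oplus H^q(\mathcal H(\mathfrak tM)\otimes F_2,D^{F_2})$ is induced by the direct sum decomposition of the complexes, and under the cohomology identification \eqref{E:HL} (which is induced by the chain map $L^F$, and is clearly compatible with direct sums of coefficient bundles since $L^{F_1\oplus F_2}=L^{F_1}\oplus L^{F_2}$) this translates into the canonical isomorphism $H^q(M;F_1\oplus F_2)=H^q(M;F_1)\oplus H^q(M;F_2)$. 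Hence the identification of graded determinant lines used in Proposition~\ref{P:sumE} is precisely the one in the statement, and the conclusion follows by unwinding the definition \eqref{E:deftorM} of $\|-\|^{\sdet(H^*(M;F))}_{\mathcal F,\tilde g,h}$ as the torsion of the twisted Rumin complex.

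There is no serious obstacle here; the proposition is a formal consequence of Proposition~\ref{P:sumE} together with the compatibility of the Rumin construction with direct sums of flat coefficient bundles. The only point requiring a line of care is verifying that the twisted differential and the chain map $L^F$ genuinely split as direct sums over $F=F_1\oplus F_2$, which is immediate from locality (as in Remark~\ref{R:twist}): locally the flat bundle $F_i$ and the metric $h_i$ trivialize, so $D^{F_1\oplus F_2}$, its formal adjoint, and the Rumin--Seshadri operator $\Delta^{F_1\oplus F_2}_{\tilde g,h_1\oplus h_2}$ all decompose as the corresponding direct sums, and the same holds for the spectral projections $Q_\lambda$ and $P_\lambda$.
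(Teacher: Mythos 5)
Your proof is correct and follows exactly the route the paper takes: the paper presents Proposition~\ref{P:sumF} as an immediate consequence of Proposition~\ref{P:sumE}, relying on the same observation that $D^{F_1\oplus F_2}=D^{F_1}\oplus D^{F_2}$, that the induced metrics and the volume density $\mu_{\tilde g}$ are compatible with the splitting, and that the identification \eqref{E:HL} respects direct sums of coefficients. Your additional checks (matching Heisenberg orders, compatibility of the determinant-line identifications) are exactly the points that make the deduction legitimate.
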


With respect to finite coverings we have

\begin{proposition}\label{P:PFF}
Suppose $\pi\colon\tilde M\to M$ is a finite covering of filtered manifolds, let $\tilde F$ be a flat vector bundle over $\tilde M$, and let $\pi_*\tilde F$ denote the flat vector bundle over $M$ with fibers $(\pi_*\tilde F)_x=\bigoplus_{\tilde x\in\pi^{-1}(x)}\tilde F_{\tilde x}$.
Moreover, let $\tilde h$ be a Hermitian inner product on $\tilde F$ and let $(\pi_*\tilde h)_x=\bigoplus_{\tilde x\in\pi^{-1}(x)}\tilde h_{\tilde x}$ denote the induced Hermitian inner product on $\pi_*\tilde F$.
Then, up to the canonical identification of determinant lines
\[
        \sdet\bigl(H^*(M;\pi_*\tilde F)\bigr)
	=\sdet\bigl(H^*(\tilde M;\tilde F)\bigr)
\]
induced by the canonical isomorphisms $H^q(M;\pi_*\tilde F)=H^q(\tilde M;\tilde F)$, we have
\[
	\|-\|^{\sdet(H^*(M;\pi_*\tilde F))}_{\mathcal F,\tilde g,\pi_*\tilde h}
	=\|-\|^{\sdet(H^*(\tilde M;\tilde F))}_{\pi^*\mathcal F,\pi^*\tilde g,\tilde h}.
\]
\end{proposition}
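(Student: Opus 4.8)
The plan is to deduce this from Proposition~\ref{P:PF} applied to the Rumin complex of $\tilde M$, using naturality of the Rumin construction. First I would fix a graded fiberwise Euclidean inner product $\tilde g$ on $\mathfrak tM$ and a splitting $S\colon\mathfrak tM\to TM$ of the filtration on $M$, and pull both back along the local diffeomorphism $\pi$. Since $\pi$ is a morphism of filtered manifolds, $T^p\tilde M=(d\pi)^{-1}(T^pM)$, hence $\mathfrak t^p\tilde M=\pi^*\mathfrak t^pM$ as bundles of graded nilpotent Lie algebras, and consequently $\mathcal H^q(\mathfrak t\tilde M)=\pi^*\mathcal H^q(\mathfrak tM)$. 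All the ingredients entering the construction of the Rumin complex in the previous subsection — the Chevalley--Eilenberg codifferential $\partial$, its fiberwise adjoint $\partial^*$, the Kostant codifferential $\delta=S\partial^*S^{-1}$, Kostant's box operator $\Box$, Rumin's projector $\Pi$, and the operators $\mathbf L$, $\iota$, $D$, $L$ — are built pointwise from $(\tilde g,S)$ and from the de~Rham differential, all of which $\pi$ preserves. Therefore the metrized Rockland complex $(\mathcal H(\mathfrak t\tilde M)\otimes\tilde F,D^{\tilde F})$ on $\tilde M$ associated with $(\pi^*\tilde g,\pi^*S)$ is canonically the pullback of $(\mathcal H(\mathfrak tM),D)$ twisted by $\tilde F$; moreover $\mu_{\pi^*\tilde g}=\pi^*\mu_{\tilde g}$ and the Hermitian metric on $\mathcal H(\mathfrak t\tilde M)\otimes\tilde F$ used in \eqref{E:deftorM} is $\pi^*(\tilde g^{-1})\otimes\tilde h$.

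Next I would push forward along $\pi$. By the projection formula, $\pi_*\bigl(\pi^*\mathcal H^q(\mathfrak tM)\otimes\tilde F\bigr)=\mathcal H^q(\mathfrak tM)\otimes\pi_*\tilde F$, and by the locality and naturality of the Rumin construction the differential $\pi_*D^{\tilde F}$ coincides with the Rumin differential $D^{\pi_*\tilde F}$ on $M$ twisted by the flat bundle $\pi_*\tilde F$; the same holds for the chain maps $L^{\tilde F}$, which push forward to $L^{\pi_*\tilde F}$. The pushed-forward Hermitian metric agrees with $\tilde g^{-1}\otimes\pi_*\tilde h$, since at a point $x\in M$ it is $\bigoplus_{\tilde x\in\pi^{-1}(x)}\bigl(\tilde g^{-1}_x\otimes\tilde h_{\tilde x}\bigr)=\tilde g^{-1}_x\otimes\bigl(\bigoplus_{\tilde x\in\pi^{-1}(x)}\tilde h_{\tilde x}\bigr)$. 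Hence the metrized complex $\bigl(\pi_*(\mathcal H(\mathfrak t\tilde M)\otimes\tilde F),\pi_*D^{\tilde F}\bigr)$ with the pushed-forward metric and the volume density $\mu_{\tilde g}$ is exactly the one computing $\|-\|^{\sdet(H^*(M;\pi_*\tilde F))}_{\mathcal F,\tilde g,\pi_*\tilde h}$, cf.~\eqref{E:deftorM}.

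Now I would apply Proposition~\ref{P:PF} to the Rockland complex $(\tilde E,\tilde D)=(\mathcal H(\mathfrak t\tilde M)\otimes\tilde F,D^{\tilde F})$ over $\tilde M$, with the graded Hermitian metric on $\tilde E$ taken to be $\pi^*(\tilde g^{-1})\otimes\tilde h$ and the volume density on $M$ taken to be $\mu=\mu_{\tilde g}$, so that $\pi^*\mu=\mu_{\pi^*\tilde g}$. With the identifications above, the conclusion of Proposition~\ref{P:PF} reads precisely as the asserted equality of norms, and its canonical identification $\sdet(H^*(\pi_*\tilde E,\pi_*\tilde D))=\sdet(H^*(\tilde E,\tilde D))$ corresponds, under the quasi-isomorphisms $L^{\pi_*\tilde F}$ and $L^{\tilde F}$ of \eqref{E:HL}, to the canonical identification $H^q(M;\pi_*\tilde F)=H^q(\tilde M;\tilde F)$ — here one uses that $L$ is compatible with $\pi$, which again follows from Lemma~\ref{L:Sindep} together with the locality of $L$.

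The main obstacle is the bookkeeping in the two middle paragraphs: one must check carefully that the pushforward of the Rumin complex of $\tilde M$ is on the nose the Rumin complex of $M$ with coefficients in $\pi_*\tilde F$ — including the identifications $\mathcal H^q(\mathfrak t\tilde M)=\pi^*\mathcal H^q(\mathfrak tM)$, $\pi_*D^{\tilde F}=D^{\pi_*\tilde F}$, $\pi_*L^{\tilde F}=L^{\pi_*\tilde F}$ — and, most delicately, that the resulting identification in cohomology is the standard one, so that the two induced identifications of determinant lines coincide. All of this is a formal consequence of the naturality in Lemma~\ref{L:Sindep} and the fact that $\pi$ is a local diffeomorphism of filtered manifolds, but it requires unwinding the construction of $\Pi$, $\mathbf L$, $\iota$, $D$ and $L$ from the preceding subsection.
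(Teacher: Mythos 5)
Your proposal is correct and follows essentially the same route as the paper: the paper's proof likewise reduces the statement to Proposition~\ref{P:PF} by identifying $\pi_*(\mathcal H^q(\mathfrak t\tilde M)\otimes\tilde F)$ with $\mathcal H^q(\mathfrak tM)\otimes\pi_*\tilde F$, intertwining $\pi_*D^{\tilde F}$ with $D^{\pi_*\tilde F}$ and the pushed-forward metric with $\tilde g^{-1}\otimes\pi_*\tilde h$, and using $\pi^*\mu_{\tilde g}=\mu_{\pi^*\tilde g}$. Your additional remarks on tracking the identification in cohomology via $L$ only make explicit what the paper leaves implicit.
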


\begin{proof}
This follows from Proposition~\ref{P:PF}.
Indeed, since the covering map is a local diffeomorphism of filtered manifolds, we have canonical isomorphisms of vector bundles $\pi_*(\mathcal H^q(\mathfrak t\tilde M)\otimes\tilde F)=\mathcal H^q(\mathfrak tM)\otimes\pi_*\tilde F$ intertwining $\pi_*D_q^{\tilde F}$ with $D^{\pi_*\tilde F}_q$, cf.~\eqref{E:RuminF}, and intertwining the Hermitian inner product $\pi_*(\pi^*\tilde g^{-1}\otimes\tilde h)$ with the Hermitian inner product $\tilde g^{-1}\otimes\pi_*\tilde h$.
Moreover, $\pi^*\mu_{\tilde g}=\mu_{\pi^*\tilde g}$.
\end{proof}

\begin{proposition}\label{P:PBF}
Suppose $\pi\colon\tilde M\to M$ is a finite covering of filtered manifolds, let $\mathcal V$ denote the flat vector bundle over $M$ with fibers $\mathcal V_x=H_0(\pi^{-1}(x);\C)=\C[\pi^{-1}(x)]$, $x\in M$, and let $h^{\mathcal V}$ denote the canonical parallel Hermitian inner product on $\mathcal V$.
Then, up to the canonical identification of determinant lines
\[
	\sdet\bigl(H^*(\tilde M;\pi^*F)\bigr)
        =\sdet\bigl(H^*(M;F\otimes\mathcal V)\bigr)
\]
induced by the canonical isomorphisms $H^q(\tilde M;\pi^* F)=H^q(M;F\otimes\mathcal V)$, we have
\[
	\|-\|^{\sdet(H^*(\tilde M;\pi^*F))}_{\pi^*\mathcal F,\pi^*\tilde g,\pi^*h}
	=\|-\|^{\sdet(H^*(M;F\otimes\mathcal V))}_{\mathcal F,\tilde g,h\otimes h^{\mathcal V}}.
\]
\end{proposition}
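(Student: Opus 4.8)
The plan is to deduce this from Proposition~\ref{P:PFF}, in exactly the way Proposition~\ref{P:PB} was deduced from Proposition~\ref{P:PF}. I would apply Proposition~\ref{P:PFF} to the finite covering $\pi$ with the flat vector bundle $\tilde F:=\pi^*F$ over $\tilde M$ and the Hermitian inner product $\tilde h:=\pi^*h$, and then identify the pushed forward data on $M$.

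The key step is the canonical isomorphism of flat vector bundles $\pi_*\pi^*F=F\otimes\mathcal V$, under which $\pi_*(\pi^*h)=h\otimes h^{\mathcal V}$. This I would verify fibrewise: for $x\in M$,
\[
	(\pi_*\pi^*F)_x=\bigoplus_{\tilde x\in\pi^{-1}(x)}(\pi^*F)_{\tilde x}=\bigoplus_{\tilde x\in\pi^{-1}(x)}F_x=F_x\otimes\C[\pi^{-1}(x)]=(F\otimes\mathcal V)_x,
\]
and this identification is parallel, since the flat connection on $\pi^*F$ is the pullback of that on $F$ while the flat structure on $\mathcal V$ is the monodromy permutation of the sheets; the Hermitian metrics match by the same description. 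With these identifications the equality produced by Proposition~\ref{P:PFF} becomes precisely the assertion. Alternatively---and this is the route I would actually write up---one obtains the statement directly from Proposition~\ref{P:PB}: because $\pi$ is a local diffeomorphism of filtered manifolds it identifies $\mathfrak t\tilde M$ with $\pi^*\mathfrak tM$, hence $\mathcal H^q(\mathfrak t\tilde M)=\pi^*\mathcal H^q(\mathfrak tM)$ (so the osculating algebras of $\tilde M$ again have pure cohomology of locally constant dimension), and since the Kostant codifferential $\delta$, the box operator $\Box$, Rumin's projector $\Pi$, the operator $\mathbf L$, and therefore $D^F$ and $L^F$, are built locally out of the de~Rham differential together with the fibrewise data $\tilde g$ and $S$, the Rumin complex of $\tilde M$ with coefficients in $\pi^*F$ is the pullback along $\pi$ of the Rumin complex of $M$ with coefficients in $F$ (independence of $\tilde g$ and $S$ being guaranteed by Lemma~\ref{L:Sindep}). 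Together with $\pi^*(\tilde g^{-1}\otimes h)=(\pi^*\tilde g)^{-1}\otimes\pi^*h$ and $\pi^*\mu_{\tilde g}=\mu_{\pi^*\tilde g}$, Proposition~\ref{P:PB} applied to the Rockland complex $(\mathcal H(\mathfrak tM)\otimes F,D^F)$---equipped with the metric $\tilde g^{-1}\otimes h$ and the density $\mu_{\tilde g}$---then yields the claim, after using associativity of twisting to rewrite $(\mathcal H(\mathfrak tM)\otimes F)\otimes\mathcal V$ as the Rumin complex of $M$ with coefficients in $F\otimes\mathcal V$.

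The main obstacle, such as it is, is purely bookkeeping: I would have to check that the canonical cohomological identification $H^q(\tilde M;\pi^*F)=H^q(M;F\otimes\mathcal V)$ appearing in the statement corresponds, via the isomorphisms \eqref{E:HL} over $\tilde M$ and over $M$, to the identification supplied by Proposition~\ref{P:PFF} (respectively Proposition~\ref{P:PB}), so that the two determinant lines $\sdet(H^*(\tilde M;\pi^*F))$ and $\sdet(H^*(M;F\otimes\mathcal V))$ are matched in the same way on both sides of the claimed equality. This is immediate once one notes that all of these isomorphisms are induced by the single canonical vector bundle map over $\pi$, but it is worth recording. No analytic input beyond the covering results of Section~\ref{SS:elementary}, already applied to the Rumin complex, is needed.
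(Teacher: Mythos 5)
Your proposal is correct and your first route is precisely the paper's proof: Proposition~\ref{P:PBF} is deduced from Proposition~\ref{P:PFF} via the canonical isomorphism of flat bundles $\pi_*\pi^*F=F\otimes\mathcal V$ together with $\pi_*\pi^*h=h\otimes h^{\mathcal V}$. The alternative route you favour, through Proposition~\ref{P:PB} applied to the Rumin complex, only reshuffles the same ingredients (the identification of the Rumin complex of $\tilde M$ with the pullback is exactly the verification underlying Proposition~\ref{P:PFF}), so there is no substantive difference.
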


\begin{proof}
This follows from Proposition~\ref{P:PFF}.
Indeed, via the canonical isomorphism of flat vector bundles $\pi_*\pi^*F=F\otimes\mathcal V$ we have $\pi_*\pi^*h=h\otimes h^{\mathcal V}$.
%This follows from Lemma~\ref{L:PB}.
%Indeed, since the covering map is a local diffeomorphism of filtered manifolds, we have canonical isomorphisms of vector bundles $\pi^*\bigl(\mathcal H^q(\mathfrak tM)\otimes F\bigr)=\mathcal H^q(\mathfrak t\tilde M)\otimes\pi^*F$ intertwining $\pi^*D^F_q$ with $D^{\pi^*F}_q$, cf.~\eqref{E:RuminF}, and intertwining the Hermitian inner product $\pi^*(\tilde g^{-1}\otimes h)$ on $\pi^*\bigl(\mathcal H^q(\mathfrak tM)\otimes F\bigr)$ with the Hermitian inner product $\pi^*\tilde g^{-1}\otimes\pi^*h$.
\end{proof}

\subsection{Poincar\'e duality}\label{SS:PD}

Rumin observed that his complex is Hodge $\star$ self-dual, see \cite[Section~2]{R99} or \cite[Proposition~2.8]{R01}.
In this section we recall this duality and discuss implications for the analytic torsion.

Let us denote the dimension of $M$ by 
\[
	m:=\dim M.
\]
Recall that the wedge product provides a fiberwise non-degenerate bilinear pairing
\begin{equation}\label{E:wedgeL}
	\Lambda^q\mathfrak t^*M\otimes\Lambda^{m-q}\mathfrak t^*M\xrightarrow{\,\,\wedge\,\,\,}\Lambda^m\mathfrak t^*M.
\end{equation}
The corresponding vector bundle isomorphism
\begin{equation}\label{E:wedgeq}
	\wedge_q\colon\Lambda^q\mathfrak t^*M\to\left(\Lambda^{m-q}\mathfrak t^*M\right)^*\otimes\Lambda^m\mathfrak t^*M
\end{equation}
is an isometry with respect to the Euclidean metric $\tilde g^{-1}$ on the left hand side and the induced Euclidean metric $\tilde g\otimes\tilde g^{-1}$ on the right hand side.
Hence,
\begin{equation}\label{E:wedgeiso}
	\wedge_q^*=\wedge_q^{-1}.
\end{equation}

Since the Chevalley--Eilenberg codifferential $\partial\colon\Lambda^*\mathfrak t_x^*M\to\Lambda^{*+1}\mathfrak t_x^*M$ is a graded derivation, the wedge product induces a fiberwise bilinear pairing
\begin{equation}\label{E:wedgeH}
	\mathcal H^q(\mathfrak tM)\otimes\mathcal H^{m-q}(\mathfrak tM)
	\xrightarrow{\,\,\wedge\,\,\,}\mathcal H^m(\mathfrak tM),
\end{equation}
where we regard $\mathcal H(\mathfrak tM)=\ker\partial/\img\partial$.
By Poincar\'e duality for the nilpotent Lie algebra $\mathfrak t_xM$, this pairing is fiberwise non-degenerate.
In particular, the codifferential in top degree, $\partial\colon\Lambda^{m-1}\mathfrak t^*M\to\Lambda^m\mathfrak t^*M$ vanishes.
Hence, we have a canonical identification
\begin{equation}\label{E:canHtM}
	\mathcal H^m(\mathfrak tM)=\Lambda^m\mathfrak t^*M
\end{equation}
and
\begin{equation}\label{E:partialwedge}
	\partial\alpha\wedge\beta+(-1)^q\alpha\wedge\partial\beta=0,
\end{equation}
for $\alpha\in\Lambda^q\mathfrak t_x^*M$ and $\beta\in\Lambda^{m-q-1}\mathfrak t_x^*M$.
Using the vector bundle isomorphism in \eqref{E:wedgeq}, this equation may be written in the form $\wedge_{q+1}\circ\partial_q=-(-1)^q(\partial^t_{m-q-1}\otimes\id)\circ\wedge_q$.
Dualizing, we obtain $\partial_q^*\circ\wedge_{q+1}^*=-(-1)^q\wedge_q^*\circ((\partial^t_{m-q-1})^*\otimes\id)$.
Using $(\partial^t)^*=(\partial^*)^t$ and \eqref{E:wedgeiso}, this yields $\wedge_q\circ\partial_q^*=-(-1)^q\circ((\partial^*_{m-q-1})^t\otimes\id)\circ\wedge_{q+1}$.
In other words, we also have
\begin{equation}\label{E:PDpartial*}
	\partial^*\alpha\wedge\beta+(-1)^q\alpha\wedge\partial^*\beta=0
\end{equation}
for $\alpha\in\Lambda^{q+1}\mathfrak t_x^*M$ and $\beta\in\Lambda^{m-q}\mathfrak t_x^*M$.

Combining \eqref{E:PDpartial*} with \eqref{E:delta} and \eqref{E:Swedge}, we obtain
\begin{equation}\label{E:PDdelta}
	\delta\phi\wedge\psi+(-1)^q\phi\wedge\delta\psi=0
\end{equation}
for $\phi\in\Lambda^{q+1}T_x^*M$ and $\psi\in\Lambda^{m-q}T^*_xM$.
Moreover,
\begin{equation}\label{E:PDpiwedge}
	\pi(\phi\wedge\psi)=\pi\phi\wedge\pi\psi
\end{equation}
for $\phi\in\ker\delta_{q,x}$ and $\psi\in\ker\delta_{m-q,x}$.
Here $\pi\colon\ker\delta\to\mathcal H(\mathfrak tM)$ is the vector bundle map from Remark~\ref{R:D.pdL}.
Indeed, \eqref{E:PDpiwedge} is obvious for $\phi\in S_q(\ker\partial^*_{q-1,x}\cap\ker\partial_{q,x})$ and $\psi\in S_{m-q}(\ker\partial^*_{m-q-1,x}\cap\ker\partial_{m-q,x})$, see \eqref{E:Swedge} and \eqref{E:delta}.
The general case follows from \eqref{E:PDdelta} since $\pi$ induces an isomorphism $\ker\delta/\img\delta\to\ker\partial^*/\img\partial^*=\ker\partial^*\cap\ker\partial=\mathcal H(\mathfrak tM)$.
Note that in top degree, $\pi$ is an isomorphism of line bundles that coincides with the canonical identification in \eqref{E:canHtM}.
Hence, combining \eqref{E:PDpiwedge} with the relations $\delta L=0$ and $\pi L=\id$ from \eqref{E:Ldef}, we obtain $\pi(L\alpha\wedge L\beta)=\alpha\wedge\beta$ and thus
\begin{equation}\label{E:PDL}
	L\alpha\wedge L\beta=L(\alpha\wedge\beta)
\end{equation}
for all $\alpha\in\Gamma^\infty(\mathcal H^q(\mathfrak tM))$ and $\beta\in\Gamma^\infty(\mathcal H^{m-q}(\mathfrak tM))$.
Note that in top degree, $L$ is algebraic, inducing the canonical identification in \eqref{E:canHtM} inverse to $\pi$.

For every flat vector bundle $F$, the wedge product in \eqref{E:wedgeH} induces a fiberwise non-degenerate bilinear pairing
\begin{equation}\label{E:wedgeHF}
	\bigl(\mathcal H^q(\mathfrak tM)\otimes F^*\otimes\mathcal O\bigr)\otimes\bigl(\mathcal H^{m-q}(\mathfrak tM)\otimes F\bigr)\xrightarrow{\,\,\wedge\,\,\,}|\Lambda|.
\end{equation}
Here $\mathcal O$ denotes the orientation bundle of $M$ and we are using the canonical identification of line bundles
\begin{equation}\label{E:Lmcan}
	\mathcal H^m(\mathfrak tM)\otimes\mathcal O
	=\Lambda^m\mathfrak t^*M\otimes\mathcal O
	=\Lambda^mT^*M\otimes\mathcal O
	=|\Lambda|.
\end{equation}
From \eqref{E:PDL} we obtain
\begin{equation}\label{E:PDLF}
	L^{F^*\otimes\mathcal O}\alpha\wedge L^F\beta=\alpha\wedge\beta
\end{equation}
for all $\alpha\in\Gamma^\infty\bigl(\mathcal H^q(\mathfrak tM)\otimes F^*\otimes\mathcal O\bigr)$ and $\beta\in\Gamma^\infty\bigl(\mathcal H^{m-q}(\mathfrak tM)\otimes F\bigr)$.

Variants of the following statement can be found in \cite[Section~2]{R99}, \cite[Proposition~2.8]{R01}, and \cite[Section~7]{CD01}.

\begin{lemma}\label{L:D*}
We have
\[
	D^{F^*\otimes\mathcal O}\alpha\wedge\beta+(-1)^q\alpha\wedge D^F\beta
	=d\bigl(L^{F^*\otimes\mathcal O}\alpha\wedge L^F\beta\bigr)
\]
for $\alpha\in\Gamma^\infty\bigl(\mathcal H^q(\mathfrak tM)\otimes F^*\otimes\mathcal O\bigr)$ and $\beta\in\Gamma^\infty\bigl(\mathcal H^{m-q-1}(\mathfrak tM)\otimes F\bigr)$.
\end{lemma}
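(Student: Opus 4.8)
The plan is to deduce this identity from the Poincar\'e duality relation \eqref{E:PDLF} for the operators $L^F$ by applying the de~Rham differential and invoking the chain map property. Concretely, I would start from
\[
	L^{F^*\otimes\mathcal O}\alpha\wedge L^F\beta=\alpha\wedge\beta,
\]
which is \eqref{E:PDLF} in bidegree $(q,m-q-1)$, and differentiate. Since $\mathcal O$ is a flat line bundle and the fiberwise contraction $F^*\otimes F\to\C$ is parallel, the de~Rham differential obeys the Leibniz rule with respect to the wedge pairing in \eqref{E:wedgeHF}; as $L^{F^*\otimes\mathcal O}\alpha$ is a $q$-form this yields
\[
	d\bigl(L^{F^*\otimes\mathcal O}\alpha\wedge L^F\beta\bigr)
	=\bigl(dL^{F^*\otimes\mathcal O}\alpha\bigr)\wedge L^F\beta
	+(-1)^qL^{F^*\otimes\mathcal O}\alpha\wedge\bigl(dL^F\beta\bigr).
\]

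Next I would use the chain map property \eqref{E:Lchain}, i.e.\ $dL^F=L^FD^F$ together with its analogue for the coefficient bundle $F^*\otimes\mathcal O$, to rewrite the right-hand side as
\[
	L^{F^*\otimes\mathcal O}\bigl(D^{F^*\otimes\mathcal O}\alpha\bigr)\wedge L^F\beta
	+(-1)^qL^{F^*\otimes\mathcal O}\alpha\wedge L^F\bigl(D^F\beta\bigr).
\]
Finally, applying \eqref{E:PDLF} once more --- to the pair $\bigl(D^{F^*\otimes\mathcal O}\alpha,\beta\bigr)$ in bidegree $(q+1,m-q-1)$ for the first summand, and to the pair $\bigl(\alpha,D^F\beta\bigr)$ in bidegree $(q,m-q)$ for the second --- collapses these summands to $D^{F^*\otimes\mathcal O}\alpha\wedge\beta$ and $(-1)^q\alpha\wedge D^F\beta$ respectively, which is the asserted formula.

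There is essentially no serious obstacle: the lemma is a formal consequence of \eqref{E:Lchain} and \eqref{E:PDLF}, and the genuine Poincar\'e-duality content has already been absorbed into the proof of \eqref{E:PDL} and \eqref{E:PDLF}. The only points needing a moment's care are the sign $(-1)^q$ in the Leibniz rule (forced by the form degree $q$ of $L^{F^*\otimes\mathcal O}\alpha$), the observation that $d$ maps the $\mathcal O$-twisted $(m-1)$-form $L^{F^*\otimes\mathcal O}\alpha\wedge L^F\beta$ into $\Gamma^\infty(|\Lambda|)$ so that both sides of the claimed identity are $1$-densities, and the fact that \eqref{E:PDLF} may legitimately be applied in the two shifted bidegrees $(q+1,m-q-1)$ and $(q,m-q)$, which is immediate since it holds in every complementary bidegree.
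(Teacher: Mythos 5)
Your proposal is correct and follows exactly the paper's own argument: Leibniz rule for $d$, the chain map property \eqref{E:Lchain}, and then \eqref{E:PDLF} applied in the two shifted bidegrees. Nothing further is needed.
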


\begin{proof}
By the Leibniz rule for the de~Rham differential, \eqref{E:Lchain} and \eqref{E:PDLF} we find:
\begin{align*}
	d\bigl(L^{F^*\otimes\mathcal O}\alpha\wedge L^{F}\beta\bigr)
	&=d^{F^*\otimes\mathcal O}L^{F^*\otimes\mathcal O}\alpha\wedge L^F\beta+(-1)^qL^{F^*\otimes\mathcal O}\alpha\wedge d^FL^F\beta
	\\&=L^{F^*\otimes\mathcal O}D^{F^*\otimes\mathcal O}\alpha\wedge L^F\beta+(-1)^qL^{F^*\otimes\mathcal O}\alpha\wedge L^FD^F\beta
	\\&=D^{F^*\otimes\mathcal O}\alpha\wedge\beta+(-1)^q\alpha\wedge D^F\beta
	\qedhere
\end{align*}
\end{proof}

By Lemma~\ref{L:D*} and Stokes' theorem, the pairing in \eqref{E:wedgeHF} induces a pairing
\begin{equation}\label{E:wedgeHH}
	H^q\bigl(\mathcal H(\mathfrak tM)\otimes F^*\otimes\mathcal O,D^{F^*\otimes\mathcal O}\bigr)\otimes H^{m-q}\bigl(\mathcal H(\mathfrak tM)\otimes F,D^F\bigr)
	\xrightarrow{\,\,\wedge\,\,\,}\mathbb C.
\end{equation}
In view of \eqref{E:PDLF}, the isomorphisms induced on cohomology by $L^{F^*\otimes\mathcal O}$ and $L^F$, see \eqref{E:HL}, intertwine this pairing with the Poincar\'e duality pairing
\[
	H^q(M;F^*\otimes\mathcal O)\otimes H^{m-q}(M;F)\xrightarrow{\,\,\wedge\,\,\,}\mathbb C.
\]
In particular, the pairing in \eqref{E:wedgeHH} is non-degenerate.

\begin{theorem}\label{T:PD}
Via the canonical isomorphism of determinant lines
\[
	\sdet\bigl(H^*(M;F^*\otimes\mathcal O)\bigr)
	=\bigl(\sdet(H^*(M;F)\bigr)^{(-1)^{m+1}}
\]
induced by Poincar\'e duality $H^q(M;F^*\otimes\mathcal O)=H^{m-q}(M;F)^*$, we have
\[
	\|-\|^{\sdet(H^*(M;F^*\otimes\mathcal O))}_{\mathcal F,\tilde g,h^{-1}}
	=\left(\|-\|^{\sdet(H^*(M;F))}_{\mathcal F,\tilde g,h}\right)^{(-1)^{m+1}}.
\]
Here $h^{-1}$ denotes the fiberwise Hermitian inner product on $F^*\otimes\mathcal O$ induced by the fiberwise Hermitian inner product $h$ on $F$.
\end{theorem}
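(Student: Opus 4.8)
The plan is to deduce Theorem~\ref{T:PD} from the abstract duality for transposed Rockland complexes, Proposition~\ref{P:dual}, by identifying the transpose of the Rumin complex twisted by $F$ with a degree-shifted copy of the Rumin complex twisted by $F^*\otimes\mathcal O$. Write $(E,D^F)$ for the Rumin complex twisted by $F$, so that $E^q=\mathcal H^q(\mathfrak tM)\otimes F$, and $(\tilde E,\tilde D)$ for the one twisted by $F^*\otimes\mathcal O$, so that $\tilde E^q=\mathcal H^q(\mathfrak tM)\otimes F^*\otimes\mathcal O$.

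First I would promote the Poincar\'e duality of the osculating algebras to a bundle-level statement. The fiberwise non-degenerate wedge pairings \eqref{E:wedgeH} and \eqref{E:wedgeHF}, together with the canonical identifications \eqref{E:canHtM} and \eqref{E:Lmcan}, yield for each $q$ a vector bundle isomorphism
\[
	\Phi_q\colon(E')^q=(E^{-q})^*\otimes|\Lambda|\xrightarrow{\ \sim\ }\mathcal H^{q+m}(\mathfrak tM)\otimes F^*\otimes\mathcal O=\tilde E^{q+m}.
\]
Since $\wedge$ is an isometry, cf.~\eqref{E:wedgeiso}, and $\mu_{\tilde g}$ is the density induced by $\tilde g^{-1}$, a routine check shows that $\Phi_q$ carries the Hermitian metric $h'_q=(h_{-q})^{-1}\otimes\mu_{\tilde g}^{-2}$ of Proposition~\ref{P:dual} to the Rumin metric $\tilde g^{-1}\otimes h^{-1}$ on $\mathcal H^{q+m}(\mathfrak tM)\otimes F^*\otimes\mathcal O$. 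Next, a short computation combining Lemma~\ref{L:D*} with Stokes' theorem gives, for suitable sections $\phi$ and $\psi$,
\[
	\int_M D^{F^*\otimes\mathcal O}(\Phi_q\phi)\wedge\psi=(-1)^{q+m+1}\int_M\Phi_{q+1}\bigl((D^F)^t\phi\bigr)\wedge\psi,
\]
so fiberwise non-degeneracy of \eqref{E:wedgeHF} shows that $\Phi$ intertwines $(D^F)^t$ with $D^{F^*\otimes\mathcal O}$ up to a sign depending only on the degree, which by Remark~\ref{R:signs} is immaterial for the analytic torsion. Hence $(E',(D^F)^t)$, as a Rockland complex equipped with its metric, is (up to irrelevant signs) the $m$-fold shift of $(\tilde E,\tilde D)$. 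Finally I would invoke \eqref{E:PDLF}: it shows that under $\Phi$ the $L^2$-type pairing on the spectral subcomplexes used in Proposition~\ref{P:dual} becomes the pairing \eqref{E:wedgeHF} on the harmonic subcomplexes, so that by the discussion following \eqref{E:wedgeHH} and the identifications \eqref{E:HL} the identification of cohomology produced by Proposition~\ref{P:dual} is precisely the Poincar\'e duality $H^q(M;F^*\otimes\mathcal O)=H^{m-q}(M;F)^*$ of the statement.

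With these identifications in hand, the conclusion is bookkeeping. Applying Proposition~\ref{P:shift} $m$ times to $(\tilde E,\tilde D)$ shows that both $\sdet(H^*(\tilde E,\tilde D))$ and the torsion norm $\|-\|^{\sdet(H^*(\tilde E,\tilde D))}_{\tilde g^{-1}\otimes h^{-1},\mu_{\tilde g}}$ are raised to the power $(-1)^m$ when one passes to $(E',(D^F)^t)$. On the other hand Proposition~\ref{P:dual}, applied to $(E,D^F)$ with metric data $\tilde g^{-1}\otimes h$ and $\mu_{\tilde g}$, identifies $\sdet(H^*(E',(D^F)^t))$ with $\sdet(H^*(E,D^F))^*$ and the associated torsion norm with the inverse of $\|-\|^{\sdet(H^*(E,D^F))}_{\tilde g^{-1}\otimes h,\mu_{\tilde g}}$. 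Combining these, using \eqref{E:HL} and \eqref{E:deftorM} and solving for the $F^*\otimes\mathcal O$-data, yields $\sdet(H^*(M;F^*\otimes\mathcal O))=\bigl(\sdet(H^*(M;F))\bigr)^{(-1)^{m+1}}$ together with the asserted identity of torsion norms.

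The analytical content being already packaged in Propositions~\ref{P:dual} and \ref{P:shift}, the main obstacle is organizational: one has to carefully track the degree shift by $m$ and the various sign conventions, and — most delicately — verify that the determinant-line identification produced by Proposition~\ref{P:dual} (defined through the $L^2$-pairing on the spectral subcomplexes $\img P_\lambda$) agrees, after transport through $\Phi$ and the $m$ shifts, with the canonical Poincar\'e-duality identification in the statement. The conjugate-bundle bookkeeping internal to Proposition~\ref{P:dual} causes no difficulty, since $\mathcal H(\mathfrak tM)$ carries a real Euclidean structure and $F^*\otimes\mathcal O$ the induced Hermitian metric $h^{-1}$.
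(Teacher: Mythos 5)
Your proposal is correct and follows essentially the same route as the paper: the paper's proof likewise uses the wedge pairing \eqref{E:wedgeHF} together with Lemma~\ref{L:D*} and Stokes' theorem to identify the Rumin complex twisted by $F^*\otimes\mathcal O$ (up to degree-dependent signs, disposed of by Remark~\ref{R:signs}) with the $m$-fold shift of the transposed complex $(E',(D^F)^t)$ carrying the metric $h'$, and then concludes by Propositions~\ref{P:shift} and \ref{P:dual}. The only difference is one of direction — the paper writes the isomorphism as $\mathcal H^q(\mathfrak tM)\otimes F^*\otimes\mathcal O\to((\mathcal H(\mathfrak tM)\otimes F)')^{q-m}$ rather than your inverse $\Phi$ — which is immaterial.
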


\begin{proof}
From the preceding lemma and Stokes' theorem we obtain the following commutative diagram:
\[
\xymatrix{
	\Gamma^\infty\bigl(\mathcal H^q(\mathfrak tM)\otimes F^*\otimes\mathcal O\bigr)\ar[rrr]^-{-(-1)^qD_q^{F^*\otimes\mathcal O}}\ar[d]^-\cong_-{\wedge}
	&&&\Gamma^\infty\bigl(\mathcal H^{q+1}(\mathfrak tM)\otimes F^*\otimes\mathcal O\bigr)\ar[d]^-\cong_-{\wedge}
	\\
	\Gamma^\infty\bigl((\mathcal H^{m-q}(\mathfrak tM)\otimes F)^*\otimes|\Lambda|\bigr)\ar[rrr]^-{(D^F_{m-q-1})^t}
	&&&\Gamma^\infty\bigl((\mathcal H^{m-q-1}(\mathfrak tM)\otimes F)^*\otimes|\Lambda|\bigr)
	\\
	\Gamma^\infty\bigl(((\mathcal H(\mathfrak tM)\otimes F)')^{q-m}\bigr)\ar[rrr]^-{(D^F)^t_{q-m}}\ar@{=}[u]
	&&&\Gamma^\infty\bigl(((\mathcal H(\mathfrak tM)\otimes F)')^{q+1-m}\bigr)\ar@{=}[u]
}
\]
Here the vertical arrows denote the vector bundle isomorphisms
\[	
	\mathcal H^q(\mathfrak tM)\otimes F^*\otimes\mathcal O
	\xrightarrow{\,\,\wedge\,\,\,}\bigl(\mathcal H^{m-q}(\mathfrak tM)\otimes F\bigr)^*\otimes|\Lambda|
	=\left(\bigl(\mathcal H(\mathfrak tM)\otimes F\bigr)'\right)^{q-m}
\]
corresponding to the pairing in \eqref{E:wedgeHF}.
Via the latter vector bundle isomorphism, the Hermitian metric $\tilde g^{-1}\otimes h^{-1}$ on $\mathcal H^q(\mathfrak tM)\otimes F^*\otimes\mathcal O$ corresponds to the Hermitian metric $h':=(\tilde g^{-1}\otimes h)^{-1}\otimes\mu_{\tilde g}^{-2}$ on the right hand side, cf.\ \eqref{E:wedgeiso}.
Hence, by Proposition~\ref{P:shift} and Remark~\ref{R:signs},
\[
	\|-\|^{\sdet(H^*(M;F^*\otimes\mathcal O))}_{\mathcal F,\tilde g,h^{-1}}
	=\left(\|-\|_{h',\mu_{\tilde g}}^{\sdet(H^*((\mathcal H(\mathfrak tM)\otimes F)',(D^F)^t))}\right)^{(-1)^m}.
\]
Moreover, from Proposition~\ref{P:dual} we obtain
\[
	\|-\|^{\sdet(H^*((\mathcal H(\mathfrak tM)\otimes F)',(D^F)^t))}_{h',\mu_{\tilde g}}
	=\left(\|-\|^{\sdet(H^*(M;F))}_{\mathcal F,\tilde g,h}\right)^{-1}.
\]
Combining the preceding to equations, we obtain the theorem.
\end{proof}

Duality can also be understood in terms of an analogue of the Hodge star operator, cf.~\cite[Section~2]{R99} or \cite[Proposition~2.8]{R01}.
To this end, we let $\star\colon\Lambda^q\mathfrak t^*M\to\Lambda^{m-q}\mathfrak t^*M\otimes\mathcal O$ denote the star operator characterized by
\begin{equation}\label{E:stardef}
	\alpha\wedge\star\beta
	=\tilde g^{-1}(\alpha,\beta)\mu_{\tilde g}
\end{equation}
for $\alpha,\beta\in\Lambda^q\mathfrak t^*_xM$ where we are using the canonical identification given by the second and third equality in \eqref{E:Lmcan}.
Recall that the star operator is isometric, i.e.,
\begin{equation}\label{E:stariso}
	\star^*=\star^{-1}.
\end{equation}
Furthermore, \eqref{E:partialwedge} and \eqref{E:stardef} yield \cite{R99,R01}
$$
	\partial^*\alpha=(-1)^q\star^{-1}\partial^{\mathcal O}\star\alpha
$$
for $\alpha\in\Lambda^q\mathfrak t_x^*M$ where $\partial^{\mathcal O}:=\partial\otimes\id_{\mathcal O}$.

The star restricts to vector bundle isomorphisms
$$
\star\colon\mathcal H^q(\mathfrak tM)\to\mathcal H^{m-q}(\mathfrak tM)\otimes\mathcal O,
$$
cf.\ \eqref{E:LtMdeco}.
From Lemma~\ref{L:D*} we obtain \cite{R99,R01}
\[
	(D^F_q)^*
	=(-1)^{q+1}(\star\otimes h)^{-1}\circ D^{F^*\otimes\mathcal O}_{m-q-1}\circ(\star\otimes h).
\]
Using \eqref{E:stariso} this also yields
\[
	D^F_q=(-1)^{q+1}(\star\otimes h)^{-1}\circ\bigl(D^{F^*\otimes\mathcal O}_{m-q-1}\bigr)^*\circ(\star\otimes h).
\]
Hence,
\begin{align*}
	(D^F_q)^*D^F_q
	&=(\star\otimes h)^{-1}D_{m-q-1}^{F^*\otimes\mathcal O}\bigl(D_{m-q-1}^{F^*\otimes\mathcal O}\bigr)^*(\star\otimes h),
\\
	D^F_{q-1}(D^F_{q-1})^*
	&=(\star\otimes h)^{-1}\bigl(D_{m-q}^{F^*\otimes\mathcal O}\bigr)^*D_{m-q}^{F^*\otimes\mathcal O}(\star\otimes h).
\end{align*}
Since 
\begin{equation}\label{E:kkq}
	k_q=k_{m-q-1}
\end{equation}
we have $a_q=a_{m-q-1}$, see \eqref{E:kkiai}.
Hence, we may use the same numbers $a_q$ for the Rumin--Seshadri operator associated with flat bundle $F^*\otimes\mathcal O$, and obtain \eqref{E:Deltai} 
\begin{equation}\label{E:Delta*}
	\Delta^F_{\tilde g,h}
	=\Delta^{\bar F}_{\tilde g,\bar h}
	=(\star\otimes h)^{-1}\Delta^{F^*\otimes\mathcal O}_{\tilde g,h^{-1}}(\star\otimes h).
\end{equation}
In particular,
\begin{equation}\label{E:Q*}
	Q_{\lambda,\tilde g,h}^F
	=Q_{\lambda,\tilde g,\bar h}^{\bar F}
	=(\star\otimes h)^{-1}Q^{F^*\otimes\mathcal O}_{\lambda,\tilde g,h^{-1}}(\star\otimes h).
\end{equation}
In view of \eqref{E:kkq}, the numbers $\tilde N_q:=-N_{m-q}$ also satisfy the relation \eqref{E:Niik}, that is, $\tilde N_{q+1}-\tilde N_q=k_q$.
Clearly,
\begin{equation}\label{E:N*}
	N=-(\star\otimes h)^{-1}\tilde N(\star\otimes h).
\end{equation}
Combining \eqref{E:Delta*}, \eqref{E:Q*} and \eqref{E:N*}, we obtain 
%\begin{multline}\label{E:PDzeta}
%	\zeta_\lambda^F(s)
%	=\str\left(NQ_\lambda\left(\Delta^F_{\tilde g,h}\right)^{-s}\right)
%	\\=-(-1)^m\str\left(\tilde NQ^{F^*\otimes\mathcal O_M}_\lambda\left(\Delta^{F^*\otimes\mathcal O}_{\tilde g,h^{-1}}\right)^{-s}\right)
%	=(-1)^{m+1}\zeta_\lambda^{F^*\otimes\mathcal O}(s),
%\end{multline}
\begin{equation}\label{E:PDzeta}
	\zeta_{\lambda,\tilde g,h}^F(s)
	=\zeta_{\lambda,\tilde g,\bar h}^{\bar F}(s)
	=(-1)^{m+1}\zeta_{\lambda,\tilde g,h^{-1}}^{F^*\otimes\mathcal O}(s),
\end{equation}
cf.~\eqref{E:zetalambda} and \eqref{E:zetabar}.
Recall here that according to Lemma~\ref{L:Ni}, we may use any sequence of numbers satisfying \eqref{E:Niik} to compute this zeta function.

The relation~\eqref{E:PDzeta} immediately leads to a slightly different proof of Theorem~\ref{T:PD}.
Let us spell out two more consequences.

\begin{proposition}\label{P:zetaeven}
If $M$ is an orientable manifold of even dimension and $h$ is a parallel Hermitian metric on $F$, then $\zeta_{\lambda,\tilde g,h}^F(s)=0$.
\end{proposition}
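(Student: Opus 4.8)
The plan is to read the vanishing directly off the duality identity \eqref{E:PDzeta}, which states
\[
\zeta^F_{\lambda,\tilde g,h}(s)=\zeta^{\bar F}_{\lambda,\tilde g,\bar h}(s)=(-1)^{m+1}\zeta^{F^*\otimes\mathcal O}_{\lambda,\tilde g,h^{-1}}(s).
\]
Since $\dim M=m$ is even, $(-1)^{m+1}=-1$, so it suffices to show that the rightmost zeta function equals $\zeta^F_{\lambda,\tilde g,h}(s)$ itself; then \eqref{E:PDzeta} reads $\zeta=-\zeta$ and forces $\zeta=0$.

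To this end I would first identify the flat bundle with parallel Hermitian metric $(F^*\otimes\mathcal O,h^{-1})$ under the standing hypotheses. Since $M$ is orientable, the orientation bundle $\mathcal O$ is trivial as a flat line bundle carrying its canonical parallel metric, so $F^*\otimes\mathcal O\cong F^*$ with $h^{-1}$ corresponding to the dual metric of $h$. Since $h$ is parallel, the conjugate-linear musical homomorphism $F\to F^*$, $v\mapsto h(-,v)$, is parallel, hence defines an isomorphism of flat bundles $\bar F\to F^*$, and a routine finite-dimensional computation shows that it carries the conjugate metric $\bar h$ to the dual metric $h^{-1}$. Thus $(F^*\otimes\mathcal O,h^{-1})\cong(\bar F,\bar h)$ as flat bundles equipped with parallel Hermitian metrics. (Equivalently: a parallel positive-definite Hermitian metric makes the holonomy representation unitary, and for a unitary representation the contragredient and the complex conjugate representations coincide.)

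Next I would observe that an isomorphism of flat bundles intertwining parallel Hermitian metrics induces a unitary isomorphism of the associated $L^2$-sections which commutes with the Rumin differentials, their formal adjoints, the Rumin--Seshadri operators, the projections $Q_\lambda$, and the grading operator $N$; hence it leaves the zeta function \eqref{E:zetalambda} unchanged. Combined with the previous step this yields $\zeta^{F^*\otimes\mathcal O}_{\lambda,\tilde g,h^{-1}}(s)=\zeta^{\bar F}_{\lambda,\tilde g,\bar h}(s)$, and using the first equality in \eqref{E:PDzeta}, namely $\zeta^{\bar F}_{\lambda,\tilde g,\bar h}=\zeta^F_{\lambda,\tilde g,h}$, we conclude $\zeta^F_{\lambda,\tilde g,h}(s)=(-1)^{m+1}\zeta^F_{\lambda,\tilde g,h}(s)=-\zeta^F_{\lambda,\tilde g,h}(s)$, whence the claim.

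The only mildly delicate point, and the one I would actually write out, is the metric bookkeeping in the identification $(F^*\otimes\mathcal O,h^{-1})\cong(\bar F,\bar h)$: one must check that the musical isomorphism sends $\bar h$ to precisely the dual metric $h^{-1}$ rather than to some rescaling of it. This is elementary linear algebra; everything else is a formal consequence of \eqref{E:PDzeta} together with the invariance of the zeta function under isometric isomorphisms of the underlying data.
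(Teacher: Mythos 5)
Your proof is correct and is essentially the paper's own argument: both identify $(\bar F,\bar h)\cong(F^*\otimes\mathcal O,h^{-1})$ via the parallel metric $h$ together with an orientation, and then read off $\zeta=-\zeta$ from \eqref{E:PDzeta}. No further comment is needed.
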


\begin{proof}
Since $h$ is parallel, it provides, together with an orientation of $M$, an isomorphism of flat vector bundles $h\colon\bar F\to F^*\otimes\mathcal O$ which maps the Hermitian metric $\bar h$ on $\bar F$ to the Hermitian metric $h^{-1}$ on $F^*\otimes\mathcal O$.
Hence, $\zeta^{\bar F}_{\lambda,\tilde g,\bar h}(s)=\zeta^{F^*\otimes\mathcal O}_{\lambda,\tilde g,h^{-1}}(s)$.
The proposition now follows from \eqref{E:PDzeta}.
\end{proof}

From \eqref{E:Delta*} we also obtain
\[
	p^F_{q,j}=p^{\bar F}_{q,j}=(\star\otimes h)^{-1}p^{F^*\otimes\mathcal O}_{m-q,j}(\star\otimes h)
\]
where $p_{q,j}^F\in\Gamma^\infty\bigl(\eend(\mathcal H^q(\mathfrak tM)\otimes F)\otimes|\Lambda|\bigr)$ denote the degree $q$ part of the local quantities in the heat kernel asymptotics associated with the Rumin--Seshadri operator $\Delta^F_{\tilde g,h}$, cf.~\eqref{E:heatkernasymp}.
Using locality, this gives
\begin{equation}\label{E:pFF*}
	\tr\left(p^F_{q,j}\right)
	=\tr\left(p^{\bar F}_{q,j}\right)
	=\tr\left(p^{F^*}_{m-q,j}\right).
\end{equation}

\begin{proposition}\label{P:strpodd}
If the dimension of $M$ is odd and if $h$ is a parallel Hermitian metric on $F$, then $\str(p_j^F)=0$ for all $j\in\mathbb N_0$.
\end{proposition}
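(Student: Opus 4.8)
The plan is to read off the vanishing from the duality relation \eqref{E:pFF*} combined with the fact that a parallel Hermitian metric identifies $F$, up to conjugation, with its dual. I would first extract from \eqref{E:pFF*} a statement about the graded trace: multiplying $\tr\bigl(p^F_{q,j}\bigr)=\tr\bigl(p^{F^*}_{m-q,j}\bigr)$ by $(-1)^q$, summing over $q$, and reindexing $q\mapsto m-q$ (using $(-1)^{m-q}=(-1)^m(-1)^q$) yields
\begin{equation*}
	\str\bigl(p_j^F\bigr)=(-1)^m\,\str\bigl(p_j^{F^*}\bigr),
\end{equation*}
where on the right $F^*$ carries the dual metric $h^{-1}$. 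No orientability hypothesis is needed here: the orientation line bundle $\mathcal O$ occurring in \eqref{E:Delta*} does not change the endomorphism bundle and disappears under the fibrewise trace, exactly as in the derivation of \eqref{E:pFF*}.

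Next I would use that $h$ is parallel. Then $h^{-1}$ is a parallel Hermitian metric on the flat bundle $F^*$, and $h$ itself provides a flat isometry $\bar F\to F^*$ carrying $\bar h$ to $h^{-1}$. Since the heat kernel coefficients $p_{q,j}$ are locally computable (Lemma~\ref{L:asyexp}), and since the formal adjoint of $D$ with respect to $h$ coincides with the one with respect to $\bar h$, so that $\Delta^{\bar F}_{\tilde g,\bar h}=\Delta^F_{\tilde g,h}$ as recorded in \eqref{E:Delta*} (cf.\ the proof of Proposition~\ref{P:dual}), locality gives $\tr\bigl(p^{F^*}_{q,j}\bigr)=\tr\bigl(p^{\bar F}_{q,j}\bigr)=\tr\bigl(p^F_{q,j}\bigr)$ for all $q$ and $j$, hence $\str\bigl(p_j^{F^*}\bigr)=\str\bigl(p_j^F\bigr)$.

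Combining the two displays gives $\str\bigl(p_j^F\bigr)=(-1)^m\str\bigl(p_j^F\bigr)$, and since $\dim M$ is odd this forces $\str\bigl(p_j^F\bigr)=0$ for every $j\in\mathbb N_0$. I do not expect any real obstacle; the argument is entirely parallel to the one behind Proposition~\ref{P:zetaeven}, and the only point requiring a little care is the bookkeeping with the conjugate bundle $\bar F$ and the orientation line bundle $\mathcal O$, both of which are already handled in the discussion leading up to \eqref{E:pFF*}.
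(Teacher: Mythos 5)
Your proposal is correct and follows essentially the same route as the paper: it combines the duality relation \eqref{E:pFF*} with the flat isometry $\bar F\cong F^*$ furnished by the parallel metric $h$, the only cosmetic difference being that you package the conclusion as $\str(p_j^F)=(-1)^m\str(p_j^F)$ whereas the paper phrases it as pairwise cancellation of the summands $\tr(p^F_{q,j})=\tr(p^F_{m-q,j})$ in the alternating sum. Your remark that no orientability is needed is also consistent with the paper, since $\mathcal O$ already drops out in the derivation of \eqref{E:pFF*}.
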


\begin{proof}
Since $h$ is parallel, it provides an isomorphism of flat vector bundles $h\colon\bar F\to F^*$ which maps the Hermitian metric $\bar h$ on $\bar F$ to the Hermitian metric $h^{-1}$ on $F^*$.
Hence, $p^{\bar F}_{q,j}=p^{F^*}_{q,j}$.
Combining this with \eqref{E:pFF*}, we see that the summands in $\str(p^F_j)$ cancel pairwise if the dimension is odd.
\end{proof}

\subsection{Variation of the metrics}\label{SS:RSvar}

In this section we apply Theorem~\ref{T:var} to determine how the analytic torsion of a Rumin complex depends on the fiberwise inner products $\tilde g$ on $\mathfrak tM$ and $h$ on $F$.

If $A\in\Gamma^\infty(\Aut(\mathfrak tM))$ is a fiberwise automorphism of graded (nilpotent) Lie algebras, we let $\mathcal H(A)\in\Gamma^\infty(\Aut(\mathcal H(\mathfrak tM)))$ denote the automorphism in fiberwise cohomology induced by the inverse $A^{-1}$. 
Hence, we have covariant functoriality $\mathcal H(AB)=\mathcal H(A)\mathcal H(B)$ just like for the dual representation of a group, $A,B\in\Gamma^\infty(\Aut(\mathfrak tM))$.
If $\dot A\in\Gamma^\infty(\der(\mathfrak tM))$ is a fiberwise derivation of graded (nilpotent) Lie algebras, we define $\mathcal H(\dot A)\in\Gamma^\infty(\eend(\mathcal H(\mathfrak tM)))$ by $\mathcal H(\dot A):=\frac\partial{\partial t}|_{t=0}\mathcal H(\exp(t\dot A))$ so that $\mathcal H([\dot A,\dot B])=[\mathcal H(\dot A),\mathcal H(\dot B)]$ for $\dot A,\dot B\in\Gamma^\infty(\der(\mathfrak tM))$.

The following generalizes anomaly formulas for the Ray--Singer \cite{RS71} torsion \cite[Theorem~0.1]{BZ92} and the Rumin--Seshadri analytic torsion, see \cite[Corollary~3.7]{RS12}.

\begin{theorem}\label{T:varg}
Suppose the graded fiberwise Euclidean inner product $\tilde g_u$ on $\mathfrak tM$ and the fiberwise Hermitian inner product $h_u$ on $F$ depend smoothly on a real parameter $u$ such that $\tilde g_v^{-1}\tilde g_u\in\Gamma^\infty(\Aut(\mathfrak tM))$ is a fiberwise automorphism of graded (nilpotent) Lie algebras, for all $u$ and $v$.
Then
%\begin{multline*}
%	\tfrac\partial{\partial u}\log\|-\|^{\sdet(H^*(M;F))}_{\tilde g_u,h_u}
%	=\frac12\int_M\str\bigl(\mathcal H(\dot g_u)p_{u,n}^F\bigr)
%	\\+\frac12\int_M\str\bigl(\dot h_up_{u,n}^F\bigr)
%	+\frac14\int_M\tr(\dot g_u)\str\bigl(p_{u,n}^F\bigr)
%\end{multline*}
\[
	\tfrac\partial{\partial u}\log\|-\|^{\sdet(H^*(M;F))}_{\mathcal F,\tilde g_u,h_u}
	=\frac12\int_M\str\Bigl(\bigl(\mathcal H(\dot g_u)+\tfrac12\tr(\dot g_u)+\dot h_u\bigr)p_{u,n}^F\Bigr)
\]
where $\dot g_u:=\tilde g_u^{-1}\frac\partial{\partial u}\tilde g_u\in\Gamma^\infty\bigl(\der(\mathfrak tM)\bigr)$, 
$\dot h_u:=h_u^{-1}\frac\partial{\partial u}h_u\in\Gamma^\infty(\eend(F))$, and $p_{u,n}^F\in\Gamma^\infty\bigl(\eend(\mathcal H(\mathfrak tM)\otimes F)\otimes|\Lambda|\bigr)$ denotes the constant term in the heat kernel expansion associated with the Rumin--Seshadri operator $\Delta^F_{\tilde g_u,h_u}$.
\end{theorem}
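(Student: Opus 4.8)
The plan is to deduce this from the general metric anomaly formula Theorem~\ref{T:var}, applied to the Rumin complex $\bigl(\mathcal H(\mathfrak tM)\otimes F,D^F\bigr)$, and then to identify the endomorphism $\dot h^E_u+\dot\mu_u$ occurring there. First recall that, by Lemma~\ref{L:Sindep}, the Rumin differentials $D^F_q$ together with the cohomology identification \eqref{E:HL} do not depend on $\tilde g$ or on the splitting $S$. Hence, keeping $E:=\mathcal H(\mathfrak tM)\otimes F$ and $D^F$ fixed, the only data that vary with $u$ are the graded Hermitian metric $h^E_u:=\tilde g^{-1}_u\otimes h_u$ on $E$ (induced via the orthogonal decomposition \eqref{E:LtMdeco}) and the volume density $\mu_u:=\mu_{\tilde g_u}$. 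By the definition \eqref{E:deftorM}, $\|-\|^{\sdet(H^*(M;F))}_{\mathcal F,\tilde g_u,h_u}=\|-\|^{\sdet(H^*(E,D^F))}_{h^E_u,\mu_u}$, and Theorem~\ref{T:var} yields
\[
	\tfrac\partial{\partial u}\log\|-\|^{\sdet(H^*(M;F))}_{\mathcal F,\tilde g_u,h_u}
	=\frac12\int_M\str\bigl((\dot h^E_u+\dot\mu_u)p^F_{u,n}\bigr),
\]
with $\dot h^E_u=(h^E_u)^{-1}\tfrac\partial{\partial u}h^E_u$ and $\dot\mu_u=\mu_u^{-1}\tfrac\partial{\partial u}\mu_u$. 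It remains to prove that $\dot h^E_u+\dot\mu_u=\mathcal H(\dot g_u)+\tfrac12\tr(\dot g_u)+\dot h_u$ on each summand $\mathcal H^q(\mathfrak tM)\otimes F$.

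The density term is immediate: under the canonical identification $\Lambda^m\mathfrak t^*M=\Lambda^mT^*M$ one has $\mu_{\tilde g_u}=\sqrt{\det\tilde g_u}$ in any local frame, so $\dot\mu_u=\tfrac12\tfrac\partial{\partial u}\log\det\tilde g_u=\tfrac12\tr\bigl(\tilde g_u^{-1}\tfrac\partial{\partial u}\tilde g_u\bigr)=\tfrac12\tr(\dot g_u)$, the trace being taken over $\mathfrak tM$. For the Hermitian term, fix $v$ and put $B_u:=\tilde g_v^{-1}\tilde g_u$; by hypothesis $B_u\in\Gamma^\infty(\Aut(\mathfrak tM))$ is a path in the fibrewise group of graded Lie algebra automorphisms with $B_v=\id$, whence $\dot g_u=\tilde g_u^{-1}\tfrac\partial{\partial u}\tilde g_u=B_u^{-1}\tfrac\partial{\partial u}B_u\in\Gamma^\infty(\der(\mathfrak tM))$ — this is exactly where the assumption is used. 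A direct computation (product rule for the metric induced on exterior powers) shows that the logarithmic derivative of the metric $\tilde g^{-1}_u$ on $\Lambda^q\mathfrak t^*M$ equals the natural derivation of $\Lambda^q\mathfrak t^*M$ determined by $\dot g_u$, namely the one induced by the infinitesimal contragredient action of $\dot g_u$ on $\mathfrak t^*M$. Since $\dot g_u$ is a Lie algebra derivation, this derivation commutes with the Chevalley--Eilenberg differential $\partial$, hence preserves the ($u$-independent) subspaces $\ker\partial_q$ and $\img\partial_{q-1}$ of $\Lambda^q\mathfrak t^*M$ and descends to $\mathcal H^q(\mathfrak tM)=\ker\partial_q/\img\partial_{q-1}$ precisely as $\mathcal H^q(\dot g_u)$, in the notation introduced before Theorem~\ref{T:varg}. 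Combining this with the observation that passing to a quotient of fixed subspaces commutes with taking the logarithmic derivative of the induced metric (because the harmonic representative of a class stays orthogonal to $\img\partial_{q-1}$), one gets that the logarithmic derivative of $\tilde g^{-1}_u|_{\mathcal H^q(\mathfrak tM)}$ is $\mathcal H^q(\dot g_u)$. Tensoring with $h_u$ gives $\dot h^E_u=\mathcal H^q(\dot g_u)\otimes\id_F+\id\otimes\dot h_u$ on $\mathcal H^q(\mathfrak tM)\otimes F$, and substituting $\dot h^E_u+\dot\mu_u$ into the displayed identity completes the proof.

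I expect the only genuine obstacle to be the identification carried out in the last paragraph. The inner product on the cohomology bundle $\mathcal H^q(\mathfrak tM)$ is defined through a Hodge decomposition \eqref{E:LtMdeco} which itself varies with $\tilde g_u$, so one must first view it as the quotient metric on the fixed quotient $\ker\partial_q/\img\partial_{q-1}$ and then keep careful track of how the operation ``restrict to a subbundle, then pass to a quotient'' interacts with differentiation in $u$; the hypothesis $\tilde g_v^{-1}\tilde g_u\in\Gamma^\infty(\Aut(\mathfrak tM))$ is what guarantees that $\dot g_u$ is a derivation and hence acts compatibly on this subquotient. The remaining steps are routine bookkeeping with Theorem~\ref{T:var}.
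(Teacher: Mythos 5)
Your argument is correct and follows the same overall strategy as the paper's proof: reduce to the general anomaly formula of Theorem~\ref{T:var} for the Rumin complex with the induced data $(\tilde g_u^{-1}\otimes h_u,\mu_{\tilde g_u})$, compute $\dot\mu_u=\tfrac12\tr(\dot g_u)$, and identify the logarithmic derivative of the bundle metric with $\mathcal H(\dot g_u)\otimes\id_F+\id\otimes\dot h_u$. Where you diverge is in how that last identification is carried out. The paper works with the finite automorphisms $A_{v,u}=\tilde g_v^{-1}\tilde g_u$: since $A_{v,u}^t$ is $\tilde g_v^{-1}$-self-adjoint and commutes with $\partial$, it also commutes with $\partial^*$, so the whole Hodge decomposition \eqref{E:LtMdeco} is independent of $u$, the transition operator restricts to $\mathcal H(A_{v,u})$ on the harmonic subbundle, and one differentiates at $v=u$. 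You instead argue infinitesimally: the hypothesis gives $\dot g_u\in\Gamma^\infty(\der(\mathfrak tM))$, its contragredient derivation of $\Lambda^*\mathfrak t^*M$ commutes with $\partial$ and hence preserves the fixed subbundles $\ker\partial_q$ and $\img\partial_{q-1}$, and the subquotient metric then has logarithmic derivative equal to the induced endomorphism, which is $\mathcal H^q(\dot g_u)$ by the sign conventions fixed before the theorem. Both routes are sound; the paper's buys the stronger (and reusable) fact that the harmonic subbundle itself does not move with $u$, while yours only needs the general principle that, for a family of inner products whose logarithmic derivative preserves a fixed subspace and a fixed subquotient, the induced subquotient metric has the induced logarithmic derivative. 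That principle is true but is the one step you should spell out: writing the quotient metric as $\bar g_u([\alpha],[\beta])=\tilde g_u^{-1}(H_u\alpha,H_u\beta)$ with $H_u$ the projection onto harmonics along $\img\partial_{q-1}$, the terms involving $\dot H_u$ drop out precisely because $\dot H_u\alpha\in\img\partial_{q-1}$ is orthogonal to $H_u\beta$ — which is what your parenthetical gestures at.
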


\begin{proof}
Recall that we are using the fiberwise Hermitian inner products $\tilde g_u^{-1}\otimes h_u$ on the Rumin complex $\mathcal H(\mathfrak tM)\otimes F$, where $\tilde g_u^{-1}$ denotes the fiberwise Euclidean inner product on $\mathcal H(\mathfrak tM)$ obtained by restriction via the fiberwise Hodge decomposition \eqref{E:LtMdeco} from the induced Euclidean inner product on $\Lambda^*\mathfrak t^*M$ which is also denoted by $\tilde g^{-1}_u$.
Put $A_{v,u}=\tilde g_v^{-1}\tilde g_u\in\Gamma^\infty(\Aut(\mathfrak tM))$ and let $A_{v,u}^t=\tilde g_u\tilde g_v^{-1}\in\Gamma^\infty(\Aut(\mathfrak t^*M))$ denote the fiberwise dual automorphism.
Extend this to an automorphism of the exterior bundle, $A_{v,u}^t\in\Gamma^\infty(\Aut(\Lambda^*\mathfrak t^*M))$ characterized by
\[
	A_{v,u}^t(\alpha\wedge\beta)=A_{v,u}^t\alpha\wedge A_{v,u}^t\beta,
\]
for $\alpha,\beta\in\Lambda^*\mathfrak t^*_xM$.
Note that the equality 
\begin{equation}\label{E:elvis3}
	A_{v,u}^t=\tilde g_u\tilde g_v^{-1}
\end{equation} 
holds on $\Lambda^*\mathfrak t^*M$ for we are using the induced Euclidean inner products on the exterior bundle.
Since $A_{v,u}$ is a fiberwise automorphism of Lie algebras, $A_{v,u}^t$ commutes with fiberwise Chevalley--Eilenberg codifferential $\partial$, the fiberwise adjoint $\partial^*$ does not depend on $u$, and $A_{v,u}^t$ commutes with $\partial^*$ too.
Hence, the decomposition in \eqref{E:LtMdeco} is independent of $u$ and invariant under $A_{v,u}^t$.
Hence, 
\begin{equation}\label{E:elvis4}
	A_{v,u}^t|^{-1}_{\mathcal H(\mathfrak tM)}=\mathcal H(A_{v,u}).
\end{equation}
Combining \eqref{E:elvis3} and \eqref{E:elvis4}, we conclude
\[
	(\tilde g_v^{-1}\otimes h_v)^{-1}(\tilde g_u^{-1}\otimes h_u)|_{\mathcal H(\mathfrak tM)\otimes F}=\mathcal H(\tilde g_v^{-1}\tilde g_u)\otimes h_v^{-1}h_u.
\]
Differentiating, we find
\[
	(\tilde g_u^{-1}\otimes h_u)^{-1}\tfrac\partial{\partial u}(\tilde g_u^{-1}\otimes h_u)|_{\mathcal H(\mathfrak tM)\otimes F}=\mathcal H(\dot g_u)\otimes\id_F+\id_{\mathcal H(\mathfrak tM)}\otimes\dot h_u.
\]
Furthermore, we clearly have $\mu_{\tilde g_u}={\det}^{1/2}(\tilde g_v^{-1}\tilde g_u)\mu_{\tilde g_v}$ and thus 
\[
	\mu^{-1}_{\tilde g_u}\tfrac\partial{\partial u}\mu_{\tilde g_u}=\tfrac12\tr(\dot g_u).
\]
The statement thus follows from Theorem~\ref{T:var}.
\end{proof}

Recall that $\phi_t$ denotes the grading automorphism of $\mathfrak t_xM=\bigoplus_p\mathfrak t^p_xM$ given by multiplication with $t^p$ on the summand $\mathfrak t_x^pM$, $t>0$.
For a positive smooth function $f$ on $M$, we let $\phi_f\in\Gamma^\infty(\Aut(\mathfrak tM))$ denote the fiberwise Lie algebra automorphism acting by $\phi_{f(x)}$ on the fiber $\mathfrak t_xM$.

\begin{corollary}\label{C:varg2}
Suppose $f$ is a real valued smooth function on $M$ and consider a family of Euclidean inner products of the form $\tilde g_u=\tilde g\phi_{\exp(uf)}$ on $\mathfrak tM$.
Then
\[
	\tfrac\partial{\partial u}\log\|-\|^{\sdet(H^*(M;F))}_{\mathcal F,\tilde g_u,h}
	=\frac12\int_Mf\str\left((N-N_0-\tfrac n2)p_{u,n}^F\right),
\]
where $p_{u,n}^F\in\Gamma^\infty\bigl(\eend(\mathcal H(\mathfrak tM)\otimes F)\otimes|\Lambda|\bigr)$ denotes the constant term in the heat kernel expansion associated with the Rumin--Seshadri operator $\Delta^F_{\tilde g_u,h}$.
\end{corollary}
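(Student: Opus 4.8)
The plan is to obtain this as a direct specialization of Theorem~\ref{T:varg}. First I would check that the hypothesis of that theorem is met: since $\tilde g$ is graded, $\tilde g_u=\tilde g\phi_{\exp(uf)}$ rescales the inner product on each summand $\mathfrak t^pM$ by the positive factor $\exp(upf)$, so that $\tilde g_v^{-1}\tilde g_u=\phi_{\exp((u-v)f)}$; as the grading automorphism is a graded Lie algebra automorphism for every positive parameter, $\tilde g_v^{-1}\tilde g_u$ is indeed a fiberwise automorphism of graded nilpotent Lie algebras, for all $u$ and $v$. Differentiating in $u$ then gives $\dot g_u=\tilde g_u^{-1}\tfrac\partial{\partial u}\tilde g_u=f\mathcal N$, where $\mathcal N\in\Gamma^\infty(\der(\mathfrak tM))$ is the grading derivation acting by multiplication with $p$ on $\mathfrak t^pM$ (a degree-zero derivation, as required).

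Next I would evaluate the three ingredients in the integrand of Theorem~\ref{T:varg}. Since $h$ does not depend on $u$, $\dot h_u=0$. From the definition~\eqref{E:n} of the homogeneous dimension, $\tr(\dot g_u)=f\sum_pp\,\rk(\mathfrak t^pM)=-fn$, hence $\tfrac12\tr(\dot g_u)=-\tfrac n2f$. For $\mathcal H(\dot g_u)=f\,\mathcal H(\mathcal N)$ I would use that $\exp(t\mathcal N)=\phi_{e^t}$ together with the purity assumption in the form~\eqref{E:deltaHq}: the induced automorphism $\mathcal H(\phi_{e^t})$ is multiplication by $e^{tp_q}$ on $\mathcal H^q(\mathfrak tM)$, so differentiating at $t=0$ gives $\mathcal H(\mathcal N)=p_q$ on $\mathcal H^q(\mathfrak tM)$. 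Finally, by~\eqref{E:kqpq} any admissible choice of numbers $N_q$ as in~\eqref{E:Niik} satisfies $N_q-N_0=p_q-p_0$, and $p_0=0$ since $\mathcal H^0(\mathfrak tM)=\mathbb R$ carries the trivial grading; thus $N-N_0$ acts on $E^q=\mathcal H^q(\mathfrak tM)\otimes F$ by $p_q$, independently of the choice of $N_q$. Combining, $\mathcal H(\dot g_u)+\tfrac12\tr(\dot g_u)+\dot h_u=f\bigl(N-N_0-\tfrac n2\bigr)$ as an endomorphism of $E$, and substituting this into Theorem~\ref{T:varg} yields the stated formula, the heat coefficient $p_{u,n}^F$ being precisely that of $\Delta^F_{\tilde g_u,h}$.

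The argument is essentially bookkeeping and no serious obstacle is expected; the only points needing a bit of care are conventions: confirming that $\tilde g_v^{-1}\tilde g_u$ is the grading automorphism $\phi_{\exp((u-v)f)}$ (so that Theorem~\ref{T:varg}, rather than merely the weaker Theorem~\ref{T:var}, applies), pinning down the sign $\mathcal H(\mathcal N)=+p_q$ from the covariant normalization of $\mathcal H(\cdot)$ and~\eqref{E:deltaHq} (the opposite sign would be incompatible with~\eqref{E:kqpq} and~\eqref{E:Niik}), and observing that $p_0=0$ so that the additive ambiguity in $N_q$ cancels in $N-N_0$.
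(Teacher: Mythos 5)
Your proposal is correct and follows essentially the same route as the paper: both specialize Theorem~\ref{T:varg} by computing $\dot g_u=f\mathcal N$, $\tr(\dot g_u)=-nf$, and $\mathcal H(\dot g_u)=(N-N_0)f$ via purity and $N_q-N_0=p_q$. Your extra checks (that $\tilde g_v^{-1}\tilde g_u=\phi_{\exp((u-v)f)}$ is a graded automorphism, and the sign of $\mathcal H(\mathcal N)$) are left implicit in the paper but are accurate.
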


\begin{proof}
%Let us write $\dot\phi:=\frac\partial{\partial t}|_{t=1}\phi_t$ for the infinitesimal grading automorphism which acts by $p$ on $\mathfrak t^pM$.
Since $\dot g_u=\tilde g_u^{-1}\tfrac\partial{\partial u}\tilde g_u$ acts by $pf$ on $\mathfrak t^pM$, we have cf.~\eqref{E:n}
\[
	\tr(\dot g_u)=-nf.
\]
Using \eqref{E:Niik}, \eqref{E:kqpq} and $p_0=0$, we find $N_q-N_0=p_q$.
Combining this with \eqref{E:deltaHq}, we obtain $\mathcal H^q(\phi_{\exp(uf)})=(e^{uf})^{p_q}=e^{u(N_q-N_0)f}$.
Hence, $\mathcal H^q(\dot g_u)=(N_q-N_0)f$ and
\[
	\mathcal H(\dot g_u)=(N-N_0)f.
\]
The statement thus follows from Theorem~\ref{T:varg}.
\end{proof}

\begin{remark}
Conformal invariance of the analytic torsion, that is, independence under scaling of the Euclidean inner product on $\mathfrak tM$ as in Corollary~\ref{C:varg2}, thus is equivalent to the pointwise vanishing of the local quantity $\str\bigl((N-N_0-\frac n2)p_n^F\bigr)$.
Using Lemma~\ref{L:Euler} and \eqref{E:zeta00}, we find
\begin{equation}\label{E:intzeta0}
	\int_M\str\bigl((N-N_0-\tfrac n2)p^F_n\bigr)=\chi'(M;F)-(N_0+\tfrac n2)\chi(M;F)+\zeta^F_{\lambda=0}(0),
\end{equation}
where $\chi(M;F)=\sum_q(-1)^q\dim H^q(M;F)=\rk(F)\chi(M)$ denotes the Euler characteristics and $\chi'(M;F)=\sum_q(-1)^qN_q\dim H^q(M;F)$.
In view of \eqref{E:dduzeta} the integral in \eqref{E:intzeta0} is independent of $\tilde g$ and $h$, hence this is a smooth invariant of the filtered manifold $M$ and the flat bundle $F$.
Actually, this invariant only depends on $F$ and the homotopy class of the underlying filtration, see Remark~\ref{R:homotop.inv} below for a more precise statement.
In the trivially filtered case, this vanishes according to \cite[Theorem~7.10]{BZ92}.
A discussion of the contact case can be found at the end of Section~3.2 in \cite{RS12}.
Whether \eqref{E:intzeta0} vanishes in general is unclear.
If $m$ is odd and $h$ is parallel then $\str(p_n^F)=0$ according to Proposition~\ref{P:strpodd}.
\end{remark}

\begin{remark}\label{R:hpara}
If $h_u$ are parallel Hermitian inner products on $F$, then the integrands in the preceding statements can be simplified somewhat:
\begin{align*}
	\str\bigl(\mathcal H(\dot g_u)p_{u,n}^F\bigr)
	&=\rk(F)\str\bigl(\mathcal H(\dot g_u)p_{u,n}\bigr)
	\\
	\str\bigl(\dot h_up_{u,n}^F\bigr)
	&=\tr(\dot h_u)\str(p_{u,n})
	\\
	\str\bigl(p_{u,n}^F\bigr)
	&=\rk(F)\str(p_{u,n})
	\\
	\str\bigl(Np_{u,n}^F\bigr)
	&=\rk(F)\str\bigl(Np_{u,n}\bigr)
\end{align*}
Here $p_{u,n}\in\Gamma^\infty\bigl(\eend(\mathcal H(\mathfrak tM))\otimes|\Lambda|\bigr)$ denotes the constant term in the heat kernel expansion for the corresponding Rumin--Seshadri operator associated with the trivial flat line bundle over $M$ equipped with the canonical Hermitian metric.
This follows from Remark~\ref{R:twist}.
\end{remark}

\subsection{Deformation of the filtration}\label{SS:def.filt}

In this section we consider a family of closed filtered manifolds $M_u$ smoothly depending on a real parameter $u$.
More explicitly, we assume the underlying manifold $M$ remains fixed, and the filtration subbundles $T^pM_u\subseteq TM$ depend smoothly on $u$.
As before, we assume that the osculating algebras of $M_u$ have pure cohomology of locally constant dimension.
Moreover, suppose $\tilde g_u$ is a smooth family of graded fiberwise Euclidean metrics on $\mathfrak tM_u$.
For any flat bundle $F$ with Hermitian metric $h$, we obtain an analytic torsion $\|-\|^{\sdet(H^*(M;F))}_{\mathcal F_u,\tilde g_u,h}$ on the determinant line $\sdet(H^*(M;F))$.
Below, we will describe qualitatively how the this torsion depends on the parameter $u$.

Clearly, the bundle of osculating algebras $\mathfrak tM_u$ depends smoothly on $u$ also.
For simplicity, we will assume that these are isomorphic as bundles of graded Lie algebras.
Hence, there exist isomorphisms of graded vector bundles $\tilde\psi_{v,u}\colon\mathfrak tM_u\to\mathfrak tM_v$ which intertwine the fiberwise Lie algebra structures and depend smoothly on the parameters $u$ and $v$.
W.l.o.g.\ we may assume $\tilde\psi_{w,v}\tilde\psi_{v,u}=\tilde\psi_{w,u}$ and in particular $\tilde\psi_{u,u}=\id$.
Let $S_u\colon\mathfrak tM_u\to TM_u$ be splittings of the filtration which depend smoothly on $u$ and define $\psi_{v,u}\in\Gamma^\infty(\Aut(TM))$ by the equation
\begin{equation}\label{E:Spsiuv}
        \psi_{v,u}S_u=S_v\tilde\psi_{v,u}.
\end{equation}
Then $\psi_{v,u}\colon TM_u\to TM_v$ is a filtration preserving isomorphism, $\psi_{v,u}(T^pM_u)=T^pM_v$, which induces $\tilde\psi_{v,u}\colon\mathfrak tM_u\to\mathfrak tM_v$ on the associated graded.
Moreover, $\psi_{w,v}\psi_{v,u}=\psi_{w,u}$ and $\psi_{u,u}=\id$.
Since we have already discussed the dependence on the metric before, we will consider a family of fiberwise Euclidean metrics $\tilde g_u$ on $\mathfrak tM_u$ such that
\begin{equation}\label{E:gpsiuv}
        \tilde\psi_{v,u}^*\tilde g_v=\tilde g_u.
\end{equation}

\begin{theorem}\label{T:def.filt}
In this situation we have
\[
        \tfrac\partial{\partial u}\log\|-\|^{\sdet(H^*(M;F))}_{\mathcal F_u,\tilde g_u,h}=\frac12\int_M\str(\tilde p_{n,u}^F)
\]
where $\tilde p^F_{n,u}\in\Gamma^\infty\bigl(\eend(\mathcal H(\mathfrak tM_u)\otimes F)\otimes|\Lambda|\bigr)$ is the constant term in the asymptotic expansion, as $t\to0$,
\begin{equation}\label{E:asym.filt}
        \Bigl(\bigl(\dot A_u^F+(\dot A^F_u)^*-\tr(\dot\psi_u)\bigr)k^F_{t,u}\Bigr)(x,x)\sim\sum_{j=-r_u}^\infty t^{(j-n)/2\kappa}\tilde p_{j,u}^F(x).
\end{equation}
Here $\dot A^F_u$ is the differential operator given by the expression \eqref{E:dotAuF} below, $\dot\psi_u:=\frac\partial{\partial v}|_{v=u}\psi_{v,u}\in\Gamma^\infty(\eend(TM))$, and $k^F_{t,u}$ denotes the heat kernel of the Rumin--Seshadri operator $\Delta^F_{\tilde g_u,h}$.
In particular, $\tilde p^F_{n,u}$ is locally computable.
More precisely, $\tilde p^F_{n,u}(x)$ can be computed from the germ of $(M,\mathcal F_u,\tilde g_u,F,h,S_u,\dot\psi_u)$ at $x$.
Furthermore, $\tilde p^F_{n,u}=0$ if the homogeneous dimension $n$ is odd.
\end{theorem}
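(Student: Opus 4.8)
The plan is to derive Theorem~\ref{T:def.filt} from Theorem~\ref{T:var.all} applied to the twisted Rumin complexes $(\mathcal H(\mathfrak tM_u)\otimes F,D^F_u)$ of \eqref{E:RuminF}. First I would fix the auxiliary data. Since the $\tilde\psi_{v,u}\colon\mathfrak tM_u\to\mathfrak tM_v$ are fiberwise isometries of graded Lie algebras by \eqref{E:gpsiuv}, the pullbacks $\tilde\psi_{v,u}^*$ are isometries of $\Lambda^*\mathfrak t^*M$ commuting with both $\partial$ and $\partial^*$; hence they preserve the Hodge decomposition \eqref{E:LtMdeco}, and the induced cohomology isomorphisms $\mathcal H(\tilde\psi_{v,u})\colon\mathcal H(\mathfrak tM_u)\to\mathcal H(\mathfrak tM_v)$ (convention as before Theorem~\ref{T:varg}) are isometries forming a cocycle. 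These define a flat auxiliary connection on $\bigsqcup_u\mathcal H(\mathfrak tM_u)$ for which the Hermitian metric $\tilde g_u^{-1}\otimes h$ is constant, so that $\dot h_u=0$. Moreover the osculating algebras of the $M_u$ are graded isomorphic, so by \eqref{E:kqpq} the Heisenberg order $k_q$ of $D^F_{q,u}$ does not depend on $u$, and we may use the same $a_q,N_q$ throughout; by Lemma~\ref{L:Sindep} the complex $D^F_u$ itself (and the isomorphism \eqref{E:HL}) depends on $u$ only through the filtration $\mathcal F_u$.

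The chain maps needed for Theorem~\ref{T:var.all} are
\[
	A^F_{v,u}:=\bigl(\iota_v^{-1}\tilde\Pi_v\mathbf L_v^{-1}\bigr)^F\circ L^F_u\colon\Gamma^\infty\bigl(\mathcal H(\mathfrak tM_u)\otimes F\bigr)\to\Gamma^\infty\bigl(\mathcal H(\mathfrak tM_v)\otimes F\bigr),
\]
which are differential operators. Using $L^F_uD^F_u=d^FL^F_u$, see \eqref{E:Lchain}, and the twisted form of \eqref{E:itPL}, one gets $A^F_{v,u}D^F_u=D^F_vA^F_{v,u}$; from $(\iota_u^{-1}\tilde\Pi_u\mathbf L_u^{-1})^FL^F_u=\id$ one gets $A^F_{u,u}=\id$; and since $(\iota_v^{-1}\tilde\Pi_v\mathbf L_v^{-1})^F$ induces on cohomology the inverse of the isomorphism \eqref{E:HL} attached to $L^F_v$, the map induced by $A^F_{v,u}$ is, via the identifications \eqref{E:HL}, the identity of $H^*(M;F)$. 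Consequently the isomorphism $\sdet(H(A^F_{v,u}))$ in Theorem~\ref{T:var.all} is the identity on $\sdet(H^*(M;F))$, and the left hand side of \eqref{E:varDuall} becomes $\tfrac\partial{\partial v}\big|_{v=u}\log\|-\|^{\sdet(H^*(M;F))}_{\mathcal F_v,\tilde g_v,h}$. To close the argument I would identify the density term: the relation \eqref{E:Spsiuv} means $\tilde\psi_{v,u}=\gr(\psi_{v,u})$, hence, using the canonical ($u$-independent) identification $\Lambda^mT^*M=\Lambda^m\mathfrak t^*M_u$ in top degree, $\mu_{\tilde g_u}=|\det\psi_{v,u}|\,\mu_{\tilde g_v}$, so that $\dot\mu_u=-\tr(\dot\psi_u)$. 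Feeding $\dot h_u=0$ and $\dot\mu_u=-\tr(\dot\psi_u)$ into Theorem~\ref{T:var.all} yields exactly the asymptotic expansion \eqref{E:asym.filt} and the stated variation formula, while the existence of this expansion, its local computability, and its vanishing for odd homogeneous dimension $n$ are all contained in Lemma~\ref{L:asymA}.

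It remains to make $\dot A^F_u$ explicit. Differentiating $A^F_{v,u}=(\iota_v^{-1}\tilde\Pi_v\mathbf L_v^{-1})^FL^F_u$ at $v=u$ with respect to the auxiliary connection and using $(\iota_v^{-1}\tilde\Pi_v\mathbf L_v^{-1})^FL^F_v=\id$, one obtains $\dot A^F_u=-(\iota_u^{-1}\tilde\Pi_u\mathbf L_u^{-1})^F\dot L^F_u$; substituting the variation of $L_u$ found by differentiating the characterizing relations $\delta_uL_u=0$, $\delta_ud\,L_u=0$, $\pi_uL_u=\id$ of Remark~\ref{R:D.pdL} produces the closed formula \eqref{E:dotAuF}. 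The routine but unavoidable part is the bookkeeping: one must check that the normalizations \eqref{E:Spsiuv}--\eqref{E:gpsiuv} really do force $\dot h_u=0$ and $\dot\mu_u=-\tr(\dot\psi_u)$ after these identifications, and that $\dot A^F_u$ is a genuine differential operator, hence of finite Heisenberg order with respect to $\mathcal F_u$, so that Lemma~\ref{L:asymA} applies. Granting this, the main conceptual point, that the deformed torsion changes by an integral over a local quantity, follows at once from Theorem~\ref{T:var.all}; the principal obstacle is organizing these normalizations so that the two torsions are compared along the \emph{same} determinant line.
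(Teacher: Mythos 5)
Your proposal follows the paper's proof in all structural respects: the comparison operators $A^F_{v,u}=(\iota_v^{-1}\tilde\Pi_v\mathbf L_v^{-1})^FL^F_u$ are exactly the ones the paper uses, the auxiliary connection is taken to be $\mathcal H(\tilde\psi_{v,u})$ so that the metric term vanishes, the identity $\dot\mu_u=-\tr(\dot\psi_u)$ is obtained the same way, and the reduction to Theorem~\ref{T:var.all} together with Lemma~\ref{L:asymA} is the intended argument. The one place you diverge is the derivation of the closed formula \eqref{E:dotAuF}, and that is also the one place where you defer the work. The paper does \emph{not} differentiate $L_v$ via its characterizing relations; instead it rewrites $A_{v,u}=\iota_v^{-1}\tilde\Pi_vS_v^{-1}\Pi_vL_u$ using \eqref{E:LPiS}, conjugates the Rumin projector to $\psi_{v,u}^{-1}\Pi_v\psi_{v,u}$, and observes via the resolvent formula \eqref{E:Pi} that $\tfrac\partial{\partial v}|_{v=u}(\psi_{v,u}^{-1}\Box_v\psi_{v,u})=[\delta_u,[d,\dot\psi_u]]$, so that the contribution of $\dot\Pi$ is killed by $[\delta_u,\Box_u]=0$, $\delta_uL_u=0$ and $\tilde\Pi_uS_u^{-1}\delta_u=0$; only the term $-\pi_u\Pi_u\dot\psi_uL_u$ survives. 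Your alternative of computing $\dot L_u$ from $\delta_uL_u=0$, $\delta_udL_u=0$, $\pi_uL_u=\id$ (Remark~\ref{R:D.pdL}) should work — note that $\delta_v=\psi_{v,u}\delta_u\psi_{v,u}^{-1}$ and $\pi_v$ vary with $v$, so all three relations must be differentiated — but it is not a priori obvious that it lands on the compact expression \eqref{E:dotAuF} rather than on a formula involving $(\delta_ud+d\delta_u)$-inverses; the paper's conjugation trick is precisely what makes these extra terms disappear. So the proposal is correct in outline, but the step you label ``routine bookkeeping'' is in fact the only non-formal computation in the proof and needs to be carried out.
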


\begin{proof}
The differential operators $A_{v,u}\colon\Gamma^\infty(\mathcal H(\mathfrak tM_u))\to\Gamma^\infty(\mathcal H(\mathfrak tM_v))$
\[
        A_{v,u}:=\iota_v^{-1}\tilde\Pi_v\mathbf L_v^{-1}L_u
        %=\iota_v^{-1}\tilde\Pi_vS_v^{-1}\Pi_vL_u
\]
satisfy $A_{u,u}=\id$ and $A_{v,u}D_u=D_vA_{v,u}$, see \eqref{E:dLLD} and \eqref{E:itPL}.
Twisting with the flat bundle $F$, we get differential operators $A^F_{v,u}\colon\Gamma^\infty(\mathcal H(\mathfrak tM_u)\otimes F)\to\Gamma^\infty(\mathcal H(\mathfrak tM_v)\otimes F)$ such that $A_{v,u}^FD_u^F=D_v^FA_{v,u}^F$.
Clearly, $A_{v,u}^F$ induces the identity on $H^*(M;F)$, up to the canonical identifications in \eqref{E:HL}.
In order to apply Theorem~\ref{T:var.all}, we have to analyze $\dot A^F_u$.
It will be convenient to rewrite $A_{v,u}$ in the form, see \eqref{E:LPiS},
\begin{equation}\label{E:Auv.var}
        A_{v,u}=\iota_v^{-1}\tilde\Pi_vS_v^{-1}\Pi_vL_u.
\end{equation}

We let $\psi_{v,u}\colon\Lambda^*T^*M_u\to\Lambda^*T^*M_v$ and $\tilde\psi_{v,u}\colon\Lambda^*\mathfrak t^*M_u\to\Lambda^*\mathfrak t^*M_v$ also denote the isomorphisms induced by $\psi_{v,u}\colon TM_u\to TM_v$ and $\tilde\psi_{v,u}\colon\mathfrak tM_u\to\mathfrak tM_v$, respectively.
Then equation~\ref{E:Spsiuv} remains true on $\Lambda^*\mathfrak t^*M_u$ if $S_u\colon\Lambda^*\mathfrak t^*M_u\to\Lambda^*T^*M_u$ denotes the splitting induced by $S_u\colon\mathfrak tM_u\to TM_u$ as in \eqref{E:Swedge}.
Furthermore, \eqref{E:gpsiuv} yields
\begin{equation}\label{E:psi.tgu}
        \tilde\psi_{v,u}^*\tilde g_v^{-1}=\tilde g_u^{-1}
\end{equation}
where $\tilde g_u^{-1}$ denotes the induced Euclidean metric on $\Lambda^*\mathfrak t^*M_u$ as before.
We have
\begin{equation}\label{E:partialpsiuv}
        \tilde\psi_{v,u}\partial_u=\partial_v\tilde\psi_{v,u}
\end{equation}
since $\tilde\psi_{v,u}$ is an isomorphism of graded Lie algebra bundles.
By \eqref{E:psi.tgu} this gives
\begin{equation}\label{E:partial*psiuv}
        \tilde\psi_{v,u}\partial_u^*=\partial_v^*\tilde\psi_{v,u}
\end{equation}
and combination with \eqref{E:Spsiuv} also yields
\begin{equation}\label{E:deltapsiuv}
        \psi_{v,u}\delta_u=\delta_v\psi_{v,u},
\end{equation}
see~\eqref{E:delta}.
Moreover,
\begin{equation}\label{E:tPipsiuv}
        \tilde\psi_{v,u}\tilde\Pi_u=\tilde\Pi_v\tilde\psi_{v,u}
        \qquad\text{and}\qquad
        \tilde\psi_{v,u}\iota_u=\iota_v\mathcal H(\tilde\psi_{v,u})
\end{equation}
where $\mathcal H(\tilde\psi_{v,u})\colon\mathcal H(\mathfrak tM_u)\to\mathcal H(\mathfrak tM_v)$ denotes the induced isomorphism, cf.~\eqref{E:partialpsiuv}.
From \eqref{E:Spsiuv}, \eqref{E:Auv.var} and \eqref{E:tPipsiuv} we obtain
\begin{equation}\label{E:HpsiA}
        \mathcal H(\tilde\psi_{v,u})^{-1}A_{v,u}=\iota_u^{-1}\tilde\Pi_uS_u^{-1}(\psi_{v,u}^{-1}\Pi_v\psi_{v,u})\psi_{v,u}^{-1}L_u.
\end{equation}
When applying Theorem~\ref{T:var.all} we use an auxiliary linear connection on the bundle $\bigsqcup_u\mathcal H(\mathfrak tM_u)\otimes F$ such that $\dot A^F_u=\bigl(\frac\partial{\partial v}|_{v=u}\mathcal H(\tilde\psi_{v,u})^{-1}A_{v,u}\bigr)^F$.
Differentiating \eqref{E:HpsiA} yields
\begin{equation}\label{E:dotAu}
        \dot A_u^F=\Bigl(\iota_u^{-1}\tilde\Pi_uS_u^{-1}\tfrac\partial{\partial v}|_{v=u}(\psi_{v,u}^{-1}\Pi_v\psi_{v,u})L_u-\iota_u^{-1}\tilde\Pi_uS_u^{-1}\Pi_u\dot\psi_uL_u\Bigr)^F.
\end{equation}
where $\dot\psi_u=\frac\partial{\partial v}|_{v=u}\psi_{v,u}\in\Gamma^\infty(\der(\Lambda^*T^*M))$ denotes the fiberwise graded derivation given by contraction with $\dot\psi_u\in\Gamma^\infty(\eend(TM))$.
Since $\Box_v=[\delta_v,d]$ we obtain from \eqref{E:deltapsiuv}
\begin{equation}\label{E:boxpsiuv}
        \psi_{v,u}^{-1}\Box_v\psi_{v,u}=[\delta_u,\psi_{v,u}^{-1}d\psi_{v,u}]
\end{equation}
and thus
\begin{equation}\label{E:ddboxpsiuv}
        \tfrac\partial{\partial v}|_{v=u}\bigl(\psi_{v,u}^{-1}\Box_v\psi_{v,u}\bigr)
        =[\delta_u,[d,\dot\psi_u]].
\end{equation}
Combining this with \eqref{E:Pi}, we find
\begin{equation}\label{E:dotPi}
        \tfrac\partial{\partial v}|_{v=u}\bigl(\psi_{v,u}^{-1}\Pi_v\psi_{v,u}\bigr)
        =\frac1{2\pi\mathbf i}\oint_{|z|=\varepsilon}(z-\Box_u)^{-1}[\delta_u,[d,\dot\psi_u]](z-\Box_u)^{-1}dz
\end{equation}
Combining this with $[\delta_u,\Box_u]=0$, $\delta_uL_u=0$ and $\tilde\Pi_uS_u^{-1}\delta_u=0$, we conclude that the first summand in \eqref{E:dotAu} vanishes.
Hence, using \eqref{E:LPiS} we obtain
\begin{equation}\label{E:dotAuF}
        \dot A_u^F
        =-\Bigl(\iota_u^{-1}\tilde\Pi_uS_u^{-1}\Pi_u\dot\psi_uL_u\Bigr)^F
%       =-\Bigl(\iota_u^{-1}\tilde\Pi_u\mathbf L_u^{-1}\dot\psi_uL_u\Bigr)^F
        =-\bigl(\pi_u\Pi_u\dot\psi_uL_u\bigr)^F.
\end{equation}
In particular, the coefficients of the differential operator $\dot A_u^F$ at $x$ can be expressed in terms of the germ of $(M,\mathcal F_u,\tilde g_u,F,h,S_u,\dot\psi_u)$ at $x$.

Clearly, $\mu_{\tilde g_v}=\psi_{v,u}(\mu_{\tilde g_u})=\det_{TM}(\psi_{v,u}^{-1})\mu_{\tilde g_u}$ and thus
\[
        \mu_{\tilde g_u}^{-1}\tfrac\partial{\partial u}\mu_{\tilde g_u}=-\tr_{TM}(\dot\psi_u).
\]
Moreover, \eqref{E:psi.tgu} and \eqref{E:tPipsiuv} give $\mathcal H(\tilde\psi_{v,u})^*(\tilde g_v^{-1}|_{\mathcal H(\mathfrak tM_v)})=\tilde g_u^{-1}|_{\mathcal H(\mathfrak tM_u)}$ and therefore
\[
        (\tilde g_u^{-1}\otimes h)^{-1}\tfrac\partial{\partial v}\big|_{v=u}(\mathcal H(\tilde\psi_{v,u})\otimes\id_F)^*\bigl((\tilde g_v^{-1}\otimes h)\big|_{\mathcal H(\mathfrak tM_v)\otimes F}\bigr)=0.
\]
The statement thus follows from Theorem~\ref{T:var.all}.
\end{proof}

\begin{remark}\label{R:dotpsi}
Note that $\dot A^F_u$, $\tr(\dot\psi_u)$ and $\tilde p_{j,u}^F$ in Theorem~\ref{T:def.filt} remain unchanged if $\psi_{v,u}$ is replaced by any other filtration preserving family $\psi_{v,u}'\colon TM_u\to TM_v$ satisfying $\psi'_{w,v}\psi'_{v,u}=\psi'_{w,u}$ and inducing the same automorphism $\mathfrak tM_u\to\mathfrak tM_v$ on the associated graded, that is, $\gr(\psi'_{v,u})=\gr(\psi_{v,u})=\tilde\psi_{v,u}$.
Indeed, in this case there exists a smooth family of filtration preserving automorphism $B_{v,u}$ of $TM_u$ such that $\psi_{v,u}'=\psi_{v,u}B_{v,u}$ and $\gr(B_{v,u})=\id$.
Differentiating, we obtain $\dot\psi'_u=\dot\psi_u+\dot B_u$ where $\dot B_u:=\frac\partial{\partial v}|_{v=u}B_{v,u}$ is a filtration preserving endomorphism of $TM_u$ inducing $\gr(\dot B_u)=0$ on $\mathfrak tM_u$.
In particular, $\tr(\dot B_u)=0$ and thus $\tr(\dot\psi'_u)=\tr(\dot\psi_u)$.
Moreover, $\gr(\pi_u\Pi_u\dot B_uL_u)=\gr(\pi_u\Pi_u)\gr(\dot B_u)\gr(L_u)=0$ since each factor is filtration preserving.
As $\mathfrak tM_u$ has pure cohomology, we conclude $\pi_u\Pi_u\dot B_uL_u=0$, cf.~the proof of Lemma~\ref{L:Sindep}.
We obtain $\pi_u\Pi_u\dot\psi'_uL_u=\pi_u\Pi_u\dot\psi_uL_u$ and thus $\dot A^F_u$ remains unchanged, see \eqref{E:dotAuF}.
Clearly this implies that $\tilde p_{j,u}^F$ remains unchanged as well, see \eqref{E:asym.filt}.
\end{remark}

\begin{remark}\label{R:homotop.inv}
Using \eqref{E:dduzetaa}, we conclude that the quantity in \eqref{E:intzeta0} remains unchanged if the filtration on $M$ is smoothly deformed as above, i.e., in such a way that the bundles of osculating algebras remain isomorphic.
Hence, this quantity only depends on $F$ and the corresponding homotopy class of the filtration.
\end{remark}

\subsection{Comparison with the Ray--Singer torsion}\label{S:compRS}

We continue to use the notation set up above.
Moreover, we consider a Riemannian metric $g$ on $M$.
Comparing the torsion of the Rumin complex constructed above, see \eqref{E:deftorM}, with the Ray--Singer torsion \cite{RS71,BZ92}, we obtain a positive real number
\begin{equation}\label{E:Rdef}
	R_{\tilde g;g}^{F,h}(M,\mathcal F):=\frac{\|-\|^{\sdet(H^*(M;F))}_{\mathcal F,\tilde g,h}}{\|-\|^{\sdet(H^*(M;F))}_{\RS,g,h}}.
\end{equation}
For trivially filtered manifolds the analytic torsion constructed above coincides, by definition, with the Ray--Singer torsion.
We will add the decoration RS to our notation if an object is associated to the underlying trivially filtered manifold. 

For contact manifolds, Albin and Quan \cite[Corollary~3]{AQ19} have shown that the quotient \eqref{E:Rdef} can be expressed as an integral over local quantities.
Possibly, their analysis of the sub-Riemannian limit can be extended to filtered manifolds whose osculating algebras have pure cohomology.
We will not pursue this here though.
Returning to the general case we have.

\begin{proposition}\label{P:cont}
Suppose $F_u$ is a complex vector bundle over $M$ with a flat connection that depends smoothly on a real parameter $u$.
Then $R_{\tilde g;g}^{F_u,h}(M,\mathcal F)$ is smooth in $u$.
\end{proposition}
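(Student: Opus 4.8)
The plan is to fix a value $u_0$ of the parameter and prove that $R^{F_u,h}_{\tilde g;g}(M,\mathcal F)$ is smooth near $u_0$; since $u_0$ is arbitrary, this suffices. As the flat bundles $F_u$ depend smoothly on $u$, their underlying smooth vector bundles are isomorphic for $u$ in a small interval around $u_0$, so we may assume without loss of generality that the underlying smooth bundle $F$ and the fibrewise Hermitian metric $h$ are independent of $u$ and that only the flat connection varies smoothly. Then the twisted Rumin complex $D^{F_u}$ and the operators $L^{F_u}$ (see \eqref{E:RuminF}, \eqref{E:Lchain}), the formal adjoints with respect to $\tilde g$ and $h$, and the Rumin--Seshadri operators $\Delta^{F_u}_{\tilde g,h}$ all depend smoothly on $u$, and likewise the de~Rham differentials $d^{F_u}$ and the Rumin--Seshadri (Hodge) Laplacians $\Delta^{F_u}_{\RS,g,h}$ of the underlying trivially filtered manifold. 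The basic difficulty is that the ranks of the cohomology groups $H^q(M;F_u)$, hence the determinant lines $\sdet(H^*(M;F_u))$ themselves, need not vary continuously in $u$, so that neither the Rumin torsion nor the Ray--Singer torsion is individually smooth; the point of the argument is that these jumps occur in the same way on both sides and cancel in the quotient.

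First I would pick a spectral cutoff $\lambda\geq0$ lying in the spectrum of neither $\Delta^{F_{u_0}}_{\tilde g,h}$ nor $\Delta^{F_{u_0}}_{\RS,g,h}$; such $\lambda$ exists since both operators have discrete spectrum, see Lemma~\ref{L:Desa}. Since these operators depend smoothly on $u$, their discrete spectra vary continuously, so after shrinking the interval $\lambda$ avoids both spectra for all $u$ under consideration. Hence the spectral projections $P^{\mathcal F}_{\lambda,u}$ of $\Delta^{F_u}_{\tilde g,h}$ and $P^{\RS}_{\lambda,u}$ of $\Delta^{F_u}_{\RS,g,h}$ have locally constant rank and depend smoothly on $u$, and $\img P^{\mathcal F}_{\lambda,u}$ and $\img P^{\RS}_{\lambda,u}$ are smooth finite dimensional subcomplexes carrying smoothly varying $L^2$ inner products. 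By the defining formula \eqref{E:defRSmetric}, applied to the Rumin complex of $\mathcal F$ (cf.\ \eqref{E:deftorM}) and to the de~Rham complex, we obtain, for $u$ near $u_0$,
\[
	R^{F_u,h}_{\tilde g;g}(M,\mathcal F)=\exp\Bigl(-\tfrac1{2\kappa}(\zeta^{\mathcal F}_{\lambda,u})'(0)+\tfrac12(\zeta^{\RS}_{\lambda,u})'(0)\Bigr)\cdot\frac{\|-\|^{\sdet(H^*(M;F_u))}_{[0,\lambda],\mathcal F,\tilde g,h}}{\|-\|^{\sdet(H^*(M;F_u))}_{[0,\lambda],\RS,g,h}},
\]
where $\zeta^{\mathcal F}_{\lambda,u}$ and $\zeta^{\RS}_{\lambda,u}$ are the zeta functions \eqref{E:zetalambda} of $\Delta^{F_u}_{\tilde g,h}$ and $\Delta^{F_u}_{\RS,g,h}$, and the ratio on the right is that of the two norms induced on $\sdet(H^*(M;F_u))$ by the $L^2$ inner products on $\img P^{\mathcal F}_{\lambda,u}$ and $\img P^{\RS}_{\lambda,u}$, respectively. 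Smoothness of the exponential factor in $u$ follows as in Section~\ref{SS:var}: the graded heat traces $\str\bigl(NQ^{\mathcal F}_{\lambda,u}e^{-t\Delta^{F_u}_{\tilde g,h}}\bigr)$, and their Ray--Singer counterparts, are smooth in $(t,u)$ for $t>0$ (Duhamel's principle, cf.\ Lemma~\ref{L:var}), admit as $t\to0$ asymptotic expansions whose coefficients depend smoothly on $u$ (Lemma~\ref{L:asyexp} together with the smooth dependence of the locally computable heat invariants on the operator, cf.~\cite{DH20}), and decay exponentially as $t\to\infty$ since $Q_\lambda$ removes all eigenvalues $\leq\lambda$; hence the Mellin transforms \eqref{E:zetalambda}, their meromorphic continuations, and the derivatives at $s=0$ depend smoothly on $u$.

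It remains to treat the ratio of the two finite dimensional norms. Each of the finite dimensional complexes $\bigl(\img P^{\mathcal F}_{\lambda,u},D^{F_u}\bigr)$ and $\bigl(\img P^{\RS}_{\lambda,u},d^{F_u}\bigr)$ has its graded determinant line canonically identified with $\sdet(H^*(M;F_u))$, via the torsion isomorphism of finite dimensional complexes \cite[Section~a]{BGS88},\cite[Section~Ia]{BZ92} followed by the identification of cohomologies, the latter induced by $L^{F_u}$ in the Rumin case, see \eqref{E:HL}. Consider $c_u:=P^{\RS}_{\lambda,u}\circ L^{F_u}\colon\img P^{\mathcal F}_{\lambda,u}\to\img P^{\RS}_{\lambda,u}$. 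Since $L^{F_u}$ is a chain map, \eqref{E:Lchain}, and $P^{\RS}_{\lambda,u}$ commutes with $d^{F_u}$, the map $c_u$ is a chain map; it induces an isomorphism in cohomology which, under the above identifications, is the identity of $H^*(M;F_u)$. Hence $c_u$ is a quasi-isomorphism of finite dimensional complexes, the induced isomorphism $\sdet(c_u)\colon\sdet(\img P^{\mathcal F}_{\lambda,u})\to\sdet(\img P^{\RS}_{\lambda,u})$ agrees with the canonical identification of both lines with $\sdet(H^*(M;F_u))$, and since $c_u$ depends smoothly on $u$, so does $\sdet(c_u)$. Choosing a nowhere vanishing smooth section $\eta_u$ of the smooth line bundle $\sdet(\img P^{\mathcal F}_{\lambda,u})$ over a neighborhood of $u_0$, the ratio of the two finite dimensional norms equals $\|\eta_u\|_{L^2}/\|\sdet(c_u)\eta_u\|_{L^2}$, which is manifestly smooth in $u$. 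Combined with the preceding step, this shows $R^{F_u,h}_{\tilde g;g}(M,\mathcal F)$ is smooth near $u_0$. The main obstacle, handled by the cutoff trick, is precisely the possible non-smoothness of the individual torsions caused by jumps in $\dim H^*(M;F_u)$; these are absorbed into the finite dimensional complexes and matched across the Rumin and de~Rham sides by $L^{F_u}$, so that they cancel in the quotient.
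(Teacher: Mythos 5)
Your overall strategy is the one the paper uses: fix $u_0$, introduce a spectral cutoff $\lambda$ valid for both $\Delta^{F_u}_{\tilde g,h}$ and $\Delta^{F_u}_{\RS,g,h}$ near $u_0$, split $R$ into the exponential zeta factor (smooth because the projectors and zeta functions vary smoothly) and the ratio of finite dimensional norms, and compare the two finite dimensional complexes via $P_{\RS,\lambda,u}\circ L^{F_u}$. The first factor is handled adequately. However, there is a genuine gap in your treatment of the finite dimensional ratio. You only require $\lambda$ to avoid the spectra at $u_0$, so $\img P^{\mathcal F}_{\lambda,u}$ and $\img P^{\RS}_{\lambda,u}$ may contain nonzero eigenspaces; these two complexes then have nontrivial differentials and, in general, \emph{different dimensions in each degree}, so $c_u$ is only a quasi-isomorphism and the object ``$\sdet(c_u)\colon\sdet(\img P^{\mathcal F}_{\lambda,u})\to\sdet(\img P^{\RS}_{\lambda,u})$'' is not defined by $c_u$ alone. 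The map you actually need is the composite of the two torsion isomorphisms through $\sdet(H^*(M;F_u))$ --- but the smoothness of that composite in $u$ is precisely the content of the proposition (the Betti numbers $\dim H^q(M;F_u)$ may jump, so $\sdet(H^*(M;F_u))$ is not a smooth line bundle), and asserting ``since $c_u$ depends smoothly on $u$, so does $\sdet(c_u)$'' begs the question. One can repair this with a mapping-cone/torsion-of-a-quasi-isomorphism argument, but you do not supply it.

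The paper closes this gap by a sharper choice of cutoff: $\lambda>0$ is taken \emph{strictly below all nonzero eigenvalues} of both $\Delta^{F_{u_0}}_{\tilde g,h}$ and $\Delta^{F_{u_0}}_{\RS,g,h}$. Then $\img P_{\lambda,u_0}$ is exactly the harmonic space on each side, the differentials $D^{F_{u_0}}$ and $d^{F_{u_0}}$ vanish there by positivity (cf.~\eqref{E:Deltai}), and $\ell_{u_0}=P_{\RS,\lambda,u_0}\circ L^{F_{u_0}}$ is an honest isomorphism of (trivial) complexes. By continuity $\ell_u$ remains a degree-wise isomorphism of complexes for $u$ near $u_0$, the identification of determinant lines is then literally induced by $\ell_u$, and the ratio of norms equals $\sdet^{-1/2}(\ell_u^*\ell_u)$, which is manifestly smooth. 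You should either adopt this choice of $\lambda$ or add the cone argument; as written, the finite dimensional step does not go through.
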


\begin{proof}
Let $\Delta_u=\Delta_{\tilde g,h}^{F_u}$ denote the Rumin--Seshadri operator \eqref{E:RS} and let $\Delta_{\RS,u}=\Delta^{F_u}_{\RS,g,h}$ denote the Hodge Laplacian.
Given $u_0$, we choose $\lambda>0$ such that all non-zero eigenvalues of $\Delta_{u_0}$ and all non-zero eigenvalues of $\Delta_{\RS,u_0}$ are strictly larger than $\lambda$.
By definition, see \eqref{E:defRSmetric} and \eqref{E:deftorM}, we have
\begin{equation}\label{E:RRS}
	R_{\tilde g;g}^{F_u,h}(M,\mathcal F)
	=\frac{\exp(-\frac1{2\kappa}\zeta'_{\lambda,u}(0))}{\exp(-\frac12\zeta'_{\RS,\lambda,u}(0))}
	\cdot\frac{\|-\|^{\sdet(H^*(M;F_u))}_{[0,\lambda],\tilde g,h}}{\|-\|^{\sdet(H^*(M;F_u))}_{\RS,[0,\lambda],g,h}}.
\end{equation}
If $u$ is sufficiently close to $u_0$, then the spectra of $\Delta_u$ and $\Delta_{\RS,u}$ do not contain $\lambda$.
For these $u$, the spectral projectors $P_{\lambda,u}$ and $P_{\RS,\lambda,u}$ depend smoothly on $u$, and so do $\zeta'_{\lambda,u}(0)$ and $\zeta'_{\RS,\lambda,u}(0)$, cf.~\eqref{E:zetalambda}.
Hence, numerator and denominator of the first factor in \eqref{E:RRS} depend smoothly on $u$, provided $u$ remains sufficiently close to $u_0$.
To analyze the second factor in \eqref{E:RRS}, we consider the chain map, cf.~\eqref{E:Lchain},
$$
	\ell_u\colon\bigl(\img(P_{\lambda,u}),D^{F_u}\bigr)\to\bigl(\img(P_{\RS,\lambda,u}),d^{F_u}\bigr),\qquad\ell_u:=P_{\RS,\lambda,u}\circ L^{F_u}|_{\img(P_{\lambda,u})}.
$$
Clearly, $\ell_u$ induces the isomorphism \eqref{E:HL} in cohomology.
By positivity, and since $\Delta_{u_0}$ vanishes on $\img(P_{\lambda,u_0})$, the differential $D^{F_{u_0}}$ vanishes on $\img(P_{\lambda,u_0})$, cf.~\eqref{E:Deltai}.
Similarly, $d^{F_{u_0}}$ vanishes on $\img(P_{\RS,\lambda,u_0})$.
Hence, $\ell_{u_0}$ is an isomorphism of (trivial) chain complexes.
We conclude that $\ell_u$ is an isomorphism of chain complexes, provided $u$ is sufficiently close to $u_0$.
In this situation we have the well known formula 
$$
	\frac{\|-\|^{\sdet(H^*(M;F_u))}_{[0,\lambda],\tilde g,h}}{\|-\|^{\sdet(H^*(M;F_u))}_{\RS,[0,\lambda],g,h}}
	=\sdet^{-1/2}(\ell_u^*\ell_u),
$$
where, for these $u$, the right hand side clearly depends smoothly on $u$.
\end{proof}

From Propositions~\ref{P:sumF}, \ref{P:PFF}, \ref{P:PBF}, and Theorem~\ref{T:PD} we immediately obtain

\begin{proposition}\label{P:R}
In this situation we have:
\begin{enumerate}[(a)]
\item\label{I:Rsum} If $F_1$ and $F_2$ are two flat vector bundles with fiberwise Hermitian inner products $h_1$ and $h_2$, respectively, then
\[
	R_{\tilde g;g}^{F_1\oplus F_2,h_1\oplus h_2}(M,\mathcal F)=R_{\tilde g;g}^{F_1,h_1}(M,\mathcal F)\cdot R_{\tilde g;g}^{F_2,h_2}(M,\mathcal F).
\]
\item\label{I:Rind}
If $\pi\colon\tilde M\to M$ is a finite covering of filtered manifolds and $\tilde F$ is a flat vector bundle over $\tilde M$ with fiberwise Hermitian inner product $\tilde h$, then
\[
	R_{\tilde g;g}^{\pi_*\tilde F,\pi_*\tilde h}(M,\mathcal F)=R_{\pi^*\tilde g;\pi^*g}^{\tilde F,\tilde h}(\tilde M,\pi^*\mathcal F).
\]
%where $\pi_*\tilde F$ denotes the flat vector bundle over $M$ with fibers $(\pi_*\tilde F)_x=\bigoplus_{\tilde x\in\pi^{-1}(x)}\tilde F_{\tilde x}$ equipped with the induced Hermitian metric $\pi_*\tilde h$.
\item\label{I:Rres} 
If $\pi\colon\tilde M\to M$ is a finite covering of filtered manifolds, then 
\[
	R_{\pi^*\tilde g;\pi^*g}^{\pi^*F,\pi^*h}(\tilde M,\pi^*\mathcal F)=R_{\tilde g;g}^{F\otimes\mathcal V,h\otimes h^{\mathcal V}}(M,\mathcal F).
\]
%where $\mathcal V$ denotes the flat vector bundle over $M$ with fibers 
%$$
%	\mathcal V_x:=H^0(\pi^{-1}(x);\mathbb C)=\mathbb C^{\pi^{-1}(x)},\qquad x\in M,
%$$
%equipped with the canonical parallel Hermitian metric $h^{\mathcal V}$.
\item\label{I:Rdual} With respect to Poincar\'e duality, we have 
\begin{equation}\label{E:Rdual}
	R_{\tilde g;g}^{F^*\otimes\mathcal O,h^{-1}}(M,\mathcal F)
	=\left(R_{\tilde g;g}^{\bar F,\bar h}(M,\mathcal F)\right)^{(-1)^{m+1}}
	=\left(R_{\tilde g;g}^{F,h}(M,\mathcal F)\right)^{(-1)^{m+1}}.
\end{equation}
\end{enumerate}
\end{proposition}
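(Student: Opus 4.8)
The plan is to derive all four assertions simultaneously by treating the numerator and the denominator of $R_{\tilde g;g}^{F,h}(M,\mathcal F)$ on an equal footing. The key observation is that the Ray--Singer torsion $\|-\|^{\sdet(H^*(M;F))}_{\RS,g,h}$ is itself an instance of the analytic torsion of a Rumin complex, namely the one associated with the trivially filtered manifold underlying $M$: its Rumin complex is the de~Rham complex, and its osculating algebras, being abelian, have pure cohomology of locally constant dimension. Consequently Propositions~\ref{P:sumF}, \ref{P:PFF}, \ref{P:PBF} and Theorem~\ref{T:PD} apply verbatim to both the filtered manifold $(M,\mathcal F)$ and to its trivially filtered underlying manifold, and — crucially — the canonical identifications of determinant lines occurring in those statements are induced by isomorphisms in the topological cohomology $H^*(M;F)$ (resp.\ $H^*(\tilde M;\tilde F)$) which make no reference to the filtration, so they are literally the same identification in the two cases.

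Granting this, each item follows by dividing the relevant norm identity for the Rumin torsion by the corresponding one for the Ray--Singer torsion: the common canonical identification cancels, leaving a multiplicative relation among the positive real numbers $R$. Concretely, item \ref{I:Rsum} is the quotient of Proposition~\ref{P:sumF} by its trivially filtered special case; items \ref{I:Rind} and \ref{I:Rres} are the quotients of Propositions~\ref{P:PFF} and \ref{P:PBF} by their trivially filtered cases, using that $\pi^*\mathcal F$ and $\pi^*g$ are the structures induced on the covering and that the pullback of a trivial filtration is again trivial; and the first equality in \eqref{E:Rdual} is the quotient of Theorem~\ref{T:PD} by its trivially filtered case, the Poincar\'e duality identification $H^q(M;F^*\otimes\mathcal O)=H^{m-q}(M;F)^*$ being common to both. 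For the second equality in \eqref{E:Rdual} I would argue separately that $R_{\tilde g;g}^{\bar F,\bar h}(M,\mathcal F)=R_{\tilde g;g}^{F,h}(M,\mathcal F)$: as in the proof of Proposition~\ref{P:dual}, see in particular \eqref{E:zetabar}, passing from $(F,h)$ to the conjugate flat bundle $(\bar F,\bar h)$ changes neither the Rumin--Seshadri operators and their spectral projectors nor the analogous de~Rham data, and leaves the relevant determinant lines and norms unchanged, so numerator and denominator of $R$ are both unaffected.

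I do not anticipate a genuine obstacle; the phrase ``we immediately obtain'' is apt. The only point that must be executed with a little care is the bookkeeping of the canonical identifications of determinant lines: one has to verify that the purely topological isomorphisms entering the Rumin-side statements (e.g.\ $H^q(M;F_1\oplus F_2)=H^q(M;F_1)\oplus H^q(M;F_2)$, $H^q(M;\pi_*\tilde F)=H^q(\tilde M;\tilde F)$, Poincar\'e duality) coincide with those entering the trivially filtered, i.e.\ Ray--Singer, versions, so that they cancel in the quotient. Once this identification of the identifications is in place, each of \ref{I:Rsum}--\ref{I:Rdual} is an immediate division of two already-established norm identities.
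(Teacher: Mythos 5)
Your proposal is correct and is exactly the paper's argument: the proposition is stated as an immediate consequence of Propositions~\ref{P:sumF}, \ref{P:PFF}, \ref{P:PBF} and Theorem~\ref{T:PD}, applied both to $(M,\mathcal F)$ and to the trivially filtered underlying manifold (whose torsion is by definition the Ray--Singer torsion), and then divided, the common topological identifications of determinant lines cancelling. Your separate remark that $R_{\tilde g;g}^{\bar F,\bar h}=R_{\tilde g;g}^{F,h}$, via the coincidence of the adjoints with respect to $h$ and $\bar h$ as in \eqref{E:zetabar}, is the right way to get the middle term of \eqref{E:Rdual}.
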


\begin{remark}
For even dimensional orientable $M$ and parallel Hermitian metrics $h$, Equation~\eqref{E:Rdual} yields $R_{\tilde g;g}^{F,h}(M,\mathcal F)=1$, for in this case, $h$ together with an orientation of $M$ provides an isomorphism of flat bundles $\bar F\cong F^*\otimes\mathcal O$ that intertwines the Hermitian metric $\bar h$ with $h^{-1}$, cf.\ Proposition~\ref{P:zetaeven}.
Hence, in this situation	
\[
	\|-\|^{\sdet(H^*(M;F))}_{\mathcal F,\tilde g,h}=\|-\|^{\sdet(H^*(M;F))}_{\RS,g,h},
\]
and, in particular, the analytic torsion on the left hand side is independent of the metric $\tilde g$.
However, besides the trivially filtered case, we do not know of any even dimensional filtered manifold whose osculating algebras have pure cohomology: contact manifolds are odd dimensional and so are the five dimensional geometries discussed in Section~\ref{S:five} below.
\end{remark}

We expect that the analytic torsion proposed in this paper can be refined to a complex bilinear form on the determinant line $\sdet(H^*(M;F))$ such that the comparison with the refined complex valued Ray--Singer torsion \cite{BK05,BK07a,BK07b,BK07c,BK08,BH07,BH10,SZ08} analogous to \eqref{E:Rdef} yields a ratio that depends holomorphically on the flat connection.

\subsection{Nilpotent Lie algebras with pure cohomology}\label{SS:pure}

The analytic torsion discussed above is only defined for filtered manifolds which have osculating algebras with pure cohomology.
This assumption appears to be very restrictive.
The only examples we are aware of are: trivially filtered manifolds corresponding to abelian osculating algebras giving rise to the classical Ray--Singer torsion; contact manifolds corresponding to Heisenberg algebras giving rise to the Rumin--Seshadri torsion; and the five dimensional geometries discussed in Section~\ref{S:five} below.

The aim of this section is to present a necessary condition which the dimensions of the grading components of a finite dimensional graded nilpotent Lie algebra
\[
        \goe=\goe_{-r}\oplus\dotsb\oplus\goe_{-2}\oplus\goe_{-1}
\]
have to satisfy, if the cohomology of $\goe$ is pure.

To this end let $\phi_t$ denote the grading automorphism acting by multiplication with $t^p$ on the summand $\goe_p$ and consider the polynomial \cite{DS88}
\[
        P(t):=\str\bigl(\phi_t|H^*(\goe)\bigr).
\]
If $\goe$ has pure cohomology, then there exist numbers $p_q\in\mathbb N_0$ such that $\phi_t$ acts by the scalar $t^{p_q}$ on $H^q(\goe)$.
Hence, in this case the polynomial
\[
	P(t)=\sum_{q=0}^d(-1)^qt^{p_q}\dim H^q(\goe)
\]
has at most $d+1$ non-trivial coefficients, where $d:=\dim\goe$.

Clearly, $\str(\phi_t|H^*(\goe))=\str(\phi_t|\Lambda^*\goe^*)=\prod_p\str(\phi_t|\Lambda^*\goe^*_p)$ and thus
\[
	P(t)=(1-t)^{n_1}(1-t^2)^{n_2}\dotsm(1-t^r)^{n_r}
\]
where $n_p:=\dim\goe_{-p}$.
This is a polynomial of degree $n:=\sum_ppn_p$.
Moreover,
\begin{align*}
        P(t)
	%&=\str(\phi_s|H^*(\goe))
        %\\&=\str(\phi_s|\Lambda^*\goe^*)
%       \\&=\str(\phi_s|\Lambda^*\goe^*_1)\cdots\str(\phi_s|\Lambda^*\goe^*_k)
        %\\&=(1-s)^{n_1}(1-s^2)^{n_2}\dotsm(1-s^k)^{n_k}
        &=(1-t)^d(1+t)^{n_2}\bigl(1+t+t^2\bigr)^{n_3}\dotsm\bigl(1+t+t^2+\dotsb+t^{r-1}\bigr)^{n_r}
        \\&=\sum_{i=0}^n(-t)^i\sum_{l=0}^{n-d}(-1)^l\binom{d}{i-l}a_l,
\end{align*}
where the (positive, integral) coefficients $a_l$ are defined by
\[
        \sum_{l=0}^{n-d}a_lt^l:=(1+t)^{n_2}\bigl(1+t+t^2\bigr)^{n_3}\dotsm\bigl(1+t+t^2+\dotsb+t^{r-1}\bigr)^{n_r}
\]
and we are using the convention $\binom dm=0$ if $m<0$ or $m>d$.
Using
\[
        (-1)^l\binom{d}{i-l}=\frac{d!}{i!(n-i)!}\prod_{j=0}^{l-1}(j-i)\prod_{j=d+l+1}^n(j-i),
\]
which is valid for all integers $0\leq i\leq n$ and $0\leq l\leq n-d$, we obtain
\[
        P(t)=\sum_{i=0}^n(-t)^i\frac{d!}{i!(n-i)!}c(i)
\]
where
\[
        c(i):=\sum_{l=0}^{n-d}a_lQ_l(i)
\]
and
\[
        Q_l(i):=\prod_{j=0}^{l-1}(j-i)\prod_{j=d+l+1}^n(j-i)=\frac{\prod_{j=0}^n(j-i)}{\prod_{j=l}^{d+l}(j-i)}
\]
are polynomials of degree $n-d$ in $i$.
Hence, the polynomial $P(t)$ has at least $d+1$ non-trivial coefficients.
Moreover, we obtain

\begin{lemma}\label{L:2}
If $\goe$ has pure cohomology, then $P(t)$ has precisely $d+1$ non-trivial coefficients and $c(i)$ has $n-d$ mutually different integral zeros located in the range $\{0,\dotsc,n\}$.
More explicitly,
\begin{equation}\label{E:cr}
	c(i)=2^{n_2}3^{n_3}\dotsm r^{n_r}\prod_{j\in\mathcal P'}(j-i),
\end{equation}
where $\mathcal P:=\{p_q:q=0,\dotsc,d\}$ and $\mathcal P':=\{0,\dotsc,n\}\setminus\mathcal P$.
\end{lemma}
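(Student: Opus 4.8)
The plan is to combine the lower bound on the number of non-trivial coefficients of $P(t)$, which is already implicit in the computation preceding the lemma, with the trivial upper bound coming from purity, and then to read off the claimed factorization of $c(i)$ from the resulting count together with a degree argument.

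First I would pin down the degree and leading coefficient of the polynomial $c$. Each $Q_l(i)=\prod_{j=0}^{l-1}(j-i)\prod_{j=d+l+1}^n(j-i)$ is a product of $l+(n-d-l)=n-d$ linear factors, each of the form $(j-i)$, so $Q_l$ has degree $n-d$ with leading coefficient $(-1)^{n-d}$, independently of $l$. Since $c(i)=\sum_{l=0}^{n-d}a_lQ_l(i)$ with all $a_l>0$, no cancellation among the top-degree terms can occur, so $c$ has degree exactly $n-d$ with leading coefficient $(-1)^{n-d}\sum_{l=0}^{n-d}a_l=(-1)^{n-d}\,2^{n_2}3^{n_3}\dotsm r^{n_r}$, the last equality being the value at $t=1$ of the generating function $\sum_l a_lt^l=(1+t)^{n_2}(1+t+t^2)^{n_3}\dotsm(1+t+\dotsb+t^{r-1})^{n_r}$. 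In particular $c$ has at most $n-d$ zeros, so the coefficient $(-1)^i\tfrac{d!}{i!(n-i)!}c(i)$ of $t^i$ in $P(t)$ is non-zero for at least $d+1$ values of $i\in\{0,\dotsc,n\}$; thus $P(t)$ has at least $d+1$ non-trivial coefficients, with no purity hypothesis.

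Next, assuming $\goe$ has pure cohomology, the identity $P(t)=\sum_{q=0}^d(-1)^qt^{p_q}\dim H^q(\goe)$ shows that $P(t)$ has at most $d+1$ non-trivial coefficients; combined with the previous step it has exactly $d+1$. Putting $\mathcal Q:=\{i\in\{0,\dotsc,n\}:[t^i]P(t)\neq0\}$, we get $|\mathcal Q|=d+1$, and since $[t^i]P(t)$ and $c(i)$ differ only by the non-zero factor $(-1)^i\tfrac{d!}{i!(n-i)!}$ for $0\le i\le n$, we have $\mathcal Q=\{i\in\{0,\dotsc,n\}:c(i)\neq0\}$. Hence $c$ vanishes at the $n-d$ mutually distinct integers forming $\{0,\dotsc,n\}\setminus\mathcal Q$, which is the first assertion of the lemma.

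Finally, because $\deg c=n-d$, those $n-d$ integers are exactly the zeros of $c$, each simple, so matching leading coefficients yields $c(i)=\bigl(2^{n_2}3^{n_3}\dotsm r^{n_r}\bigr)\prod_{j\in\{0,\dotsc,n\}\setminus\mathcal Q}(j-i)$. It then remains to identify $\mathcal Q$ with $\mathcal P=\{p_q:q=0,\dotsc,d\}$: one has $\mathcal Q\subseteq\mathcal P$ since every $i\in\mathcal Q$ equals some $p_q$, and $|\mathcal P|\le d+1=|\mathcal Q|$ forces $\mathcal Q=\mathcal P$ (in particular all $p_q$ are distinct). Then $\{0,\dotsc,n\}\setminus\mathcal Q=\mathcal P'$ and \eqref{E:cr} follows. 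I do not expect a serious obstacle: the only points that need care are the non-cancellation of the leading terms of the $Q_l$, which is what makes $\deg c$ equal to $n-d$ exactly, and the elementary set-theoretic step $\mathcal Q=\mathcal P$.
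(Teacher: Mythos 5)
Your proof is correct and follows essentially the same route as the paper, which presents the lemma as a direct consequence of the preceding computation: the lower bound $d+1$ from $\deg c=n-d$ (non-cancellation of the leading terms of the $Q_l$ since $a_l>0$), the upper bound from purity, and then reading off the factorization of $c$ from its $n-d$ integral roots and the leading coefficient $(-1)^{n-d}\sum_l a_l=(-1)^{n-d}2^{n_2}\dotsm r^{n_r}$. The only step you make explicit that the paper leaves tacit is the identification $\mathcal Q=\mathcal P$ (hence that the $p_q$ are distinct and lie in $\{0,\dotsc,n\}$), and your counting argument for it is sound.
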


\begin{example}
If $\goe=\goe_{-2}\oplus\goe_{-1}$ with $n_2=2$, then $a_l=1,2,1$ and
$$
        c(i)%=4i^2-4ni+n(n-1)
        =(n-2i)^2-n
$$
with zeros $i=\frac{n\pm\sqrt n}2$.
Hence, if $\goe$ has pure cohomology, then $n$ must be a square according to Lemma~\ref{L:2}.
The case $n=4$ corresponds to $n_1=0$ and is realized by an abelian Lie algebra concentrated in degree $2$.
The case $n=9$ corresponds to $n_1=5$.
It is not hard to see that this cannot be realized by a nilpotent Lie algebra though.
%To see this, we choose a graded basis $\alpha^1,\dotsc,\alpha^5,\beta^1,\beta^2$ of $\goe_1^*\oplus\goe_2^*$ such that $\partial\alpha^i=0$, $\partial\beta^1=c_{12}\alpha^1\wedge\alpha^2+c_{34}\alpha^3\wedge\alpha^4$ with constants $c_{12},c_{34}$.
%By pureness of $H^2(\goe)$, we see that $\partial(\alpha^i\wedge\beta^j)=\alpha^i\wedge\partial\beta^j$ must span the $10$-dimensional space $\Lambda^3\goe_1^*$.
%Factoring out $\Lambda^3V$ where $V$ denots the span of $\alpha^1,\alpha^2,\alpha^3,\alpha^4$, it is rather easy to see that this is impossible.
%Indeed, since $\partial\beta_1$ and $\partial\beta_2$ can not have the same kernel, we may w.l.o.g.\ assume $\partial\beta_2=\gamma\wedge\alpha^5+\rho$ with $0\neq\gamma\in V$ and $\rho\in\Lambda^2V$, whence $\gamma\wedge\partial\beta^2\in\Lambda^3V$.
We do no know if larger squares $n$ can be realized by nilpotent Lie algebras.
\end{example}

\begin{example}
If $\goe=\goe_{-2}\oplus\goe_{-1}$ with $n_2=3$, then $a_l=1,3,3,1$ and
$$
        c(i)=(n-2i)\bigl((n-2i)^2-(3n-2)\bigr)
$$
Hence, if $\goe$ has pure cohomology, then $n$ must be even and $3n-2$ must be a square according to Lemma~\ref{L:2}.
The case $n=6$ corresponds to $n_1=0$ and is realized by an abelian Lie algebra concentrated in degree $2$.
%The next possibility is $n=22$ which corresponds to $n_1=16$.
The general $n$ consistent with Lemma~\ref{L:2} is of the form $n=(m^2+2)/3$ where $m\geq4$ is an integer congruent to $2$ or $4$ mod $6$.
Which of these can be realized by nilpotent Lie algebras is unclear.
\end{example}

\begin{example}
If $\goe=\goe_{-2}\oplus\goe_{-1}$ with $n_2=4$, then $a_l=1,4,6,4,1$ and
\[
       c(i)=\bigl((n-2i)^2-(3n-4)\bigr)^2-(6n^2-18n+16).
\]
Hence, if $\goe$ has pure cohomology, then the four roots of $c(r)$,
\[
	i=\frac{n\pm\sqrt{3n-4\pm\sqrt{6n^2-18n+16}}}2,
\]
must all be integral according to Lemma~\ref{L:2}.
The case $n=8$ corresponds to $n_1=0$ and is realized by an abelian Lie algebra concentrated in degree $2$.
We do not know of any other $n$ for which this happens.
Using a computer, we ruled out all $8<n\leq10000$.
For some $n$, e.g.\ $n=17$, $n=66$, or $n=1521$, two roots are integral, but never all four, in this range.
\end{example}

\begin{example}
If $\goe=\goe_{-2}\oplus\goe_{-1}$ with $n_2=5$, then $a_l=1,5,10,10,5,1$ and
\[
	c(i)=(n-2i)\Bigl(\bigl((n-2i)^2-(5n-10)\bigr)^2-(10n^2-50n+76)\Bigr)
\]
Hence, if $\goe$ has pure cohomology, then the root $i=n/2$ must be integral, hence $n$ must be even and the other four roots,
\[
	i=\frac{n\pm\sqrt{5n-10\pm\sqrt{10n^2-50n+76}}}2,
\]
must all be integral according to Lemma~\ref{L:2}.
The case $n=10$ corresponds to $n_1=0$ and is realized by an abelian Lie algebra concentrated in degree $2$.
We do not know of any other $n$ for which this happens.
Using a computer, we ruled out all $10<n\leq10000$.
For some $n$, e.g. $n=17$, $n=36$, or $n=289$, two roots are integral, but never all five.
For $n=67$ four roots are integral, but this one is odd.
\end{example}

Using computer algebra, we also ruled out Lie algebras of the form $\goe=\goe_{-2}\oplus\goe_{-1}$ in the range $0\leq n_1\leq1000$ and $6\leq n_2\leq11$ --- non of these dimensions are consistent with the condition in Lemma~\ref{L:2}.

\begin{example}
If $\goe=\goe_{-3}\oplus\goe_{-2}\oplus\goe_{-1}$ with $n_2=n_3=1$, then $a_l=1,2,2,1$ and
\[
	c(i)=(n-2i)\Bigl(\tfrac34(n-2i)^2+\bigl(\tfrac{n^2}4-3n+2\bigr)\Bigr)
\]
Hence, if $\goe$ has pure cohomology, then the root $i=n/2$ must be integral, hence $n$ must be even and the other two roots,
\[
	i=\frac{n\pm\sqrt{-(n^2-12n+8)/3}}2,
\]
must be integral according to Lemma~\ref{L:2}.
All $n\geq12$ can be ruled out since in this case these roots are complex.
Hence, $n=10$ is the only case consistent with Lemma~\ref{L:2}.
However, one readily shows that this can not be realized by a nilpotent Lie algebra.
%To see this, we choose a graded basis $\alpha^1,\dotsc,\alpha^5,\beta,\gamma$ of $\goe_1^*\oplus\goe_2^*\oplus\goe_3^*$ such that $d\alpha^i=0$, $d\beta=c_{12}\alpha^{12}+c_{34}\alpha^{34}$ and $d\gamma=c_i\alpha^i\beta$ with constants $c_{12},c_{34},c_i$.
%By pureness of $H^2(\goe)$, we see that $d(\alpha^i\beta)=\alpha^id\beta$ must span a $4$-dimensional subspace.
%However, this is impossible.
\end{example}

\begin{example}
If $\goe=\goe_{-3}\oplus\goe_{-2}\oplus\goe_{-1}$ with $n_2=2$ and $n_3=1$, then we have $a_l=1,3,4,3,1$ and
\[
	4c(i)=3(n-2i)^4+(n^2-23n+30)(n-2i)^2-n(n^2-14n+24).
\]
If this polynomial has four real zeros, then it must take a positive value at $i=n/2$, hence we must have $n^2-14n+24<0$.
%\[
%	n^2-23n+30<0\qquad\text{and}\qquad n^2-14n+24<0.
%\]
This rules out any $n\geq12$.
The remaining finitely many $n$ can easily be excluded.
Hence, no nilpotent Lie algebra of this form is consistent with the condition in Lemma~\ref{L:2}.
\end{example}

\begin{example}
If $\goe=\goe_{-3}\oplus\goe_{-2}\oplus\goe_{-1}$ with $n_2=1$ and $n_3=2$, then we have $a_l=1,3,5,5,3,1$ and
\begin{multline*}
	16c(i)=9(n-2i)^5+(6n^2-132n+252)(n-2i)^3\\+(n^4-28n^3+308n^2-800n+384)(n-2i).
\end{multline*}
If $\goe$ has pure cohomology, then the zero $i=n/2$ must be integral, hence $n$ must be even.
Moreover, in order to obtain five real zeros, the polynomial $c(i)$ must have a local maximum at $i=n/2$.
Hence we must have $6n^2-132n+252<0$, which rules out any $n\geq20$.
Checking the remaining $n$ by computer, we see that the only case consistent with Lemma~\ref{L:2} is $n=10$ corresponding to $n_1=2$.
The latter can indeed be realized by a nilpotent Lie algebra, see Section~\ref{S:five} below.
\end{example}

Using computer algebra, we also searched $\goe=\goe_{-5}\oplus\goe_{-4}\oplus\goe_{-3}\oplus\goe_{-2}\oplus\goe_{-1}$ in the range $0\leq n_1\leq100$, $0\leq n_2,n_3,n_4,n_5\leq5$.
The only cases consistent with Lemma~\ref{L:2} are the ones mentioned above, perhaps with a different grading (abelian concentrated in any degree, or contact in even degrees).
We also searched $\goe=\goe_{-3}\oplus\goe_{-2}\oplus\goe_{-1}$ in the range $0\leq n_1\leq200$, $0\leq n_2\leq50$, $0\leq n_3\leq20$ and did not find anything new which is consistent with Lemma~\ref{L:2}.
The algorithm we used expands the polynomial $P(t)=(1-t)^{n_1}(1-t^2)^{n_2}\cdots(1-t^r)^{n_r}$ and counts the number of non-zero coefficients.

\section{Generic rank two distributions in dimension five}\label{S:five}

Recall that a generic rank two distribution on a 5-manifold $M$ is a smooth rank two subbundle $\mathcal D$ in the tangent bundle $TM$ such that Lie brackets of sections of $\mathcal D$ span a rank three subbundle $[\mathcal D,\mathcal D]$ in $TM$ and triple brackets span all of the tangent bundle.
Hence, putting $T^{-1}M:=\mathcal D$, $T^{-2}M:=[\mathcal D,\mathcal D]$ and $T^{-3}M:=TM$, a generic rank two distribution turns $M$ into a filtered manifold, 
\[
	TM=T^{-3}M\supseteq T^{-2}M\supseteq T^{-1}M.
\]
We will denote this filtration by $\mathcal F_{\mathcal D}$.

The osculating algebras $\mathfrak t_xM=\mathfrak t^{-3}_xM\oplus\mathfrak t^{-2}_xM\oplus\mathfrak t_x^{-1}M$
%, with $\mathfrak t_x^{-1}M=T_x^{-1}M$, $\mathfrak t_x^{-2}M=T_x^{-2}M/T_x^{-1}M$, $\mathfrak t_x^{-3}M=T_xM/T_x^{-2}M$, 
of such a filtration are all isomorphic to the five dimensional graded Lie algebra 
\begin{equation}\label{E:goefive}
	\goe=\goe_{-3}\oplus\goe_{-2}\oplus\goe_{-1}
\end{equation}
with graded basis $X_1,X_2\in\goe_{-1}$, $X_3\in\goe_{-2}$, $X_4,X_5\in\goe_{-3}$ and non-trivial brackets
\begin{equation}\label{E:brackets}
	[X_1,X_2]=X_3,\qquad[X_1,X_3]=X_4,\qquad[X_2,X_3]=X_5.
\end{equation}
Note that these geometries have even homogeneous dimension
\[
	n=10.
\]

\begin{lemma}\label{L:Autgoe}
Suppose $Y_1,Y_2\in\goe$ project to a basis of $\goe/[\goe,\goe]$.
Then there exists a unique (in general ungraded) Lie algebra automorphism $\varphi\colon\goe\to\goe$ such that $\varphi(X_1)=Y_1$ and $\varphi(X_2)=Y_2$.
\end{lemma}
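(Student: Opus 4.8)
The plan is to exploit the structure of $\goe$ as a graded nilpotent Lie algebra together with the fact that, since $X_1,X_2$ generate $\goe$, any Lie algebra homomorphism out of $\goe$ is determined by the images of $X_1$ and $X_2$; so uniqueness is immediate once existence is established. For existence, I would first record that $\goe$ is the free two-step\,... more precisely, $\goe$ is the free nilpotent Lie algebra on two generators $X_1,X_2$ of step $3$. Indeed, the free nilpotent Lie algebra on two generators of step three has dimension $2+1+2=5$ with basis $X_1,X_2,[X_1,X_2],[X_1,[X_1,X_2]],[X_2,[X_1,X_2]]$, and the brackets \eqref{E:brackets} are exactly the defining relations; the Jacobi identity forces $[X_1,[X_1,X_2]]$ and $[X_2,[X_1,X_2]]$ to be the only new basis elements in degree $-3$ and kills everything in degree $\leq -4$. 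Hence $\goe$ has the universal property: for any nilpotent Lie algebra $\mathfrak n$ of step $\leq 3$ and any two elements $Z_1,Z_2\in\mathfrak n$, there is a unique Lie algebra homomorphism $\goe\to\mathfrak n$ sending $X_i\mapsto Z_i$.

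With this universal property in hand, the argument is short. Given $Y_1,Y_2\in\goe$ projecting to a basis of $\goe/[\goe,\goe]$, apply the universal property with $\mathfrak n=\goe$ to obtain the unique Lie algebra homomorphism $\varphi\colon\goe\to\goe$ with $\varphi(X_i)=Y_i$. It remains to check that $\varphi$ is an isomorphism. Since $Y_1,Y_2$ span $\goe/[\goe,\goe]$, the map $\varphi$ is surjective modulo $[\goe,\goe]$; because $\goe$ is nilpotent and generated by $X_1,X_2$, a standard induction on the lower central series shows $\varphi$ is surjective onto all of $\goe$ (the image contains $Y_1,Y_2$ hence their iterated brackets, which span $\goe$). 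A surjective endomorphism of a finite-dimensional vector space is bijective, so $\varphi$ is an isomorphism.

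The one point requiring a little care is the claim that $\goe$ is \emph{freely} generated in the nilpotent category of step $3$ — equivalently, that there are no relations among $X_1,X_2$ beyond those in \eqref{E:brackets} and the consequences of nilpotency. I would justify this by a direct dimension count: the free Lie algebra on two generators has $2$ elements in degree $1$, $1$ in degree $2$ (namely $[X_1,X_2]$), and $2$ in degree $3$ (namely $[X_1,[X_1,X_2]]$ and $[X_2,[X_1,X_2]]$, the Hall basis elements), so the free step-$3$ nilpotent quotient has dimension $5$, matching $\dim\goe=5$; and the surjection from this free object onto $\goe$ (which exists since $\goe$ has step $3$ and is generated by $X_1,X_2$) is then an isomorphism. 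I expect this dimension bookkeeping for the free Lie algebra to be the main — though entirely routine — obstacle; everything else is formal.

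Alternatively, if one prefers to avoid invoking the free Lie algebra, one can argue existence by hand: write $Y_1=a_{11}X_1+a_{12}X_2+(\text{higher filtration})$ and $Y_2=a_{21}X_1+a_{22}X_2+(\text{higher filtration})$ with $\det(a_{ij})\neq 0$, set $\varphi(X_3):=[Y_1,Y_2]$, $\varphi(X_4):=[Y_1,[Y_1,Y_2]]$, $\varphi(X_5):=[Y_2,[Y_1,Y_2]]$, and verify directly that this assignment respects all brackets \eqref{E:brackets}; the verification uses only the Jacobi identity and the vanishing of all brackets of length $\geq 4$ in $\goe$. This is the approach I would actually write out, as it is self-contained and makes the uniqueness transparent.
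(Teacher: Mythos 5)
Your proposal is correct, and the ``by hand'' argument you say you would actually write out is exactly the paper's proof: define $\varphi(X_3):=[Y_1,Y_2]$, $\varphi(X_4):=[Y_1,[Y_1,Y_2]]$, $\varphi(X_5):=[Y_2,[Y_1,Y_2]]$, check compatibility with the brackets \eqref{E:brackets}, and conclude invertibility from the block upper triangular form (equivalently, from surjectivity of a generating set in finite dimensions). Your alternative observation that $\goe$ is the free step-three nilpotent Lie algebra on two generators is also correct and gives a clean conceptual route to both existence and uniqueness, but it is not needed.
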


\begin{proof}
Define a linear map $\varphi\colon\goe\to\goe$ by $\varphi(X_1):=Y_1$, $\varphi(X_2):=Y_2$, $\varphi(X_3):=[Y_1,Y_2]$, $\varphi(X_4):=[Y_1,[Y_1,Y_2]]$, $\varphi(X_5)=[Y_2,[Y_1,Y_2]]$.
Using the relations in~\eqref{E:brackets} one readily checks that $\varphi$ is a homomorphism of Lie algebras.
With respect to the decomposition $\goe=\goe_{-3}\oplus\goe_{-2}\oplus\goe_{-1}$ the linear map $\varphi$ has upper triangular block form with invertible diagonal blocks.
Hence, $\varphi$ is invertible.
\end{proof}

In particular, this lemma shows that restriction provides an isomorphism 
\begin{equation}\label{E:Autfive}
	\Aut(\goe)=\GL(\goe_{-1})\cong\GL_2(\mathbb R)
\end{equation}
where the left hand side denotes the group of graded Lie algebra automorphisms.

For the Betti numbers $b^q:=\dim H^q(\goe)$ we find $b^0=b^5=1$, $b^1=b^4=2$, $b^2=b^3=3$.
Moreover, $\goe$ has pure cohomology.
Indeed, the grading automorphism acts by multiplication $t^{p_q}$ on $H^q(\goe)$ where $p_0=0$, $p_1=1$, $p_2=4$, $p_3=6$, $p_4=9$, $p_5=10$.
Hence, the Rumin complex is a Rockland complex of the form
\begin{multline*}
	\Gamma^\infty\bigl(\mathcal H^0(\mathfrak tM)\bigr)\xrightarrow{D_0}
	\Gamma^\infty\bigl(\mathcal H^1(\mathfrak tM)\bigr)\xrightarrow{D_1}
	\Gamma^\infty\bigl(\mathcal H^2(\mathfrak tM)\bigr)\\\xrightarrow{D_2}
	\Gamma^\infty\bigl(\mathcal H^3(\mathfrak tM)\bigr)\xrightarrow{D_3}
	\Gamma^\infty\bigl(\mathcal H^4(\mathfrak tM)\bigr)\xrightarrow{D_4}
	\Gamma^\infty\bigl(\mathcal H^5(\mathfrak tM)\bigr)
\end{multline*}
with ranks $\rk\mathcal H^0(\mathfrak tM))=\rk\mathcal H^5(\mathfrak tM))=1$, $\rk\mathcal H^1(\mathfrak tM))=\rk\mathcal H^4(\mathfrak tM))=2$, and $\rk\mathcal H^2(\mathfrak tM))=\rk\mathcal H^3(\mathfrak tM))=3$.
Moreover, the differential operators $D_0$ and $D_4$ have Heisenberg order $1$; the operators $D_1$ and $D_3$ have Heisenberg order $3$; and $D_2$ has Heisenberg order $2$, see \cite[Section~5]{BEN11} or \cite[Example~4.24]{DH17}.

To obtain an analytic torsion, it suffices to choose a sub-Riemannian metric $\tilde g$ on the rank two bundle $\mathcal D=T^{-1}M=\mathfrak t^{-1}M$.
We can use the fiberwise Lie algebra structure to extend this to a graded Euclidean inner product on $\mathfrak tM$, which will be denoted by $\tilde g$ too.
Indeed, in view of \eqref{E:brackets}, the Levi bracket induces canonical isomorphisms of vector bundles
\[
	\Lambda^2\mathfrak t^{-1}M=\mathfrak t^{-2}M\qquad\text{and}\qquad\mathfrak t^{-1}M\otimes\mathfrak t^{-2}M=\mathfrak t^{-3}M.
\]
More explicitly, using a graded basis as in \eqref{E:brackets}, we extend $\tilde g$ such that
\begin{align}
	\tilde g(X_3,X_3)&=4\bigl(\tilde g(X_1,X_1)\tilde g(X_2,X_2)-\tilde g(X_1,X_2)^2\bigr)\label{E:gext1}\\
	\tilde g([X_i,X_3],[X_j,X_3])&=3\tilde g(X_3,X_3)\tilde g(X_i,X_j),\qquad i,j=1,2.\label{E:gext2}
\end{align}
This choice of peculiar constants $4$ and $3$ is motivated by parabolic geometry,  see \cite[Section~3.3.1]{CS09} and \cite[Equations~(3.21) and (3.39)]{S08}.
Hence, a sub-Riemannian metric $\tilde g$ on the rank two bundle $\mathcal D$ and a Hermitian metric $h$ on a flat bundle $F$ over $M$ give rise to an analytic torsion 
\[
	\|-\|^{\sdet(H^*(M;F))}_{\mathcal D,\tilde g,h}:=\|-\|^{\sdet(H^*(M;F))}_{\mathcal F_{\mathcal D},\tilde g,h}
\]
i.e.\ a norm on the graded determinant line, cf.~\eqref{E:deftorM},
\[
	\sdet(H^*(M;F)):=\bigotimes_q\bigl(\det H^q(M;F)\bigr)^{(-1)^q}.
\]

For the purpose of this paper, any other pair of positive constants in \eqref{E:gext1} and \eqref{E:gext2} would work equally well. 
We will not discuss here to what extent the torsion depends on the choice of these constants.
The following will be crucial though:
If $\tilde g_u$ is a family of sub-Riemannian metrics on $\mathcal D$, then their extensions differ by a graded automorphism of the bundle of osculating algebras $\mathfrak tM$, that is, $\tilde g_v^{-1}\tilde g_u\in\Gamma^\infty(\Aut(\mathfrak tM))$, see \eqref{E:Autfive}.
Moreover, $\tr_{\mathfrak tM}(\dot g_u)=5\tr_{\mathcal D}(\dot g_u)$.
Hence, Theorem~\eqref{T:varg} is applicable and specializes Theorem~\ref{T:var235}.

%$\mathcal H^q(\mathfrak tM))$ AS REPRESENTATION???.

%Let $\omega\in\Gamma^\infty(|\Lambda^2\gr_{-1}(TM)|)$ denote the induced volume density on the rank two bundle.
%We extend $\tilde g$ to a graded Euclidean metric on $\mathfrak tM=\gr(TM)$ characterized by
%\begin{align}
%	\tilde g(\{X,Y\},\{X,Y\})&=4\omega(X,Y)^2\label{E:tgext1}\\
%	\tilde g(\{r,X\},\{s,Y\})&=3\tilde g(r,s)\tilde g(X,Y)\label{E:tgext2}
%\end{align}
%for $X,Y\in\gr_{-1}(T_xM)$ and $r,s\in\gr_{-2}(T_xM)$.
%We let $\mu_{\tilde g}$ denote the volume density on $M$ induced by $\tilde g$.

\subsection{Proof of Theorem~\ref{T:var.all235}}\label{SS:proof}

%\begin{proof}[{Proof of Theorem~\ref{T:var.all235}}]
We begin by noting that a sub-Riemannian metric $\tilde g$ on a generic rank two distribution $\mathcal D$ induces a splitting of the filtration $S_{\mathcal D,\tilde g}\colon\mathfrak tM\to TM$.
Indeed,	the sub-Riemannian metric induces a nowhere vanishing section of the line bundle $|\Lambda^2\mathcal D^*|$ that provides a scale which is closely related to the generalized contact forms considered in \cite[Section~3.1]{S08}.
It induces \cite[Corollary~5.1.6]{CS09} an (exact) Weyl structure \cite[Definition~5.1.7]{CS09} for the canonical (regular and normal) Cartan connection \cite{C10}.
In particular, it provides a splitting of the filtration such that $S_{\mathcal D,\tilde g}(x)$ only depends on the germ of $(M,\mathcal D,\tilde g)$ at $x$.
Using these splittings, the coefficients at $x$ of the differential operators $L\colon\Gamma^\infty(\mathcal H(\mathfrak tM))\to\Omega(M)$ and $\pi\Pi\colon\Omega(M)\to\Gamma^\infty(\mathcal H(\mathfrak tM))$ only depend on the germ of $(M,\mathcal D,\tilde g)$ at $x$.

We denote the filtered manifold corresponding to the generic rank two distribution $\mathcal D_u$ by $M_u$.
Let $\tilde\psi_u\colon\mathfrak tM\to\mathfrak tM_u$ denote the unique isomorphism of graded Lie algebra bundles which restricts to $\Theta_u|_{\mathcal D}\colon\mathcal D\to\mathcal D_u$.
Clearly, $(\tilde\psi_u)_*\tilde g=\tilde g_u$, cf.~\eqref{E:gpsiuv}, where $\tilde g$ and $\tilde g_u$ denote the graded Euclidean metrics on $\mathfrak tM$ and $\mathfrak tM_u$ which are induced from the sub-Riemannian metrics on $\mathcal D$ and $\mathcal D_u$ as indicated in \eqref{E:gext1} and \eqref{E:gext2}.
We use the splittings discussed above to define $\psi_u\in\Gamma^\infty(\Aut(TM))$ by $\psi_uS_{\mathcal D,\tilde g}=S_{\mathcal D_u,\tilde g_u}\tilde\psi_u$, cf.~\eqref{E:Spsiuv}.
Put $\dot\psi=\frac\partial{\partial u}|_{u=0}\psi_u$ and consider the differential operator, cf.~\eqref{E:dotAuF},
\begin{equation}\label{E:dotAF235}
	\dot A=-\pi\Pi\dot\psi L.
\end{equation}

We will now show that $\tr(\dot\psi)(x)$ and the coefficients of $\dot A$ at $x$ only depend on the germ of $(M,\mathcal D,\tilde g,\dot\Theta|_{\mathcal D})$ at $x$.
To see this let $X,Y$ be a local frame of $\mathcal D$ defined on an open subset $U\subseteq M$.
Then $X,Y,[X,Y],[X,[X,Y]],[Y,[X,Y]]$ is a frame of $TM|_U$ and we may define a smooth family of filtration preserving automorphisms $\psi_u'\colon TM|_U\to TM_u|_U$ by:
\begin{align*}	
	\psi'_u(X)&:=\Theta_u(X)\\
	\psi'_u(Y)&:=\Theta_u(Y)\\
	\psi'_u([X,Y])&:=[\Theta_u(X),\Theta_u(Y)]\\
	\psi'_u([X,[X,Y]])&:=[\Theta_u(X),[\Theta_u(X),\Theta_u(Y)]]\\
	\psi'_u([Y,[X,Y]])&:=[\Theta_u(Y),[\Theta_u(X),\Theta_u(Y)]]
\end{align*}
%$\psi'_u(X):=\Theta_u(X)$, $\psi'_u(Y):=\Theta_u(Y)$, $\psi'_u([X,Y]):=[\Theta_u(X),\Theta_u(Y)]$, $\psi'_u([X,[X,Y]]):=[\Theta_u(X),[\Theta_u(X),\Theta_u(Y)]]$ and $\psi'_u([Y,[X,Y]]):=[\Theta_u(Y),[\Theta_u(X),\Theta_u(Y)]]$.
Clearly $\gr(\psi_u')=\tilde\psi_u$ and $\psi'_0=\id$.
Differentiating, we see that $\dot\psi'(x)=\frac\partial{\partial u}|_{u=0}\psi_u'(x)$ only depends on the germ of $(M,\mathcal D,\dot\Theta|_{\mathcal D},X,Y)$ at $x$.
According to Remark~\ref{R:dotpsi} we have $\tr(\dot\psi)|_U=\tr(\dot\psi')$ and $\dot A|_U=-\pi\Pi\dot\psi'L|_U$.
We conclude that $\tr(\dot\psi)(x)$ and the coefficients of $\dot A$ at $x$ only depend on the germ of $(M,\mathcal D,\tilde g,\dot\Theta|_{\mathcal D})$ at $x$.

From the preceding paragraph we see that the coefficients at $x$ of the differential operator $\dot A^F+(\dot A^F)^*-\tr(\dot\psi)$ only depend on the germ of $(M,\mathcal D,\tilde g,F,h,\dot\Theta|_{\mathcal D})$ at $x$.
Hence, in the asymptotic expansion
\begin{equation}\label{E:tpj}
	\Bigl(\bigl(\dot A^F+(\dot A^F)^*-\tr(\dot\psi)\bigr)k^F_t\Bigr)(x,x)\sim\sum_{j=-r}^\infty t^{(j-10)/2\kappa}\tilde p_j^F(x).
\end{equation}
the terms $\tilde p_j^F(x)$ can be computed from the germ of $(M,\mathcal D,\tilde g,F,h,\dot\Theta|_{\mathcal D})$ at $x$.
Hence, in view of Theorem~\ref{T:def.filt}, the density 
\begin{equation}\label{E:alpha}
	\alpha=\str(\tilde p^F_{10})
\end{equation} 
has the desired property.
%\end{proof}

\subsection{Computation for nilmanifolds}\label{SS:nilmf}

Let $G$ denote the simply connected nilpotent Lie group with Lie algebra \eqref{E:goefive}.
Since the structure constants are rational, this group admits lattices \cite[Theorem~2.12]{R72}.
Suppose $\Gamma\subseteq G$ is a lattice and let $\rho\colon\Gamma\to U(k)$ be a finite dimensional unitary representation of $\Gamma$.
Then $F_\rho:=G\times_\Gamma\C^k$ is a flat vector bundle over the nilmanifold $G/\Gamma$ which comes with a parallel Hermitian metric $h_\rho$.
Let $\mathcal D$ be a right $G$-invariant generic rank two distribution on $G$ and let $\mathcal D_\Gamma$ denote the induced generic rank two distribution on $G/\Gamma$.
Note that a 2-dimensional subspace in $\goe$ gives rise to a right invariant generic rank two distribution on $G$ iff it intersects $[\goe,\goe]=\goe_{-3}\oplus\goe_{-2}$ trivially, cf.~\eqref{E:brackets}.
We consider a right invariant sub-Riemannian metric $\tilde g$ on $\mathcal D$ and let $\tilde g_\Gamma$ denote the induced sub-Riemannian metric on $\mathcal D_\Gamma$.

%%%%%%%%%%%%%%%%%%
%\begin{lemma}\label{L:Giso}
%Suppose $\phi\colon G\to G$ is a Lie group automorphism.
%Consider the right invariant generic rank two distribution $\mathcal D':=\phi^{-1}(\mathcal D)$ on $G$, the right invariant subRiemannian metric $\tilde g':=\phi^*\tilde g$ on $\mathcal D'$, the lattice $\Gamma':=\phi^{-1}(\Gamma)$ in $G$, and the unitary representation $\rho'\colon\Gamma'\to U(k)$, $\rho':=\rho\phi$.
%Then $\phi$ descends to a diffeomorphism $\bar\phi\colon G/\Gamma'\to G/\Gamma$ and
%\[
%	\|-\|^{\sdet(H^*(G/\Gamma',F_{\rho'}))}_{\mathcal D'_{\Gamma'},\tilde g'_{\Gamma'},h_{\rho'}}
%	=\bar\phi^*\|-\|^{\sdet(H^*(G/\Gamma,F_\rho))}_{\mathcal D_\Gamma,\tilde g_\Gamma,h_\rho}
%\]
%via the identification $\sdet(H^*(G/\Gamma';F_{\rho'}))\cong\sdet(H^*(G/\Gamma;F_\rho))$ induced by $\bar\phi$.
%\end{lemma}
%
%\begin{proof}
%Clearly, $\mathcal D'_{\Gamma'}=\bar\phi^{-1}(\mathcal D_\Gamma)$ and $\bar\phi^*\tilde g_\Gamma=\tilde g'_{\Gamma'}$.
%Moreover, via the canonical identification $\bar\phi^*F_\rho=F_{\rho'}$ we have $\bar\phi^*h_\rho=h_{\rho'}$.
%The lemma thus follows by naturality of the Rumin complex, cf.\ Lemma~\ref{L:Sindep}.
%\end{proof}
%%%%%%%%%%%%%%%%%%%%

\begin{lemma}\label{L:Rrhotg}
The torsion $\|-\|^{\sdet H^*(G/\Gamma;F_\rho)}_{\mathcal D_\Gamma,\tilde g_\Gamma,h_\rho}$ does not depend on the right invariant generic rank two distribution $\mathcal D$ on $G$ nor does it depend on the right invariant sub-Riemannian metric $\tilde g$ on $\mathcal D$.
\end{lemma}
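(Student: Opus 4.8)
The plan is to show that any two choices of right invariant data $(\mathcal D,\tilde g)$ and $(\mathcal D',\tilde g')$ are related by a graded Lie algebra automorphism of $\goe$, and then to transport the torsion along the induced diffeomorphism of the nilmanifold. First I would invoke \eqref{E:Autfive}: since a right invariant generic rank two distribution on $G$ is determined by a $2$-plane $W\subseteq\goe$ complementary to $[\goe,\goe]$, and such a plane is the graph of a linear map $\goe_{-1}\to\goe_{-2}\oplus\goe_{-3}$, composing with the projection $W\to\goe/[\goe,\goe]=\goe_{-1}$ gives a linear isomorphism; by Lemma~\ref{L:Autgoe} this extends to a (possibly ungraded) Lie algebra automorphism $\varphi$ of $\goe$ with $\varphi(\goe_{-1})=W$. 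Thus $\varphi$ carries $\mathcal D_0$ (the right invariant distribution obtained by translating $\goe_{-1}$) to $\mathcal D$. Since $\varphi$ is a Lie algebra automorphism, it integrates to a diffeomorphism $\Phi$ of $G$ commuting with right translations, hence descends to a diffeomorphism of $G/\Gamma$ provided $\varphi$ preserves $\log\Gamma$; but here I must be careful, because an arbitrary $\varphi$ need not preserve the lattice. The correct statement is therefore a two step reduction: first reduce $\mathcal D$ to $\mathcal D_0$, then reduce $\tilde g$ on $\mathcal D_0$ to a fixed one.

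For the first step, note that $\Phi\colon G\to G$ with $d\Phi_e=\varphi$ is a diffeomorphism intertwining the right invariant filtered structures, and it carries the right invariant distribution $\mathcal D_0$ with metric $\tilde g_0$ to $\mathcal D=\varphi(\mathcal D_0)$ with the pushed forward metric. Pulling back $\Gamma$ along $\Phi$ gives another lattice $\Gamma'=\Phi^{-1}(\Gamma)$, and $\Phi$ descends to a diffeomorphism $G/\Gamma'\to G/\Gamma$ of filtered manifolds intertwining all the structures, including the flat bundle $F_\rho$ pulled back to $G/\Gamma'$. So I would use the diffeomorphism invariance of the analytic torsion of a Rumin complex — which is immediate from Lemma~\ref{L:Sindep} and the naturality of all the ingredients in \eqref{E:deftorM} under filtered diffeomorphisms — to reduce to the case $\mathcal D=\mathcal D_0$, at the cost of changing the lattice. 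Here I realize the lemma as stated fixes $\Gamma$, so I must instead run the variational argument: connect $\tilde g$ to $\tilde g'$ and $\mathcal D$ to $\mathcal D'$ through right invariant data and apply the anomaly formulas.

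Concretely, here is the approach I would actually carry out. Choose a smooth path $u\mapsto(\mathcal D_u,\tilde g_u)$ of right invariant generic rank two distributions with right invariant sub-Riemannian metrics joining the two given pairs; this is possible because the space of such pairs is connected (it fibers over the connected space of $2$-planes complementary to $[\goe,\goe]$, with connected fibers the right invariant metrics). Writing $\Theta_u\in\Gamma^\infty(\Aut(TM))$ for the right invariant bundle automorphism of $T(G/\Gamma)$ induced by a path in $\Aut(\goe)$ realizing $\mathcal D_u=\Theta_u(\mathcal D_0)$ and $\tilde g_u=(\Theta_u)_*\tilde g_0$, Theorem~\ref{T:var.all235} expresses $\frac\partial{\partial u}\log\|-\|_{\mathcal D_u,\tilde g_u,h_\rho}$ as $\frac12\int_{G/\Gamma}\alpha_u$ with $\alpha_u$ computed from the germ of $(G/\Gamma,\mathcal D_u,\tilde g_u,F_\rho,h_\rho,\dot\Theta_u|_{\mathcal D_u})$. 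The key point is that all of this data is \emph{right invariant}: the distribution, the metric, the splitting $S_{\mathcal D_u,\tilde g_u}$ constructed from the sub-Riemannian metric (which is natural, hence right invariant when the metric is), the operators $L$, $\pi\Pi$, $\dot A$, the Rumin--Seshadri operator $\Delta^{F_\rho}_{\tilde g_u,h_\rho}$ and its heat kernel, and $\dot\Theta_u$. Therefore $\alpha_u$ is a right invariant density on $G/\Gamma$, so $\int_{G/\Gamma}\alpha_u=\mathrm{vol}(G/\Gamma)\cdot\alpha_u(e)$. I must then show this integral vanishes. For this I would use that $\alpha_u$ depends only on the algebraic data at $e\in G$, and that scaling: the homogeneous dimension is $n=10$, even, so there is no a priori analytic vanishing; instead I would argue that by naturality the value $\alpha_u(e)$ is computed entirely from the invariant Lie algebra model, where the relevant heat operator on the group $G$ itself has a spectral expansion that is exactly scale covariant under the grading automorphism $\phi_t$, forcing the constant term in the relevant small time expansion to transform homogeneously; since $\dot\Theta_u$ can be taken in $\der(\goe)\subseteq\mathrm{gl}(\goe_{-1})$ and one may split off its trace part via Corollary~\ref{C:varg2} plus Theorem~\ref{T:varg}, what remains is a traceless, hence (by symmetry of the invariant model under $\GL_2$) vanishing, contribution.

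The main obstacle is precisely this last vanishing: unlike the odd dimensional case there is no automatic reason for $\alpha_u$ to be zero, and I expect the honest argument to require either (i) an explicit identification, via Remark~\ref{R:dotpsi} and \eqref{E:dotAF235}, of $\dot A^{F_\rho}+(\dot A^{F_\rho})^*-\tr(\dot\psi)$ as a right invariant operator whose symbol lies in a representation of $\GL_2(\R)=\Aut(\goe)$ that contains no copies of the trivial representation after pairing with the (also $\GL_2$ equivariant) heat coefficient $\tilde p^{F_\rho}_{10}$, so that $\str(\tilde p^{F_\rho}_{10})(e)=0$ by Schur; or (ii) a direct symmetry argument: the stabilizer in $\GL_2(\R)$ of a fixed right invariant $\tilde g_0$ is $O(2)$, acting on the line of right invariant densities trivially but on $\dot\Theta|_{\mathcal D}\in\mathrm{gl}(\goe_{-1})/\mathfrak{o}(2)\cong\mathrm{Sym}^2_0\oplus\R$, and averaging $\alpha_u$ over $O(2)$ together with the splitting of $\dot\Theta$ into its $\mathfrak{o}(2)$, trace, and trace free symmetric parts (the first contributing nothing since it generates an isometry, the second handled by Corollary~\ref{C:varg2}) reduces to showing the trace free symmetric part contributes an $O(2)$ invariant multiple of an odd object, which vanishes. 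I would pursue route (ii) as the cleaner one, reducing the whole lemma to: the trace free symmetric part of $\dot g_u$ contributes zero to the anomaly in Theorem~\ref{T:varg} for the right invariant model on $G/\Gamma$, which follows because the corresponding integrand is $O(2)$ equivariant of a type that has no invariants.
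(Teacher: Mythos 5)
Your overall skeleton --- connect the two choices by a smooth path of right invariant data, apply the anomaly formulas of Theorems~\ref{T:var235} and \ref{T:var.all235}, and use right invariance to reduce the integral to the value of the anomaly density at a single point --- is indeed the paper's strategy. The gap is in the one step that carries all the weight: showing that this pointwise value is zero. Your route (ii) decomposes the variation under the stabilizer $O(2)\subseteq\GL(\goe_{-1})$ of $\tilde g$ and discards the summands admitting no $O(2)$-invariant linear functional, but there remain invariant directions that this argument cannot touch. For the metric variation, the trace (conformal) direction is $O(2)$-trivial, and Corollary~\ref{C:varg2} does not dispose of it: it merely rewrites the variation as $\tfrac12\int_M f\,\str\bigl((N-N_0-\tfrac n2)p^F_{u,n}\bigr)$, and the paper explicitly remarks after \eqref{E:intzeta0} that it is unclear whether this quantity vanishes in general; nor can a constant rescaling of $\tilde g_0$ be absorbed into the grading automorphism $\phi_t$, since $\phi_t$ does not preserve the lattice. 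For the variation of the plane, the relevant datum is not an element of $\mathrm{gl}(\goe_{-1})/\mathfrak{o}(2)$ but of $\hom(\goe_{-1},\goe_{-2}\oplus\goe_{-3})\cong\goe_{-1}\oplus\bigl(\goe_{-1}^*\otimes\goe_{-1}\otimes\Lambda^2\goe_{-1}\bigr)$, and the second summand contains a copy of the trivial $O(2)$-representation, so again equivariance leaves a direction unaccounted for. Route (i) is not carried out at all.

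What actually forces the vanishing is homogeneity rather than equivariance. Since the heat coefficients are local, they may be computed on the universal cover $G$ (your worry that the automorphism does not preserve $\Gamma$ is therefore moot): by Lemma~\ref{L:Autgoe} there is a, generally ungraded, automorphism $\phi$ of $G$ carrying $(\mathcal D,\tilde g)$ to $(\mathcal D_0,\tilde g_0)$ with $\mathcal D_0$ the translate of $\goe_{-1}$. The conjugated Rumin--Seshadri operator is then simultaneously right invariant and homogeneous of degree $2\kappa$ under the grading dilations, so the heat operator on $\mathbb R\times G$ admits a homogeneous parametrix and the on-diagonal expansion of $e^{-t\Delta}$ --- and of $Be^{-t\Delta}$ for any differential operator $B$ of Heisenberg order at most $r$ --- terminates, containing only the terms $t^{(j-10)/2\kappa}$ with $j\le0$. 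Hence $p^F_{10}=0$ and $\tilde p^F_{10}=0$ identically, and both anomaly integrands vanish pointwise. You gesture at the ``scale covariance'' of the invariant model but never draw this conclusion; without it the proof does not close.
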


\begin{proof}
We start by showing independence of $\tilde g$.
By convexity, any two right invariant sub-Riemannian metrics on $\mathcal D$ can be connected by a straight path of right invariant sub-Riemannian metrics on $\mathcal D$.
In view of Theorem~\ref{T:var235}, it therefore suffices to show that the constant term in the asymptotic expansion of the heat kernel of the Rumin--Seshadri operator $\Delta_{\mathcal D_\Gamma,\tilde g_\Gamma,h_\rho}^{F_\rho}$ vanishes.
Since this asymptotic expansion can be computed locally, we may work on the universal covering $p\colon G\to G/\Gamma$.
Over $G$, the flat bundle $F_\rho$ may be (canonically) trivialized and the Hermitian metric $h_\rho$ becomes constant in this trivialization.
In view of Lemma~\ref{L:Autgoe} there exists a group isomorphism $\phi\colon G\to G$ such that $\phi^*\mathcal D=\mathcal D_0$, the right invariant standard distribution obtained by translating $\goe_{-1}$.
Moreover, $\tilde g_0:=\phi^*\tilde g$ is a right invariant sub-Riemannian metric on $\mathcal D_0$.
Hence, by naturality, 
\begin{equation}\label{E:trivG}
	\phi^*p^*\Delta_{\mathcal D_\Gamma,\tilde g_\Gamma,h_\rho}^{F_\rho}=\Delta_{\mathcal D_0,\tilde g_0,h_0}^{\mathbb C^{\rk(\rho)}}
\end{equation}
where $h_0$ denotes the standard (constant) Hermitian metric on the trivial flat bundle $\phi^*p^*F_\rho=G\times\mathbb C^{\rk(\rho)}$.
Note that $\mathcal D_0$ and $\tilde g_0$ are also homogeneous with respect to the grading automorphism of $G$.
Hence, $\Delta_{\mathcal D_0,\tilde g_0,h_0}^{\mathbb C^{\rk(\rho)}}$ is right invariant and homogeneous of degree $2\kappa$.
The same is true for the heat operator $\partial_t+\Delta_{\mathcal D_0,\tilde g_0,h_0}^{\mathbb C^{\rk(\rho)}}$ on $\mathbb R\times G$, after assigning degree $2\kappa$ to the time direction $\mathbb R$.
Consequently, this heat operator admits a parametrix which is homogeneous of degree $-2\kappa$.
Combining this with \eqref{E:trivG}, we see that all but the leading term in the asymptotic expansion of the heat kernel of $\Delta_{\mathcal D_\Gamma,\tilde g_\Gamma,h_\rho}^{F_\rho}$ vanish, cf.\ the proof of Lemma~\ref{L:asymA}. %or \cite[Section~4]{DH20}.

To show independence of $\mathcal D$, we proceed analogously using Theorem~\ref{T:var.all235}.
Since the space of 2-dimensional subspaces in $\goe$ intersecting $\goe_{-3}\oplus\goe_{-2}$ trivially is connected, any two right invariant generic rank two distributions on $G$ can be connected by a smooth family of rank two distributions of the same type.
According to Theorem~\ref{T:var.all235} and the formula in~\eqref{E:alpha} it thus suffices to show that for every differential operator $B$ of Heisenberg order at most $r$ on $G/\Gamma$ we have an asymptotic expansion of the form, as $t\to0$,
\[
	\bigl(Be^{-t\Delta^{F_\rho}_{\mathcal D_\Gamma,\tilde g_\Gamma,h_\rho}}\bigr)(x,x)\sim\sum_{j=-r}^0t^{(j-10)/2\kappa}\tilde p_j(x)
\]
and, in particular, the constant term $\tilde p_{10}$ vanishes.
This in turn follows from \eqref{E:trivG}.
Indeed, since $\Delta_{\mathcal D_0,\tilde g_0,h_0}^{\mathbb C^{\rk(\rho)}}$ is homogeneous and right invariant, $(\phi^*p^*B)e^{-\Delta_{\mathcal D_0,\tilde g_0,h_0}^{\mathbb C^{\rk(\rho)}}}$ has an asymptotic expansion of the same form.
\end{proof}

Since $G/\Gamma$ is odd dimensional, the Ray--Singer torsion $\|-\|^{\sdet(H^*(G/\Gamma;F_\rho))}_{\RS,g,h_\rho}$ is independent of the Riemannian metric $g$ and the fiberwise Hermitian inner product $h$ on $F_\rho$, see \cite{RS71} and \cite[Theorem~0.1]{BZ92}. 
Combining this with Lemma~\ref{L:Rrhotg}, we see that the relative torsion in \eqref{E:Rdef},
\begin{equation}\label{E:Rrho}
	R_\Gamma(\rho)
	:=R_{\tilde g_\Gamma;g}^{F_\rho,h_\rho}(G/\Gamma,\mathcal F_{\mathcal D_\Gamma})
	%=\frac{\|-\|^{\sdet(H^*(M;F_\rho))}_{\tilde g,h_\rho}}{\|-\|^{\sdet(H^*(M;F_\rho))}_{\RS,g,h_\rho}}
	=\frac{\|-\|^{\sdet(H^*(G/\Gamma;F_\rho))}_{\mathcal D_\Gamma,\tilde g_\Gamma,h_\rho}}{\|-\|^{\sdet(H^*(G/\Gamma;F_\rho))}_\RS}
\end{equation}
only depends on the lattice $\Gamma$ in $G$ and the unitary representation $\rho\colon\Gamma\to U(k)$.

\begin{lemma}\label{L:Rrho}
Let $\rho$, $\rho_1$ and $\rho_2$ be a finite dimensional unitary representations of a lattice $\Gamma$ in $G$.
Moreover, let $\tilde\rho$ be a finite dimensional unitary representations of a sublattice $\tilde\Gamma\subseteq\Gamma$. 
Then the following hold true:
\begin{enumerate}[(a)]
	\item $R_\Gamma(\rho_1\oplus\rho_2)=R_\Gamma(\rho_1)\cdot R_\Gamma(\rho_2)$.
	\item $R_\Gamma(\rho)=R_{\tilde\Gamma}(\tilde\rho)$ if $\rho=\Ind_{\tilde\Gamma}^\Gamma(\tilde\rho)$ is the induced representation of $\Gamma$.
	\item $R_{\tilde\Gamma}(\rho|_{\tilde\Gamma})=R_\Gamma(\rho\otimes\mathbb C[\Gamma/\tilde\Gamma])$.
	\item $R_\Gamma(\rho^*)=R_\Gamma(\bar\rho)=R_\Gamma(\rho)$.
	\item $R_{\phi^{-1}(\Gamma)}(\rho\phi)=R_\Gamma(\rho)$ for every Lie group isomorphism $\phi\colon G\to G$.
\end{enumerate}
\end{lemma}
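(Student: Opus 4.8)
The plan is to deduce each of the five assertions from the corresponding statement in Proposition~\ref{P:R}, combined with the fact, established in Lemma~\ref{L:Rrhotg}, that $R_\Gamma(\rho)$ depends only on $\Gamma$ and $\rho$ (and not on the choice of right invariant distribution or sub-Riemannian metric used to define it). The point is that the flat bundle--operations appearing in Proposition~\ref{P:R} correspond, under the dictionary $\rho\leftrightarrow F_\rho$, exactly to the representation--theoretic operations in Lemma~\ref{L:Rrho}, so each item is a translation of an already-proved statement. Throughout I would fix the standard right invariant distribution $\mathcal D_0$ on $G$ and a right invariant sub-Riemannian metric, and use Lemma~\ref{L:Rrhotg} whenever a construction (covering, pullback) changes the ambient structures so that the resulting quantities must still be compared to the canonical ones.

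First I would dispose of (a): the flat bundle associated with $\rho_1\oplus\rho_2$ is $F_{\rho_1}\oplus F_{\rho_2}$ with $h_{\rho_1}\oplus h_{\rho_2}$, so Proposition~\ref{P:R}\eqref{I:Rsum} gives $R_\Gamma(\rho_1\oplus\rho_2)=R_\Gamma(\rho_1)\cdot R_\Gamma(\rho_2)$ directly. For (d), note $F_{\rho^*}=F_\rho^*$ and, since $G/\Gamma$ is orientable, $\mathcal O$ is trivial; as $h_\rho$ is parallel it provides an isomorphism $\overline{F_\rho}\cong F_\rho^*\otimes\mathcal O$ intertwining $\bar h_\rho$ with $h_\rho^{-1}$, whence $R_\Gamma(\rho^*)=R_\Gamma(\bar\rho)=R_\Gamma(\rho)$ by Proposition~\ref{P:R}\eqref{I:Rdual} (the sign $(-1)^{m+1}=1$ since $m=5$ is odd). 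For (e), a Lie group automorphism $\phi\colon G\to G$ descends to a diffeomorphism $G/\phi^{-1}(\Gamma)\to G/\Gamma$ which is an isomorphism of filtered manifolds once one transports $\mathcal D_0$ and $\tilde g$ accordingly; pulling back $F_\rho$ along it gives $F_{\rho\phi}$, and since the analytic torsion and the Ray--Singer torsion are both natural under such diffeomorphisms, $R_{\phi^{-1}(\Gamma)}(\rho\phi)=R_\Gamma(\rho)$. Here one uses Lemma~\ref{L:Rrhotg} to replace the transported distribution and metric by the standard right invariant ones without changing $R$.

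For (b) and (c) I would invoke the finite covering $\pi\colon G/\tilde\Gamma\to G/\Gamma$, which is a covering of filtered manifolds when $G/\tilde\Gamma$ is given the distribution and metric pulled back from $G/\Gamma$. The flat bundle $\pi_*F_{\tilde\rho}$ over $G/\Gamma$ is precisely $F_{\Ind_{\tilde\Gamma}^\Gamma\tilde\rho}$ with the induced Hermitian metric, so Proposition~\ref{P:R}\eqref{I:Rind} gives $R_{\tilde g_\Gamma;g}^{\pi_*F_{\tilde\rho},\pi_*h_{\tilde\rho}}(G/\Gamma,\mathcal F_{\mathcal D_\Gamma})=R_{\pi^*\tilde g_\Gamma;\pi^*g}^{F_{\tilde\rho},h_{\tilde\rho}}(G/\tilde\Gamma,\pi^*\mathcal F_{\mathcal D_\Gamma})$; the left side is $R_\Gamma(\Ind_{\tilde\Gamma}^\Gamma\tilde\rho)$ and, using Lemma~\ref{L:Rrhotg} to identify the pulled-back filtration and metric with the standard right invariant ones on $G/\tilde\Gamma$, the right side is $R_{\tilde\Gamma}(\tilde\rho)$, giving (b). For (c), the flat bundle $\mathcal V$ of Proposition~\ref{P:PB} is $F_{\mathbb C[\Gamma/\tilde\Gamma]}$ with its canonical parallel metric, and $\pi^*F_\rho=F_{\rho|_{\tilde\Gamma}}$, so Proposition~\ref{P:R}\eqref{I:Rres} reads $R_{\pi^*\tilde g_\Gamma;\pi^*g}^{F_{\rho|_{\tilde\Gamma}},\pi^*h_\rho}(G/\tilde\Gamma,\pi^*\mathcal F)=R_{\tilde g_\Gamma;g}^{F_\rho\otimes F_{\mathbb C[\Gamma/\tilde\Gamma]},h\otimes h^{\mathcal V}}(G/\Gamma,\mathcal F)$, i.e.\ $R_{\tilde\Gamma}(\rho|_{\tilde\Gamma})=R_\Gamma(\rho\otimes\mathbb C[\Gamma/\tilde\Gamma])$ after the same identification.

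The only real subtlety --- and the step I would be most careful about --- is the bookkeeping in (b), (c), (e): the finite covering (or the automorphism) produces a filtered manifold whose distribution and sub-Riemannian metric are \emph{a priori} the pulled-back ones, which need not literally equal the ``standard'' right invariant structures used to define $R_{\tilde\Gamma}$ or $R_{\phi^{-1}(\Gamma)}$. But since the pullbacks of a right invariant distribution and metric under the covering $G/\tilde\Gamma\to G/\Gamma$ (resp.\ under $\phi$) are again right invariant, Lemma~\ref{L:Rrhotg} applies on the smaller nilmanifold and shows the resulting relative torsion is independent of which right invariant data one picks; this is exactly what closes the gap. Everything else is a direct transcription of Proposition~\ref{P:R}, and no new analysis is required.
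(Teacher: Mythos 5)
Your proposal is correct and follows essentially the same route as the paper: each item is obtained by translating Proposition~\ref{P:R} through the dictionary $\rho\leftrightarrow(F_\rho,h_\rho)$, with Lemma~\ref{L:Rrhotg} handling the fact that coverings and (possibly ungraded) automorphisms transport the right invariant distribution and metric to other right invariant ones. The only cosmetic difference is in (c), where the paper deduces it from (b) via $\Ind_{\tilde\Gamma}^\Gamma(\rho|_{\tilde\Gamma})=\rho\otimes\mathbb C[\Gamma/\tilde\Gamma]$ while you invoke Proposition~\ref{P:R}\eqref{I:Rres} directly; these are equivalent.
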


\begin{proof}
With respect to the canonical identification $F_{\rho_1\oplus\rho_2}=F_{\rho_1}\oplus F_{\rho_2}$ we have $h_{\rho_1\oplus\rho_2}=h_{\rho_1}\oplus h_{\rho_2}$.
Hence (a) follows from Proposition~\ref{P:R}(a).
	
The canonical projection $\pi\colon G/\tilde\Gamma\to G/\Gamma$ is a finite covering of filtered manifolds, where we use the generic rank two distributions $\mathcal D_{\tilde\Gamma}$ and $\mathcal D_\Gamma$ on $G/\tilde\Gamma$ and $G/\Gamma$, respectively.
If $\rho=\Ind_{\tilde\Gamma}^\Gamma(\tilde\rho)$, then with respect to the canonical identification $\pi_*F_{\tilde\rho}=F_\rho$ we have $\pi_*h_{\tilde\rho}=h_\rho$.
%Moreover, if $\tilde g$ is a $G$-invariant subRiemannian metric on $G/\Gamma$, then $\pi^*\tilde g$ is $G$-invariant subRiemannian metric on $G/\tilde\Gamma$.
Moreover, $\pi^*\tilde g_\Gamma=\tilde g_{\tilde\Gamma}$.
Hence (b) follows from Proposition~\ref{P:R}(b).
	
(c) follows from (b) for we have $\Ind_{\tilde\Gamma}^\Gamma(\rho|_{\tilde\Gamma})=\rho\otimes\C[\Gamma/\tilde\Gamma]$.

(d) follows from Proposition~\ref{P:R}(d).

To see (e), note that $\mathcal D':=\phi^{-1}(\mathcal D)$ is a right invariant generic rank two distribution on $G$ and $\tilde g':=\phi^*\tilde g$ is a right invariant sub-Riemannian metric on $\phi^{-1}(\mathcal D)$.
Hence, $\phi$ induces a diffeomorphism $\bar\phi\colon G/\Gamma'\to G/\Gamma$ where we consider the lattice $\Gamma':=\phi^{-1}(\Gamma)$.
Clearly, $\bar\phi(\mathcal D'_{\Gamma'})=\mathcal D_\Gamma$ and $\bar\phi^*\tilde g_\Gamma=\tilde g'_{\Gamma'}$.
Moreover, via the canonical identification $\bar\phi^*F_\rho=F_{\rho'}$ we have $\bar\phi^*h_\rho=h_{\rho'}$, where we abbreviate $\rho':=\rho\phi$.
Hence, by naturality
\[
	\|-\|^{\sdet(H^*(G/\Gamma',F_{\rho'}))}_{\mathcal D'_{\Gamma'},\tilde g'_{\Gamma'},h_{\rho'}}
	=\bar\phi^*\|-\|^{\sdet(H^*(G/\Gamma,F_\rho))}_{\mathcal D_\Gamma,\tilde g_\Gamma,h_\rho}
\]
via the identification $\sdet(H^*(G/\Gamma';F_{\rho'}))\cong\sdet(H^*(G/\Gamma;F_\rho))$ induced by $\bar\phi$.
Combining this with a corresponding identity for the Ray--Singer torsion, we obtain $R_{\Gamma'}(\rho')=R_\Gamma(\rho)$, see \eqref{E:Rrho} and Lemma~\ref{L:Rrhotg}.
\end{proof}

The exponential map provides a diffeomorphism $\exp\colon\goe\to G$.
Using the Baker--Campbell--Hausdorff formula we find
\[
        \exp\left(\sum_{i=1}^5x_iX_i\right)\exp\left(\sum_{i=1}^5y_iX_i\right)
        =\exp\left(\sum_{i=1}^5z_iX_i\right)
\]
where
\begin{equation}\label{E:zxy}
%	\begin{pmatrix}z_1\\z_2\\z_3\\z_4\\z_5\end{pmatrix}=
        z=x\cdot y
        :=\left(\begin{array}{l}
                x_1+y_1\\
                x_2+y_2\\
		x_3+y_3+\frac{x_1y_2-x_2y_1}2\\x_4+y_4+\frac{x_1y_3-x_3y_1}2+\frac{(x_1-y_1)(x_1y_2-x_2y_1)}{12}\\
		x_5+y_5+\frac{x_2y_3-x_3y_2}2+\frac{(x_2-y_2)(x_1y_2-x_2y_1)}{12}
        \end{array}\right)
\end{equation}
We let $\Gamma_0$ denote the subgroup generated by $\gamma_1:=\exp(X_1)$ and $\gamma_2:=\exp(X_2)$.

\begin{lemma}
$\Gamma_0$ is a lattice in $G$ and
\begin{equation}\label{E:Gamma0}
	\log\Gamma_0=\left\{\sum_{i=1}^5x_iX_i\middle|
	\begin{array}{rr}
		x_1,x_2\in\mathbb Z\\
		x_3-\frac{x_1x_2}2\in\Z\\
		x_4-\tfrac{x_1^2x_2}{12}-\frac{x_1+1}2(x_3-\frac{x_1x_2}2)\in\Z\\
		x_5+\tfrac{x_1x_2^2}{12}+\frac{x_2+1}2(x_3-\frac{x_1x_2}2)\in\Z
	\end{array}\right\}.
\end{equation}
\end{lemma}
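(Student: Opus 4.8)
The plan is to pass to exponential coordinates and find a polynomial change of variables that turns $(G,\cdot)$ into a ``unitriangular integral'' group law on $\R^5$, in which the set $L$ described by the right-hand side of \eqref{E:Gamma0} becomes simply $\Z^5$. First I would identify $G$ with $(\R^5,\cdot)$ via $\exp$, so that the product is given by \eqref{E:zxy}, and record that bi-invariant Haar measure on $G$ is then Lebesgue measure $dx_1\cdots dx_5$ (standard for simply connected nilpotent Lie groups, since in exponential coordinates translations are polynomial maps with unipotent differential). Reading off the four correction expressions in \eqref{E:Gamma0}, we have $L=\Phi^{-1}(\Z^5)$, where $\Phi\colon\R^5\to\R^5$ is the polynomial map whose $k$-th component is $x_k$ minus (resp.\ plus) the indicated polynomial in $x_1,\dots,x_{k-1}$. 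In particular $\Phi$ is ``unitriangular,'' hence a polynomial bijection with polynomial inverse and with $\det D\Phi\equiv1$. Transport the group law along $\Phi$ by setting $w\ast w':=\Phi\bigl(\Phi^{-1}(w)\cdot\Phi^{-1}(w')\bigr)$.

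The computational core is to check, directly from \eqref{E:zxy}, that $\ast$ is again unitriangular with \emph{integer}-coefficient polynomial corrections, i.e.\ $(w\ast w')_k=w_k+w'_k+P_k(w_1,\dots,w_{k-1};w'_1,\dots,w'_{k-1})$ with $P_k\in\Z[\,\cdots]$; it then follows automatically that $\ast$-inversion is unitriangular integral as well. In coordinate $3$ one finds $P_3=-w_2w'_1$; coordinates $4$ and $5$ are the ones where the constants $\tfrac12$ and $\tfrac1{12}$ appearing in \eqref{E:Gamma0} are needed precisely so that the $\tfrac1{12}$ terms produced by the Baker--Campbell--Hausdorff formula \eqref{E:zxy} cancel. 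Granting this, $\Z^5$ is a subgroup of $(\R^5,\ast)$, so $\Lambda:=\exp(L)=\exp(\Phi^{-1}(\Z^5))$ is a subgroup of $G$; moreover the cube $[0,1)^5$ is a fundamental domain for $\Z^5$ acting on $(\R^5,\ast)$ (reduce the coordinates one at a time, using unitriangularity together with integrality), so $\Lambda$ is discrete and cocompact in $G$, i.e.\ a lattice, of covolume $\operatorname{vol}\bigl(\exp(\Phi^{-1}([0,1)^5))\bigr)=\operatorname{Leb}([0,1)^5)=1$ because $\det D\Phi\equiv1$.

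It remains to identify $\Lambda$ with $\Gamma_0$. Since $\Phi(\exp X_1)=e_1$ and $\Phi(\exp X_2)=e_2$ lie in $\Z^5$, we get $\gamma_1,\gamma_2\in\Lambda$ and hence $\Gamma_0\subseteq\Lambda$. For the reverse inclusion I would compute, from \eqref{E:zxy} and the bracket relations \eqref{E:brackets}, the commutators $[\gamma_1,\gamma_2]=\exp\bigl(X_3+\tfrac12X_4+\tfrac12X_5\bigr)$, and then $[\gamma_1,[\gamma_1,\gamma_2]]=\exp(X_4)$ and $[\gamma_2,[\gamma_1,\gamma_2]]=\exp(X_5)$; together with the powers $\exp(aX_1)$ and $\exp(bX_2)$, $a,b\in\Z$, these all lie in $\Gamma_0$, and by the usual Mal'cev normal-form argument along the central series $G\supseteq\exp(\goe_{-3}\oplus\goe_{-2})\supseteq\exp(\goe_{-3})$ every element of $\Lambda$ is a word in them. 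Hence $\Gamma_0=\Lambda$, which is the lattice described by \eqref{E:Gamma0}. (Alternatively, once $\Gamma_0\subseteq\Lambda$ and $\log\Gamma_0$ is seen to span $\goe$ over $\R$, $\Gamma_0$ is automatically of finite index in $\Lambda$, and it suffices to rule out proper index by a covolume count.)

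The main obstacle is the bookkeeping in the preceding paragraph's core step: verifying that after the change of coordinates $\Phi$ the transported law $\ast$ has integer polynomial corrections in all five coordinates. This is the single place where the precise shape of \eqref{E:Gamma0} enters, and it simultaneously yields that $L$ is a subgroup of $\goe$ under the BCH product, that $\exp(L)$ is a lattice, and that its covolume equals $1$; the commutator identities and the normal-form generation argument, while somewhat lengthy, are routine.
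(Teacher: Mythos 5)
Your proposal is correct and follows essentially the same route as the paper: the computational core (that the quantities $x_3-\tfrac{x_1x_2}2$, etc., transform under the BCH product by adding integer cross terms, i.e.\ your unitriangular integral law $\ast$) is exactly the set of identities the paper displays, and your identification of $\exp(L)$ with $\Gamma_0$ via $[\gamma_1,\gamma_2]=\exp(X_3+\tfrac12X_4+\tfrac12X_5)$, $[\gamma_1,[\gamma_1,\gamma_2]]=\exp(X_4)$, $[\gamma_2,[\gamma_1,\gamma_2]]=\exp(X_5)$ and a normal-form reduction along the lower central series matches the paper's argument. The coordinate-change/fundamental-domain/covolume packaging is a harmless reformulation of the same content.
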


\begin{proof}
With $z$ as in \eqref{E:zxy} we have:	
\begin{align*}
	z_3-\tfrac{z_1z_2}2&=x_3-\tfrac{x_1x_2}2+y_3-\tfrac{y_1y_2}2-x_2y_1\\
	z_4-\tfrac{z_1^2z_2}{12}-\tfrac{z_1+1}2(z_3-\tfrac{z_1z_2}2)
	&=x_4-\tfrac{x_1^2x_2}{12}-\tfrac{x_1+1}2(x_3-\tfrac{x_1x_2}2)\\
	&\qquad+y_4-\tfrac{y_1^2y_2}{12}-\tfrac{y_1+1}2(y_3-\tfrac{y_1y_2}2)\\
	&\qquad-y_1(x_3-\tfrac{x_1x_2}2)+\tfrac{y_1(y_1+1)x_2}2\\
	z_5+\tfrac{z_1z_2^2}{12}+\tfrac{z_2+1}2(z_3-\tfrac{z_1z_2}2)
	&=x_5+\tfrac{x_1x_2^2}{12}+\tfrac{x_2+1}2(x_3-\tfrac{x_1x_2}2)\\
	&\qquad+y_5+\tfrac{y_1y_2^2}{12}+\tfrac{y_2+1}2(y_3-\tfrac{y_1y_2}2)\\
	&\qquad+x_2(y_3-\tfrac{y_1y_2}2)-\tfrac{x_2(x_2+1)y_1}2
\end{align*}
Using these relations, one readily checks that the right hand side in \eqref{E:Gamma0} defines a lattice in $G$ that contains $\gamma_1$ and $\gamma_2$.
Using the computations
\begin{align*}
	\log(\gamma_1^k\gamma_2^l)&=\begin{pmatrix}k\\l\\kl/2\\k^2l/12\\-kl^2/12\end{pmatrix},
	&\log[\gamma_1,\gamma_2]&=\begin{pmatrix}0\\0\\1\\1/2\\1/2\end{pmatrix},
\\	\log[\gamma_1,[\gamma_1,\gamma_2]]&=\begin{pmatrix}0\\0\\0\\1\\0\end{pmatrix},
	&\log[\gamma_2,[\gamma_1,\gamma_2]]&=\begin{pmatrix}0\\0\\0\\0\\1\end{pmatrix},
\end{align*}
it is easy to see that this lattice is generated by $\gamma_1$ and $\gamma_2$.
Here the formula
\[
	\log[\exp x,\exp y]=\begin{pmatrix}0\\0\\x_1y_2-x_2y_1\\x_1y_3-x_3y_1+\frac{(x_1+y_1)(x_1y_2-x_2y_1)}2\\x_2y_3-x_3y_2+\frac{(x_2+y_2)(x_1y_2-x_2y_1)}2\end{pmatrix}
\]
for commutators is helpful.
\end{proof}

\begin{lemma}\label{L:GG0}
If $\Gamma$ is a lattice in $G$, then there exists a, not necessarily graded Lie group automorphism $\phi\colon G\to G$ such that $\phi(\Gamma)\subseteq\Gamma_0$.
\end{lemma}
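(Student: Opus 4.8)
The plan is to construct $\phi$ in two stages: first use Lemma~\ref{L:Autgoe} to arrange, after an automorphism, that $\Gamma$ contains $\Gamma_0$; then apply a grading automorphism of $G$ to push the remaining (finite) discrepancy back into $\Gamma_0$.

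For the first stage, note that the commutator subgroup $[G,G]=\exp([\goe,\goe])$ is a closed connected normal subgroup, that $[\goe,\goe]=\goe_{-2}\oplus\goe_{-3}$ is abelian, and that the projection onto the two-dimensional abelianization $G\to G/[G,G]$ corresponds under $\exp$ to $\goe\to\goe/[\goe,\goe]$. Since $[G,G]$ is a term of the lower central series it is rational with respect to the lattice $\Gamma$ \cite{R72}, so $\Gamma\cap[G,G]$ is a lattice in $[G,G]$ and the image $\overline\Gamma$ of $\Gamma$ in $G/[G,G]$ is a lattice, hence free abelian of rank two. I would pick $\gamma_1,\gamma_2\in\Gamma$ projecting to a $\Z$-basis of $\overline\Gamma$, so that $\log\gamma_1,\log\gamma_2$ project to a basis of $\goe/[\goe,\goe]$; Lemma~\ref{L:Autgoe} then supplies the Lie algebra automorphism $\varphi$ with $\varphi(X_i)=\log\gamma_i$, which induces a Lie group automorphism $\phi$ of $G$ with $\phi(\exp X_i)=\gamma_i$. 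Put $\Gamma':=\phi^{-1}(\Gamma)$. Then $\exp X_1,\exp X_2\in\Gamma'$, so $\Gamma_0=\langle\exp X_1,\exp X_2\rangle\subseteq\Gamma'$; and since the automorphism of $G/[G,G]$ induced by $\phi$ carries the $\Z$-basis $\overline{\exp X_i}$ of $\overline{\Gamma_0}$ onto the $\Z$-basis $\bar\gamma_i$ of $\overline\Gamma$, we get $\overline{\Gamma'}=\overline{\Gamma_0}$. Consequently every element of $\Gamma'$ agrees modulo $[G,G]$ with one of $\Gamma_0$, i.e.\ $\Gamma'=\Gamma_0\cdot\Lambda$ where $\Lambda:=\Gamma'\cap[G,G]$ is a lattice in $[G,G]$, normal in $\Gamma'$; moreover $\Lambda\supseteq\Lambda_0:=\Gamma_0\cap[G,G]$ with finite index $N:=[\Lambda:\Lambda_0]$.

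For the second stage, for $k\in\N$ let $\delta_k$ be the grading automorphism of $G$, determined via Lemma~\ref{L:Autgoe} by $\delta_k(X_1)=kX_1$, $\delta_k(X_2)=kX_2$, so that $\delta_k(X_3)=k^2X_3$, $\delta_k(X_4)=k^3X_4$, $\delta_k(X_5)=k^3X_5$. Since $\delta_k(\exp X_i)=(\exp X_i)^k$ we have $\delta_k(\Gamma_0)\subseteq\Gamma_0$ for every $k$. On the other hand, Lagrange's theorem applied to $\Lambda/\Lambda_0$ gives $N\cdot\log\Lambda\subseteq\log\Lambda_0$, and \eqref{E:Gamma0} shows $\log\Lambda_0\subseteq\tfrac12\Z^3$ in the basis $X_3,X_4,X_5$, hence $\log\Lambda\subseteq\tfrac1{2N}\Z^3$. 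A direct check with the congruences defining $\Lambda_0$ in \eqref{E:Gamma0} (specialized to $x_1=x_2=0$) then shows that if $2N\mid k$, so that scaling by $k^2,k^3,k^3$ clears all denominators, one has $\delta_k(\Lambda)\subseteq\Lambda_0$. For such $k$ we obtain $\delta_k(\Gamma')=\delta_k(\Gamma_0)\,\delta_k(\Lambda)\subseteq\Gamma_0\Lambda_0=\Gamma_0$, and $\delta_k\circ\phi^{-1}$ is the desired automorphism.

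The main obstacle, and the reason a single application of Lemma~\ref{L:Autgoe} cannot suffice, is that $\Gamma$ need not be two-generated: its abelianization may have torsion, and then after the first stage $\Gamma'$ is genuinely larger than $\Gamma_0$, with the excess living in $[G,G]$. The role of $\delta_k$ for $k\gg0$ is precisely to expand $[G,G]$ enough to thin $\Lambda$ down into $\Lambda_0$ while keeping $\Gamma_0$ stable; the only step that is not pure bookkeeping is the elementary verification that $\delta_k(\Lambda)\subseteq\Lambda_0$, which comes down to the divisibility computation in \eqref{E:Gamma0} just indicated.
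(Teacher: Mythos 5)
Your proposal is correct and follows essentially the same route as the paper: first use Lemma~\ref{L:Autgoe} to move two elements of $\Gamma$ generating it modulo $[G,G]$ onto $\exp(X_1),\exp(X_2)$, and then apply a grading automorphism with a sufficiently divisible even parameter to push the remaining discrepancy in $[G,G]$ into $\Gamma_0$. The only difference is bookkeeping: the paper invokes Raghunathan's adapted generating set $\omega_1,\dotsc,\omega_5$ and checks the congruences of \eqref{E:Gamma0} on explicit coordinates of $\omega_3,\omega_4,\omega_5$, whereas you argue via the sublattice $\Lambda=\Gamma'\cap[G,G]\supseteq\Lambda_0$ and Lagrange's theorem, which amounts to the same divisibility computation.
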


\begin{proof}
Every lattice $\Gamma$ in $G$ can be generated by five elements $\omega_1,\dotsc,\omega_5$.
These generators may be chosen such that $\omega_1,\omega_2\in\Gamma$, $\omega_3\in\Gamma\cap[G,G]$ and $\omega_4,\omega_5\in\Gamma\cap[G,[G,G]]$, see \cite[Theorem~2.21]{R72}.
According to Lemma~\ref{L:Autgoe} that there exists a unique Lie algebra automorphism $\varphi\colon\goe\to\goe$ such that $\varphi(X_1)=\log\omega_1$ and $\varphi(X_2)=\log\omega_2$.
Then $\exp\circ\varphi\circ\log\colon G\to G$ is a Lie group automorphism mapping $\gamma_1$ to $\omega_1$ and $\gamma_2$ to $\omega_2$.
W.l.o.g.\ we may thus assume $\omega_1=\gamma_1$ and $\omega_2=\gamma_2$.
Then $\Gamma_0$ is a subgroup of finite index in $\Gamma$. 
Hence, there exists a positive integer $k$ such that $\omega_3^k,\omega_4^k,\omega_5^k$ are all contained in $\Gamma_0$.
Writing
\[
	\log\omega_3=\begin{pmatrix}0\\0\\a\\b\\c\end{pmatrix},\qquad
	\log\omega_4=\begin{pmatrix}0\\0\\0\\d\\e\end{pmatrix},\qquad
	\log\omega_5=\begin{pmatrix}0\\0\\0\\f\\g\end{pmatrix},
\]
we see from \eqref{E:Gamma0} that the numbers $ka,kb-\frac{ka}2,kc+\frac{ka}2,kd,ke,kf,kg$ must all be integral.
For the action by the grading automorphism we find  
\[
	\log\phi_r(\omega_3)=\begin{pmatrix}0\\0\\r^2a\\r^3b\\r^3c\end{pmatrix},\quad
	\log\phi_r(\omega_4)=\begin{pmatrix}0\\0\\0\\r^3d\\r^3e\end{pmatrix},\quad
	\log\phi_r(\omega_5)=\begin{pmatrix}0\\0\\0\\r^3f\\r^3g\end{pmatrix}.
\]
Taking $r=2k$, the numbers $r,r^2a,r^3b-\frac{r^2a}2,r^3c+\frac{r^2a}2,r^3d,r^3e,r^3f,r^3g$ are all integral.
Hence, using \eqref{E:Gamma0} again, we obtain $\phi_r(\omega_i)\in\Gamma_0$ for $i=1,\dotsc,5$.
As $\Gamma$ is generated by $\omega_1,\dotsc,\omega_5$, this implies $\phi_r(\Gamma)\subseteq\Gamma_0$, whence the lemma.
\end{proof}

\begin{lemma}\label{L:tG0}
If $\rho_0$ is a finite dimensional unitary representation of\/ $\Gamma_0$, then 
\[
	R_{\Gamma_0}(\rho_0\otimes\chi_0)=R_{\Gamma_0}(\rho_0)
\]
for all unitary characters $\chi_0\colon\Gamma_0\to U(1)$.
\end{lemma}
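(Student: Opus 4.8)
The plan is to exploit four ingredients: the automorphism-invariance of the relative torsion (Lemma~\ref{L:Rrho}(e)), its continuity in the flat bundle (Proposition~\ref{P:cont}), compactness of a representation variety, and the density of generic $\GL(2,\Z)$-orbits on the character torus. First I would record the relevant structure. Every unitary character of $\Gamma_0$ factors through $\Gamma_0/[\Gamma_0,\Gamma_0]$, and the description of $\log\Gamma_0$ in \eqref{E:Gamma0} together with \eqref{E:zxy} shows that $[\Gamma_0,\Gamma_0]=\Gamma_0\cap[G,G]$ and that $\gamma_1,\gamma_2$ project to a free basis of $\Gamma_0/[\Gamma_0,\Gamma_0]\cong\Z^2$; hence $\widehat{\Gamma_0}:=\mathrm{Hom}(\Gamma_0,U(1))$ is identified with the torus $T^2$ via $\chi\mapsto(\chi(\gamma_1),\chi(\gamma_2))$. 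The group $\Aut(\goe,\log\Gamma_0)\cong\GL(2,\Z)$ acts on $G$ by Lie group automorphisms preserving $\Gamma_0$, and (since these automorphisms are graded, hence preserve $\goe_{-1}$ and $[\goe,\goe]$) the induced action $\chi\mapsto\chi\circ\phi$ on $\widehat{\Gamma_0}=T^2$ is, up to transpose, the standard linear $\GL(2,\Z)$-action. By Lemma~\ref{L:Rrho}(e), for every such $\phi$ and every finite dimensional unitary representation $\sigma$ of $\Gamma_0$ we have $R_{\Gamma_0}(\sigma\circ\phi)=R_{\Gamma_0}(\sigma)$; applying this to $\sigma=\rho_0\otimes\chi$ and using $(\rho_0\otimes\chi)\circ\phi=(\rho_0\circ\phi)\otimes(\chi\circ\phi)$ yields the key identity
\[
  R_{\Gamma_0}\bigl((\rho_0\circ\phi)\otimes(\chi\circ\phi)\bigr)=R_{\Gamma_0}(\rho_0\otimes\chi)
  \qquad\text{for all }\phi\in\Aut(\goe,\log\Gamma_0).
\]

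Next I would establish the continuity needed. Fixing $d=\dim\rho_0$, the space $\mathrm{Hom}(\Gamma_0,U(d))$ embeds as the closed subset of $U(d)\times U(d)$ cut out by the defining relations of $\Gamma_0$ evaluated at $(\rho(\gamma_1),\rho(\gamma_2))$, hence is a compact real-analytic set. Along any smooth path in $\mathrm{Hom}(\Gamma_0,U(d))\times\widehat{\Gamma_0}$ we obtain a smooth family of flat bundles $F_{\rho\otimes\chi}$ with parallel Hermitian metrics, so Proposition~\ref{P:cont} (whose proof is local and produces smooth dependence on the parameters, combined with local path-connectedness of real-analytic sets) shows that $(\rho,\chi)\mapsto R_{\Gamma_0}(\rho\otimes\chi)$ is continuous on this parameter space.

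Now the main step. Choose first $\chi_0$ in the dense, open, $\GL(2,\Z)$-invariant subset $U\subseteq\widehat{\Gamma_0}$ of characters whose two coordinates are not both rational; it is classical that every orbit in $U$ is dense in $\widehat{\Gamma_0}$. Pick $\phi_n\in\Aut(\goe,\log\Gamma_0)$ with $\chi_0\circ\phi_n\to\mathbf 1$, and, using compactness of $\mathrm{Hom}(\Gamma_0,U(d))$, pass to a subsequence so that $\rho_0\circ\phi_n\to\rho_\infty$. The key identity gives $R_{\Gamma_0}(\rho_0\otimes\chi_0)=R_{\Gamma_0}\bigl((\rho_0\circ\phi_n)\otimes(\chi_0\circ\phi_n)\bigr)$ for every $n$, so letting $n\to\infty$ and using continuity, $R_{\Gamma_0}(\rho_0\otimes\chi_0)=R_{\Gamma_0}(\rho_\infty\otimes\mathbf 1)=R_{\Gamma_0}(\rho_\infty)$. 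On the other hand $R_{\Gamma_0}(\rho_0)=R_{\Gamma_0}(\rho_0\circ\phi_n)$ for all $n$ by Lemma~\ref{L:Rrho}(e), so continuity gives $R_{\Gamma_0}(\rho_0)=R_{\Gamma_0}(\rho_\infty)$ as well. Hence $R_{\Gamma_0}(\rho_0\otimes\chi_0)=R_{\Gamma_0}(\rho_0)$ whenever $\chi_0\in U$. Finally, since $U$ is dense in $\widehat{\Gamma_0}$, the same equality for an arbitrary unitary character $\chi_0$ follows by approximating $\chi_0$ by characters in $U$ and invoking continuity once more.

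I expect the main obstacle to be the entanglement of the two symmetries: automorphism-invariance moves $\rho_0$ along with $\chi_0$, so it does \emph{not} directly say that $\chi\mapsto R_{\Gamma_0}(\rho_0\otimes\chi)$ is $\GL(2,\Z)$-invariant, and a naive orbit-closure argument fails. Compactness of $\mathrm{Hom}(\Gamma_0,U(d))$ is precisely what lets one absorb the drift of $\rho_0\circ\phi_n$ into an auxiliary limit $\rho_\infty$ that then cancels out. A secondary point requiring care is that the density of $\GL(2,\Z)$-orbits holds only off the (measure zero) set of torsion characters, so those must be handled by the final approximation step rather than by the symmetry itself; and one should make sure Proposition~\ref{P:cont} is invoked so as to give honest continuity on the compact parameter space, not merely smoothness along individual curves.
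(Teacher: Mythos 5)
Your strategy is genuinely different from the paper's, and the difference is exactly where the gap sits. You correctly identify the entanglement problem (Lemma~\ref{L:Rrho}(e) moves $\rho_0$ along with $\chi_0$) and propose to absorb the drift of $\rho_0\circ\phi_n$ into a limit $\rho_\infty$ via compactness of $\mathrm{Hom}(\Gamma_0,U(d))$. But this forces you to evaluate limits of $R_{\Gamma_0}$ along the sequence $\rho_0\circ\phi_n\to\rho_\infty$ inside the representation variety, and for that you need (sequential) continuity of $\rho\mapsto R_{\Gamma_0}(\rho)$ on $\mathrm{Hom}(\Gamma_0,U(d))$. Proposition~\ref{P:cont} does not give this: it asserts smoothness only along smooth one-parameter families of flat connections. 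Continuity along every smooth arc through a point does not imply continuity at that point, and the sequence $\rho_0\circ\phi_n$ — produced by the hyperbolic dynamics of $\GL(2,\Z)$ on the torus — has no reason to lie on any smooth arc through $\rho_\infty$; nor is $\mathrm{Hom}(\Gamma_0,U(d))$ a manifold, so Boman-type arguments are unavailable. ``Local path-connectedness of real-analytic sets'' does not bridge this: it lets you reach $\rho_\infty$ by some arc, not by an arc interpolating your sequence. Closing the gap would require reproving Proposition~\ref{P:cont} with uniform control in a continuous multi-parameter setting (uniformity of the heat-trace asymptotics and of the meromorphic continuation of $\zeta'_{\lambda}(0)$ in the flat connection), which is a nontrivial strengthening that you neither state nor prove. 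Two minor slips: the set of characters with dense $\GL(2,\Z)$-orbit is dense but not open (its complement contains all torsion characters, which are dense), and you only ever use density and invariance, so this is harmless.

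For comparison, the paper avoids the representation variety entirely: after reducing to irreducible $\rho_0$ via Lemma~\ref{L:Rrho}(a), it invokes Howe's structure theorem to write $\rho_0=\tau_0\otimes\sigma_0$ with $\tau_0$ factoring through a finite quotient $\Gamma_0/\Gamma_1$ and $\sigma_0$ a character. The finite-index subgroup $A\subseteq\Aut(G,\Gamma_0)\cong\GL(2,\Z)$ fixing $\Gamma_1$ and acting trivially on $\Gamma_0/\Gamma_1$ satisfies $\tau_0\circ\phi=\tau_0$, so $\sigma\mapsto R_{\Gamma_0}(\tau_0\otimes\sigma)$ is honestly $A$-invariant on the smooth character torus; finitely many $A$-orbit closures cover the torus, and Proposition~\ref{P:cont} applies there as stated. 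If you want to salvage your route, the cleanest fix is to import this reduction: once $\rho_0=\tau_0\otimes\sigma_0$ with $\tau_0$ fixed by a finite-index subgroup, the only parameter that moves is the character, and all the continuity you need lives on $T^2$.
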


\begin{proof}
W.l.o.g.\ we may assume $\rho_0$ irreducible, see Lemma~\ref{L:Rrho}(a).
According to \cite[Proposition~1]{H77} there exists a normal subgroup $\Gamma_1\subseteq\Gamma_0$ of finite index, a unitary representation $\tau_0$ of $\Gamma_0/\Gamma_1$, and a unitary character $\sigma_0\colon\Gamma_0\to U(1)$ such that 
\begin{equation}\label{E:rho0}
	\rho_0=\tau_0\otimes\sigma_0.
\end{equation}
We consider the group of graded automorphisms preserving $\Gamma_0$,
\[
	\Aut(G,\Gamma_0)
	%=\bigl\{\phi\in\Aut(\goe):\phi(\Gamma_0)=\Gamma_0\bigr\}
	\cong\GL(2,\mathbb Z).
\]
%acts by filtration preserving diffeomorphisms on $G/\Gamma_0$.
Since $\Gamma_0$ only has finitely many subgroups of given finite index, see \cite[Section~II]{H77} or \cite[Lemma~3]{B73}, the subgroup
\[
	A:=\bigl\{\phi\in\Aut(G,\Gamma_0):\phi(\Gamma_1)=\Gamma_1,\phi|_{\Gamma_0/\Gamma_1}=1\bigr\}
\]
has finite index in $\Aut(G,\Gamma_0)$.
For $\phi\in A$ we have $\tau_0\phi=\tau_0$ and Lemma~\ref{L:Rrho}(e) gives
\begin{equation}\label{E:jkl}
	R_{\Gamma_0}(\tau_0\otimes\sigma\phi)=R_{\Gamma_0}(\tau_0\otimes\sigma)
\end{equation}
for all unitary characters $\sigma\colon\Gamma_0\to U(1)$.
Recall that the action of $\Aut(G,\Gamma_0)\cong\GL(2,\mathbb Z)$ on the dual group $\hom(\Gamma_0,U(1))\cong U(1)\times U(1)$ has dense orbits \cite{D76}.
Since $A$ is a finite index subgroup, there exist finitely many $A$-orbits whose closures cover all of $U(1)\times U(1)$.
By \eqref{E:jkl} and Proposition~\ref{P:cont} the function $\sigma\mapsto R_{\Gamma_0}(\tau_0\otimes\sigma)$ is constant on each of these orbit closures.
As $U(1)\times U(1)$ is connected, $R_{\Gamma_0}(\tau_0\otimes\sigma)$ has to be independent of $\sigma$.
We conclude 
\[
	R_{\Gamma_0}(\tau_0\otimes\sigma_0)=R_{\Gamma_0}(\tau_0\otimes\sigma_0\chi_0).
\]
Combining this with \eqref{E:rho0}, we obtain the lemma.
\end{proof}

%%%%%%%%%%%%%
%\begin{lemma}
%We have $R_{\Gamma_0}(\rho\phi)=R_{\Gamma_0}(\rho)$ for all finite dimensional unitary representations $\rho$ of $\Gamma_0$  and all $\phi\in\Aut(\goe,\Gamma_0)$.
%\end{lemma}
%
%\begin{proof}
%If $\tilde g$ is a $G$-invariant subRiemannian metric on the nilmanifold $M=G/\Gamma_0$, then $\phi^*\tilde g$ is a $G$-invariant subRiemannian metric also.
%Via the canonical identification $\phi^*F_\rho=F_{\rho\phi}$, we have $\phi^*h_\rho=h_{\rho\phi}$.
%Hence, by naturality
%\[
%	\|-\|^{\sdet(H^*(M,F_{\rho\phi}))}_{\phi^*\tilde g,h_{\rho\phi}}=\phi^*\|-\|^{\sdet(H^*(M,F_\rho))}_{\tilde g,h_\rho}
%\]
%via the identification $\sdet(H^*(M;F_{\rho\phi}))\cong\sdet(H^*(M;F_\rho))$ induced by $\phi$.
%Combining this with a corresponding identity for the Ray--SInger torsion, we obtain $R_{\Gamma_0}(\rho\phi)=R_{\Gamma_0}(\rho)$, see \eqref{E:Rrho} and Lemma~\ref{L:Rrhotg}.
%\end{proof}
%%%%%%%%%%%%%%%%5

Combining these observations we will now show

\begin{theorem}\label{T:main}
Suppose\/ $\Gamma$ is a lattice in $G$ and let $\rho$ denote a finite dimensional unitary representation of\/ $\Gamma$.
Then
\[
	R_\Gamma(\rho\otimes\chi)=R_\Gamma(\rho)
\]
for all unitary characters $\chi\colon\Gamma\to U(1)$ that vanish on $\Tor H_1(\Gamma)$.
\end{theorem}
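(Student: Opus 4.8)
The plan is to reduce the general case to the special lattice $\Gamma_0$ generated by $\gamma_1=\exp(X_1)$ and $\gamma_2=\exp(X_2)$, where Lemma~\ref{L:tG0} already handles twisting by arbitrary unitary characters. First I would invoke Lemma~\ref{L:GG0} to obtain a Lie group automorphism $\phi\colon G\to G$ with $\phi(\Gamma)\subseteq\Gamma_0$. Writing $\Gamma':=\phi(\Gamma)\subseteq\Gamma_0$ and $\rho':=\rho\circ\phi^{-1}$, a unitary representation of $\Gamma'$, part~(e) of Lemma~\ref{L:Rrho} gives $R_\Gamma(\rho\otimes\chi)=R_{\Gamma'}(\rho'\otimes\chi')$ for every unitary character $\chi$ of $\Gamma$, where $\chi':=\chi\circ\phi^{-1}$; and $\chi$ vanishes on $\Tor H_1(\Gamma)$ precisely when $\chi'$ vanishes on $\Tor H_1(\Gamma')$, since $\phi$ induces an isomorphism $\Gamma\cong\Gamma'$. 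Thus it suffices to prove the statement for finite-index sublattices $\Gamma'$ of $\Gamma_0$.

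The second step is to pass from $\Gamma'\subseteq\Gamma_0$ up to $\Gamma_0$ using part~(c) of Lemma~\ref{L:Rrho}, which reads $R_{\Gamma'}(\tau|_{\Gamma'})=R_{\Gamma_0}(\tau\otimes\mathbb C[\Gamma_0/\Gamma'])$ for representations $\tau$ of $\Gamma_0$. The subtlety is that a representation $\rho'$ of $\Gamma'$ need not be the restriction of a representation of $\Gamma_0$. I would instead work with the induced representation: by part~(b) of Lemma~\ref{L:Rrho}, $R_{\Gamma'}(\rho'\otimes\chi')=R_{\Gamma_0}\bigl(\Ind_{\Gamma'}^{\Gamma_0}(\rho'\otimes\chi')\bigr)$. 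The key algebraic point is the projection formula: a character $\chi'$ of $\Gamma'$ that vanishes on $\Tor H_1(\Gamma')$ extends to a character $\chi_0$ of $\Gamma_0$ vanishing on $\Tor H_1(\Gamma_0)$—because the map $H_1(\Gamma')\to H_1(\Gamma_0)$ becomes an isomorphism after tensoring with $\mathbb R$ (both are free abelian of rank two, the abelianization being detected by $X_1,X_2$), so a homomorphism $\Gamma'\to U(1)$ killing torsion factors through $H_1(\Gamma')\otimes\mathbb R$ and one may choose a compatible extension to $H_1(\Gamma_0)\otimes\mathbb R$, then restrict to the lattice $H_1(\Gamma_0)/\Tor$. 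With such an extension, $\rho'\otimes\chi'=\rho'\otimes(\chi_0|_{\Gamma'})$, and the push-pull identity $\Ind_{\Gamma'}^{\Gamma_0}(\rho'\otimes\chi_0|_{\Gamma'})=\bigl(\Ind_{\Gamma'}^{\Gamma_0}\rho'\bigr)\otimes\chi_0$ gives
\[
	R_\Gamma(\rho\otimes\chi)=R_{\Gamma_0}\bigl((\Ind_{\Gamma'}^{\Gamma_0}\rho')\otimes\chi_0\bigr).
\]

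The third step is to apply Lemma~\ref{L:tG0} to the representation $\rho_0:=\Ind_{\Gamma'}^{\Gamma_0}\rho'$ of $\Gamma_0$ and the character $\chi_0$: this yields $R_{\Gamma_0}(\rho_0\otimes\chi_0)=R_{\Gamma_0}(\rho_0)$. Running the chain of equalities in reverse with the trivial character shows $R_{\Gamma_0}(\rho_0)=R_{\Gamma'}(\rho')=R_\Gamma(\rho)$, which completes the proof. Note that here we do not even need $\chi_0$ to kill torsion in $H_1(\Gamma_0)$; Lemma~\ref{L:tG0} allows an arbitrary unitary character of $\Gamma_0$, so the extension of $\chi'$ can be chosen freely. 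The main obstacle I anticipate is the bookkeeping in the second step: verifying that the abelianization of any finite-index sublattice $\Gamma'\subseteq\Gamma_0$ has free rank exactly two and that the inclusion-induced map on $H_1\otimes\mathbb R$ is an isomorphism, so that characters killing torsion extend as claimed. This follows from the fact that $G$ is nilpotent with $\goe/[\goe,\goe]$ two-dimensional, hence $H_1(\Gamma';\mathbb R)\cong\goe/[\goe,\goe]$ for every lattice $\Gamma'$ in $G$, but the explicit torsion in $H_1(\Gamma')$ must be handled with care—it is exactly this torsion that obstructs the naive extension, and the hypothesis that $\chi$ vanishes on it is what makes the argument go through.
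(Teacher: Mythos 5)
Your proposal is correct and follows essentially the same route as the paper: reduce to a sublattice of $\Gamma_0$ via Lemma~\ref{L:GG0} and Lemma~\ref{L:Rrho}(e), extend the character to $\Gamma_0$ (the paper simply cites Howe for this, while you argue it via $H_1\otimes\mathbb R$; divisibility of $U(1)$ also does it), induce up using Lemma~\ref{L:Rrho}(b) together with the projection formula, and conclude with Lemma~\ref{L:tG0}. The only difference is that you make explicit the push--pull identity $\Ind_{\Gamma'}^{\Gamma_0}(\rho'\otimes\chi_0|_{\Gamma'})=(\Ind_{\Gamma'}^{\Gamma_0}\rho')\otimes\chi_0$, which the paper uses tacitly.
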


\begin{proof}
In view of Lemma~\ref{L:GG0} and Lemma~\ref{L:Rrho}(e) we may, w.l.o.g.\ assume $\Gamma\subseteq\Gamma_0$.
Since $\chi$ vanishes on the torsion part of $H_1(\Gamma)$ it can be extended to a unitary character $\chi_0\colon\Gamma_0\to U(1)$, see \cite{H77} for instance.
Let $\rho_0=\Ind_\Gamma^{\Gamma_0}(\rho)$ denote the induced representation of $\Gamma_0$.
Using Lemma~\ref{L:Rrho}(b) we find
\[
	R_\Gamma(\rho)=R_{\Gamma_0}(\rho_0)\qquad\text{and}\qquad R_\Gamma(\rho\otimes\chi)=R_{\Gamma_0}(\rho_0\otimes\chi_0).
\]
The theorem thus follows form Lemma~\ref{L:tG0}.
\end{proof}

Note that Theorem~\ref{T:intro} in the introduction follows immediately.
Indeed, a lattice $\Gamma$ in $G$ is generated by two elements of and only if $H_1(\Gamma)=0$ is torsionfree.
Moreover, every flat line bundle $F$ with parallel Hermitian metric $h$ on $G/\Gamma$ is of the form $F=F_\rho$ with $h=h_\rho$ for some unitary representation $\rho\colon\Gamma\to U(k)$.

\end{document}